\definecolor{hot}{RGB}{65,105,225}
\newtheorem{theorem}{Theorem}[section]
\newtheorem{lemma}[theorem]{Lemma}
\newtheorem{theorem-definition}[theorem]{Theorem-Definition}
\newtheorem{corollary}[theorem]{Corollary}
\newtheorem{proposition}[theorem]{Proposition}
\newtheorem{definition-theorem}[theorem]{Definition-Theorem}
\newtheorem{theorem-defintion}[theorem]{Theorem-Definition}
\newtheorem{corollary-definition}[theorem]{Corollary-Definition}
\newtheorem{definition-proposition}[theorem]{Definition-Proposition}
\theoremstyle{definition}
\newtheorem{definition}[theorem]{Definition}
\newtheorem{remark}[theorem]{Remark}
\numberwithin{equation}{section}
\def\bZ{\mathbb{Z}}
\def\bR{\mathbb{R}}
\def\bC{\mathbb{C}}
\def\bA{\mathbb{A}}
\def\A{\mathcal{A}}
\def\B{\mathcal{B}}
\def\C{\mathcal{C}}
\def\spb{\mathrm{Spb}}
\def\spr{\mathrm{Spr}}
\def\mspec{\mathrm{Max}}
\def\Cl{\mathrm{Cl}}
\def\gr{\mathrm{gr}}
\def\rad{\mathrm{rad}}
\def\Gal{\mathrm{Gal}}
\newcommand{\norm}[1]{\left\lVert#1\right\rVert}
\author{Tom Biesbrouck}
\address{Department of Mathematics, KU Leuven, Celestijnenlaan 200B, 3001 Leuven, Belgium.}
\email{tom.biesbrouck@kuleuven.be}
\title{An algebraic characterisation of non-Archimedean Stein spaces}
\begin{document}
\begin{abstract}
	We introduce Liu algebras as Banach algebras which are \lq locally affinoid\rq, and define non-Archimedean Stein algebras as suitable inverse limits of these. We show that this gives rise to a complete functorial characterisation of non-Archimedean Liu and Stein spaces as Berkovich spectra of their respective algebras, thereby resolving a conjecture of Michael Temkin. This can be interpreted as a non-Archimedean analytic version of Serre's criterion for affineness. Furthermore, we prove a criterion that distinguishes affinoid algebras within the category of Liu algebras, answering another conjecture of Temkin. We also prove a generalisation of the Gerritzen-Grauert Theorem for non-Archimedean Stein spaces.
 \end{abstract}

\maketitle
\thispagestyle{empty}
\vspace{-0.5cm}
{
	\hypersetup{linkcolor=black}
	\tableofcontents 
}

\section*{Introduction}

\subsection*{Motivation.}\label{subsMot} In classical algebraic geometry, Serre's criterion for affineness provides a full characterisation of affine schemes in terms of vanishing higher cohomology. Analogously, one might wonder whether the conventional \lq local building blocks\rq\ of non-Archimedean analytic geometry, i.e. Berkovich spectra of affinoid algebras, also admit such an equivalent description. However, by work of Qing Liu in the late eighties, this turned out to be impossible: in \cite{LiuParis}, Liu constructs a non-affinoid compact \(k\)-analytic space with affinoid normalisation; and in \cite[§5]{LiuTohoku}, he constructs a non-affinoid analytic domain of the two-dimensional unit disk, both with vanishing higher cohomology. Consequently, a compact non-Archimedean analytic space \(X\) with \lq vanishing higher cohomology\rq\ is nowadays referred to as a Liu space. More generally, when \(X\) is not necessarily compact, it is called a Stein space, inspired by the complex analytic setting.

One natural approach still to recover a non-Archimedean analytic version of Serre's criterion for affineness, would thus be to enlarge the category of Banach algebras of which we take Berkovich spectra to serve as \lq local building blocks\rq. In this paper, we achieve this by introducing Liu algebras as Banach algebras which are \lq locally affinoid\rq, and proving that Liu spaces correspond exactly to Berkovich spectra of these Liu algebras. Subsequently, as to also include general Stein spaces, we recall the definition of Berkovich spectra for Fr\'echet algebras and define Stein algebras as suitable inverse limits of Liu algebras. This enables us to characterise any Stein space as the Berkovich spectrum of a corresponding Stein algebra. A direct corollary of all this, is the initially desired non-Archimedean analytic version of Serre's criterion for affineness.

We remark that our algebraic definition of Liu algebras as \lq locally affinoid\rq\ Banach algebras was already conjectured to correspond to Liu spaces in the first prerprint version of \cite[§4]{TemkinPinch} on arXiv. A proof of this was then attempted in the first prerprint version of \cite[Appendix B]{Xia} on arXiv. However, this proof overlooked some essential subtleties and was therefore not published. Furthermore, a characterisation of dagger Stein spaces without boundary as Berkovich spectra of certain Fr\'echet algebras has already been worked out in \cite[§4.2]{BBBK}.

\subsection*{Background.}\label{subsBack} We recall the terminology and characterisation of Liu and Stein spaces as were introduced in \cite{MaculanPoineau}. We let \(k\) be a complete non-Archimedean field (possibly trivially valued) and consider non-Archimedean analytic spaces in the sense of Berkovich \cite[§1.2]{BerkovichIHES}. Whenever the word \lq strict\rq\ is used, \(k\) is assumed to be non-trivially valued.
\begin{definition}
	Let \(X\) be a \(k\)-analytic space.
	\begin{itemize}
		\item The \emph{holomorphic convex hull} of a compact subset \(K\) of \(X\) is defined as
		\[\widehat{K}_X\vcentcolon=\left\{x\in X\,\,\middle|\,\,|f(x)|\leq\norm{f}_K\text{ for all }f\in\mathcal{O}_X(X)\right\},\]
		where \(\norm{f}_K\vcentcolon=\sup_{x\in K}|f(x)|\) for all \(f\in\mathcal{O}_X(X)\).
		\item A coherent sheaf \(\mathcal{F}\) on \(X\) is called \emph{universally acyclic} if, for every analytic extension \(\ell/k\), \(\mathcal{F}_\ell\) is acyclic on \(X_\ell\), i.e. \(H^q(X_\ell,\mathcal{F}_\ell)=0\) for every \(q\in\bZ_{>0}\).
	\end{itemize}
	The \(k\)-analytic space \(X\) is said to be
	\begin{itemize}
		\item \emph{holomorphically separable} if for all distinct points \(x_1,x_2\in X\) there exists an \(f\in\mathcal{O}_X(X)\) such that \(|f(x_1)|\neq|f(x_2)|\);
		\item \emph{holomorphically convex} if for every compact subset \(K\) of \(X\), the holomorphic convex hull \(\widehat{K}_X\) of \(K\) is compact;
		\item \emph{cohomologically Stein} if every coherent sheaf \(\mathcal{F}\) of \(\mathcal{O}_X\)-modules on \(X\) is acyclic.
	\end{itemize}
\end{definition}
Note that when \(X\) is separated and compact, \(X\) is always holomorphically convex. Also, when \(k\) is non-trivially valued and \(X\) is separated and countable at infinity, then any acyclic coherent sheaf on \(X\) is universally acyclic by \cite[Theorem A.5]{MaculanPoineau}.
\begin{definition}
	A \emph{(strictly) \(k\)-Liu space} is a (strictly) \(k\)-analytic space \(X\) which is separated, holomorphically separable, compact and on which \(\mathcal{O}_X\) is universally acyclic.
\end{definition}
\begin{definition}
	A (strictly) \(k\)-analytic space \(X\) is said to be \emph{\(W\)-exhausted by (strictly) Liu domains} if it admits a cover \(\{D_i\}_{i\in\bZ_{\geq0}}\) for the \(G\)-topology by (strictly) \(k\)-Liu spaces such that for each \(i\in\bZ_{\geq0}\), \(D_i\) is contained in \(D_{i+1}\) and the restriction map \(\mathcal{O}_X(D_{i+1})\to\mathcal{O}_X(D_i)\) has dense image.
\end{definition}
\begin{definition}\label{def:stein}
	A \emph{(strictly) \(k\)-Stein space} is a separated (strictly) \(k\)-analytic space \(X\) countable at infinity, satisfying one of the following conditions, which are equivalent by \cite[Theorem 1.11]{MaculanPoineau}.
	\begin{enumerate}[label=(\roman*)]
		\item For every analytic extension \(\ell/k\), \(X_\ell\) is cohomologically Stein.
		\item \(X\) is holomorphically convex, holomorphically separable and \(\mathcal{O}_X\) is universally acyclic.
		\item \(X\) is \(W\)-exhausted by (strictly) Liu domains.
	\end{enumerate}
\end{definition}
\begin{remark}
	To be precise, it is not proven explicitly in \cite{MaculanPoineau} that a strictly \(k\)-Stein space admits a \(W\)-exhaustion by \emph{strictly} Liu domains. However, a careful analysis of the proofs of \cite[Lemma 5.3 and Claim 5.4]{MaculanPoineau} shows that one can indeed take the constructed \(W\)-exhaustion to be strict.
\end{remark}

\begin{definition}\label{def:ratdomstein}
	Let \(X\) be a (strictly) \(k\)-Stein space. A \emph{(strictly) rational domain} of \(X\) is a subset of \(X\) of the form
	\[X\left(r_\bullet^{-1}\frac{f_\bullet}{g}\right)\vcentcolon=\left\{x\in X\,\,\middle|\,\, |f_i(x)|\leq r_i|g(x)|\text{ for }i=1,\ldots,n\right\}\]
	where \(r_\bullet=(r_1,\ldots,r_n)\in\bR_{>0}^n\), \(g\in\mathcal{O}_X(X)\) and \(f_\bullet=(f_1,\ldots,f_n)\in\mathcal{O}_X(X)^n\) such that \(g,f_1,\ldots,f_n\) have no common zeroes. In the strict case, we set \(r_1=\ldots=r_n=1\).
\end{definition}

Lastly, we recall the notion of (graded) reduction from \cite[§3]{TemkinGraded}.
\begin{definition}\label{def:reduction}
	Let \(\A\) be a \(k\)-Banach algebra. The \emph{reduction} and \emph{graded reduction} of \(\A\) are respectively defined as
	\[\widetilde{\A}\vcentcolon=\frac{\left\{a\in\A\,\,\middle|\,\,\rho_\A(a)\leq 1\right\}}{\left\{a\in\A\,\,\middle|\,\,\rho_\A(a)< 1\right\}}\qquad\text{and}\qquad\widetilde{\A}_\gr\vcentcolon=\bigoplus_{r\in\bR_{>0}}\frac{\left\{a\in\A\,\,\middle|\,\,\rho_\A(a)\leq r\right\}}{\left\{a\in\A\,\,\middle|\,\,\rho_\A(a)< r\right\}},\]
	where \(\rho_\A:\A\to\bR_{\geq0}\) denotes the spectral radius function of \(\A\) (cf. Definition \ref{def:spb}).
\end{definition}

\subsection*{Statement of the results.}\label{subsMain} The main enterprise of this paper is to provide a complete functorial characterisation of \(k\)-Liu spaces (resp. \(k\)-Stein spaces) as Berkovich spectra of an explicit class of \(k\)-Banach algebras (resp. \(k\)-Fr\'echet algebras). To achieve this, we introduce (strictly) \(k\)-Liu algebras as \(k\)-Banach algebras that are \lq locally \(k\)-affinoid\rq, i.e. their Berkovich spectra admit \lq nice coverings by affinoid rational domains\rq. By construction, the proof of Tate's Acyclicity Theorem then carries over almost verbatim, endowing the Berkovich spectrum of a \(k\)-Liu algebra with a canonical acyclic \(k\)-analytic structure sheaf. More generally, we define a \(k\)-Stein algebra as a \(k\)-Fr\'echet algebra obtained as a suitable inverse limit of \(k\)-Liu algebras, and define the \(k\)-analytic structure on its Berkovich spectrum via gluing. For the definition of \(k\)-Fr\'echet algebras and their Berkovich spectra, see Section \ref{secFrech}. Ultimately, we obtain the following theorem.
\begin{theorem}[cf. Theorem \ref{thm:isocatliu} and Theorem \ref{thm:isocatstein}]\label{thm:isocatfull}
	The category of (strictly) \(k\)-Liu algebras (resp. \(k\)-Stein algebras) is anti-equivalent to the category of (strictly) \(k\)-Liu spaces (resp. \(k\)-Stein spaces). More precisely, taking Berkovich spectra of (strictly) \(k\)-Liu algebras (resp. \(k\)-Stein algebras) and taking global sections of structure sheaves on (strictly) \(k\)-Liu spaces (resp. \(k\)-Stein spaces) induce anti-isomorphisms of categories which are mutually inverse to each other.
\end{theorem}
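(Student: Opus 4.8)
The plan is to exhibit the two functors explicitly and reduce the anti-equivalence to showing that the canonical unit and counit natural transformations are isomorphisms. On objects, one functor sends a Liu (resp. Stein) algebra \(\A\) to its Berkovich spectrum \(\spb(\A)\) equipped with the acyclic structure sheaf furnished by the Tate-type construction, while the other sends a Liu (resp. Stein) space \(X\) to the global sections \(\mathcal{O}_X(X)\), which carry a canonical Banach (resp. Fr\'echet) structure. For a space \(X\) there is a tautological morphism \(\eta_X\colon X\to\spb(\mathcal{O}_X(X))\) sending a point \(x\) to the multiplicative seminorm \(f\mapsto|f(x)|\), and for an algebra \(\A\) there is an evaluation homomorphism \(\varepsilon_\A\colon\A\to\mathcal{O}_{\spb(\A)}(\spb(\A))\). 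To obtain mutually inverse anti-isomorphisms of categories it suffices to prove that \(\eta_X\) is an isomorphism of \(k\)-analytic spaces for every Liu/Stein space \(X\), that \(\varepsilon_\A\) is an isomorphism of topological algebras for every Liu/Stein algebra \(\A\), and then to check naturality in morphisms. The strict case runs identically, specialising all radii in Definition \ref{def:ratdomstein} to \(1\).

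I would first treat the Liu case. That \(\varepsilon_\A\) is an isomorphism should follow almost directly from the construction: since the structure sheaf on \(\spb(\A)\) is built so that Tate's acyclicity argument applies to the covering by affinoid rational domains, taking global sections over the whole compact spectrum recovers \(\A\) by the sheaf axiom together with the vanishing of the first \v{C}ech cohomology. The substantive direction is showing \(\eta_X\) is an isomorphism. Injectivity of \(\eta_X\) on points is exactly holomorphic separability; that \(\eta_X\) is a homeomorphism onto its image uses compactness of \(X\) together with holomorphic convexity to rule out points of the spectrum escaping the image. To upgrade this to an isomorphism of analytic spaces I would compare local models: choosing a finite covering of \(X\) by affinoid domains that are rational in the sense of Definition \ref{def:ratdomstein}, the universal acyclicity of \(\mathcal{O}_X\) guarantees that \(\mathcal{O}_X(X)\) is precisely locally affinoid, hence a Liu algebra, and that the structure sheaves on \(X\) and on \(\spb(\mathcal{O}_X(X))\) agree on this common affinoid atlas.

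For the Stein case I would bootstrap from the Liu case by passing to inverse limits. Writing a Stein space \(X\) as \(W\)-exhausted by Liu domains \(\{D_i\}_{i\geq 0}\) with dense restriction maps, one gets \(\mathcal{O}_X(X)=\varprojlim_i\mathcal{O}_X(D_i)\) as a Fr\'echet algebra, and since each \(\mathcal{O}_X(D_i)\) is a Liu algebra by the first part, this inverse limit is by definition a Stein algebra. Conversely, a Stein algebra \(\A=\varprojlim_i\A_i\) has Berkovich spectrum glued from the \(\spb(\A_i)\), each of which is a Liu space whose identification with its piece of \(X\) is already established; compatibility of these identifications under the transition maps, together with the density condition, yields \(\eta_X\) and \(\varepsilon_\A\) as isomorphisms in the limit. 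Naturality in both variables is then a formal check, once one verifies that a morphism of Liu/Stein algebras induces a morphism of spectra compatible with \(\eta\) and \(\varepsilon\).

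The hard part will be the claim that \(\eta_X\) is an isomorphism of analytic spaces, and in particular that \(\mathcal{O}_X(X)\) is genuinely a Liu algebra in the technical sense—that its spectrum admits a covering by \emph{affinoid} rational domains satisfying the conditions phrased via the (graded) reduction of Definition \ref{def:reduction}. Verifying that the globally defined functions \(f_\bullet,g\) cutting out rational domains of \(X\) actually generate the unit ideal away from their common zero locus, and that the resulting affinoid pieces assemble into a Tate-acyclic covering, is precisely the subtlety that the earlier attempts mentioned in the introduction overlooked. I expect this verification, rather than the formal categorical bookkeeping or the limiting argument in the Stein case, to carry the weight of the proof.
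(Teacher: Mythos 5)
Your overall architecture coincides with the paper's: both directions of Theorem \ref{thm:isocatfull} are proved by checking that the two functors are well defined and then that they are mutually inverse, and the Stein case is bootstrapped from the Liu case via inverse limits along a \(W\)-exhaustion, exactly as in Theorems \ref{thm:isocatliu} and \ref{thm:isocatstein}. The difficulty is that the steps you yourself single out as carrying the weight of the proof are left as announcements rather than arguments, and they are precisely the non-formal content. Concretely: (a) the existence of a finite covering of a \(k\)-Liu space \(X\) by \emph{affinoid} domains that are rational in the sense of Definition \ref{def:ratdomstein}, i.e. cut out by \emph{global} sections \(f_\bullet,g\in\mathcal{O}_X(X)\) with no common zero, is a genuine theorem and not a choice one can simply make; the paper imports it as \cite[Proposition 3.12]{MaculanPoineau}. (b) Surjectivity of \(\eta_X\colon X\to\spb(\mathcal{O}_X(X))\) does not follow from compactness and holomorphic convexity alone: one must show that every bounded multiplicative seminorm on \(\mathcal{O}_X(X)\) localises to one of the affinoid pieces, which again rests on (a) and is \cite[Corollary 3.17]{MaculanPoineau} in the paper. (c) Condition (iii) of Definition \ref{def:liualg} --- rational injectivity of \(\mathcal{O}_X(X)\to\prod_i\mathcal{O}_X(X_i)\) --- is not addressed in your proposal; in the paper it is deduced from the sheaf property of \(\mathcal{O}_X\) after identifying rational localisations of \(\mathcal{O}_X(X)\) with sections over rational domains (Proposition \ref{prop:ratunivprop}).

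A second, smaller omission: for functoriality you need that a morphism of Liu (resp. Stein) spaces induces a \emph{bounded} morphism of the global section algebras, since boundedness is part of the definition of the target category. This is not a formal check; the paper invokes \cite[Corollary 3.7]{Xia} (see also Corollary \ref{cor:liubounded}(i)), and in the Stein case one additionally needs that each \(\A\to\B_j\) factorises through some \(\A_i\). Your treatment of \(\varepsilon_\A\) is fine: with the structure sheaf defined by \(U\mapsto\A_U\) on the weak rational \(G\)-topology, Theorem \ref{thm:acyclic} gives the sheaf property and \(\varepsilon_\A\) is essentially the identity once Lemma \ref{lem:slightlyfiner} identifies this presheaf with the \(k\)-analytic structure sheaf. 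So the proposal is a correct skeleton of the paper's argument, but as written it postpones exactly the steps that distinguish a proof from a plan; to complete it you should either supply arguments for (a)--(c) or cite the Maculan--Poineau and Xia results on which the paper relies.
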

\begin{remark}\label{rem:banfrechstr}
	In the theorem above, it is implicit that for a compact (resp. countable at infinity) \(k\)-analytic space \(X\), \(\mathcal{O}_X(X)\) can be endowed with a canonical \(k\)-Banach structure (resp. \(k\)-Fr\'echet structure). When \(X\) is compact, this is done by taking a finite affinoid covering \(X=\cup_iX_i\) and letting \(\norm{f}_X=\max_i\norm{f}_{X_i}\) for every \(f\in\mathcal{O}_X(X)\), where each \(\norm{\cdot}_{X_i}\) is a norm on \(\mathcal{O}_X(X_i)\). It is easily verified that different choices of \(\{X_i,\norm{\cdot}_{X_i}\}_i\) yield equivalent norms on \(\mathcal{O}_X(X)\). When \(X\) is countable at infinity, \(\mathcal{O}_X(X)\) can be endowed with a canonical \(k\)-Fr\'echet structure by taking an exhaustion \(\{D_i\}_{i\in\bZ_{\geq0}}\) of \(X\) by compact analytic domains, each of which induces a seminorm \(\norm{\cdot}_{D_i}\) on \(\mathcal{O}_X(X)\) via pullback. It is easily verified that different choices for \(\{D_i\}_{i\in\bZ_{\geq0}}\) yield equivalent families of seminorms \(\{\norm{\cdot}_{D_i}\}_{i\in\bZ_{\geq0}}\).
\end{remark}
As a direct corollary of Theorem \ref{thm:isocatfull}, we get the following non-Archimedean analytic version of Serre's criterion for affineness.
\begin{corollary}
	Let \(X\) be a separated \(k\)-analytic space. Then
	\begin{enumerate}[label=(\roman*)]
		\item \(X\) is compact (resp. countable at infinity), and
		\item for every analytic extension \(\ell/k\), \(X_\ell\) is cohomologically Stein
	\end{enumerate}
	if and only if \(\mathcal{O}_X(X)\) is a \(k\)-Liu algebra (resp. \(k\)-Stein algebra) and \(X\cong\spb(\mathcal{O}_X(X))\).
\end{corollary}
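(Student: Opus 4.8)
The plan is to read the corollary as a purely cohomological reformulation of Theorem~\ref{thm:isocatfull}: the real task is to verify that conditions (i) and (ii) together are equivalent to \(X\) being a \(k\)-Liu space (in the compact case) resp.\ a \(k\)-Stein space (in the countable-at-infinity case), after which the corollary follows by invoking the anti-equivalence of categories. Throughout I would use Remark~\ref{rem:banfrechstr} to equip \(\mathcal{O}_X(X)\) with its canonical \(k\)-Banach resp.\ \(k\)-Fr\'echet structure, so that the phrase ``\(\mathcal{O}_X(X)\) is a \(k\)-Liu (resp.\ \(k\)-Stein) algebra'' is unambiguous.

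The Stein case is immediate from the definitions. Condition (ii) is verbatim condition (i) of Definition~\ref{def:stein}, so a separated \(X\) that is countable at infinity and satisfies (ii) is by definition a \(k\)-Stein space, and conversely. Hence, for separated \(X\), conditions (i) and (ii) hold if and only if \(X\) is a \(k\)-Stein space.

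The compact case requires more care, since the Liu-space definition only demands that \(\mathcal{O}_X\) be universally acyclic, whereas condition (ii) demands that \emph{every} coherent sheaf be acyclic after every base change. The bridge is the Maculan--Poineau equivalence cited in Definition~\ref{def:stein}, together with the fact, recorded after the first definition, that a separated compact space is automatically holomorphically convex (and countable at infinity). If separated \(X\) satisfies (i) and (ii), then \(X\) is separated, countable at infinity and satisfies condition (i) of Definition~\ref{def:stein}; the equivalence yields condition (ii) of Definition~\ref{def:stein}, i.e.\ \(X\) is holomorphically separable with \(\mathcal{O}_X\) universally acyclic, which together with separatedness and compactness is exactly the definition of a \(k\)-Liu space. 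Conversely, a \(k\)-Liu space is separated, compact, holomorphically separable with \(\mathcal{O}_X\) universally acyclic; being compact it is holomorphically convex and countable at infinity, hence satisfies condition (ii) of Definition~\ref{def:stein}, and the same equivalence returns condition (i) of Definition~\ref{def:stein}, which is precisely condition (ii) of the corollary. Thus conditions (i) and (ii) hold if and only if \(X\) is a \(k\)-Liu space.

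Finally I would invoke Theorem~\ref{thm:isocatfull}. Having identified conditions (i) and (ii) with the property of being a Liu (resp.\ Stein) space, the forward implication follows because the global-sections functor sends a Liu (resp.\ Stein) space \(X\) to a Liu (resp.\ Stein) algebra \(\mathcal{O}_X(X)\) with \(X\cong\spb(\mathcal{O}_X(X))\), while the reverse implication follows because the Berkovich-spectrum functor sends a Liu (resp.\ Stein) algebra to a Liu (resp.\ Stein) space, so that \(\mathcal{O}_X(X)\) being such an algebra together with \(X\cong\spb(\mathcal{O}_X(X))\) forces \(X\) to be a Liu (resp.\ Stein) space. The step I expect to require the most care, and which I would spell out in detail, is the use of the Maculan--Poineau equivalence in the compact case to reconcile the a priori weaker acyclicity of \(\mathcal{O}_X\) in the Liu-space definition with the full cohomological Steinness over all extensions demanded by condition (ii).
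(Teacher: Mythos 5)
Your proposal is correct and follows exactly the route the paper intends: the paper states this as a direct corollary of Theorem \ref{thm:isocatfull} without further argument, and the content you supply --- identifying conditions (i) and (ii) with being a (strictly) \(k\)-Liu space via the Maculan--Poineau equivalence of Definition \ref{def:stein} (using that a separated compact space is holomorphically convex and countable at infinity), and with being a \(k\)-Stein space verbatim, then applying the anti-equivalence in both directions --- is precisely the spelling-out of that deduction. No gaps.
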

Moreover, we prove the following criterion to distinguish (strictly) \(k\)-affinoid algebras within the category of (strictly) \(k\)-Liu algebras. 
\begin{theorem}[cf. Theorem \ref{thm:affcrit}]
	Let \(k\) be stable and \(X\) a \(k\)-Liu space. Denote \(\A=\mathcal{O}_X(X)\).
	\begin{enumerate}[label=(\roman*)]
		\item \(X\) is \(k\)-affinoid if and only if \(\widetilde{\A}_\gr\) is a finitely generated \(\widetilde{k}_\gr\)-algebra and there exist elements \(f_1,\ldots,f_n\in\A\), with spectral radii \(r_1,\ldots,r_n\), such that the Laurent domains \(X(r_1^{-1}f_1^{-1}),\ldots,X(r_n^{-1}f_n^{-1})\) form an affinoid covering of \(X\).
		\item If \(k\) is non-trivially valued and \(X\) is strictly \(k\)-Liu, then \(X\) is strictly \(k\)-affinoid if and only if \(\widetilde{\A}\) is a finitely generated \(\widetilde{k}\)-algebra and there exist elements \(f_1,\ldots,f_n\), each with spectral radius \(1\), such that the strictly Laurent domains \(X(f_1^{-1}),\ldots,X(f_n^{-1})\) form an affinoid covering of \(X\).
	\end{enumerate}
\end{theorem}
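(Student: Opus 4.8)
The plan is to treat the two implications asymmetrically: the forward direction is essentially formal, and the converse carries all the weight. First I would dispose of the forward implication. If \(X\) is \(k\)-affinoid then \(\A\) is a \(k\)-affinoid algebra, so \(\widetilde{\A}_\gr\) is a finitely generated \(\widetilde{k}_\gr\)-algebra by the graded reduction theory of Temkin (and, in the strict case, \(\widetilde{\A}\) is a finitely generated \(\widetilde{k}\)-algebra, using that \(k\) is stable). The covering condition is then satisfied trivially: taking \(n=1\), \(f_1=1\) and \(r_1=1\), the Laurent domain \(X(r_1^{-1}f_1^{-1})=\{x\in X:|f_1(x)|\geq r_1\}\) is all of \(X\), which is affinoid by assumption. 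Thus only the converse requires genuine work.

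For the converse I would first set up the reduction-theoretic dictionary. Since \(r_i=\rho_\A(f_i)=\sup_{x\in X}|f_i(x)|\), each \(V_i\vcentcolon=X(r_i^{-1}f_i^{-1})\) is exactly the locus \(\{x\in X:|f_i(x)|=r_i\}\) where \(|f_i|\) attains its maximum, which is the preimage under the graded reduction map of the principal open \(D(\widetilde{f_i})\subseteq\mathrm{Spec}(\widetilde{\A}_\gr)\), where \(\widetilde{f_i}\) is the nonzero homogeneous element of degree \(r_i\) induced by \(f_i\). Hence \(X=\bigcup_iV_i\) translates into \(\mathrm{Spec}(\widetilde{\A}_\gr)=\bigcup_iD(\widetilde{f_i})\); the \(\widetilde{f_i}\) then have no common zero and generate the unit ideal, and lifting a relation \(\sum_i\widetilde{h_i}\widetilde{f_i}=1\) together with completeness of \(\A\) shows that \(f_1,\ldots,f_n\) generate the unit ideal of \(\A\). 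Because \(\widetilde{\A}_\gr\) is of finite type over \(\widetilde{k}_\gr\), I would next choose homogeneous generators \(\widetilde{a_1},\ldots,\widetilde{a_m}\) of degrees \(\rho_1,\ldots,\rho_m\), lift them to \(a_1,\ldots,a_m\in\A\) with \(\rho_\A(a_j)=\rho_j\), form the generalised Tate algebra \(\mathcal{T}=k\{\rho_1^{-1}T_1,\ldots,\rho_m^{-1}T_m\}\), and consider the bounded homomorphism \(\phi:\mathcal{T}\to\A\) sending \(T_j\mapsto a_j\). By construction the induced map on graded reductions \(\widetilde{\mathcal{T}}_\gr=\widetilde{k}_\gr[\widetilde{T}_1,\ldots,\widetilde{T}_m]\to\widetilde{\A}_\gr\) is surjective, and the goal is to upgrade \(\phi\) to an admissible epimorphism, which would exhibit \(\A\) as a quotient of an affinoid algebra and hence as affinoid.

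The hard part will be precisely this upgrade from surjectivity on reductions to admissibility of \(\phi\): a priori the canonical Banach norm on \(\A=\mathcal{O}_X(X)\) from Remark \ref{rem:banfrechstr} need not be the spectral norm, so the naive successive-approximation argument against \(\widetilde{\A}_\gr\) does not converge globally. This is where the affinoid covering \(\{V_i\}\) and the stability of \(k\) enter. Locally, the graded reduction of \(\mathcal{O}(V_i)\) is the graded localisation \((\widetilde{\A}_\gr)_{\widetilde{f_i}}\), which is generated by the images of the \(\widetilde{a_j}\) together with \(\widetilde{f_i}^{-1}\); since \(\mathcal{O}(V_i)\) is affinoid, classical affinoid reduction theory over the stable field \(k\) shows that the corresponding map from a suitable affinoid localisation \(\mathcal{T}_i\) of \(\mathcal{T}\) to \(\mathcal{O}(V_i)\) is an admissible epimorphism. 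I would then glue these local statements using Tate acyclicity for the Liu space \(X\), which gives the exactness of the \v{C}ech complex \(0\to\A\to\prod_i\mathcal{O}(V_i)\to\prod_{i<j}\mathcal{O}(V_i\cap V_j)\) with affinoid terms (the intersections \(V_i\cap V_j\) being Laurent domains of the \(V_i\)); comparing this with the analogous complex of localisations of \(\mathcal{T}\) and feeding in the local admissibility promotes \(\phi\) to a global admissible epimorphism, so that \(\A\) is \(k\)-affinoid and \(X\cong\spb(\A)\) is affinoid by Theorem \ref{thm:isocatfull}. The strict case (ii) follows the same outline, with \(\widetilde{\A}_\gr\), \(\widetilde{k}_\gr\) and the generalised Tate algebra replaced by the ungraded reduction \(\widetilde{\A}\), \(\widetilde{k}\) and the ordinary Tate algebra, the hypothesis \(\rho_\A(f_i)=1\) guaranteeing that each \(\widetilde{f_i}\in\widetilde{\A}\) is a well-defined nonzero element and that \(X(f_i^{-1})=\{|f_i|=1\}\) is the reduction preimage of \(D(\widetilde{f_i})\).
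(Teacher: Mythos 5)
Your overall strategy for the converse --- lift generators of the (graded) reduction to a generalised Tate algebra $\mathcal{T}$, use the affinoid Laurent covering $\{V_i\}$ to obtain surjectivity locally, and glue --- has the same shape as the paper's proof, and your forward implication is identical to the paper's. But two essential steps are missing, and the first of them makes your local claim false as stated.

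The first gap is nilpotents. You assert that surjectivity of $\widetilde{\mathcal{T}}_\gr\to\widetilde{\A}_\gr$ (localised at $\widetilde{f_i}$) forces $\mathcal{T}_i\to\mathcal{O}(V_i)$ to be an admissible epimorphism. The reduction is taken with respect to the spectral seminorm, which kills nilpotents, so this implication fails in general: $k\to k[\epsilon]/(\epsilon^2)$ induces an isomorphism on (graded) reductions without being surjective. Any reduction-theoretic surjectivity criterion (such as \cite[Proposition 6.4.2/1]{BGR}, which the paper uses) requires the target to be reduced. The paper therefore first reduces the converse to the case where $\A$ is reduced via Lemma \ref{lem:reduced}, which lifts an admissible presentation of $\A/\rad(\A)$ to one of $\A$ using Noetherianity and Corollary \ref{cor:reltateuniprop}; you need this step or a substitute for it.

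The second gap is the gluing. Exactness of the two \v{C}ech-type complexes plus surjectivity of $\prod_i\mathcal{T}_i\to\prod_i\mathcal{O}(V_i)$ does not yield surjectivity of $\mathcal{T}\to\A$ by a diagram chase: the subdomains $\mathrm{Spb}(\mathcal{T}_i)$ do not cover $\mathrm{Spb}(\mathcal{T})$ (they only cover the image of $X$), so your ``analogous complex of localisations of $\mathcal{T}$'' is not the \v{C}ech complex of a covering and its $H^0$ is not $\mathcal{T}$. The missing ingredient is cohomological: one regards $\phi_*\mathcal{O}_X$ as a coherent sheaf on $\mathrm{Spb}(\mathcal{T})$, notes that the cokernel of $\mathcal{O}_{\mathrm{Spb}(\mathcal{T})}\to\phi_*\mathcal{O}_X$ vanishes (on each $\mathrm{Spb}(\mathcal{T}_i)$ by local surjectivity, and elsewhere because the support of $\phi_*\mathcal{O}_X$ is contained in the union of the $\mathrm{Spb}(\mathcal{T}_i)$), and then uses vanishing of $H^1$ of the coherent kernel on the cohomologically Stein affinoid $\mathrm{Spb}(\mathcal{T})$ to pass to global sections; this is precisely the paper's closing step. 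Two smaller deviations are worth noting: the paper adjoins extra variables $Y_j\mapsto f_j$ so that the relevant subdomains are honest Laurent localisations $\mathcal{T}\langle Y_j^{-1}\rangle$ (in your setup $f_i$ need not lie in the image of $\mathcal{T}$, so you would have to replace it by a lift of a polynomial expression of $\widetilde{f_i}$ in the $\widetilde{a_j}$), and it reduces statement (i) to statement (ii) at the outset by base change to a $k_r$ with divisible value group, so that only the ungraded reduction theory of \cite{BGR} is ever invoked.
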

\begin{remark}\label{rem:conjtemkin}
	The criterion above is a weakening of \cite[Conjecture 4.1.4]{TemkinPinch}, where Temkin did not require the existence of affinoid coverings by Laurent domains. However, a counterexample to Temkin's stronger conjecture can be found in \cite{LiuParis}, where Liu constructs a non-affinoid strictly \(k\)-Liu space \(X\) for which the reduction of \(\mathcal{O}_X(X)\) equals \(\widetilde{k}\).
\end{remark}

Lastly, after having slightly generalised the definition of Runge immersions, we will prove the following generalisation of the Gerritzen-Grauert Theorem.
\begin{theorem}[cf. Theorem \ref{thm:ggstein}]
	Let \(\varphi:Y\to X\) be a monomorphism of (strictly) \(k\)-Stein spaces with \(Y\) (strictly) \(k\)-affinoid. Then there exists an admissible covering \(\{R_i\}_{i\in I}\) of \(X\) by (strictly) rational domains such that \(\varphi\) induces (strictly) Runge immersions \(\varphi_i:\varphi^{-1}(R_i)\to R_i\).
\end{theorem}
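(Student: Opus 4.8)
The plan is to bootstrap the classical affinoid Gerritzen--Grauert theorem up to the Stein setting, using the \(W\)-exhaustion furnished by Definition \ref{def:stein}(iii) together with the density of global sections built into it. Since \(Y\) is \(k\)-affinoid it is compact, so \(\varphi(Y)\) is a compact subset of \(X\). Fixing a \(W\)-exhaustion \(\{D_i\}_{i\in\bZ_{\geq0}}\) of \(X\) by (strictly) Liu domains, compactness forces \(\varphi(Y)\subseteq D_n\) for some \(n\); as \(\varphi\) is a monomorphism this gives \(\varphi^{-1}(D_n)=Y\), so \(\varphi\) factors as \(Y\to D_n\hookrightarrow X\). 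It therefore suffices to cover \(D_n\) by rational domains over which \(Y\to D_n\) restricts to Runge immersions, and afterwards to extend this to a covering of all of \(X\).

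Next I would establish the statement relatively over the Liu space \(D_n\). Being (strictly) Liu, hence \lq locally affinoid\rq, \(D_n\) admits a finite covering by affinoid rational domains \(U_1,\ldots,U_m\). For each \(j\) the preimage \(\varphi^{-1}(U_j)\) is an affinoid domain inside the affinoid \(Y\), so \(\varphi^{-1}(U_j)\to U_j\) is a monomorphism of \(k\)-affinoid spaces with affinoid source. The classical Gerritzen--Grauert theorem --- whose proof rests on the graded reduction theory recalled in Definition \ref{def:reduction} --- then covers each \(U_j\) by affinoid rational domains over which \(\varphi\) restricts to Runge immersions. Patching these across the \(U_j\), using Tate acyclicity for the Liu structure sheaf, yields a finite covering of \(D_n\) by rational domains relative to \(D_n\) realising \(Y\to D_n\) as Runge immersions fibrewise.

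The crux is the globalisation. The domains produced so far are cut out by functions in \(\mathcal{O}_X(D_n)\), whereas Definition \ref{def:ratdomstein} demands rational domains of \(X\) defined by \emph{global} functions in \(\mathcal{O}_X(X)\) without common zeroes. Here I would exploit the defining density of the \(W\)-exhaustion: writing \(\mathcal{O}_X(X)=\varprojlim_i\mathcal{O}_X(D_i)\) with dense transition maps, the image of \(\mathcal{O}_X(X)\) in \(\mathcal{O}_X(D_n)\) is dense, so every local defining function can be approximated, uniformly on \(\varphi(Y)\) and a compact neighbourhood thereof, by a global section. One must then verify that a sufficiently fine approximation still produces rational domains covering \(\varphi(Y)\) and satisfying the no-common-zero condition, and --- the genuinely delicate point --- that the Runge immersion property survives the perturbation. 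For the latter I would use the characterisation of a Runge immersion as a closed immersion into a Weierstrass domain, equivalently as a monomorphism with dense image of global sections: density is stable under small perturbations, while the finitely many equations cutting out the closed immersion can be transported along the approximation and, if needed, corrected by the same argument.

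Finally I would cover the complement \(X\setminus\varphi(Y)\). As \(X\) is holomorphically separable and holomorphically convex and \(\varphi(Y)\) is compact, each point \(x\notin\varphi(Y)\) can be separated from \(\varphi(Y)\) by a global function, yielding a (strictly) rational domain \(R\ni x\) with \(R\cap\varphi(Y)=\emptyset\); over such \(R\) the map \(\varphi^{-1}(R)=\emptyset\to R\) is vacuously a Runge immersion. Selecting such domains along the exhaustion gives a locally finite, hence admissible, family which together with the domains from the previous step covers \(X\). I expect the third step to be the main obstacle: transferring the fibrewise Runge structure from functions on \(D_n\) to genuinely global functions on the non-compact \(X\), without destroying either the covering or the immersion property.
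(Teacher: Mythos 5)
Your strategy is the same as the paper's: use compactness of \(\varphi(Y)\) to descend to a Liu domain \(D_n\) of the \(W\)-exhaustion, apply Temkin's affinoid Gerritzen--Grauert theorem on an affinoid rational cover of \(D_n\), promote the resulting local rational domains to rational domains of \(X\) via the density of \(\mathcal{O}_X(X)\to\mathcal{O}_X(D_n)\), and treat \(X\setminus\varphi(Y)\) separately. However, your last step fails as written. Holomorphic separability separates pairs of points, and holomorphic convexity only makes \(\widehat{\varphi(Y)}_X\) compact; it does not make it equal to \(\varphi(Y)\). A point \(x\in\widehat{\varphi(Y)}_X\setminus\varphi(Y)\) admits no global \(f\) with \(|f(x)|>\norm{f}_{\varphi(Y)}\): concretely, for \(X=\bA^{1,\mathrm{an}}_k\), \(Y=\{|T|=1\}\) and \(x\) the origin, every entire \(f=\sum a_nT^n\) satisfies \(|f(0)|=|a_0|\leq\max_n|a_n|=\norm{f}_Y\). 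So you cannot in general manufacture a rational domain containing \(x\) and disjoint from \(\varphi(Y)\) by a separating function. The paper instead takes, around each \(x\notin\varphi(Y)\), a rational domain of some large \(D_\ell\) avoiding \(\varphi(Y)\) and globalises it by the same density mechanism as in the main step; since the resulting global domain \(R\) satisfies \(R\cap D_\ell\subseteq D_\ell\setminus\varphi(Y)\) while \(\varphi(Y)\subseteq D_\ell\), one still gets \(\varphi^{-1}(R)=\varnothing\).

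Concerning the ``genuinely delicate point'' you flag but do not resolve: the paper's resolution is not a perturbation-stability argument. By rigidity of rational domains under small perturbation of their defining functions (this is Proposition \ref{prop:ratindense}, via the argument at the start of Proposition \ref{prop:basechangeliufull}), the local domain \(R'\subseteq D_n\) is \emph{literally unchanged} once its defining functions are replaced by sufficiently close global sections, so nothing about the Runge property has to ``survive'' a perturbation. The actual issue is that the globally cut-out domain \(R=X(r_\bullet^{-1}f'_\bullet/g')\) is larger than \(R'\): it may acquire entirely new connected components outside \(D_n\), and one must show that \(\varphi^{-1}(R)\to R\), not merely \(\varphi^{-1}(R')\to R'\), is a Runge immersion. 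This works because \(R'\) is a finite union of connected components of \(R\), hence a Weierstrass domain of \(R\) (cut out by the corresponding idempotent), and \(\varphi^{-1}(R)=\varphi^{-1}(R')\) since \(\varphi(Y)\) lies in \(D_n\); the density criterion of Proposition \ref{prop:equivrunge} then closes the argument. Your density-based characterisation of Runge immersions is the right tool, but you should make this connected-component phenomenon explicit instead of treating the global domain as a small deformation of the local one. (A minor point: the patching over the affinoid pieces of \(D_n\) rests on transitivity of rational domains, Corollary \ref{cor:transcomp}, not on Tate acyclicity.)
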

It follows that every (strictly) Liu domain in a (strictly) \(k\)-Stein space \(X\) can be written as a finite union of (strictly) \(k\)-affinoid connected components of (strictly) rational domains (cf. Corollary \ref{cor:affinoidrational}).

\subsection*{Structure of the paper.}\label{subsStruct} This paper consists of two parts. The first part, ranging from Section \ref{secRat} to Section \ref{secAffcrit}, is concerned solely with \(k\)-Liu algebras and \(k\)-Liu spaces. We start by recalling the notions of rational localisations and rational domains (Section \ref{secRat}), after which we introduce \(k\)-Liu algebras (Section \ref{secLiuAlg}). Next, we define Liu domains for Berkovich spectra of \(k\)-Liu algebras, and demonstrate that rational domains are Liu domains (Section \ref{secLiuDom}). We then prove a version of Tate's Acyclicity Theorem for \(k\)-Liu algebras (Section \ref{secTate}) and derive the correspondence between \(k\)-Liu algebras and \(k\)-Liu spaces (Section \ref{secLiuSpaces}). We conclude with the criterion for distinguishing \(k\)-affinoid algebras within the category of \(k\)-Liu algebras (Section \ref{secAffcrit}).

In the second part, ranging from Section \ref{secFrech} to Section \ref{secGG}, we consider general \(k\)-Stein algebras and \(k\)-Stein spaces. First, we recall the definition of a \(k\)-Fr\'echet algebra and its Berkovich spectrum (Section \ref{secFrech}), after which we define \(k\)-Stein algebras and prove their correspondence with \(k\)-Stein spaces (Section \ref{secStein}). Lastly, we slightly extend the notion of Runge immersions (Section \ref{secRunge}), and use this to prove a generalisation of the Gerritzen-Grauert Theorem to \(k\)-Stein spaces (Section \ref{secGG}).

\subsection*{Conventions.}\label{subsNot} Throughout this paper, we let \(k\) be a complete non-Archimedean field (possibly trivially valued) and consider non-Archimedean analytic spaces in the sense of Berkovich \cite[§1.2]{BerkovichIHES}. Whenever the word \lq strict\rq\ is used, \(k\) is assumed to be non-trivially valued. An analytic extension of \(k\) is a field extension \(\ell/k\) where \(\ell\) is also a complete non-Archimedean field of which the absolute value restricts to that of \(k\). For a polyradius \(r=(r_1,\ldots,r_n)\in\bR_{>0}^n\), we denote by \(k_r\) the completed fraction field of \(k(T_1,\ldots,T_n)\) with the generalised Gauss norm given by \(\norm{T_i}=r_i\) for each \(i\in\{1,\ldots,n\}\). An analytic extension \(\ell/k\) is called topgebraic if it can be embedded into \(\widehat{k^a}/k\), with \(k^a\) an algebraic closure of \(k\).

\subsection*{Acknowledgements.}\label{subsAck} The author was supported by the grant 11P4V24N from FWO. I am thankful to Johannes Nicaise for introducing me to a project on non-Archimedean Stein spaces, of which this paper became a side project. I also want to thank Mingchen Xia and Michael Temkin for kindly answering my emails when I stumbled upon this project. Lastly, many thanks go to J\'er\^ome Poineau for showing interest in this work, providing feedback to a first draft and pointing out some essential subtleties that were overlooked in Mingchen's first attempt at an algebraic characterisation of Liu spaces.

\section{Rational domains and rational localisations}\label{secRat}
In this section, we recall basic notions concerning Berkovich spectra of \(k\)-Banach algebras. In particular, we focus on rational domains and their connection with rational localisations of \(k\)-Banach algebras.
\begin{definition}\label{def:spb}
	Let \(\A\) be a \(k\)-Banach algebra.
	\begin{enumerate}[label=(\roman*)]
		\item The \emph{Berkovich spectrum of \(\A\)} is the set
		\[\spb(\A)\vcentcolon=\left\{\text{bounded multiplicative seminorms }|\cdot(x)|:\A\to\bR_{\geq0}\right\}\]
		equipped with the weakest topology such that \(\spb(\A)\to\bR_{\geq0}:|\cdot(x)|\mapsto|f(x)|\) is continuous for every \(f\in\A\).
		\item The \emph{spectral radius function of \(\A\)} is given by
		\[\rho_\A:\A\to\bR_{\geq0}:f\mapsto\inf_{n\in\bZ_{\geq0}}\sqrt[n]{\norm{f^n}}=\max_{x\in\spb(\A)}|f(x)|.\]
		\item \(\A\) is called \emph{uniform} if its norm \(\norm{\cdot}_\A\) is power-multiplicative.
		\item \(\A\) is called a \emph{\(k\)-Banach function algebra} if its spectral radius function \(\rho_\A\) is a complete power-multiplicative norm.
	\end{enumerate}
\end{definition}
\begin{remark}
	Recall that taking Berkovich spectra is contravariantly functorial, i.e. a bounded morphism \(\varphi:\A\to\B\) of \(k\)-Banach algebras induces a continuous map \(\spb(\varphi):\spb(\B)\to\spb(\A)\) via pullback. For the equality in the definition of \(\rho_\A\), see \cite[Theorem 1.3.1]{Berkovich}. When \(k\) is non-trivially valued, it follows from Banach's Open Mapping Theorem \cite[Th\'eor\`eme I.3.3/1]{BourbakiVec} that \(\A\) is a \(k\)-Banach function algebra if and only if \(\norm{\cdot}_\A\) and \(\rho_\A\) are equivalent as seminorms. Note that a uniform \(k\)-Banach algebra is always a \(k\)-Banach function algebra.
\end{remark}
\begin{definition}\label{def:rationaldomain}
	Let \(\A\) be a \(k\)-Banach algebra and denote \(X=\spb(\A)\).
	\begin{enumerate}[label=(\roman*)]
		\item A \emph{rational domain of \(X\)} is a subset of the form \[X\left(r_\bullet^{-1}\frac{f_\bullet}{g}\right)\vcentcolon=\left\{x\in X\,\,\middle|\,\,|f_i(x)|\leq r_i|g(x)|\text{ for all }i\right\},\]
		where \(r_\bullet=(r_1,\ldots,r_n)\in\bR_{>0}^n\), \(g\in\A\) and \(f_\bullet=(f_1,\ldots,f_n)\in\A^n\) such that \(g,f_1,\ldots,f_n\) generate the unit ideal in \(\A\).
		\item A \emph{Laurent domain of \(X\)} is a subset of the form
		\[X\left(r_\bullet^{-1}f_\bullet,s_\bullet g_\bullet^{-1}\right)\vcentcolon=\left\{x\in X\,\,\middle|\,\,|f_i(x)|\leq r_i\text{ and }|g_j(x)|\geq s_j\text{ for all }i,j\right\},\]
		where \(r_\bullet=(r_1,\ldots,r_n)\in\bR_{>0}^n\), \(s_\bullet=(s_1,\ldots,s_m)\in\bR_{>0}^m\), \(f_\bullet=(f_1,\ldots,f_n)\in\A^n\) and \(g_\bullet=(g_1,\ldots,g_m)\in\A^m\).
		\item A \emph{Weierstrass domain of \(X\)} is a Laurent domain for which we can set \(m=0\) in the notation above.
	\end{enumerate}
	When \(k\) is non-trivially valued, we define \emph{strictly rational} (resp. \emph{Laurent}, \emph{Weierstrass}) domains as rational (resp. Laurent, Weierstrass) domains for which we can take each \(r_i=1\) (and \(s_j=1\)).
\end{definition}
\begin{definition}\label{def:ratlocalisation}
	Let \(\A\) be a \(k\)-Banach algebra. A \emph{rational localisation} of \(\A\) is a \(k\)-Banach algebra of the form
	\[\A\left\langle r_\bullet^{-1}\frac{f_\bullet}{g}\right\rangle\vcentcolon=\frac{\A\langle r_1^{-1}T_1,\ldots,r_n^{-1}T_n\rangle}{\Cl(gT_1-f_1,\ldots,gT_n-f_n)}\]
	equipped with its residue norm, where \(r_\bullet=(r_1,\ldots,r_n)\in\bR_{>0}^n\), \(g\in\A\) and \(f_\bullet=(f_1,\ldots,f_n)\in\A^n\) such that \(g,f_1,\ldots,f_n\) generate the unit ideal in \(\A\). Here, \(\Cl(gT_1-f_1,\ldots,gT_n-f_n)\) denotes the closure of the ideal \((gT_1-f_1,\ldots,gT_n-f_n)\) in \(\A\). When \(k\) is non-trivially valued, \(\A\langle r_\bullet^{-1}f_\bullet/g\rangle\) is called a \emph{strictly rational localisation} if we can take each \(r_i=1\). Analogously, one defines \emph{(strictly) Laurent and Weierstrass localisations}.
\end{definition}
The following proposition establishes the expected relation between rational domains and rational localisations.
\begin{proposition}(\cite[Proposition 2.12]{Mihara})\label{prop:ratdesc}
	Let \(\A\) be a \(k\)-Banach algebra and denote \(X=\spb(\A)\). For a rational localisation \(\A\langle r_\bullet^{-1}f_\bullet/g\rangle\) of \(\A\), the canonical morphism \(\A\to\A\langle r_\bullet^{-1}f_\bullet/g\rangle\) induces a homeomorphism of \(\spb(\A\langle r_\bullet^{-1}f_\bullet/g\rangle)\) onto \(X(r_\bullet^{-1}f_\bullet/g)\).
\end{proposition}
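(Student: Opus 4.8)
The plan is to analyse the continuous map $\spb(\varphi)\colon\spb(\B)\to X$ induced by the canonical morphism $\varphi\colon\A\to\B$, where I abbreviate $\B\vcentcolon=\A\langle r_\bullet^{-1}f_\bullet/g\rangle$. I would show that $\spb(\varphi)$ is a continuous bijection onto $X(r_\bullet^{-1}f_\bullet/g)$ and then promote this to a homeomorphism using that $\spb(\B)$ is compact while $X$ is Hausdorff, so that any continuous bijection onto a subspace is automatically a homeomorphism onto its image.

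First I would verify that the image of $\spb(\varphi)$ is contained in the rational domain. Let $y\in\spb(\B)$ and put $x=\spb(\varphi)(y)$. Since $\B$ is the quotient of $\A\langle r_1^{-1}T_1,\ldots,r_n^{-1}T_n\rangle$ by $\Cl(gT_1-f_1,\ldots,gT_n-f_n)$, the image $t_i$ of $T_i$ satisfies $\norm{t_i}_\B\leq\norm{T_i}=r_i$, whence $|t_i(y)|\leq r_i$ by boundedness of $y$. The relations $gt_i=f_i$ in $\B$ then give $|f_i(y)|=|g(y)|\,|t_i(y)|\leq r_i|g(y)|$, and since $|f_i(x)|=|f_i(y)|$ and $|g(x)|=|g(y)|$ by definition of the pullback, we obtain $|f_i(x)|\leq r_i|g(x)|$ for all $i$; that is, $x\in X(r_\bullet^{-1}f_\bullet/g)$.

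The crucial observation for the reverse construction is that $g$ does not vanish on the rational domain: if $|g(x)|=0$ for some $x\in X(r_\bullet^{-1}f_\bullet/g)$, then $|f_i(x)|\leq r_i|g(x)|=0$ forces $|f_i(x)|=0$ for every $i$, so the character $\A\to\mathcal{H}(x)$ corresponding to $x$ would send each of $g,f_1,\ldots,f_n$ to $0$; but these generate the unit ideal, so $1$ would map to $0$ in the field $\mathcal{H}(x)$, a contradiction. Hence, for each $x\in X(r_\bullet^{-1}f_\bullet/g)$, the character $\A\to\mathcal{H}(x)$ attached to $x$ extends to $\A\langle r_1^{-1}T_1,\ldots,r_n^{-1}T_n\rangle\to\mathcal{H}(x)$ by sending $T_i\mapsto f_i(x)/g(x)$; the estimate $|f_i(x)/g(x)|\leq r_i$ shows this extension is bounded by the generalised Gauss norm, and since it kills each $gT_i-f_i$ it factors through $\B$, producing a point of $\spb(\B)$ over $x$. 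Because $\B$ is topologically generated over the image of $\A$ by the $t_i$, whose values at any preimage of $x$ are forced to be $f_i(x)/g(x)$ once $g(x)\neq 0$, this preimage is unique; this yields both injectivity and surjectivity of $\spb(\varphi)$ onto $X(r_\bullet^{-1}f_\bullet/g)$.

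I expect the main obstacle to be the non-vanishing of $g$ on the rational domain, which is precisely where the hypothesis that $g,f_1,\ldots,f_n$ generate the unit ideal is indispensable: without it the assignment $x\mapsto f_\bullet(x)/g(x)$ would not be defined and bijectivity would fail. The only other step demanding genuine care is the boundedness of the constructed character, verified against the generalised Gauss norm of $\A\langle r_1^{-1}T_1,\ldots,r_n^{-1}T_n\rangle$; once these are in place, compactness of $\spb(\B)$ closes the argument.
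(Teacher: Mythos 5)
Your proposal is correct and follows essentially the argument that the paper delegates to Mihara's Proposition 2.12: containment of the image in \(X(r_\bullet^{-1}f_\bullet/g)\) via the relations \(gt_i=f_i\) and the bound \(\norm{t_i}_\B\leq r_i\), non-vanishing of \(g\) on the domain from the unit-ideal hypothesis, construction of the unique character over each point, and compactness of \(\spb(\B)\) together with the Hausdorff property of \(X\) to upgrade the continuous bijection to a homeomorphism. The only step worth making explicit is that your character on \(\A\langle r_\bullet^{-1}T_\bullet\rangle\) factors through the quotient by the \emph{closure} \(\Cl(gT_1-f_1,\ldots,gT_n-f_n)\), not just the ideal itself; this is immediate because a bounded multiplicative seminorm vanishing on an ideal vanishes on its closure, which follows from the boundedness against the generalised Gauss norm that you already verify.
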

\begin{remark}
	To be precise, in \cite{Mihara}, it is assumed that \(k\) is non-trivially valued, and rational domains and rational localisations are defined slightly differently, i.e. there one assumes that \(f_1,\ldots,f_n\) generates the unit ideal instead of \(g,f_1,\ldots,f_n\). However, one easily verifies that the proof of the cited proposition above remains valid in our setting.
\end{remark}
\begin{remark}
	Recall that a bounded morphism \(\varphi:\A\to\B\) of seminormed groups is called \emph{admissible} when the restriction of the norm on \(\B\) to \(\varphi(\A)\) is equivalent to the residue seminorm on \(\varphi(\A)\cong\A/\ker(\varphi)\). Note that the composition of admissible morphisms is again admissible, and that the image of an admissible \(k\)-linear map between \(k\)-Banach algebras is always closed.
\end{remark}
The following proposition is a generalisation of \cite[Propositions 2.17 and 2.18]{Mihara}.
\begin{proposition}\label{prop:closedrationalidealsgeneral}
	Let \(\A\) be a \(k\)-Banach function algebra. For any \(g,f_1,\ldots,f_n\in\A\) generating the unit ideal and \(r_1,\ldots,r_n\in\bR_{>0}\), the ideal \(I=(gT_1-f_1,\ldots,gT_n-f_n)\) is closed in \(\A\langle r_1^{-1}T_1,\ldots,r_n^{-1}T_n\rangle\).
\end{proposition}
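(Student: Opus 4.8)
The plan is to reduce the statement to the separatedness of a residue seminorm, and then to build an explicit factorisation of the quotient map that exploits the hypothesis that $g,f_1,\ldots,f_n$ generate the unit ideal. Write $B$ for $\A\langle r_1^{-1}T_1,\ldots,r_n^{-1}T_n\rangle$; this is again a $k$-Banach function algebra, as its Gauss norm is power-multiplicative and coincides with $\rho_B$. Since $B$ is a Banach space and $I$ a $k$-subspace, $I$ is closed if and only if the residue seminorm on $B/I$ is a norm, i.e. $B/I$ is separated. The crucial use of the hypothesis is the following: choose $h_0,h_1,\ldots,h_n\in\A$ with $h_0g+\sum_{i=1}^n h_if_i=1$ and set $w=h_0+\sum_{i=1}^n h_iT_i\in B$. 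A direct computation gives $gw-1=\sum_{i=1}^n h_i(gT_i-f_i)\in I$, so the image of $g$ is invertible in $B/I$ with inverse the image of $w$, and consequently $T_i\equiv f_iw\pmod I$ for each $i$. In particular $g$ is bounded below on the rational domain and $\rho_B(w)\le M$, where $M=\max\{\rho_{\A}(h_0),\,r_1\rho_{\A}(h_1),\ldots,r_n\rho_{\A}(h_n)\}$.

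Having made $g$ invertible, I would realise $B/I$ through a Laurent localisation followed by a Weierstrass localisation. Let $L=\A\langle M^{-1}S\rangle/\Cl(gS-1)$ be the Laurent localisation inverting $g$, and let $L'=L\langle r_1^{-1}U_1,\ldots,r_n^{-1}U_n\rangle/\Cl(U_1-f_1S,\ldots,U_n-f_nS)$ be the Weierstrass localisation cutting out the locus where $|f_iS|\le r_i$; geometrically, $\spb(B/I)$ is exactly this Weierstrass subdomain of the Laurent domain $\{|g|\ge M^{-1}\}$. By the universal properties of these Tate-type algebras I would construct two bounded homomorphisms: $\beta\colon B\to L'$ sending $T_i$ to the image of $U_i$ (bounded since that image has norm at most $r_i$), and $\gamma\colon L'\to B/I$ sending the image of $S$ to the image of $w$ and the image of $U_i$ to the image of $T_i$ (bounded since $\norm{\overline{w}}\le\rho_B(w)\le M$ by power-multiplicativity of $B$, and $\norm{\overline{T_i}}\le r_i$). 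Both are well defined, as $\beta(gT_i-f_i)=f_i(gS-1)=0$ in $L'$, and the defining relations of $L$ and $L'$ are sent by $\gamma$ to $gw-1\equiv0$ and $T_i-f_iw\equiv0$ in $B/I$. Since $\beta$ and $\gamma$ agree with the quotient map $q\colon B\to B/I$ on the dense subalgebra generated by $\A$ and the $T_i$, continuity forces $\gamma\circ\beta=q$. Hence $\ker\beta\subseteq\ker q=I\subseteq\ker\beta$, so $\ker\beta=I$; as $L'$ is separated, $I=\ker\beta$ is closed, as desired.

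The argument thus reduces to two base cases, which I expect to be the main obstacle: the admissibility (closedness of the defining ideal) of the single-variable Laurent localisation $\A\langle M^{-1}S\rangle/(gS-1)$ and of the Weierstrass localisation $L\langle r_\bullet^{-1}U_\bullet\rangle/(U_i-f_iS)$. Over an affinoid base one would invoke Noetherianity and Weierstrass division, but for a general $k$-Banach function algebra these are unavailable, so closedness must be obtained from explicit bounded sections, relying crucially on the fact that $\rho_{\A}$ is a complete power-multiplicative norm. The Weierstrass step is the more delicate one: the spectral radius $\rho_L(f_iS)$ may exceed $r_i$, since the Weierstrass conditions genuinely shrink the Laurent domain down to the rational domain, so the naive substitution $U_i\mapsto f_iS$ does not yield a retraction onto $L$, and one must instead produce a norm-controlled normal form directly inside $L\langle r_\bullet^{-1}U_\bullet\rangle$. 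Establishing these two bounded sections in the Banach function algebra setting is precisely where the subtleties lie, and it is the point at which the generalisation of \cite[Propositions 2.17 and 2.18]{Mihara} requires genuine care.
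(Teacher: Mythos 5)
Your opening reductions are sound: $I$ is closed if and only if the residue seminorm on $B/I$ is separated, and the element $w=h_0+\sum_ih_iT_i$ with $gw-1\in I$ (hence $T_i\equiv f_iw\bmod I$) is a correct and genuinely useful observation. But the argument has two gaps, and the first makes it circular as written. Every appeal to continuity into $B/I$ is illegitimate: the residue seminorm on $B/I$ is Hausdorff precisely when $I$ is closed, which is the conclusion. Two bounded maps agreeing on a dense subalgebra of $B$ need only agree modulo the closure of $\{0\}$ in the target, so ``continuity forces $\gamma\circ\beta=q$'' only yields agreement modulo $\Cl(I)$, whence $\ker\beta\subseteq\Cl(I)$; combined with $I\subseteq\ker\beta$ this gives $\ker\beta=\Cl(I)$ and nothing more. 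The same problem occurs when descending $\gamma$ through $\Cl(gS-1)$ and $\Cl(U_\bullet-f_\bullet S)$: the generators map into $I$, but elements of the closures a priori only map into $\Cl(I)$. Second, the two ``base cases'' you defer are not more tractable than the original statement: the Laurent one is literally the case $n=1$, $f_1=1$ of the proposition, and the Weierstrass one is the analogous statement over $L$, which is a quotient algebra and is not known to be a $k$-Banach function algebra, so the reduction does not terminate. (The strategy can be repaired by working with the single ideal $J=(gS-1,U_1-f_1S,\ldots,U_n-f_nS)$ in $\A\langle M^{-1}S,r_\bullet^{-1}U_\bullet\rangle$ and using the substitution $S\mapsto w$, $U_i\mapsto T_i$ back into $B$, which retracts the lift $T_i\mapsto U_i$ of $\beta$ and carries $J$ into $I$ purely algebraically; but proving $J$ closed is then exactly the original problem again.)

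The paper avoids localisations entirely. After base-changing to $k_r$ to reduce to $k$ non-trivially valued with $r_1=\ldots=r_n=1$, and normalising $\norm{\cdot}_\A=\rho_\A$, it observes that the Gauss norm on $\B=\A\langle T_1,\ldots,T_n\rangle$ is the supremum over the Gauss points $\widetilde{x}$, and that compactness of $\spb(\A)$ together with the unit-ideal hypothesis gives a uniform bound $\max_j|(gT_j-f_j)(\widetilde{x})|\geq\varepsilon$. It then factors the map $(b_1,\ldots,b_n)\mapsto\sum_ib_i(gT_i-f_i)$ as $\varphi_2\circ\varphi_1$, where $\varphi_1:\B^n\to\B^{n^2}$, $(b_i)\mapsto(b_i(gT_j-f_j))_{i,j}$, is bounded below by $\varepsilon$ thanks to that estimate (hence admissible), and $\varphi_2$ is the bounded surjection summing the diagonal (admissible by the Open Mapping Theorem); the image $I$ of the admissible composite is then closed. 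Your lower bound $|(gS-1)(\widetilde{x})|\geq1$ is the same phenomenon; the point is to exploit it on the generating system directly rather than route through quotient algebras whose separatedness is the very thing in question.
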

\begin{proof}
	First note that for \(r=(r_1,\ldots,r_n)\), we get that \(I=\A\langle r_\bullet^{-1}T_\bullet\rangle\cap I(\A\langle r_\bullet^{-1}T_\bullet\rangle\widehat{\otimes}_kk_r)\) in \(\A\langle r_\bullet^{-1}T_\bullet\rangle\widehat{\otimes}_kk_r\). Since \(\A\langle r_\bullet^{-1}T_\bullet\rangle\) is closed in \(\A\langle r_\bullet^{-1}T_\bullet\rangle\widehat{\otimes}_kk_r\), it thus suffices to prove that \(I(\A\langle r_\bullet^{-1}T_\bullet\rangle\widehat{\otimes}_kk_r)\) is closed. Hence, we may assume that \(k\) is non-trivially valued and \(r_1=\ldots=r_n=1\). Moreover, as \(\A\) is a \(k\)-Banach function algebra, we may also assume that the norm on \(\A\) is given by the spectral radius function \(\rho_\A\). Let us now write \(\B=\A\langle T_1,\ldots,T_n\rangle\) and denote for every \(x\in\spb(\A)\) by \(\widetilde{x}\in\spb(\B)\) the seminorm defined by
	\[|G(\widetilde{x})|\vcentcolon=\sup_{I\in\bZ_{\geq0}^n}|G_I(x)|\quad\text{ for each }\quad G=\sum_{I\in\bZ_{\geq0}^n}G_IT^I\in\B.\]
	Note that \(\widetilde{x}\) is maximal among the preimage of \(x\) under \(\spb(\B)\to\spb(\A)\). As \(\spb(\A)\) is compact and \(g,f_1,\ldots,f_n\) generate the unit ideal in \(\A\), there is an \(\varepsilon\in\bR_{>0}\) such that
	\[\inf_{x\in\spb(\A)}\max\left\{|f_1(x)|,\ldots,|f_n(x)|,|g(x)|\right\}\geq\varepsilon.\]
	As a result, we have for each \(x\in\spb(\A)\) that
	\begin{equation}\label{eq:lowerbound}
		\max_{1\leq j\leq n}|(gT_j-f_j)(\widetilde{x})|=\max\left\{|f_1(x)|,\ldots,|f_n(x)|,|g(x)|\right\}\geq\varepsilon.
	\end{equation}
	Now define the linear maps
	\begin{align*}
		\varphi_1&:\B^n\to\B^{n^2}:(b_i)_{1\leq i\leq n}\mapsto(b_i(gT_j-f_j))_{1\leq i,j\leq n} \\
		\varphi_2&:\B^{n^2}\to\B:(b_{ij})_{1\leq i,j\leq n}\mapsto\sum_{1\leq i\leq n}b_{ii}.
	\end{align*}
	Note that their composition \(\varphi_2\circ\varphi_1\) is given by
	\[\varphi:\B^n\to\B:(b_1,\ldots,b_n)\mapsto b_1(gT_1-f_1)+\ldots+b_n(gT_n-f_n),\]
	so \(\varphi(\B^n)=(gT_1-f_1,\ldots,gT_n-f_n)\). Hence, to show that the latter ideal is closed in \(\B\), it suffices to show that both \(\varphi_1\) and \(\varphi_2\) are admissible. First take \(b=(b_i)_{1\leq i\leq n}\in\B^n\). By using the assumption that \(\norm{\cdot}_\B=\rho_\B\) and applying equation (\ref{eq:lowerbound}), we get that
	\begin{align*}
		\norm{b}_{\B^n}&=\max_{1\leq i\leq n}\norm{b_i}_\B \\
		&=\max_{1\leq i\leq n}\max_{x\in\spb(\A)}|b_i(\widetilde{x})| \\
		&\leq\frac{1}{\varepsilon}\max_{1\leq i\leq n}\max_{x\in\spb(\A)}\max_{1\leq j\leq n}|(gT_j-f_j)(\widetilde{x})|\cdot|b_i(\widetilde{x})| \\
		&=\frac{1}{\varepsilon}\max_{1\leq i\leq n}\max_{x\in\spb(\A)}\max_{1\leq j\leq n}|(gT_j-f_j)b_i(\widetilde{x})| \\
		&=\frac{1}{\varepsilon}\max_{1\leq i\leq n}\max_{1\leq j\leq n}\norm{(gT_j-f_j)b_i}_\B \\
		&=\frac{1}{\varepsilon}\norm{\varphi_1(b)}_{\B^{n^2}} \\
		&\leq\frac{1}{\varepsilon}\max_{1\leq j\leq n}\norm{gT_j-f_j}_\B\max_{1\leq i\leq n}\norm{b_i}_{\B} \\
		&=\frac{1}{\varepsilon}\max_{1\leq j\leq n}\norm{gT_j-f_j}_\B\norm{b}_{\B^n},
	\end{align*}
	or summarised
	\[\varepsilon\norm{b}_{\B^n}\leq\norm{\varphi_1(b)}_{\B^{n^2}}\leq\left(\max_{1\leq j\leq n}\norm{gT_j-f_j}_\B\right)\norm{b}_{\B^n},\]
	showing that \(\varphi_1\) is admissible. Next, take \(b'=(b_{ij}')_{1\leq i,j\leq n}\in\B^{n^2}\). Then we have that
	\begin{equation*}
		\norm{\varphi_2(b')}_\B=\norm{\sum_{1\leq i\leq n}b_{ii}'}_\B \leq\max_{1\leq i\leq n}\norm{b_{ii}'}_\B \leq\max_{1\leq i\leq n}\max_{1\leq j\leq n}\norm{b_{ij}'}_\B =\norm{b'}_{\B^{n^2}},
	\end{equation*}
	showing that \(\varphi_2\) is a bounded, and thus continuous since \(k\) was assumed to be non-trivially valued. As \(\varphi_2\) is also surjective, it follows from Banach's Open Mapping Theorem that it is also admissible.
\end{proof}
The following lemma shows that rational localisation commutes with base change. Although the proof is straightforward, we state it here for convenience of later reference.
\begin{lemma}\label{lem:ratcombas}
	Let \(\A\) be a \(k\)-Banach algebra and \(\ell/k\) an analytic extension. Then \(\A\langle r_\bullet^{-1}f_\bullet/g\rangle\widehat{\otimes}_k\ell\cong(\A\widehat{\otimes}_k\ell)\langle r_\bullet^{-1}f_\bullet/g\rangle\).
\end{lemma}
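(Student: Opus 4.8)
The plan is to reduce the statement to two standard compatibilities of the completed tensor product \(-\widehat{\otimes}_k\ell\): its commutation with the formation of Tate algebras, and its right exactness with respect to admissible quotients. Writing \(\B=\A\langle r_\bullet^{-1}T_\bullet\rangle\) and \(I=(gT_1-f_1,\ldots,gT_n-f_n)\), the rational localisation is by definition \(\A\langle r_\bullet^{-1}f_\bullet/g\rangle=\B/\Cl(I)\) equipped with its residue norm, so I must identify \((\B/\Cl(I))\widehat{\otimes}_k\ell\) with the analogous quotient formed over \(\A\widehat{\otimes}_k\ell\).

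First I would record the base change isomorphism for Tate algebras,
\[\B\widehat{\otimes}_k\ell=\A\langle r_\bullet^{-1}T_\bullet\rangle\widehat{\otimes}_k\ell\cong(\A\widehat{\otimes}_k\ell)\langle r_\bullet^{-1}T_\bullet\rangle,\]
which follows directly from the universal property of the completed tensor product together with the explicit description of a Tate algebra as convergent power series whose coefficients tend to zero. Under this isomorphism the ideal \(I\) is carried to the ideal \((gT_1-f_1,\ldots,gT_n-f_n)\) generated by the same Laurent data, now read through the structure map \(\A\to\A\widehat{\otimes}_k\ell\). In particular, since \(g,f_1,\ldots,f_n\) generate the unit ideal in \(\A\) they continue to do so in \(\A\widehat{\otimes}_k\ell\), so the target localisation \((\A\widehat{\otimes}_k\ell)\langle r_\bullet^{-1}f_\bullet/g\rangle\) is well defined.

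Next I would use that \(-\widehat{\otimes}_k\ell\) sends the admissible short exact sequence \(0\to\Cl(I)\to\B\to\B/\Cl(I)\to 0\) to an exact sequence whose quotient map is again admissible; this is the right exactness of the completed tensor product, valid because completion is a left adjoint and hence preserves cokernels, while the algebraic tensor product over the field \(k\) is right exact. Consequently
\[(\B/\Cl(I))\widehat{\otimes}_k\ell\cong(\B\widehat{\otimes}_k\ell)\big/\Cl\!\left(I\cdot(\B\widehat{\otimes}_k\ell)\right),\]
where I would check that the image of \(\Cl(I)\widehat{\otimes}_k\ell\) has the same closure as the ideal generated by \(I\), using that \(I\) is an ideal of \(\B\) so that already \(I\cdot(\B\otimes_k\ell)\) coincides with the image of \(I\otimes_k\ell\). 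Combining this with the Tate algebra base change of the previous step yields \((\B/\Cl(I))\widehat{\otimes}_k\ell\cong(\A\widehat{\otimes}_k\ell)\langle r_\bullet^{-1}f_\bullet/g\rangle\), as desired.

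The only genuinely delicate point is the compatibility of closures in the final step: one must verify that extending \(\Cl(I)\) and then closing produces the same closed ideal as extending \(I\) and then closing, i.e. that passage to the closure commutes with base change here. This is where I expect the main obstacle to lie, and I would resolve it by observing that both closed ideals arise as the kernel of one and the same admissible surjection onto \((\A\widehat{\otimes}_k\ell)\langle r_\bullet^{-1}f_\bullet/g\rangle\), whence they must coincide; the density of \(I\) in \(\Cl(I)\) then guarantees that the residue norms on both sides agree, giving an isomorphism of \(\ell\)-Banach algebras.
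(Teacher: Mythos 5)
Your proposal is correct and follows essentially the same route as the paper: apply \(-\widehat{\otimes}_k\ell\) to the admissible exact sequence \(0\to\Cl(I)\to\A\langle r_\bullet^{-1}T_\bullet\rangle\to\A\langle r_\bullet^{-1}f_\bullet/g\rangle\to0\), identify \(\A\langle r_\bullet^{-1}T_\bullet\rangle\widehat{\otimes}_k\ell\) with \((\A\widehat{\otimes}_k\ell)\langle r_\bullet^{-1}T_\bullet\rangle\), and check that \(\Cl(I)\widehat{\otimes}_k\ell\) and the extended ideal have the same closure. The paper dismisses that last compatibility as an easy verification, whereas you make the density argument explicit; this is only a difference in level of detail, not in method.
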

\begin{proof}
	Let us denote \(I=(gT_1-f_1,\ldots,gT_n-f_n)\subset\A\langle r_\bullet^{-1}T_\bullet\rangle\). By definition of rational localisation, we have an exact and admissible sequence 
	\[0 \to \Cl(I) \to \A\langle r_\bullet^{-1}T_\bullet\rangle \to \A\left\langle r_\bullet^{-1}\frac{f_\bullet}{g}\right\rangle \to 0.\]
	Since \(-\widehat{\otimes}_k\ell\) is exact and preserves admissibility, the induced sequence
	\[0 \to \Cl(I)\widehat{\otimes}_k\ell \to \A\langle r_\bullet^{-1}T_\bullet\rangle\widehat{\otimes}_k\ell \to \A\left\langle r_\bullet^{-1}\frac{f_\bullet}{g}\right\rangle\widehat{\otimes}_k\ell \to 0\]
	is also exact and admissible. One easily verifies that
	\[\Cl(I)\widehat{\otimes}_k\ell=\Cl\left(I(\A\langle r_\bullet^{-1}T_\bullet\rangle\widehat{\otimes}_k\ell)\right) \qquad\text{and}\qquad \A\langle r_\bullet^{-1}T_\bullet\rangle\widehat{\otimes}_k\ell\cong(\A\widehat{\otimes}_k\ell)\langle r_\bullet^{-1}T_\bullet\rangle.\]
	Hence, it follows by definition of rational localisations that \(\A\langle r_\bullet^{-1}f_\bullet/g\rangle\widehat{\otimes}_k\ell\cong(\A\widehat{\otimes}_k\ell)\langle r_\bullet^{-1}f_\bullet/g\rangle\).
\end{proof}
\section{Liu algebras}\label{secLiuAlg}
In this section, we introduce \(k\)-Liu algebras as \(k\)-Banach algebras which \lq locally look like \(k\)-affinoid algebras\rq. Although this name only makes sense a posteriori when having proven their correspondence with \(k\)-Liu spaces (cf. Theorem \ref{thm:isocatliu}), we already stick with it from the beginning to avoid notational cluttering. Our definition of \(k\)-Liu algebras could already be found in \cite[§4]{TemkinPinch} and Appendix B of the first version of \cite{Xia} on arXiv, where they were called \emph{locally \(k\)-affinoid algebras}.

\begin{definition}
	A morphism \(\varphi:\A\to\B\) of \(k\)-Banach algebras is called \emph{(strictly) rationally injective} if for each (strictly) rational localisation \(\A\langle r_\bullet^{-1}f_\bullet/g\rangle\) of \(\A\) the induced base change morphism \(\varphi\otimes\A\langle r_\bullet^{-1}f_\bullet/g\rangle:\A\langle r_\bullet^{-1}f_\bullet/g\rangle\to\B\widehat{\otimes}_\A\A\langle r_\bullet^{-1}f_\bullet/g\rangle\) is injective.
\end{definition}
\begin{definition}\label{def:liualg}
	A \(k\)-Banach algebra \(\A\) is called a \emph{(strictly) \(k\)-Liu algebra} if there exist finitely many (strictly) rational localisations \(\A_1,\ldots,\A_n\) of \(\A\), such that
	\begin{enumerate}[label=(\roman*)]
		\item each \(\A_i\) is (strictly) \(k\)-affinoid;
		\item the (strictly) rational domains \(\spb(\A_i)\) cover \(\spb(\A)\);
		\item the canonical map \(\iota:\A\to\prod_i\A_i\) is (strictly) rationally injective.
	\end{enumerate}
	Such a collection of (strictly) rational localisations of \(\A\) will be called a \emph{(strictly) rational atlas} of \(\A\). We define the \emph{category of (strictly) \(k\)-Liu algebras} as the category consisting of (strictly) \(k\)-Liu algebras with bounded \(k\)-Banach algebra morphisms between them.
\end{definition}
\begin{remark}\label{rem:faithflat}
	Note that a (strictly) rational localisation of a (strictly) \(k\)-Liu algebra is again a (strictly) \(k\)-Liu algebra. Also, the canonical map \(\iota:\A\to\prod_i\A_i\) is admissible by Banach's Open Mapping Theorem when \(k\) is non-trivially valued, and thus in general as well by \cite[Proposition 2.1.2(ii)]{Berkovich}. Moreover, one obtains an equivalent definition for \(k\)-Liu algebras when replacing conditions (ii)-(iii) with the requirement for \(\iota:\A\to\prod_i\A_i\) to be faithfully flat (cf. Corollary \ref{cor:faithful}).
\end{remark}
\begin{proposition}\label{prop:closedrationalidealsliu}
	Let \(\A\) be a \(k\)-Liu algebra. For any \(g,f_1,\ldots,f_n\in\A\) generating the unit ideal and \(r_1,\ldots,r_n\in\bR_{>0}\), the ideal \((gT_1-f_1,\ldots,gT_n-f_n)\) is closed in \(\A\langle r_1^{-1}T_1,\ldots,r_n^{-1}T_n\rangle\).
\end{proposition}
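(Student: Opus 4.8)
The plan is to reduce the closedness of $I=(gT_1-f_1,\ldots,gT_n-f_n)$ in $\B:=\A\langle r_\bullet^{-1}T_\bullet\rangle$ to the corresponding statement over the affinoid members of a rational atlas, where it is standard, and then to descend along the atlas. Fix a rational atlas $\A_1,\ldots,\A_m$ of $\A$ as in Definition \ref{def:liualg}, set $\A'=\prod_i\A_i$, and write $\B_i:=\A_i\langle r_\bullet^{-1}T_\bullet\rangle$ and $\B':=\prod_i\B_i$. Since completed tensor products commute with finite products and with forming Tate algebras, there are canonical identifications $\B_i\cong\B\widehat{\otimes}_\A\A_i$ and $\B'\cong\B\widehat{\otimes}_\A\A'$, and I would record these first (the base-change compatibility used in the proof of Lemma \ref{lem:ratcombas}).

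The point of passing to the $\B_i$ is that each $\A_i$ is $k$-affinoid, hence so is each $\B_i$; as affinoid algebras are Noetherian with all ideals closed, the ideal $I_i:=(gT_1-f_1,\ldots,gT_n-f_n)\B_i$ is closed in $\B_i$. Because the product is finite, $I\B'=\prod_iI_i$ is then closed in $\B'$.

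Next I would descend. Let $\iota':\B\to\B'$ be the base change of the atlas map $\iota:\A\to\A'$ along $\A\to\B$. By Remark \ref{rem:faithflat} the map $\iota$ is faithfully flat, and faithful flatness is preserved under base change, so $\iota'$ is faithfully flat; in particular it is injective, and for the ideal $I$ one has the contraction identity $\iota'^{-1}(I\B')=I$. On the other hand $\iota'$ is continuous, so it carries $\Cl(I)$ into $\Cl(\iota'(I))\subseteq\Cl(I\B')=I\B'$, using that $I\B'$ is closed. Combining these inclusions gives $\Cl(I)\subseteq\iota'^{-1}(I\B')=I$, whence $I=\Cl(I)$ is closed.

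The routine parts are the affinoid closedness of $I_i$ and the coefficientwise identifications $\B_i\cong\B\widehat{\otimes}_\A\A_i$. The crux, and the step I expect to require the most care, is the descent: namely that the atlas map $\iota$ is faithfully flat (the content behind Remark \ref{rem:faithflat} and Corollary \ref{cor:faithful}) and that this survives both the base change to Tate algebras and the completed tensor products, so that the purely algebraic contraction $\iota'^{-1}(I\B')=I$ may legitimately be applied. If one prefers to avoid invoking faithful flatness at this stage, the alternative is to argue directly in the spirit of Proposition \ref{prop:closedrationalidealsgeneral}: one would produce, for each $\B_i$, an admissibility estimate for the map $(b_\bullet)\mapsto\sum_jb_j(gT_j-f_j)$ (available since $I_i$ is closed in the affinoid $\B_i$) and then glue these local estimates over the covering $\spb(\A_i)$ of $\spb(\A)$ using the admissibility of $\iota$ recorded in Remark \ref{rem:faithflat}; this is more hands-on but sidesteps the flatness input.
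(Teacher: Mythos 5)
Your argument hinges on the faithful flatness of the atlas map \(\iota:\A\to\prod_i\A_i\) (and of its completed base change \(\iota'\)), and this is exactly where it breaks down: at this point in the development faithful flatness is not available. Remark \ref{rem:faithflat} only \emph{announces} that faithful flatness gives an equivalent definition, deferring the proof to Corollary \ref{cor:faithful}; that corollary rests on Proposition \ref{prop:flat}, which rests on Corollary \ref{cor:noetherian}, which rests on Theorem \ref{thm:isocatliu}, which rests on Tate acyclicity (Theorem \ref{thm:acyclic}) --- and the proof of Theorem \ref{thm:acyclic} invokes Proposition \ref{prop:closedrationalidealsliu} itself (via Corollary \ref{cor:kerneltate} as well). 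So your main route is circular. Note that the definition of a rational atlas only gives you \emph{rational injectivity} of \(\iota\), and that is genuinely weaker than what you need: rational injectivity tells you precisely that \(\B/\Cl(I)\to\prod_i\B_i/I\B_i\) is injective, i.e. \(\iota'^{-1}(I\B')=\Cl(I)\), which is consistent with your chain of inclusions but does not force \(\Cl(I)=I\). The contraction identity \(\iota'^{-1}(I\B')=I\) is exactly the flatness input you cannot yet assume. (A secondary issue: even granting faithful flatness of \(\iota\), its preservation under the \emph{completed} base change \(\A\rightsquigarrow\A\langle r_\bullet^{-1}T_\bullet\rangle\) is not formal, since the Tate algebra is not a finite module and \(\A\) is not yet known to be Noetherian.)

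Your fallback sketch does not repair this. Closedness of \(I\B_i\) in the affinoid \(\B_i\) gives, via the open mapping theorem, only an \emph{existence} statement: every \(y\in I\B_i\) admits \emph{some} bounded preimage \(c^{(i)}_\bullet\in\B_i^n\) under \((b_\bullet)\mapsto\sum_jb_j(gT_j-f_j)\). These local preimages over different charts need not agree on overlaps, so they do not glue to an element of \(\B^n\); and the uniform lower bound \(\varepsilon\norm{b}\leq\norm{\varphi_1(b)}\) from the proof of Proposition \ref{prop:closedrationalidealsgeneral} is only available when the ambient algebra is a Banach function algebra, which a general affinoid \(\B_i\) (let alone \(\A\) itself) need not be. The paper circumvents all of this by a different device: it forms the fibre product \(\A\times_{\prod_i\A_i}\prod_i\mathcal{T}_i\) with a product of Tate algebras surjecting onto the \(\A_i\); this fibre product is \emph{uniform} (because Gauss norms are multiplicative), so Proposition \ref{prop:closedrationalidealsgeneral} applies to it directly after lifting \(g,f_1,\ldots,f_n\) (together with an auxiliary lift of \(0\) to keep the unit-ideal condition), and the closed ideal upstairs is then pushed down along the admissible epimorphism onto \(\A\langle r_\bullet^{-1}T_\bullet\rangle\). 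If you want to salvage your descent strategy, you would first have to establish flatness of rational localisations of \(\A\) by some independent means, which is precisely what the paper's ordering of results is designed to avoid.
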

\begin{proof}
	Let \(\A_1,\ldots,\A_n\) be a rational atlas of \(\A\). By definition, we have a Cartesian diagram of \(k\)-Banach algebras
	\[\begin{tikzcd}
		\mathcal{B}\vcentcolon=\A\times_{\prod_i\A_i}\prod_i\mathcal{T}_i \arrow[d, "\varphi'", two heads] \arrow[r, "\iota'", hook] & \prod_i\mathcal{T}_i \arrow[d, "\varphi", two heads] \\
		\A \arrow[r, "\iota", hook]                                                                            & \prod_i\A_i                                         
	\end{tikzcd}\]
	with every morphism admissible, where each \(\mathcal{T}_i\) is a \(k\)-Tate algebra.
	
	Let \(g',f_1',\ldots,f_n',f_{n+1}'\in\B\) be liftings of \(g,f_1,\ldots,f_n,0\in\A\), generating the unit ideal in \(\B\). To see that such a lifting always exists, let \(f_1',\ldots,f_n',g'\in\B\) be liftings of \(g,f_1,\ldots,f_n\in\A\). Since the latter generate the unit ideal in \(\A\), there exist \(a_0,a_1,\ldots,a_n\in\A\) such that \(a_0g+a_1f_1+\ldots+a_nf_n=1\). For liftings \(b_0,b_1,\ldots,b_n\in\B\) of \(a_0,a_1\ldots,a_n\), we then get that \(b_0g'+b_1f_1'+\ldots+b_nf_n'=1+c\) with \(c\in\ker(\varphi')\). Now take \(f_{n+1}'=-c\) as lifting of \(0\in\A\).
	
	Next, since the Gauss norm on \(k\)-Tate algebras is multiplicative, one easily verifies that \(\prod_i\mathcal{T}_i\) is a uniform \(k\)-Banach algebra, and thus so is \(\B\). Hence, by Proposition \ref{prop:closedrationalidealsgeneral}, the ideal \((g'T_1-f_1',\ldots,g'T_{n+1}-f_{n+1}')\) is closed in \(\B\langle r_1^{-1}T_1,\ldots,r_n^{-1}T_n,T_{n+1}\rangle\). As the image of this ideal under the admissible epimorphism
	\[\B\langle r_1^{-1}T_1,\ldots,r_n^{-1}T_n,T_{n+1}\rangle\to\A\langle r_1^{-1}T_1,\ldots,r_n^{-1}T_n\rangle:T_{n+1}\mapsto0\]
	is the ideal \((gT_1-f_1,\ldots,gT_n-f_n)\), it follows that the latter is also closed in \(\A\langle r_1^{-1}T_1,\ldots,r_n^{-1}T_n\rangle\).
\end{proof}
\begin{corollary}\label{cor:kerneltate}
	Let \(\A\) be a \(k\)-Liu algebra. For any \(f\in\A\), the kernel of the canonical morphism \(\A\langle T,T^{-1}\rangle\to\A\langle f,f^{-1}\rangle\) coincides with \((T-f)\subset\A\langle T,T^{-1}\rangle\).
\end{corollary}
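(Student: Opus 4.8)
The plan is to reduce the claim directly to Proposition \ref{prop:closedrationalidealsliu} by presenting the Laurent localisation \(\A\langle f,f^{-1}\rangle\) as a rational localisation of \(\A\) \emph{with a common denominator}. The key observation is that the Laurent domain \(\{x\in\spb(\A)\mid|f(x)|=1\}\) can be written as the rational domain \(\{x\in\spb(\A)\mid|1|\le|f(x)|\text{ and }|f^2(x)|\le|f(x)|\}\), that is, the rational domain with denominator \(g=f\), numerators \((f_1,f_2)=(1,f^2)\) and radii \(r_1=r_2=1\); indeed \(|1|\le|f(x)|\) forces \(|f(x)|\ge 1\) while \(|f^2(x)|\le|f(x)|\) forces \(|f(x)|\le 1\). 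Consequently \(\A\langle f,f^{-1}\rangle\) is canonically isomorphic to \(\A\langle T_1,T_2\rangle/\Cl(J)\) with \(J=(fT_1-1,fT_2-f^2)\). Since \(g,f_1,f_2=f,1,f^2\) generate the unit ideal (already \(1\) does), Proposition \ref{prop:closedrationalidealsliu} shows that \(J\) is closed, so in fact \(\A\langle f,f^{-1}\rangle=\A\langle T_1,T_2\rangle/J\) and the projection \(q\colon\A\langle T_1,T_2\rangle\to\A\langle f,f^{-1}\rangle\), \(T_1\mapsto f^{-1}\), \(T_2\mapsto f\), has kernel exactly \(J\).

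Next I would connect this two-variable presentation to the Laurent algebra appearing in the statement. Write \(\psi\colon\A\langle T,T^{-1}\rangle\to\A\langle f,f^{-1}\rangle\), \(T\mapsto f\), for the canonical morphism, and introduce the bounded morphism \(\theta\colon\A\langle T_1,T_2\rangle\to\A\langle T,T^{-1}\rangle\) determined by \(T_1\mapsto T^{-1}\) and \(T_2\mapsto T\). Reading off Laurent expansions shows that \(\theta\) is surjective, and checking on \(\A\), \(T_1\) and \(T_2\) gives the factorisation \(q=\psi\circ\theta\). As \(\theta\) is surjective and \(\ker q=\theta^{-1}(\ker\psi)\), applying \(\theta\) yields \(\ker\psi=\theta(\ker q)=\theta(J)\). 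It then remains to compute \(\theta(J)\), and this is where I would use that \(T\) is a unit: we have \(\theta(fT_1-1)=fT^{-1}-1=-T^{-1}(T-f)\) and \(\theta(fT_2-f^2)=fT-f^2=f(T-f)\), so \(\theta(J)=\big(T^{-1}(T-f),\,f(T-f)\big)=(T-f)\), the last equality holding because \(T^{-1}\) is a unit and hence the first generator already generates \((T-f)\). This gives \(\ker\psi=(T-f)\), as desired.

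The only non-formal ingredient is Proposition \ref{prop:closedrationalidealsliu}, which is exactly what forces me to choose the common-denominator presentation above; the rest is elementary. I expect the only point requiring care to be the canonical identification of the Laurent localisation \(\A\langle f,f^{-1}\rangle\) with the rational localisation \(\A\langle T_1,T_2\rangle/J\): one should check that both corepresent the same functor of bounded morphisms \(\A\to\mathcal{C}\) inverting \(f\) with \(\norm{f}_{\mathcal{C}}\le 1\) and \(\norm{f^{-1}}_{\mathcal{C}}\le 1\), so that the isomorphism is canonical and compatible with \(\psi\) and \(q\). Once this is in place, the factorisation \(q=\psi\circ\theta\) and the unit-multiplier computation of \(\theta(J)\) finish the argument without any further analysis.
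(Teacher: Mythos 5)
Your argument is correct, and it rests on the same key input as the paper's proof --- Proposition \ref{prop:closedrationalidealsliu} together with an elementary manipulation of ideals in a two-variable Tate algebra over \(\A\) --- but the bookkeeping is organised differently. The paper presents \emph{both} algebras as quotients of \(\A\langle T,S\rangle\), by \((TS-1)\) and by \((T-f,fS-1)\) respectively, checks the identity \((T-f,TS-1)=(T-f,fS-1)\), and reads off the kernel as \((T-f,TS-1)/(TS-1)=(T-f)\). You instead present only the target as a quotient of \(\A\langle T_1,T_2\rangle\), surject that Tate algebra onto \(\A\langle T,T^{-1}\rangle\), and push the ideal forward, using that \(T\) is a unit downstairs. (In fact, under \(S=T_1\), \(T=T_2\), your ideal \(J=(fT_1-1,fT_2-f^2)\) coincides with the paper's \((T-f,fS-1)\): one has \(T-f=S\cdot f(T-f)-(fS-1)(T-f)\in J\), so the two proofs manipulate the same ideal with different generators.) Your choice of generators buys something concrete: \((fT_1-1,fT_2-f^2)\) has the single common denominator \(g=f\) with numerators \(1,f^2\) generating the unit ideal, so Proposition \ref{prop:closedrationalidealsliu} applies verbatim, whereas the paper's ideal \((T-f,fS-1)\) has no common denominator and its closedness strictly requires a two-step application of that proposition (first kill \(T-f\) in \(\A\langle T\rangle\), then \(fS-1\) over the Liu algebra \(\A\langle f\rangle\)). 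The price you pay is the canonical identification of the Laurent localisation \(\A\langle f,f^{-1}\rangle\) with the rational localisation \(\A\langle T_1,T_2\rangle/\Cl(J)\), which you rightly flag as the delicate point; it does go through, since both represent the Liu domain \(\{x\mid|f(x)|=1\}\) in the sense of Definition \ref{def:liudomain} (by Proposition \ref{prop:ratunivprop} and its evident Laurent analogue), and compatibility with \(q\) and \(\psi\) follows from the uniqueness in that universal property. The remaining steps --- surjectivity of \(\theta\) via Laurent expansions, \(\ker\psi=\theta(\ker q)\) from surjectivity of \(\theta\), and \(\theta(J)=(T-f)\) because \(T^{-1}\) is a unit --- all check out.
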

\begin{proof}
	By Proposition \ref{prop:closedrationalidealsliu}, we can write
	\[\A\langle T,T^{-1}\rangle=\frac{\A\langle T,S\rangle}{(TS-1)}\qquad\text{and}\qquad\A\langle f,f^{-1}\rangle=\frac{\A\langle T,S\rangle}{(T-f,fS-1)}.\]
	Hence, it suffices to verify that \((T-f,TS-1)=(T-f,fS-1)\) in \(\A\langle T,S\rangle\). The latter follows directly from the equalities
	\begin{align*}
		fS-1=TS-1-S(T-f)\qquad\text{and}\qquad TS-1=fS-1+S(T-f).
	\end{align*}
\end{proof}
The following base change lemma for Liu algebras will allow us later on to prove properties for Liu algebras by reducing to the strict case.
\begin{lemma}\label{lem:basechangeliu}
	Let \(\ell/k\) be an analytic extension and let \(\A\) be a (strictly) \(k\)-Liu algebra. Then \(\A_\ell=\A\widehat{\otimes}_k\ell\) is a (strictly) \(\ell\)-Liu algebra.
\end{lemma}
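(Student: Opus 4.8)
The plan is to base change a rational atlas of $\A$ and to check that it yields a rational atlas of $\A_\ell$. Concretely, let $\A_1,\ldots,\A_n$ be a (strictly) rational atlas of $\A$ and set $(\A_i)_\ell\vcentcolon=\A_i\widehat{\otimes}_k\ell$. By Lemma \ref{lem:ratcombas} each $(\A_i)_\ell$ is the (strictly) rational localisation of $\A_\ell$ attached to the same data, so by Definition \ref{def:liualg} it suffices to verify conditions (i)--(iii) for $(\A_1)_\ell,\ldots,(\A_n)_\ell$. When $\A$ is strictly $k$-Liu, $k$ and hence $\ell$ is non-trivially valued and the localisation data keep $r_i=1$, so the strict and non-strict cases run in parallel.

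For (i), each $(\A_i)_\ell=\A_i\widehat{\otimes}_k\ell$ is (strictly) $\ell$-affinoid, since base change of a (strictly) $k$-affinoid algebra along an analytic extension is (strictly) $\ell$-affinoid. For (ii), I would use that the canonical projection $\pi:\spb(\A_\ell)\to\spb(\A)$ is surjective and that $|f(x)|=|f(\pi(x))|$ for every $f\in\A$ and $x\in\spb(\A_\ell)$; hence membership of $x$ in a rational domain cut out by elements of $\A$ depends only on $\pi(x)$. Since Proposition \ref{prop:ratdesc} and Lemma \ref{lem:ratcombas} identify $\spb((\A_i)_\ell)$ with the rational domain of $\spb(\A_\ell)$ defined by the same data, surjectivity of $\pi$ together with the covering $\spb(\A)=\bigcup_i\spb(\A_i)$ gives $\spb(\A_\ell)=\bigcup_i\spb((\A_i)_\ell)$.

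Condition (iii) is the heart of the matter. Writing $\B=\prod_i\A_i$, the relevant map is $\iota_\ell=\iota\widehat{\otimes}_k\ell:\A_\ell\to\B_\ell=\prod_i(\A_i)_\ell$, and for a rational localisation $\mathcal{D}=\A_\ell\langle s_\bullet^{-1}h_\bullet/q\rangle$ of $\A_\ell$ one computes, using $(\A_i)_\ell=\A_i\widehat{\otimes}_\A\A_\ell$ and associativity of $\widehat{\otimes}$, that
\[\left(\prod_i(\A_i)_\ell\right)\widehat{\otimes}_{\A_\ell}\mathcal{D}\cong\B\widehat{\otimes}_\A\mathcal{D},\]
so the morphism to be shown injective is the base change of $\iota$ along the structure map $\A\to\mathcal{D}$. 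When the data $h_\bullet,q$ descend to $\A$, Lemma \ref{lem:ratcombas} gives $\mathcal{D}\cong\A\langle s_\bullet^{-1}h_\bullet/q\rangle\widehat{\otimes}_k\ell$, and the morphism is the image under the exact functor $-\widehat{\otimes}_k\ell$ of the morphism $\A\langle s_\bullet^{-1}h_\bullet/q\rangle\to\B\widehat{\otimes}_\A\A\langle s_\bullet^{-1}h_\bullet/q\rangle$, which is injective by rational injectivity of $\iota$; hence it is injective as well.

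The main obstacle is that a general rational localisation of $\A_\ell$ is defined by elements $h_\bullet,q\in\A_\ell$ that need not descend to $\A$, so the clean reduction above does not apply verbatim. To close this gap I would pass to a flatness formulation: since $-\widehat{\otimes}_k\ell$ is exact and preserves admissibility, $\iota_\ell$ is an admissible injection, and I would deduce injectivity after localising at arbitrary $\ell$-data from flatness of rational localisations. The cleanest route is via the reformulation recorded in Remark \ref{rem:faithflat}: conditions (i)--(iii) for $\A$ make $\iota$ faithfully flat, faithful flatness is stable under the base change $-\widehat{\otimes}_k\ell$, and faithful flatness of $\iota_\ell$ returns condition (iii) for $\A_\ell$. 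The delicate point is to carry this out without circularity, i.e. using only the flatness input genuinely available before this lemma is invoked.
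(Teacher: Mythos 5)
Your overall strategy is exactly the paper's: base change a rational atlas via Lemma \ref{lem:ratcombas} and verify conditions (i)--(iii) of Definition \ref{def:liualg} for $\A_1\widehat{\otimes}_k\ell,\ldots,\A_n\widehat{\otimes}_k\ell$. Your treatment of (i) and (ii) matches the paper's; for (ii) the paper, like you, pushes a point $x\in\spb(\A_\ell)$ down to $\spb(\A)$ and lifts the defining inequalities --- surjectivity of the projection plays no role, so your appeal to it is a harmless red herring.

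The issue is (iii). The paper disposes of it in one line: rational injectivity of $\iota\otimes\ell$ ``follows from rational injectivity of $\iota$ and exactness of $-\widehat{\otimes}_k\ell$'' --- which is precisely your computation for localisation data $h_\bullet,q$ descending to $\A$, with no separate discussion of rational localisations of $\A_\ell$ whose data do not come from $\A$. So the subtlety you isolate is not one the paper resolves differently; but your proposed repair is not available. Corollary \ref{cor:faithful} and the faithful-flatness reformulation of Remark \ref{rem:faithflat} rest on Proposition \ref{prop:flat} and Proposition \ref{prop:basechangeliufull}, both of which invoke Lemma \ref{lem:basechangeliu} itself (the former through the base-change argument reducing flatness to the strict case, the latter explicitly for its direct implication); using them here is circular, exactly as you fear, and you offer no substitute. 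As written, your verification of (iii) is therefore incomplete for a general rational localisation $\A_\ell\langle s_\bullet^{-1}h_\bullet/q\rangle$ with $h_\bullet,q\in\A_\ell\setminus\A$. To go beyond the paper's one-line assertion you would need an argument that uses only material preceding this lemma --- for instance, an approximation/perturbation argument replacing $h_\bullet,q$ by nearby elements whose associated rational domain is unchanged and whose data live in a dense subalgebra generated over $\A$ by elements of $\ell$ --- rather than a forward reference to the flatness results.
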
	
\begin{proof}
	Let \(\A_1,\ldots,\A_n\) be a (strictly) rational atlas of \(\A\). We claim that \(\A_1\widehat{\otimes}_k\ell,\ldots,\A_n\widehat{\otimes}_k\ell\) constitutes a (strictly) rational atlas of \(\A\widehat{\otimes}_k\ell\). By Lemma \ref{lem:ratcombas}, each \(\A_i\widehat{\otimes}_k\ell\) is a (strictly) rational localisation of \(\A\widehat{\otimes}_k\ell\). Clearly, each \(\A_i\widehat{\otimes}_k\ell\) is also (strictly) \(\ell\)-affinoid, and (strictly) rational injectivity of \(\iota\otimes\ell:\A\widehat{\otimes}_k\ell\to\prod_i\A_i\widehat{\otimes}_k\ell\) follows from (strictly) rational injectivity  of \(\iota\) and exactness of \(-\widehat{\otimes}_k\ell\). Next, let \(x\in\spb(\A\widehat{\otimes}_k\ell)\) and denote by \(\varphi:\A\to\A\widehat{\otimes}_k\ell\) the canonical base change morphism. Then \(\spb(\varphi)(x)\in\spb(\A_i)\) for some \(\A_i=\A\langle s_\bullet^{-1}f_\bullet/g\rangle\). Hence, by Proposition \ref{prop:ratdesc}, we get for each \(f_j\) in \(f_\bullet\) that
	\[|\varphi(f_j)(x)|=|f_j(\spb(\varphi)(x))|\leq s_j|g(\spb(\varphi)(x))|=s_j|\varphi(g)(x)|,\]
	and thus \(x\in\spb(\A_i\widehat{\otimes}_k\ell)\). It follows that the sets \(\spb(\A_i\widehat{\otimes}_k\ell)\) cover \(\spb(\A\widehat{\otimes}_k\ell)\).
\end{proof}
\begin{remark}
	When \(\ell/k\) is a topgebraic extension of \(k_r\) for \(r\in\bR_{>0}^n\), the converse of Lemma \ref{lem:basechangeliu} also holds (cf. Proposition \ref{prop:basechangeliufull}).
\end{remark}

\section{Liu domains}\label{secLiuDom}
In this section, we define Liu domains of spectra of \(k\)-Liu algebras as a direct generalisation of affinoid domains from the classical setting and prove some of their expected properties. In particular, we show that rational domains are examples of Liu domains.
\begin{definition}\label{def:liudomain}
	Let \(\A\) be a (strictly) \(k\)-Liu algebra. A \emph{(strictly) Liu domain} of \(\spb(\A)\) is a subset \(U\subset\spb(\A)\) equipped with a morphism \(\iota_U:\A\to\A_U\) of (strictly) \(k\)-Liu algebras such that \(U=\spb(\iota_U)(\spb(\A_U))\), and that \emph{represents all (strictly) \(k\)-Liu morphisms into \(U\)}, i.e. for any morphism of (strictly) \(k\)-Liu algebras \(\varphi:\A\to\B\) we have a commutative diagram
	\[\begin{tikzcd}
		\A \arrow[rd, "\varphi"'] \arrow[r, "\iota_U"] & \A_U \arrow[d, dashed] \\
		& \B                   
	\end{tikzcd}\]
	if and only if \(\spb(\varphi)\) factorises through \(U\), in which case the diagram is unique. In case \(\A_U\) is a (strictly) \(k\)-affinoid algebra, \(U\) is called a \emph{(strictly)} \emph{affinoid domain} of \(\spb(\A)\). \(\A_U\) is called the \emph{coordinate ring} of \(U\).
\end{definition}
As might have been expected, rational domains are examples of Liu domains. In order to show this, we need the following two auxiliary results.
\begin{lemma}\label{lem:spectralradius}
	Let \(\A\) be a \(k\)-Liu algebra. For any \(f\in\A\), there exist a \(C\in\bR_{>0}\) and an \(N\in\bZ_{>0}\) such that \(\norm{f^n}_\A\leq C\rho_\A(f)^n\) for all \(n\geq N\). In particular, if \(f\) is quasinilpotent (i.e. \(\rho_\A(f)=0\)), then \(f\) is nilpotent.
\end{lemma}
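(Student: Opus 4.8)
The plan is to reduce to the affinoid case via a rational atlas, and there to isolate the nilpotent phenomena. Fix a rational atlas $\A_1,\ldots,\A_n$ of $\A$ and write $f_i$ for the image of $f$ in $\A_i$. Since the canonical map $\iota:\A\to\prod_i\A_i$ is admissible (Remark \ref{rem:faithflat}), the norm $\norm{\cdot}_\A$ is equivalent to $a\mapsto\max_i\norm{\iota_i(a)}_{\A_i}$; in particular there is a constant $C'$, independent of $n$, with $\norm{f^n}_\A\leq C'\max_i\norm{f_i^n}_{\A_i}$. Pulling back bounded multiplicative seminorms along $\spb(\iota_i)$ gives $\rho_{\A_i}(f_i)\leq\rho_\A(f)$ for every $i$ (in fact equality holds, as the rational domains $\spb(\A_i)$ cover $\spb(\A)$). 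Hence it suffices to produce, for each affinoid $\A_i$, constants $C_i,N_i$ with $\norm{f_i^n}_{\A_i}\leq C_i\rho_{\A_i}(f_i)^n$ for $n\geq N_i$: combining these yields the claim with $C=C'\max_iC_i$ and $N=\max_iN_i$.

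So let $\B$ be a $k$-affinoid algebra and $g\in\B$. When $\B$ is reduced I would invoke the classical fact that the spectral seminorm $\rho_\B$ is then a norm equivalent to the residue norm (for strict $\B$ this is standard, and in general it follows from Temkin's graded reduction, cf. Definition \ref{def:reduction}). As $\rho_\B$ is power-multiplicative, applying the equivalence to $g^n$ gives $\norm{g^n}_\B\leq C_0\,\rho_\B(g^n)=C_0\,\rho_\B(g)^n$ for all $n$, which settles the reduced case with $N=1$.

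For the general affinoid case, let $\mathcal{N}\subset\B$ be the nilradical, a closed finitely generated ideal with $\mathcal{N}^m=0$, so that $\B_{\mathrm{red}}=\B/\mathcal{N}$ is reduced affinoid and $\rho_\B(g)=\rho_{\B_{\mathrm{red}}}(\bar g)$ (multiplicative seminorms kill nilpotents). I would bound $\norm{g^n}_\B\leq\norm{\mu_g^n}_{\mathrm{op}}\norm{1}_\B$ through the multiplication operator $\mu_g:\B\to\B$. Since $\mu_g$ preserves the finite filtration $\B\supset\mathcal{N}\supset\cdots\supset\mathcal{N}^m=0$ by closed ideals, a bounded $k$-linear splitting (Banach's Open Mapping Theorem, reducing to non-trivially valued $k$ by Lemma \ref{lem:basechangeliu} if necessary) identifies $\B$ with $\bigoplus_j\mathcal{N}^j/\mathcal{N}^{j+1}$ and writes $\mu_g=D+\mathcal{E}$, where $D$ is block-diagonal, acting on each finite $\B_{\mathrm{red}}$-module $\mathcal{N}^j/\mathcal{N}^{j+1}$ as multiplication by $\bar g$, and $\mathcal{E}$ strictly raises the filtration degree, so $\mathcal{E}^m=0$. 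Applying the reduced case to these finite modules yields $\norm{D^a}_{\mathrm{op}}\leq C_D\,\rho_\B(g)^a$ for all $a\geq0$.

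Expanding $\mu_g^n=(D+\mathcal{E})^n$ and discarding the words containing at least $m$ factors $\mathcal{E}$ (which raise the degree by at least $m$ and hence vanish), every surviving word has the shape $D^{a_0}\mathcal{E}\cdots\mathcal{E}D^{a_t}$ with $t\leq m-1$ and $\sum_ia_i=n-t$, so its operator norm is at most $C_D^{\,t+1}\norm{\mathcal{E}}_{\mathrm{op}}^{\,t}\,\rho_\B(g)^{\,n-t}$. The crucial point is that, although there are of the order of $2^n$ such words, the operator norm is non-Archimedean, whence $\norm{\mu_g^n}_{\mathrm{op}}\leq\max_{\text{words}}\norm{\cdot}_{\mathrm{op}}$; bounding each $\rho_\B(g)^{\,n-t}$ by $\max\bigl(1,\rho_\B(g)^{-(m-1)}\bigr)\rho_\B(g)^n$ then gives $\norm{\mu_g^n}_{\mathrm{op}}\leq C\,\rho_\B(g)^n$ for $n\geq m$ with $C$ independent of $n$. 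This is exactly where the non-Archimedean hypothesis is indispensable: in the Archimedean world the combinatorial count would contribute a polynomial factor, whereas here $\bigl|\binom{n}{j}\bigr|\leq1$ and the supremum over the cross terms stays bounded. When $\rho_\B(g)=0$ the same expansion forces $\mu_g^n=0$ for $n\geq m$, so $g$ is nilpotent, giving the final assertion after the reduction of the first paragraph. I expect the only delicate points to be the existence of the bounded splitting of the nilpotent filtration and the bookkeeping for the cross terms; the heart of the matter is that non-Archimedean operator norms turn the otherwise troublesome interaction between nilpotents and the spectral radius into a harmless maximum.
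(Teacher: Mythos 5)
Your first paragraph is precisely the paper's proof: fix a rational atlas, use admissibility of \(\iota:\A\to\prod_i\A_i\) to compare \(\norm{f^n}_\A\) with \(\max_i\norm{f^n}_{\A_i}\), and use \(\spb(\A_i)\subset\spb(\A)\) to get \(\rho_{\A_i}(f)\leq\rho_\A(f)\). (One nitpick: for an individual \(i\) only the inequality \(\rho_{\A_i}(f)\leq\rho_\A(f)\) holds; the covering gives equality only after taking the maximum over \(i\). The inequality is all you use, so this is harmless.) Where you diverge is that the paper simply cites \cite[Proposition 2.1.4(i)]{Berkovich} for the affinoid case, whereas you reprove it. Your \(\mu_g=D+\mathcal{E}\) argument is a valid substitute: the reduced case via equivalence of the spectral and residue norms is standard, the words with \(\geq m\) occurrences of \(\mathcal{E}\) do vanish because \(\mathcal{E}\) strictly raises the nilradical filtration, and the ultrametric inequality for the operator norm does eliminate the combinatorial factor exactly as you say. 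The one genuinely nontrivial input you flag yourself is the bounded \(k\)-linear splitting of the filtration \(\B\supset\mathcal{N}\supset\cdots\supset\mathcal{N}^m=0\); this is fine because (after base change to a non-trivially valued field, under which the norm and spectral radius are preserved) an affinoid algebra is a \(k\)-Banach space of countable type, so its closed subspaces admit bounded linear complements, but it is a real fact that needs to be invoked, not a formality. In short: same route as the paper, with the affinoid base case worked out by hand instead of quoted; the paper's citation buys brevity, your version buys self-containedness at the cost of importing the complementation result for Banach spaces of countable type.
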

\begin{proof}
	By \cite[Proposition 2.1.4(i)]{Berkovich} the claim is true when \(\A\) is \(k\)-affinoid. Hence, let \(\A_1,\ldots,\A_n\) be a rational atlas of \(\A\) and take \(f\in\A\). For each \(i\in\{1,\ldots,n\}\) there now exist \(C_i\in\bR_{>0}\) and \(N_i\in\bZ_{>0}\) such that \(\norm{f^n}_{\A_i}\leq C_i\rho_{\A_i}(f)^n\) for all \(n\geq N_i\). As a result, we get for all \(n\geq\max_{1\leq i\leq n} N_i\) that
	\begin{align*}
		\norm{f^n}_\A&=\max_{1\leq i\leq n}\norm{f^n}_{\A_i} \\
		&\leq\max_{1\leq i\leq n}C_i\rho_{\A_i}(f)^n \\
		&=\max_{1\leq i\leq n}\max_{x\in\spb(\A_i)}C_i|f(x)|^n \\
		&\leq\max_{1\leq i\leq n}C_i\cdot\max_{x\in\spb(\A)}|f(x)|^n \\
		&=\max_{1\leq i\leq n}C_i\cdot\rho_\A(f)^n.
	\end{align*}
	Thus, we can take \(C=\max_{1\leq i\leq n}C_i\) and \(N=\max_{1\leq i\leq n}N_i\).
\end{proof}
\begin{corollary}\label{cor:reltateuniprop}
	Let \(\varphi:\A\to\B\) be a morphism of \(k\)-Liu algebras. Let \(b_1,\ldots,b_n\in\B\) and \(r_1,\ldots,r_n\in\bR_{>0}\) with \(\rho_\B(b_i)\leq r_i\) for all \(i\in\{1,\ldots,n\}\). Then there exists a unique morphism \(\Phi:\A\langle r_1^{-1}T_1,\ldots,r_n^{-1}T_n\rangle\to\B\) of \(k\)-Liu algebras extending \(\varphi\) with \(\Phi(T_i)=b_i\) for all \(i\in\{1,\ldots,n\}\).
\end{corollary}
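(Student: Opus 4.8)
The plan is to define $\Phi$ by substituting $T_i\mapsto b_i$ and extending $\varphi$ continuously. \emph{Uniqueness} is immediate: the polynomial subalgebra $\A[T_1,\ldots,T_n]$ is dense in $\A\langle r_\bullet^{-1}T_\bullet\rangle$, so any bounded morphism extending $\varphi$ with $\Phi(T_i)=b_i$ is forced on a dense subset and hence everywhere. Thus the entire content lies in \emph{existence}: one must show that the formula $\Phi\big(\sum_I G_IT^I\big)=\sum_I\varphi(G_I)\,b^I$ converges in $\B$ and yields a bounded $k$-algebra homomorphism.

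First I would address convergence, which is the crux. An element $G=\sum_I G_IT^I$ lies in $\A\langle r_\bullet^{-1}T_\bullet\rangle$ precisely when $\norm{G_I}_\A r^I\to0$ as $|I|\to\infty$. Since $\varphi$ is bounded and $\B$ is complete, it suffices to bound $\norm{b^I}_\B$ by a constant multiple of $r^I$. This is exactly where the hypothesis $\rho_\B(b_i)\leq r_i$ must be converted from a spectral-radius bound into a genuine norm bound.

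The hard part is that $\rho_\B(b_i)\leq r_i$ controls only the spectral radius, and a priori $\norm{b_i^m}_\B$ may grow faster than $r_i^m$. Here I would invoke Lemma \ref{lem:spectralradius}: since $\B$ is a $k$-Liu algebra, for each $i$ there are $C_i\in\bR_{>0}$ and $N_i\in\bZ_{>0}$ with $\norm{b_i^m}_\B\leq C_i\,\rho_\B(b_i)^m\leq C_i\,r_i^m$ for all $m\geq N_i$. Absorbing the finitely many exponents $m<N_i$ into a larger constant $C_i'$, one obtains $\norm{b_i^m}_\B\leq C_i'\,r_i^m$ for every $m\geq0$, whence by submultiplicativity $\norm{b^I}_\B\leq\big(\prod_i C_i'\big)\,r^I$. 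Combined with boundedness of $\varphi$, this yields $\norm{\varphi(G_I)b^I}_\B\leq C\,\norm{G_I}_\A r^I\to0$, so the defining series converges in the Banach algebra $\B$.

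Finally, $\Phi$ is additive and multiplicative because these identities hold on polynomials and pass to the limit by continuity, and it is bounded since $\norm{\Phi(G)}_\B\leq C\,\norm{G}_{\A\langle r_\bullet^{-1}T_\bullet\rangle}$. As morphisms of $k$-Liu algebras are by definition just bounded $k$-Banach algebra morphisms, and $\A\langle r_\bullet^{-1}T_\bullet\rangle$ is itself a $k$-Liu algebra (with atlas $\{\A_i\langle r_\bullet^{-1}T_\bullet\rangle\}$ obtained from any rational atlas $\{\A_i\}$ of $\A$ via Lemma \ref{lem:ratcombas}), the map $\Phi$ is the desired morphism, extending $\varphi$ and sending each $T_i$ to $b_i$. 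The main obstacle is thus the passage from spectral-radius to norm control, which fails for general Banach algebras but is precisely supplied by Lemma \ref{lem:spectralradius} in the Liu setting.
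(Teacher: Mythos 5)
Your proposal is correct and follows essentially the same route as the paper, which simply invokes Lemma \ref{lem:spectralradius} to convert the spectral-radius bound into a norm bound and then copies the convergence argument of \cite[Corollary 2.1.5]{Berkovich}; your write-up just makes that argument explicit. (One cosmetic point: the fact that \(\A\langle r_\bullet^{-1}T_\bullet\rangle\) is again \(k\)-Liu is a routine verification in the spirit of Lemma \ref{lem:basechangeliu} rather than a consequence of Lemma \ref{lem:ratcombas}, which concerns base change along a field extension.)
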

\begin{proof}
	Using Lemma \ref{lem:spectralradius}, one can copy the argument from \cite[Corollary 2.1.5]{Berkovich}.
\end{proof}
\begin{proposition}\label{prop:ratunivprop}
	Let \(\A\) be a (strictly) \(k\)-Liu algebra and denote \(X=\spb(\A)\). A (strictly) rational domain \(X(r_\bullet^{-1}f_\bullet/g)\) is a (strictly) Liu domain of \(X\) with as coordinate ring the (strictly) rational localisation \(\A\langle r_\bullet^{-1}f_\bullet/g\rangle\).
\end{proposition}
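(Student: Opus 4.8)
The plan is to exhibit $A_U \vcentcolon= \A\langle r_\bullet^{-1}f_\bullet/g\rangle$, together with the canonical morphism $\iota_U:\A\to A_U$, as the datum making $X(r_\bullet^{-1}f_\bullet/g)$ a (strictly) Liu domain in the sense of Definition \ref{def:liudomain}. By Remark \ref{rem:faithflat} the rational localisation $A_U$ is again a (strictly) \(k\)-Liu algebra, and by Proposition \ref{prop:ratdesc} the map \(\spb(\iota_U)\) identifies \(\spb(A_U)\) homeomorphically with \(X(r_\bullet^{-1}f_\bullet/g)\), so that \(U=\spb(\iota_U)(\spb(A_U))\) holds. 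I would also record that \(\iota_U(g)\) is a unit in \(A_U\): writing \(a_0g+\sum_i a_if_i=1\) in \(\A\) and using \(\iota_U(f_i)=\iota_U(g)T_i\), one obtains \(\iota_U(g)(a_0+\sum_i a_iT_i)=1\); moreover \(T_i=\iota_U(f_i)\iota_U(g)^{-1}\) with \(\rho_{A_U}(T_i)\leq r_i\).

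It then remains to verify the universal property, the proof of which parallels the affinoid case. One implication is immediate: if \(\varphi:\A\to\B\) factors as \(\psi\circ\iota_U\), then \(\spb(\varphi)=\spb(\iota_U)\circ\spb(\psi)\) has image in \(\mathrm{im}(\spb(\iota_U))=U\). For the converse, suppose \(\spb(\varphi)\) factors through \(U\), i.e. \(|\varphi(f_i)(y)|\leq r_i|\varphi(g)(y)|\) for every \(y\in\spb(\B)\) and every \(i\). The crucial step is to show that \(\varphi(g)\) is invertible in \(\B\). It is nonvanishing on \(\spb(\B)\): since \(g,f_1,\ldots,f_n\) generate the unit ideal, so do their images, hence they cannot all vanish at a point \(y\); but the inequalities force every \(\varphi(f_i)(y)\) to vanish whenever \(\varphi(g)(y)=0\), a contradiction. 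To promote this to invertibility I would pass to a rational atlas \(\B_1,\ldots,\B_m\) of \(\B\): in each \(k\)-affinoid \(\B_j\) the image of \(\varphi(g)\) is nonvanishing on \(\spb(\B_j)\subseteq\spb(\B)\), hence invertible by the affinoid theory, so \(\varphi(g)\) becomes a unit in \(\prod_j\B_j\). Since \(\B\to\prod_j\B_j\) is faithfully flat (Remark \ref{rem:faithflat}), invertibility descends: \(\B/\varphi(g)\B\) base-changes to \(\bigl(\prod_j\B_j\bigr)/\varphi(g)\bigl(\prod_j\B_j\bigr)=0\), whence \(\B/\varphi(g)\B=0\).

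With \(\varphi(g)\) invertible, set \(b_i\vcentcolon=\varphi(f_i)\varphi(g)^{-1}\in\B\). The hypothesis gives \(|b_i(y)|=|\varphi(f_i)(y)|/|\varphi(g)(y)|\leq r_i\) for all \(y\), so \(\rho_\B(b_i)=\max_{y}|b_i(y)|\leq r_i\). By the relative Tate universal property (Corollary \ref{cor:reltateuniprop}) there is a unique morphism \(\Phi:\A\langle r_\bullet^{-1}T_\bullet\rangle\to\B\) of \(k\)-Liu algebras extending \(\varphi\) with \(\Phi(T_i)=b_i\). Then \(\Phi(gT_i-f_i)=\varphi(g)b_i-\varphi(f_i)=0\), so \(\Phi\) annihilates the ideal \((gT_1-f_1,\ldots,gT_n-f_n)\) and, being continuous, its closure as well; hence \(\Phi\) descends to \(\psi:A_U\to\B\) with \(\psi\circ\iota_U=\varphi\). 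Uniqueness is forced: \(\psi\) agrees with \(\varphi\) on \(\iota_U(\A)\), while \(\psi(T_i)\varphi(g)=\psi(\iota_U(f_i))=\varphi(f_i)\) together with the invertibility of \(\varphi(g)\) pins down \(\psi(T_i)=b_i\), and these data topologically generate \(A_U\). The strict case (\(r_i=1\)) is verbatim the same, working throughout with strictly rational localisations and strict \(k\)-Liu morphisms.

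I expect the promotion of nonvanishing of \(\varphi(g)\) to genuine invertibility to be the main obstacle, since for a general \(k\)-Banach function algebra a function that does not vanish on the spectrum need not be a unit. This is exactly where the local-to-global nature of Liu algebras, encoded in the faithful flatness of \(\iota_\B:\B\to\prod_j\B_j\), is indispensable, and one should ensure this flatness is available independently of the present statement so as to avoid circularity; everything else reduces to the relative Tate property and the affinoid case.
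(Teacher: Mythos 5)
Your overall strategy coincides with the paper's: identify the underlying set via Proposition \ref{prop:ratdesc}, extend \(\varphi\) to \(\A\langle r_\bullet^{-1}T_\bullet\rangle\) via Corollary \ref{cor:reltateuniprop} after bounding \(\rho_\B(\varphi(f_i)/\varphi(g))\), and descend to the quotient because the ideal \((gT_\bullet-f_\bullet)\) lies in the (closed) kernel of \(\Phi\). Your treatment of uniqueness and of the easy direction is fine, and your descent to the quotient via continuity of \(\Phi\) is an acceptable variant of the paper's appeal to Proposition \ref{prop:closedrationalidealsliu}.

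The one step where you diverge is the invertibility of \(\varphi(g)\), and there your argument is circular as written. You invoke faithful flatness of \(\B\to\prod_j\B_j\) from Remark \ref{rem:faithflat}, but that remark forward-references Corollary \ref{cor:faithful}, which sits downstream of the present proposition: Corollary \ref{cor:faithful} rests on Proposition \ref{prop:flat}, hence on Corollary \ref{cor:noetherian}, hence on Theorem \ref{thm:isocatliu}, whose proof uses Lemma \ref{lem:slightlyfiner}, which in turn uses the fact that a rational domain is a Liu domain --- i.e.\ Proposition \ref{prop:ratunivprop} itself. You flag this risk yourself but do not resolve it. The good news is that your stated worry --- that in a general \(k\)-Banach (function) algebra an element nonvanishing on the spectrum need not be a unit --- is unfounded: for \emph{any} commutative Banach ring, an element is invertible if and only if it vanishes nowhere on the Berkovich spectrum (this is \cite[Corollary 1.2.4]{Berkovich}, a consequence of the nonemptiness of the spectrum of the quotient by a maximal ideal, maximal ideals being closed). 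Your nonvanishing argument from the unit-ideal condition together with the inequalities \(|\varphi(f_i)(y)|\leq r_i|\varphi(g)(y)|\) therefore already yields invertibility directly, with no detour through a rational atlas or flatness; replacing your descent argument by this citation closes the gap and recovers the paper's (terse) \lq\lq one easily verifies that \(\varphi(g)\) is a unit\rq\rq.
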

\begin{proof}
	Denote by \(\iota:\A\to\A\langle r_1^{-1}f_1/g,\ldots,r_n^{-1}f_n/g\rangle\) the canonical morphism. It follows directly from Proposition \ref{prop:ratdesc} that \(\spb(\iota)(\spb(\A\langle r_\bullet^{-1}f_\bullet/g\rangle))=X(r_\bullet^{-1}f_\bullet/g)\). To see  that \(\iota:\A\to\A\langle r_\bullet^{-1}f_\bullet/g\rangle\) represents all (strictly) \(k\)-Liu morphisms into \(U\), let \(\varphi:\A\to\B\) be a morphism of (strictly) \(k\)-Liu algebras with \(\spb(\varphi)\) factorising through \(X(r_\bullet^{-1}f_\bullet/g)\), and take \(x\in\spb(B)\). One easily verifies that \(\varphi(g)\) is a unit in \(\B\) and that \(|\varphi(f_i)/\varphi(g)(x)|\leq r_i\) for each \(i\in\{1,\ldots,n\}\). Hence, \(\rho_\B(\varphi(f_i)/\varphi(g))\leq r_i\) for each \(i\in\{1,\ldots,n\}\), so that Corollary \ref{cor:reltateuniprop} yields a unique morphism \(\Phi:\A\langle r_\bullet^{-1}T_\bullet\rangle\to\B\) extending \(\varphi\) with \(\Phi(T_i)=\varphi(f_i)/\varphi(g)\) for all \(i\in\{1,\ldots,n\}\). The ideal \((gT_1-f_1,\ldots,gT_n-f_n)\) belongs to the kernel of \(\Phi\), so by Proposition \ref{prop:closedrationalidealsliu}, \(\varphi\) factorises through \(\A\langle r_\bullet^{-1}f_\bullet/g\rangle\).
\end{proof}

Next, we establish some auxiliary results concerning strictly \(k\)-Liu algebras. For a \(k\)-Banach algebra \(\A\), we will denote by \(\mspec(\A)\) the collection of maximal ideals in \(\A\).

\begin{lemma}\label{lem:finext}
	Assume that \(k\) is non-trivially valued. Let \(\A\) be a strictly \(k\)-Liu algebra and \(\varphi:\B\to\A\) a morphism of \(k\)-algebras. Then for any \(\mathfrak{m}\in\mspec(\A)\), \(\A/\mathfrak{m}\) is a finite extension of \(k\) and \(\varphi^{-1}(\mathfrak{m})\) is a maximal ideal in \(\B\).
\end{lemma}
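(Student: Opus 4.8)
The plan is to reduce both assertions to the classical theory of strictly \(k\)-affinoid algebras via a strictly rational atlas \(\A_1,\dots,\A_n\) of \(\A\), with localisation maps \(\iota_i:\A\to\A_i\). The second assertion will follow from the first by elementary commutative algebra, so the real content is to show that \(\A/\mathfrak m\) is finite over \(k\). For this I would find an index \(i\) and a maximal ideal \(\mathfrak m_i\subset\A_i\) with \(\iota_i^{-1}(\mathfrak m_i)=\mathfrak m\); then \(\A/\mathfrak m\) embeds into \(\A_i/\mathfrak m_i\), and the latter is a finite extension of \(k\) because \(\A_i\) is strictly \(k\)-affinoid.

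First I would establish the key descent step: there exists \(i\) with \(\mathfrak m\A_i\neq\A_i\). Suppose not. Then for each \(i\) one may write \(1\) as a finite combination of elements \(\iota_i(m)\) with \(m\in\mathfrak m\) and coefficients in \(\A_i\); collecting the finitely many \(m\)'s occurring over all \(i\) yields \(h_1,\dots,h_M\in\mathfrak m\) whose images generate the unit ideal in every \(\A_i\). For each \(i\), writing \(1=\sum_l b_l^{(i)}h_l\) in \(\A_i\) gives the uniform bound \(\max_l|h_l(y)|\geq\big(\max_l\rho_{\A_i}(b_l^{(i)})\big)^{-1}=:\varepsilon_i>0\) for all \(y\in\spb(\A_i)\). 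By Proposition \ref{prop:ratdesc} the embedding \(\spb(\A_i)\hookrightarrow\spb(\A)\) preserves the values \(|h_l(\cdot)|\), and since the \(\spb(\A_i)\) cover \(\spb(\A)\) we obtain \(\max_l|h_l(x)|\geq\varepsilon:=\min_i\varepsilon_i>0\) for every \(x\in\spb(\A)\). Consequently the closed ideal \(\Cl(h_1,\dots,h_M)\) has empty Berkovich spectrum, so \(\A/\Cl(h_1,\dots,h_M)=0\), as a nonzero \(k\)-Banach algebra has nonempty spectrum \cite[Theorem 1.2.1]{Berkovich}; thus \((h_1,\dots,h_M)\) is dense in \(\A\). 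Picking \(j\in(h_1,\dots,h_M)\) with \(\norm{1-j}_\A<1\), the element \(j\) is invertible by the usual geometric series, whence \(1\in(h_1,\dots,h_M)\subseteq\mathfrak m\), a contradiction. This descent argument is the main obstacle, and the crucial point is that genuine (not merely dense) membership in the ideal is recovered from invertibility.

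With such an \(i\) fixed, \(\mathfrak m\A_i\) is a proper ideal of the Noetherian algebra \(\A_i\), hence contained in some maximal ideal \(\mathfrak m_i\). Then \(\mathfrak m\subseteq\iota_i^{-1}(\mathfrak m_i)\subsetneq\A\), so maximality of \(\mathfrak m\) forces \(\iota_i^{-1}(\mathfrak m_i)=\mathfrak m\) and therefore \(\A/\mathfrak m\hookrightarrow\A_i/\mathfrak m_i\). Since \(\A_i\) is strictly \(k\)-affinoid, \(\A_i/\mathfrak m_i\) is a finite extension of \(k\), and a \(k\)-subalgebra of a finite-dimensional \(k\)-vector space is itself finite-dimensional; hence \(\A/\mathfrak m\) is a finite extension of \(k\).

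Finally, for the second assertion, set \(\mathfrak q=\varphi^{-1}(\mathfrak m)\). The induced map \(\B/\mathfrak q\hookrightarrow\A/\mathfrak m\) is an injection of \(k\)-algebras into a field that is finite, hence algebraic, over \(k\). Thus \(\B/\mathfrak q\) is an integral domain all of whose elements are integral over \(k\), and such a domain is a field; therefore \(\mathfrak q\) is maximal in \(\B\).
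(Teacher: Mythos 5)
Your proof is correct, and its overall skeleton coincides with the paper's: find a chart \(\A_i\) of a strictly rational atlas in which \(\mathfrak{m}\) generates a proper ideal, enlarge to a maximal ideal \(\mathfrak{m}_i\), embed \(\A/\mathfrak{m}\hookrightarrow\A_i/\mathfrak{m}_i\), invoke the affinoid Nullstellensatz \cite[Corollary 6.1.2/3]{BGR}, and finish with the observation that a domain finite over \(k\) is a field. Where you diverge is in the key existence step. The paper gets it in two lines: maximal ideals in a complete normed ring are closed \cite[Corollary 1.2.4/5]{BGR}, so \(\A/\mathfrak{m}\) with its residue norm is a nonzero \(k\)-Banach algebra; by \cite[Theorem 1.2.1]{Berkovich} it has a point \(x\in\spb(\A/\mathfrak{m})\subset\spb(\A)\), and any chart containing \(x\) works, since \(x\) kills the ideal generated by the image of \(\mathfrak{m}\) there. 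You instead argue by contradiction with a quantitative lower bound on \(\max_l|h_l(\cdot)|\), deduce that \(\Cl(h_1,\dots,h_M)=\A\) from emptiness of the spectrum, and recover \(1\in(h_1,\dots,h_M)\) from density via the geometric series. This is sound --- and your remark that genuine membership must be recovered from mere density is exactly the right thing to worry about --- but note that your density-plus-invertibility step is precisely the proof that proper ideals have proper closure, i.e.\ you are re-deriving \cite[Corollary 1.2.4/5]{BGR} in situ; citing it directly, as the paper does, shortens the argument considerably. Both routes ultimately rest on the same nontrivial input, namely the nonemptiness of the Berkovich spectrum of a nonzero \(k\)-Banach algebra.
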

\begin{proof}
	Recall that maximal ideals in a complete normed ring are always closed \cite[Corollary 1.2.4/5]{BGR}. Hence, we can equip \(\A/\mathfrak{m}\) with its residue norm and take some \(x\in\spb(\A/\mathfrak{m})\subset\spb(\A)\). Now let \(\A'\) be a strictly \(k\)-affinoid strictly rational localisation of \(\A\) such that \(x\in\spb(\A')\) and let \(\mathfrak{m}'\) be a maximal ideal in \(\A'\) containing the image of \(\mathfrak{m}\) under the map \(\A\to\A'\). This induces an injection \(\A/\mathfrak{m}\hookrightarrow\A'/\mathfrak{m}'\). Since \(\A'/\mathfrak{m}'\) is a finite extension of \(k\) by \cite[Corollary 6.1.2/3]{BGR}, it follows that \(\A/\mathfrak{m}\) is as well. As any integral domain of finite dimension over a field is itself a field, we conclude that \(\varphi^{-1}(\mathfrak{m})\) is a maximal ideal in \(\B\).
\end{proof}
\begin{proposition}\label{prop:sprinspb}
	Assume that \(k\) is non-trivially valued. For a strictly \(k\)-Liu algebra \(\A\), we can equip \(\mspec(\A)\) with a canonical topology such that we have a functorial canonical inclusion map \(\mspec(\A)\hookrightarrow\spb(\A)\) which is a homeomorphism onto an everywhere dense subset \(\spr(\A)\) of \(\spb(\A)\).
\end{proposition}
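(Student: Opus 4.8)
The plan is to attach to each maximal ideal a Berkovich point, assemble these into the inclusion map, and then reduce the density statement to the affinoid case via a strictly rational atlas.

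First I would construct the map. Given \(\mathfrak{m}\in\mspec(\A)\), Lemma \ref{lem:finext} shows that \(\A/\mathfrak{m}\) is a finite extension of \(k\), and as a finite extension of a complete non-Archimedean field it carries a unique absolute value extending that of \(k\). Since maximal ideals in a complete normed ring are closed, this absolute value is equivalent to the residue norm, so the composite \(\A\to\A/\mathfrak{m}\to\bR_{\geq0}\) is a bounded multiplicative seminorm \(x_\mathfrak{m}\), i.e. a point of \(\spb(\A)\). Its kernel as a seminorm is exactly \(\mathfrak{m}\), so \(\mathfrak{m}\mapsto x_\mathfrak{m}\) is injective. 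I would then define the canonical topology on \(\mspec(\A)\) as the weakest topology making \(\mathfrak{m}\mapsto|f(x_\mathfrak{m})|\) continuous for every \(f\in\A\); this is by construction the initial topology for the inclusion, so the inclusion is tautologically a homeomorphism onto its image \(\spr(\A)\) equipped with the subspace topology from \(\spb(\A)\). Functoriality is immediate: for a morphism \(\varphi:\A\to\B\) of strictly \(k\)-Liu algebras the preimage \(\varphi^{-1}(\mathfrak{m})\) is maximal by Lemma \ref{lem:finext}, and a direct comparison of kernels shows \(\spb(\varphi)(x_\mathfrak{m})=x_{\varphi^{-1}(\mathfrak{m})}\).

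The main content, and the hard part, is density of \(\spr(\A)\) in \(\spb(\A)\). I would fix a strictly rational atlas \(\A_1,\ldots,\A_n\) of \(\A\), so that the strictly \(k\)-affinoid rational domains \(\spb(\A_i)\) cover \(\spb(\A)\) and, by Proposition \ref{prop:ratdesc}, each \(\spb(\A_i)\to\spb(\A)\) is a homeomorphism onto the rational domain it cuts out. For strictly \(k\)-affinoid \(\A_i\) the set \(\spr(\A_i)\) of rigid points is dense in \(\spb(\A_i)\) by the classical theory (cf. \cite{Berkovich}). The crucial compatibility is that rigid points transfer correctly: for \(\mathfrak{n}\in\mspec(\A_i)\), the pullback \(\mathfrak{m}=\iota^{-1}(\mathfrak{n})\) along \(\iota:\A\to\A_i\) is maximal by Lemma \ref{lem:finext}, the induced inclusion \(\A/\mathfrak{m}\hookrightarrow\A_i/\mathfrak{n}\) respects the unique norms extending that of \(k\), and hence \(\spb(\iota)(x_\mathfrak{n})=x_\mathfrak{m}\). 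Thus the inclusion of the rational domain carries \(\spr(\A_i)\) into \(\spr(\A)\).

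Finally I would assemble the pieces. Writing \(Y=\spb(\A_i)\) as a subspace of \(X=\spb(\A)\) and \(S\subseteq Y\) for the image of \(\spr(\A_i)\), density of \(\spr(\A_i)\) in \(\spb(\A_i)\) gives \(\overline{S}^{\,Y}=Y\), and since \(\overline{S}^{\,Y}=\overline{S}^{\,X}\cap Y\) we obtain \(Y\subseteq\overline{S}^{\,X}\subseteq\overline{\spr(\A)}^{\,X}\), using \(S\subseteq\spr(\A)\). Taking the union over the finite cover yields \(\spb(\A)=\bigcup_i\spb(\A_i)\subseteq\overline{\spr(\A)}\), which proves density. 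I expect the only delicate points to be bookkeeping: confirming via the uniqueness of the extended absolute value that the two seminorms agree, so that rigid points match up across the atlas, and invoking the correct form of the classical density statement for strictly affinoid spectra.
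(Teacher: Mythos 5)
Your proposal is correct. The construction of the inclusion and the functoriality argument coincide with the paper's: both rest on Lemma \ref{lem:finext} to see that \(\A/\mathfrak{m}\) is a finite extension of \(k\), hence carries a (unique, bounded) multiplicative absolute value whose pullback is a point of \(\spb(\A)\) with kernel \(\mathfrak{m}\). Where you diverge is in the two remaining items. For the topology, the paper equips \(\mspec(\A)\) with the intrinsic canonical topology of \cite[\S7.2.1]{BGR} and must then \emph{check} that the inclusion is a homeomorphism, whereas you simply declare the topology to be the initial one for the inclusion, making that part tautological; this is a legitimate reading of the statement and suffices for everything the paper later uses (\(\spr\) only ever enters through its subspace topology and density), but it forgoes the extra content that this topology agrees with BGR's intrinsically defined one. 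For density, the paper instructs the reader to rerun the argument of \cite[Proposition 2.1.15]{Berkovich} directly on \(\spb(\A)\) (producing a rigid point in any Laurent-type neighbourhood of a given point), while you reduce to the affinoid case via a strictly rational atlas: rigid points are dense in each \(\spb(\A_i)\) by the classical theory, they are carried into \(\spr(\A)\) because the unique absolute value on \(\A_i/\mathfrak{n}\) restricts to the unique one on \(\A/\iota^{-1}(\mathfrak{n})\), and a closure argument over the finite cover finishes the proof. Your route is arguably more in the spirit of the definition of Liu algebras and avoids re-proving a Nullstellensatz-type nonemptiness statement for strictly \(k\)-Liu algebras, at the cost of leaning on Proposition \ref{prop:ratdesc} to identify \(\spb(\A_i)\) with a subspace of \(\spb(\A)\); both arguments are sound.
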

\begin{proof}
	Using the fact from Lemma \ref{lem:finext} that \(\A/\mathfrak{m}_x\) is a finite extension of \(k\) for every \(\mathfrak{m}_x\in\mspec(\A)\), one easily verifies that their residue norms are multiplicative. Hence, we obtain a canonical inclusion map \(\mspec(\A)\hookrightarrow\spb(\A)\) via pullback, and \(\mspec(\A)\) can be equipped with the same canonical topology as is defined in \cite[§7.2.1]{BGR} for strictly \(k\)-affinoid algebras. By Lemma \ref{lem:finext}, a morphism \(\A\to\B\) of strictly \(k\)-Liu algebras induces a canonical map \(\mspec(\B)\to\mspec(\A)\), making the canonical inclusion map above functorial. To see that it is also a homeomorphism onto an everywhere dense subset \(\spr(\A)\) of \(\spb(\A)\), one can follow the same argument as in \cite[Proposition 2.1.15]{Berkovich}.
\end{proof}
\begin{definition}
	Assume that \(k\) is non-trivially valued and let \(\A\) be a strictly \(k\)-Liu algebra. The elements of the everywhere dense subset \(\spr(\A)\subset\spb(\A)\) are called \emph{rigid points}. For a morphism \(\varphi:\A\to\B\) of strictly \(k\)-Liu algebras, we denote the restriction of \(\spb(\varphi)\) to the set of rigid points of \(\B\) by \(\spr(\varphi):\spr(\B)\to\spr(\A)\). This is well-defined due to the functoriality of the canonical inclusion map \(\mspec(\A)\hookrightarrow\spb(\A):\mathfrak{m}_x\mapsto x\).
\end{definition}
\begin{proposition}\label{prop:maxidealprop}
	Assume that \(k\) is non-trivially valued. Let \(\A\) be a strictly \(k\)-Liu algebra and let \(U\) be a strictly Liu domain of \(X=\spb(\A)\). Let \(x\in\spr(\A_U)\). Then
	\begin{enumerate}[label=(\roman*)]
		\item \(\spr(\iota_U)\) is injective and satisfies \(\spr(\iota_U)(\spr(\A_U))=U\cap\spr(\A)\);
		\item the map \(\iota_U\) induces an isomorphism \(\A/\mathfrak{m}_{\spr(\iota_U)(x)}\xrightarrow{\sim}\A_U/\mathfrak{m}_x\);
		\item we have \(\mathfrak{m}_x=\iota_U(\mathfrak{m}_{\spr(\iota_U)(x)})\A_U\).
	\end{enumerate}
\end{proposition}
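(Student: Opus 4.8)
The plan is to deduce all three assertions from the universal property of the Liu domain $U$ (Definition \ref{def:liudomain}) together with the finiteness statement of Lemma \ref{lem:finext}. Throughout I write $y:=\spr(\iota_U)(x)$ and $\mathfrak{m}_y:=\mathfrak{m}_{y}=\iota_U^{-1}(\mathfrak{m}_x)$, which is maximal by Lemma \ref{lem:finext}; set $\kappa(x):=\A_U/\mathfrak{m}_x$ and $\kappa(y):=\A/\mathfrak{m}_y$, both finite extensions of $k$. The map $\iota_U$ induces an injection of fields $j:\kappa(y)\hookrightarrow\kappa(x)$, and proving it surjective is exactly assertion (ii); I would treat (ii) first, then read off (i), and finally attack (iii).

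For (ii), I would test the universal property against the $k$-affinoid (hence $k$-Liu) algebra $\kappa(x)$. Since $y\in\spb(\iota_U)(\spb(\A_U))=U$, the quotient $q:\A\to\kappa(y)$ has $\spb(q)$ supported on $\{y\}\subseteq U$, so the universal property yields a unique $\psi:\A_U\to\kappa(y)$ with $\psi\circ\iota_U=q$. Now both the quotient $\pi_x:\A_U\to\kappa(x)$ and $j\circ\psi$ are morphisms $\A_U\to\kappa(x)$ whose precomposition with $\iota_U$ equals the factorisation $\A\xrightarrow{q}\kappa(y)\xrightarrow{j}\kappa(x)$ of $\pi_x\circ\iota_U$; the uniqueness clause of the universal property forces $j\circ\psi=\pi_x$. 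As $\pi_x$ is surjective, so is $j$, whence $j$ is an isomorphism, proving (ii).

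Assertion (i) then follows quickly. The inclusion $\spr(\iota_U)(\spr(\A_U))\subseteq U\cap\spr(\A)$ is immediate from $U=\spb(\iota_U)(\spb(\A_U))$ and Lemma \ref{lem:finext}; conversely, for $y\in U\cap\spr(\A)$ the morphism $q:\A\to\kappa(y)$ factors as $\pi\circ\iota_U$ for some $\pi:\A_U\to\kappa(y)$ whose kernel is a maximal ideal lying over $\mathfrak{m}_y$, producing a rigid preimage. For injectivity, if $x_1,x_2\in\spr(\A_U)$ both map to $y$, then by (ii) the quotients $\A_U\to\kappa(x_i)\cong\kappa(y)$ both restrict to $q$ along $\iota_U$, so the uniqueness clause again forces them to coincide, hence $\mathfrak{m}_{x_1}=\mathfrak{m}_{x_2}$.

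The genuine obstacle is (iii): ruling out that the ideal $\mathfrak{a}:=\iota_U(\mathfrak{m}_y)\A_U\subseteq\mathfrak{m}_x$ is strictly smaller, equivalently that the fibre $\A_U\widehat{\otimes}_\A\kappa(y)=\A_U/\mathfrak{a}$ carries no infinitesimal part beyond $\kappa(y)$. I would first settle the case where $\A_U=\A\langle r_\bullet^{-1}f_\bullet/g\rangle$ is a rational localisation: using (ii) to lift $f_i/g\bmod\mathfrak{m}_x$ to some $h_i\in\A$ with $gh_i-f_i\in\mathfrak{m}_y$, one gets $g(T_i-h_i)=f_i-gh_i\in\mathfrak{a}$, and since $g$ is a unit in $\A_U$ this yields $T_i-h_i\in\mathfrak{a}$; as the $T_i$ topologically generate $\A_U$ over $\A$, the image of $\kappa(y)$ is dense in $\A_U\widehat{\otimes}_\A\kappa(y)$, and being finite-dimensional over the complete field $\kappa(y)$ it is also closed, so $\A_U\widehat{\otimes}_\A\kappa(y)=\kappa(y)$ and $\mathfrak{m}_x=\mathfrak{a}$. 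For a general Liu domain I would reduce to the $k$-affinoid setting: choosing affinoid members $\mathcal{W}\ni y$ and $\mathcal{V}\ni x$ from the respective rational atlases, the restricted domains become honest $k$-affinoid subdomains (the Liu universal property is stronger than the affinoid one), so the classical identity for affinoid subdomains $\mathfrak{m}_x^{\mathcal{V}}=\mathfrak{m}_y^{\mathcal{W}}\mathcal{V}$ applies (cf. \cite{Berkovich}); combining this with the rational-localisation case to descend $\mathfrak{m}_y^{\mathcal{W}}=\mathfrak{m}_y\mathcal{W}$ and $\mathfrak{m}_x^{\mathcal{V}}=\mathfrak{m}_x\mathcal{V}$, I obtain that $\mathfrak{m}_x$ and $\mathfrak{a}$ generate the same ideal after each member of a rational atlas of $\A_U$. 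Since the finitely generated module $\mathfrak{m}_x/\mathfrak{a}$ is supported only at $x$, it dies after every atlas base change, and faithful flatness of the atlas (Remark \ref{rem:faithflat}) then forces $\mathfrak{m}_x/\mathfrak{a}=0$. The main difficulty throughout is this last point---propagating the residue-field isomorphism of (ii) to a genuine equality of ideals, i.e.\ excluding nilpotent thickenings in the fibre---which is precisely where closedness of the relevant ideals, the explicit rational computation, and the faithfully flat descent must all be marshalled together.
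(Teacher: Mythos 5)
Your parts (i) and (ii) are correct and essentially coincide with the paper's route: both run the argument of \cite[Proposition 7.2.2/1]{BGR} through the universal property of \(U\), applied to the residue fields \(\A/\mathfrak{m}_y\) and \(\A_U/\mathfrak{m}_x\) viewed as strictly \(k\)-Liu algebras via Lemma \ref{lem:finext}. The problems are concentrated in your treatment of (iii), where you depart from the paper.

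First, in the rational-localisation case, the relation \(T_i-h_i\in\mathfrak{a}\) only shows that the image of \(\A[T_1,\ldots,T_n]\) lies in \(\iota_U(\A)+\mathfrak{a}\); since the \(T_i\) only \emph{topologically} generate \(\A_U\) over \(\A\), you obtain that \(\iota_U(\A)+\mathfrak{a}\) is dense in \(\A_U\), hence that \(\Cl(\mathfrak{a})=\mathfrak{m}_x\), not the asserted algebraic equality \(\mathfrak{a}=\mathfrak{m}_x\) (your ``finite-dimensional hence closed'' step takes place in \(\A_U/\Cl(\mathfrak{a})\) and cannot strip the closure). Closedness of \(\mathfrak{a}\) cannot be extracted from Noetherianity here, because Corollary \ref{cor:noetherian} is only established after the main theorem. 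Second, your descent step for a general Liu domain invokes faithful flatness of the rational atlas via Remark \ref{rem:faithflat}; but that remark merely forward-references Corollary \ref{cor:faithful}, whose proof uses Proposition \ref{prop:defstrictliu} --- and hence part (i) of the very proposition you are proving --- as well as Proposition \ref{prop:flat}, which itself relies on Corollary \ref{cor:noetherian}. This makes the argument circular within the paper's architecture. You also tacitly use that \(\mathfrak{m}_x/\mathfrak{a}\) is finitely generated (Noetherianity again), do not verify that \(\mathfrak{a}\) becomes the unit ideal on the atlas members not containing \(x\) (which is what the vanishing of \((\mathfrak{m}_x/\mathfrak{a})\otimes_{\A_U}\A_{U,i}\) actually requires there), and do not arrange \(\mathcal{V}\subseteq\mathcal{W}\), without which the classical identity \(\mathfrak{m}_x^{\mathcal{V}}=\mathfrak{m}_y^{\mathcal{W}}\A_{\mathcal{V}}\) does not apply. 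The paper sidesteps all of this: it chooses a strictly affinoid domain \(V\ni y\) of \(U\), notes that \(V\) is also a strictly affinoid domain of \(X\), and computes \(\A_U/\mathfrak{m}_y\A_U\cong\A_V\otimes_\A\A/\mathfrak{m}_y\cong\A/\mathfrak{m}_y\) using the classical affinoid statement for \(V\), where Noetherianity of affinoid algebras handles the algebraic ideal; this exhibits \(\mathfrak{m}_y\A_U\) as a maximal ideal contained in \(\mathfrak{m}_x\), giving (ii) and (iii) simultaneously.
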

\begin{proof}
	By Lemma \ref{lem:finext}, we have for an arbitrary element \(y\in U\cap\spr(\A)\) that \(\A/\mathfrak{m}_y\) is a finite extension of \(k\), and thus a strictly \(k\)-Liu algebra which we can equip with a residue norm induced by \(\A\). One can now apply the universal property of the strictly Liu domain \(U\) to the morphism \(\A\to\A/\mathfrak{m}_y\) and give the same argument as in \cite[Proposition 7.2.2/1]{BGR} in case \(U\) is a strictly affinoid domain. In order to copy the argument from \cite[Proposition 7.2.2/1]{BGR} for a general Liu domain \(U\), one also needs to show that \(\A_U\to\A_U/\mathfrak{m}_y\A_U\) is a morphism of strictly \(k\)-Liu algebras. To see that this is the case, let \(V\) be a strictly affinoid domain of \(U=\spb(\A_U)\) containing \(y\) and defined by the morphism \(\A_U\to\A_V\). One easily verifies that \(V\) is also a strictly affinoid domain in \(X\) defined by the composition morphism \(\A\to\A_V\). Hence, we get that
	\[\frac{\A_U}{\mathfrak{m}_y\A_U}\cong\A_U\otimes_\A\frac{\A}{\mathfrak{m}_y}\cong\A_U\otimes_\A\left(\A_V\otimes_\A\frac{\A}{\mathfrak{m}_y}\right)\cong\left(\A_U\otimes_\A\A_V\right)\otimes_\A\frac{\A}{\mathfrak{m}_y}\cong\A_V\otimes_\A\frac{\A}{\mathfrak{m}_y}\cong\frac{\A}{\mathfrak{m}_y}\]
	is a finite extension of \(k\). Equipping \(\A_U/\mathfrak{m}_y\A_U\) with its residue norm, it follows that \(\A_U\to\A_U/\mathfrak{m}_y\A_U\) is a morphism of strictly \(k\)-Liu algebras.
\end{proof}
\begin{remark}
	The version of Proposition \ref{prop:maxidealprop} for strictly \(k\)-affinoid algebras, namely \cite[Proposition 7.2.2/1]{BGR}, does not only treat the maximal ideal \(\mathfrak{m}_x\), but also its powers \(\mathfrak{m}_x^n\) for all \(n\in\bZ_{>0}\). After having proven Corollary \ref{cor:noetherian}, one can also obtain this generalisation here. However, at this stage, we do not yet have an argument why the powers \(\mathfrak{m}_x^n\) should be closed, and if so, why their quotients should define strictly \(k\)-Liu algebras. 
	
	Moreover, when proving that \(\A_U\to\A_U/\mathfrak{m}_y\A_U\) is a morphism of strictly \(k\)-Liu algebras, it was essential that \(U=\spb(\iota_U)(\spb(A_U))\) is part of the definition of (strictly) Liu domains in order to see that \(V\) is also a strictly Liu domain in \(X\). A posteriori, though, this directly follows from Corollary \ref{cor:noetherian} and Corollary \ref{cor:liubounded}(i). This allows us to remove the equality \(U=\spb(\iota_U)(\spb(A_U))\) from the definition of (strictly) Liu domains, and derive it as a property of (strictly) Liu domains afterwards. In the strict case, this is done by reproving Proposition \ref{prop:maxidealprop}, whereas the general case follows by the same base change argument as in \cite[Proposition 2.2.4(i)]{Berkovich} via Lemma \ref{lem:basechangeliu}.
\end{remark}
\begin{proposition}\label{prop:defstrictliu}
	Assume that \(k\) is non-trivially valued. Let \(\A\) be a \(k\)-Banach algebra and let \(\A_1,\ldots,\A_n\) be strictly rational localisations of \(\A\). Then \(\A_1,\ldots,\A_n\) constitute a strictly rational atlas of \(\A\) if and only if
	\begin{enumerate}[label=(\roman*)]
		\item each \(\A_i\) is strictly \(k\)-affinoid;
		\item the rigid rational domains \(\spr(\A_i)\) cover \(\spr(\A)\);
		\item the map \(\iota:\A\to\prod_i\A_i\) is strictly rationally injective.
	\end{enumerate}
\end{proposition}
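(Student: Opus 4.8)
The only discrepancy between the definition of a strictly rational atlas (Definition \ref{def:liualg}) and the list (i)--(iii) stated here is that the covering hypothesis is imposed on the rigid points $\spr(\A)$ rather than on all of $\spb(\A)$. Since conditions (i) and (iii) are verbatim the same in both formulations, the plan is to fix (i) and (iii) and prove that the covering of $\spb(\A)$ by the $\spb(\A_i)$ is equivalent to the covering of $\spr(\A)$ by the $\spr(\A_i)$.

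For the forward implication I would assume that $\A_1,\dots,\A_n$ form a strictly rational atlas, so that $\A$ is strictly $k$-Liu and $\spb(\A)=\bigcup_i\spb(\A_i)$. Each $\spb(\A_i)$ is an affinoid, hence Liu, domain by Proposition \ref{prop:ratunivprop}, so Proposition \ref{prop:maxidealprop}(i) identifies $\spr(\A_i)$ with $\spb(\A_i)\cap\spr(\A)$. Intersecting the covering with $\spr(\A)$ then yields $\spr(\A)=\bigcup_i\bigl(\spb(\A_i)\cap\spr(\A)\bigr)=\bigcup_i\spr(\A_i)$, which is exactly (ii').

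The substance lies in the converse. Assuming (i), (ii') and (iii), set $Y=\bigcup_i\spb(\A_i)$; as a finite union of rational domains, each cut out by non-strict inequalities, $Y$ is closed in $X=\spb(\A)$, and it suffices to prove $Y=X$. The leverage provided by rational injectivity is a separation argument: if $D$ were a nonempty strict rational domain of $X$ with $D\cap Y=\emptyset$, then for every $i$ the algebra $\A_i\widehat{\otimes}_\A\A_D$ is the coordinate ring of the empty rational domain $\spb(\A_i)\cap D$, hence zero; by (iii) the induced map $\A_D\hookrightarrow\prod_i(\A_i\widehat{\otimes}_\A\A_D)=0$ would be injective, forcing $D=\emptyset$, a contradiction. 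Thus no nonempty strict rational domain is contained in $X\setminus Y$. On the other hand, each $\A_i$ is strictly $k$-affinoid, so its rigid points are dense in $\spb(\A_i)$ and map to rigid points of $\A$; combined with (ii') this shows $\spr(\A)\subseteq Y$ and that $\spr(\A)$ is dense in $Y$. Once one knows that $\spr(\A)$ is dense in all of $X$, the inclusion $\spr(\A)\subseteq Y$ together with closedness of $Y$ gives $X=\overline{\spr(\A)}\subseteq Y$, and we are done.

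The main obstacle is precisely this last density statement, namely ruling out non-rigid points of $X$ lying outside $Y$. The separation argument disposes of any point that can be cut out from $Y$ by a strict rational domain, which is automatic as soon as the value group of $k$ is divisible; the genuine difficulty occurs for discretely valued $k$, where a non-rigid point whose coordinates have absolute values outside $|k^{\times}|$ can sit on a boundary hypersurface $\{|f^{(i)}_j|=|g^{(i)}|\}$ that no strict rational domain avoids. I would resolve this by base changing along a suitable analytic extension $\ell/k$ with divisible value group (built, say, as a completed union of the fields $k_r$), using Lemma \ref{lem:ratcombas} to recognise the $\A_i\widehat{\otimes}_k\ell$ as strict rational localisations of $\A_\ell$ and the surjectivity of $\spb(\A_\ell)\to\spb(\A)$ to transport the equality $\bigcup_i\spb(\A_i\widehat{\otimes}_k\ell)=\spb(\A_\ell)$ back down to $X$. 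The delicate point, and the step I expect to require the most care, is that strict rational injectivity over $k$ is a priori weaker than over $\ell$: one must argue that (ii') together with (iii) already guarantees injectivity of the localisation maps at the non-strict rational domains that become strict after base change, equivalently that the rigid points of $\A$ are dense in $\spb(\A)$.
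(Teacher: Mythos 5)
Your reduction of both implications to the single question of whether $\bigcup_i\spb(\A_i)$ exhausts $\spb(\A)$ is exactly the paper's strategy, and your forward direction (via Proposition \ref{prop:maxidealprop}(i)) coincides with the paper's. The converse, however, is left with a gap that you yourself flag: you identify the density of $\spr(\A)$ in $\spb(\A)$ as the main obstacle, propose a base-change detour, and then concede that its key step (descending strict rational injectivity, equivalently the very density statement you set out to prove) remains open. The paper closes the converse in one line by combining closedness of $Y=\bigcup_i\spb(\A_i)$ with the density of $\spr(\A)$ from Proposition \ref{prop:sprinspb}, a result that was available for you to cite and that you did not invoke.

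More to the point, the obstacle you describe for discretely valued $k$ is not actually there, and the separation argument you already set up finishes the proof on its own. Your concern is that a point of $X\setminus Y$ might not be separated from $Y$ by a strict domain when $|k^\times|$ is not divisible; but strict domains realise every radius in $\sqrt{|k^\times|}$ via the usual power trick, namely $\{|f|\leq|c|^{1/n}\}=X(c^{-1}f^n)$ and $\{|g|\geq|c|^{1/n}\}=X\bigl((c^{-1}g^n)^{-1}\bigr)$ for $c\in k^\times$, and $\sqrt{|k^\times|}$ is dense in $\bR_{>0}$ for every non-trivially valued $k$. Hence strictly Laurent (so strictly rational) domains form a neighbourhood basis of every point of $\spb(\A)$ --- this is the same observation that underlies \cite[Proposition 2.1.15]{Berkovich} and thus Proposition \ref{prop:sprinspb}. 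Since $Y$ is closed, any $x\in X\setminus Y$ would then lie in a nonempty strictly rational domain $D$ disjoint from $Y$, and your computation $\A_D\hookrightarrow\prod_i(\A_i\widehat{\otimes}_\A\A_D)=0$ yields the contradiction. So drop the base-change machinery: either cite Proposition \ref{prop:sprinspb} as the paper does, or add the $\sqrt{|k^\times|}$ remark, after which your separation argument is complete (and, as a bonus, shows that conditions (i) and (iii) alone already force the $\spb(\A_i)$ to cover $\spb(\A)$, so (ii) is only needed to recover the covering statement in the forward direction).
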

\begin{proof}
	For the direct implication, it suffices to show that (ii) makes sense and holds. This is done by applying Proposition \ref{prop:maxidealprop}(i) to the rational domains \(\spb(\A_1),\ldots,\spb(\A_n)\). For the converse implication, note that \(\cup_i\spb(\A_i)\) is closed and contains \(\cup_i\spr(\A_i)=\spr(\A)\), which is dense in \(\spb(\A)\) by Proposition \ref{prop:sprinspb}. Hence, we get that \(\cup_i\spb(\A_i)=\spb(\A)\).
\end{proof}
\begin{proposition}\label{prop:liudomprop}
	Let \(\varphi:\A\to\B\) be a morphism of (strictly) \(k\)-Liu algebras and denote \(X=\spb(\A)\).
	\begin{enumerate}[label=(\roman*)]
		\item If \(U\subset X\) is a (strictly) rational (resp. Weierstrass, Laurent) domain, then \(\spb(\varphi)^{-1}(U)\) is a (strictly) rational (resp. Weierstrass, Laurent) domain in \(\spb(\B)\) represented by the morphism \(\B\to\B\widehat{\otimes}_\A\A_U\).
		\item Let \(U,V\subset X\) be (strictly) rational (resp. Weierstrass, Laurent) domains. Then \(U\cap V\) is a (strictly) rational (resp. Weierstrass, Laurent) domain represented by the morphism \(\A\to\A_U\widehat{\otimes}_\A\A_V\). In particular, (strictly) Laurent domains are also (strictly) rational domains.
		\item Let \(U\subset X\) be a (strictly) rational (resp. Weierstrass) domain and \(V\subset U\). Then \(V\) is a (strictly) rational (resp. Weierstrass) domain of \(X\) if and only if it is a (strictly) rational (resp. Weierstrass) domain of \(U\).
	\end{enumerate}
\end{proposition}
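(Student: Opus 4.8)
The plan is to reduce every claim to explicit manipulations of the defining data of rational domains, combined with the universal property of rational localisations (Proposition \ref{prop:ratunivprop}) and the base-change behaviour from Lemma \ref{lem:ratcombas}. For (i), write \(U=X(r_\bullet^{-1}f_\bullet/g)\). Since \(\varphi\) is unital, \(\varphi(g),\varphi(f_1),\ldots,\varphi(f_n)\) generate the unit ideal in \(\B\), and \(\spb(\varphi)^{-1}(U)=\{y\in\spb(\B)\mid|\varphi(f_i)(y)|\le r_i|\varphi(g)(y)|\ \forall i\}\) is by definition the rational domain \(\spb(\B)(r_\bullet^{-1}\varphi(f_\bullet)/\varphi(g))\); the Weierstrass and Laurent cases are verbatim. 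By Proposition \ref{prop:ratunivprop} this is a Liu domain of \(\spb(\B)\) with coordinate ring \(\B\langle r_\bullet^{-1}\varphi(f_\bullet)/\varphi(g)\rangle\), so it remains to identify the latter with \(\B\widehat{\otimes}_\A\A_U\). I would obtain the relative base-change isomorphism \(\B\widehat{\otimes}_\A\A\langle r_\bullet^{-1}f_\bullet/g\rangle\cong\B\langle r_\bullet^{-1}\varphi(f_\bullet)/\varphi(g)\rangle\) by repeating the proof of Lemma \ref{lem:ratcombas} with the extension \(\ell/k\) replaced by \(\varphi\): right exactness of \(-\widehat{\otimes}_\A\B\), the identity \(\A\langle r_\bullet^{-1}T_\bullet\rangle\widehat{\otimes}_\A\B\cong\B\langle r_\bullet^{-1}T_\bullet\rangle\), and closedness of the defining ideal (Proposition \ref{prop:closedrationalidealsliu}) are precisely what is required.

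For (ii), I would realise the intersection explicitly. With \(U=X(r_\bullet^{-1}f_\bullet/g)\) and \(V=X(s_\bullet^{-1}h_\bullet/p)\), take denominator \(gp\) and as numerators all products \(f_ip\) (radius \(r_i\)), \(gh_j\) (radius \(s_j\)) and \(f_ih_j\) (radius \(r_is_j\)). These numerators together with \(gp\) are exactly the products of the generators \(\{g,f_\bullet\}\) with \(\{p,h_\bullet\}\), hence generate the unit ideal in \(\A\); consequently \(gp\) is pointwise nonzero on the resulting domain, which lets one divide the inequalities and check that it equals \(U\cap V\). The Weierstrass and Laurent cases follow by the same homogenisation. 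For the coordinate ring, a Liu morphism \(\A\to\C\) has spectrum in \(U\cap V\) iff it has spectrum in both \(U\) and \(V\), iff it factors through both \(\A_U\) and \(\A_V\), iff it factors through \(\A_U\widehat{\otimes}_\A\A_V\); uniqueness of representing objects then identifies the coordinate ring of \(U\cap V\) with \(\A_U\widehat{\otimes}_\A\A_V\). Finally, a Laurent domain is the intersection of the Weierstrass domain \(\{|f_i|\le r_i\}\) with the rational domains \(\{|g_j|\ge s_j\}\) (numerator \(1\), denominator \(g_j\)), so it is rational by this very part.

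The substance of (iii) is the implication that a rational domain \(V\) of \(U\) is rational in \(X\); the converse is immediate from (i), since for \(V\subset U\) one has \(\spb(\iota_U)^{-1}(V)=U\cap V=V\). For the forward direction, write \(V=U(\rho_\bullet^{-1}a_\bullet/a_0)\) with \(a_i\in\A_U\). Using that \(\A[f_\bullet/g]\) is dense in \(\A_U\), I would approximate each \(a_i\) in the sup-norm by an element \(\hat{H}_i/g^{d}\) with \(\hat{H}_i\) in the image of \(\A\); after cancelling the common factor \(g^{d}\) (legitimate since \(g\) is pointwise nonzero on \(U\)) this gives, as subsets of \(U\), the equality \(V=U(\rho_\bullet^{-1}\hat{H}_\bullet/\hat{H}_0)\). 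The step I expect to be the main obstacle is the underlying perturbation estimate. In the ultrametric setting it is true but delicate to calibrate: setting \(\varepsilon=\inf_{x\in U}\max(|a_0(x)|,\max_{i}|a_i(x)|/\rho_i)>0\) (finite and positive by compactness of \(U\) and the absence of common zeros), one checks that if \(\max_i\norm{a_i-\hat{H}_i/g^{d}}_{\sup,U}\) is smaller than \(\varepsilon\) times a factor depending only on the \(\rho_i\), then the two collections cut out the same rational domain, because ultrametrically the defining inequalities are unaffected by a perturbation dominated by the relevant values. Pinning down this uniform tolerance is the only genuinely technical point.

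It remains that \(\hat{H}_0,\ldots,\hat{H}_m\) need not generate the unit ideal in \(\A\), so \(X(\rho_\bullet^{-1}\hat{H}_\bullet/\hat{H}_0)\) is a priori not a rational domain of \(X\). I would bypass this without any Nullstellensatz: since \(a_0\) is nonzero on the compact set \(V\), the perturbed \(\hat{H}_0\) is bounded below on \(V\), so the constants \(C_j=\sup_{x\in V}|f_j(x)|/|\hat{H}_0(x)|\) and \(C_0=\sup_{x\in V}|g(x)|/|\hat{H}_0(x)|\) are finite. Adjoining \(f_1,\ldots,f_n\) (radii \(C_j\)) and \(g\) (radius \(C_0\)) as further numerators over the denominator \(\hat{H}_0\) yields a subset \(W=X(\rho_\bullet^{-1}\hat{H}_\bullet,C_\bullet^{-1}f_\bullet,C_0^{-1}g/\hat{H}_0)\) whose generating set already contains \(g,f_1,\ldots,f_n\) and therefore generates the unit ideal in \(\A\); thus \(W\) is a genuine rational domain of \(X\). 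By construction the extra inequalities hold throughout \(V\) and force nothing new there, so \(U\cap W=V\), and part (ii) shows \(V=U\cap W\) is rational in \(X\). The Weierstrass case is easier: one may take \(\hat{H}_0=1\), no unit-ideal issue arises, and \(V=U\cap X(\rho_\bullet^{-1}\hat{H}_\bullet)\) exhibits \(V\) as an intersection of two Weierstrass domains.
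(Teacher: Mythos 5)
Your argument is correct and follows essentially the same route as the paper, which simply defers to the corresponding proofs in \cite[Propositions 7.2.3/6--7 and Theorem 7.2.4/2]{BGR} that you have reconstructed in the Liu setting (using Proposition \ref{prop:closedrationalidealsliu} for the base-change identification in (i) and the perturbation/approximation argument for (iii)); your variant of (iii), realising \(V\) as \(U\cap W\) and then invoking (ii), is an acceptable substitute for BGR's direct construction. The only loose end is the strict case of (iii): the radii \(C_j\) you introduce need not equal \(1\), so to keep \(W\) \emph{strictly} rational one should rescale the adjoined numerators \(f_j\) and \(g\) by constants \(c_j\in k^\times\) with \(|c_j|\le C_j^{-1}\) (available since \(k\) is non-trivially valued whenever ``strict'' is in force), which changes nothing in the containments \(V\subseteq W\) and \(U\cap W\subseteq V\).
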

\begin{proof}
	This is verified in the same way as in \cite[Proposition 7.2.3/6, Proposition 7.2.3/7, Theorem 7.2.4/2]{BGR}.
\end{proof}
\begin{remark}
	Once we know that the morphism \(\iota_U:\A\to\A_U\) for a Liu domain \(U\) in \(\spb(\A)\) is flat, we can show that the completed tensor product of two \(k\)-Liu algebras is again a \(k\)-Liu algebra (cf. Corollary \ref{cor:tensorliu}), and derive that the claims in Proposition \ref{prop:liudomprop} also hold for general Liu domains. Nevertheless, at this stage, an obvious version of the proposition can already be shown when \(\B\), \(U\) and \(V\) are assumed to be (strictly) \(k\)-affinoid.
\end{remark}

\section{Tate acyclicity for Liu algebras}\label{secTate}
In this section, we define a \(G\)-topology and a structure presheaf on Berkovich spectra of \(k\)-Liu algebras and prove a generalised version of Tate's Acyclicity Theorem, showing that this presheaf is an acyclic sheaf. To recall the definition of a \(G\)-topology, see \cite[Definition 9.1.1/1]{BGR}.
\begin{lemma}\label{lem:structsheaf}
	Let \(\A\) be a (strictly) \(k\)-Liu algebra and denote \(X=\spb(\A)\).
	\begin{enumerate}[label=(\roman*)]
		\item We have a \(G\)-topology \(\mathfrak{T}_{X,rat}\) on \(X\) with (strictly) rational domains as admissible open subsets and finite coverings by (strictly) rational domains as admissible coverings. \(\mathfrak{T}_{X,rat}\) is called the \emph{weak (strictly) rational \(G\)-topology} on \(X\).
		\item The assignment \(\mathcal{O}_X(U)\vcentcolon=\A_U\) for every (strictly) rational domain \(U\) in \(X\) defines a presheaf on \(X\) w.r.t. \(\mathfrak{T}_{X,rat}\).
	\end{enumerate}
\end{lemma}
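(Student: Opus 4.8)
The plan is to reduce both statements to the structural properties of rational domains already established in Proposition \ref{prop:liudomprop}, together with the universal property of Proposition \ref{prop:ratunivprop}. For part (i) I would verify the three axioms of a \(G\)-topology from \cite[Definition 9.1.1/1]{BGR}, and for part (ii) I would construct the restriction maps from the universal property of rational domains as Liu domains (Definition \ref{def:liudomain}) and check functoriality by uniqueness. Since all of these cited statements hold verbatim in the strict case, the rational and strictly rational \(G\)-topologies are treated simultaneously.

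For (i), first note that \(X\) itself is a (strictly) rational domain, e.g. \(X=X(1^{-1}\frac{0}{1})\), so it is the terminal admissible open. Arbitrary finite intersections of (strictly) rational domains are again (strictly) rational domains by iterating Proposition \ref{prop:liudomprop}(ii); in particular the stability axiom holds, since for a finite covering \(\{U_i\}\) of an admissible open \(U\) and any admissible open \(V\subseteq U\), the sets \(U_i\cap V\) are (strictly) rational domains forming a finite covering of \(V\). The transitivity axiom asks that a finite covering refined by finite coverings remain admissible: every piece is a (strictly) rational domain of \(X\), and where a refinement \(\{U_{ij}\}_j\) is presented as a covering of \(U_i\) rather than of \(X\), Proposition \ref{prop:liudomprop}(iii) identifies (strictly) rational domains of \(U_i\) with those of \(X\) contained in \(U_i\), so \(\{U_{ij}\}_{i,j}\) is a finite covering of \(U\) by (strictly) rational domains of \(X\). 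Finiteness is evidently preserved throughout.

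For (ii), given (strictly) rational domains \(V\subseteq U\subseteq X\), Proposition \ref{prop:liudomprop}(iii) shows \(V\) is a (strictly) rational domain of \(U\). By Proposition \ref{prop:ratunivprop} the domain \(U\) is a Liu domain with coordinate ring \(\A_U\), and the morphism \(\iota_V:\A\to\A_V\) has \(\spb(\iota_V)\) landing in \(V\subseteq U\); the universal property in Definition \ref{def:liudomain} therefore yields a unique morphism \(\mathrm{res}^U_V:\A_U\to\A_V\) of (strictly) \(k\)-Liu algebras with \(\mathrm{res}^U_V\circ\iota_U=\iota_V\). I would take these as the restriction maps \(\mathcal{O}_X(U)\to\mathcal{O}_X(V)\). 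Functoriality then follows purely from the uniqueness clause: \(\mathrm{res}^U_U\) and \(\mathrm{id}_{\A_U}\) both restrict \(\iota_U\) to itself and hence coincide, while for \(W\subseteq V\subseteq U\) both \(\mathrm{res}^V_W\circ\mathrm{res}^U_V\) and \(\mathrm{res}^U_W\) precompose with \(\iota_U\) to give \(\iota_W\), and so are equal.

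The substantive content is entirely absorbed into Proposition \ref{prop:liudomprop}; once it is granted, the lemma is a formal verification with no serious computation. The only point that genuinely requires care is the interplay between \lq (strictly) rational domain of \(X\)\rq\ and \lq (strictly) rational domain of the subdomain \(U\)\rq, which arises both for transitivity in (i) and for the very existence of the restriction maps in (ii). This is exactly what Proposition \ref{prop:liudomprop}(iii) resolves, and it is also what guarantees that the coordinate ring \(\A_V\) is unambiguous; I therefore expect this coherence, rather than any individual axiom, to be the main thing to get right.
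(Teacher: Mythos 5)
Your proposal is correct and follows the same route as the paper, which likewise deduces (i) from Proposition \ref{prop:liudomprop} and (ii) from Proposition \ref{prop:ratunivprop} together with the uniqueness clause in the universal property of Liu domains; you have simply written out the axiom checks and the construction of the restriction maps that the paper leaves implicit. The one point worth flagging is that your use of Proposition \ref{prop:liudomprop}(ii)--(iii) is exactly within its stated scope (rational domains, whose coordinate rings are rational localisations and hence automatically Liu algebras), so no appeal to the later tensor-product result is needed.
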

\begin{proof}
	\begin{enumerate}[label=(\roman*)]
		\item It follows from Proposition \ref{prop:liudomprop} that \(\mathfrak{T}_{X,rat}\) is a well-defined \(G\)-topology on \(X\).
		\item It follows from Proposition \ref{prop:ratunivprop} that the assignment \(\mathcal{O}_X(U)\vcentcolon=\A_U\) is well-defined. Functoriality follows from the defining universal property of Liu domains.
	\end{enumerate}
\end{proof}
\begin{remark}
	Let \(\A\) be a (strictly) \(k\)-Liu algebra with \(X=\spb(\A)\). As is the case for affinoid spaces (cf. \cite[§7.3.2]{BGR} and \cite[§2.3]{Berkovich}), one easily verifies for every \(x\in X\) that the stalk \(\mathcal{O}_{X,x}\) is a local ring with maximal ideal given by \(\{f\in\mathcal{O}_{X,x}\,\,|\,\,|f(x)|=0\}\).	Similarly, a morphism of \(k\)-Liu algebras induces local ring morphism on stalks.
\end{remark}
For convenience, we recall the notions of (strictly) rational and Laurent coverings from \cite[§8.2.2]{BGR} as special cases of admissible coverings in the weak (strictly) rational \(G\)-topology.
\begin{definition}
	Let \(\A\) be a \(k\)-Liu algebra and denote \(X=\spb(\A)\).
	\begin{enumerate}[label=(\roman*)]
		\item A \emph{rational covering} of \(X\) is a covering of the form
		\[\mathfrak{U}=\left\{X\left(\frac{r_if_1}{r_1f_i},\ldots,\frac{r_if_n}{r_nf_i}\right)\,\,\middle|\,\,1\leq i\leq n\right\}\]
		with \(f_1,\ldots,f_n\in\A\) generating the unit ideal in \(\A\) and \(r_1,\ldots,r_n\in\bR_{>0}\).
		\item A \emph{Laurent covering} of \(X\) is a covering of the form
		\[\mathfrak{U}=\left\{X\left(r_1^{-\varepsilon_1}f_1^{\varepsilon_1},\ldots,r_n^{-\varepsilon_n}f_n^{\varepsilon_n}\right)\,\,\middle|\,\,\varepsilon_1,\ldots,\varepsilon_n\in\{1,-1\}\right\}\]
		with \(f_1,\ldots,f_n\in\A\) and \(r_1,\ldots,r_n\in\bR_{>0}\).
	\end{enumerate}
	When \(k\) is non-trivially valued and \(\A\) is a strictly \(k\)-Liu algebra, \(\mathfrak{U}\) is respectively called a \emph{strictly rational covering} and a \emph{strictly Laurent covering} if we can take \(r_1=\ldots=r_n=1\).
\end{definition}
The following lemmata are adaptations of results from \cite[§8.2.2]{BGR} to this setting, of which one can essentially copy the original proofs. We recall them mainly for self-containedness of our argument.
\begin{lemma}\label{lem:bgr8221}
	Let \(\A\) be a (strictly) \(k\)-Liu algebra, \(\mathfrak{U}\) a finite covering of \(X=\spb(\A)\) by (strictly) rational domains, and let \(X'\subset X\) be a (strictly) rational domain. Then \(\mathfrak{U}|_{X'}\) is a finite covering of \(X'\) by (strictly) rational domains. If \(\mathfrak{U}\) is a (strictly) rational or (strictly) Laurent covering, \(\mathfrak{U}|_{X'}\) is a covering of the same type.
\end{lemma}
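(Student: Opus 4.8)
The plan is to reduce everything to Proposition \ref{prop:liudomprop}, which describes preimages and intersections of (strictly) rational, Weierstrass and Laurent domains, together with the elementary fact that generation of the unit ideal is preserved under the canonical ring morphism \(\iota_{X'}:\A\to\A_{X'}\). Throughout, the argument is uniform in the \lq strict\rq\ variant, so I would treat both cases simultaneously.

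First I would prove that \(\mathfrak{U}|_{X'}\) is a finite covering of \(X'\) by (strictly) rational domains. Writing \(\mathfrak{U}=\{U_1,\ldots,U_m\}\), the inclusion \(X'\hookrightarrow X\) is exactly \(\spb(\iota_{X'})\), and under this identification \(U_j\cap X'=\spb(\iota_{X'})^{-1}(U_j)\). By Proposition \ref{prop:liudomprop}(i) each such preimage is a (strictly) rational domain of \(X'=\spb(\A_{X'})\), represented by \(\A_{X'}\to\A_{X'}\widehat{\otimes}_\A\A_{U_j}\). Since the finitely many \(U_j\) cover \(X\), their intersections with \(X'\) cover \(X'\), giving the desired finite covering of \(X'\) by (strictly) rational domains.

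Next I would track the explicit defining data through the pullback to handle the assertion about preserving the covering type. The key input is the explicit form of Proposition \ref{prop:liudomprop}(i): if \(U=X(r_\bullet^{-1}f_\bullet/g)\), then \(\spb(\iota_{X'})^{-1}(U)=X'(r_\bullet^{-1}\iota_{X'}(f_\bullet)/\iota_{X'}(g))\), i.e. the preimage is the rational domain of \(X'\) cut out by the images of the same data with the same radii. Writing \(\bar f_j=\iota_{X'}(f_j)\), if \(\mathfrak{U}=\{X(\frac{r_if_1}{r_1f_i},\ldots,\frac{r_if_n}{r_nf_i})\mid 1\leq i\leq n\}\) is the rational covering attached to \(f_1,\ldots,f_n\) generating the unit ideal and radii \(r_1,\ldots,r_n\), then applying the formula to each member yields \(U_i\cap X'=X'\left(\frac{r_i\bar f_1}{r_1\bar f_i},\ldots,\frac{r_i\bar f_n}{r_n\bar f_i}\right)\). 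Because \(\iota_{X'}\) is a ring homomorphism, the \(\bar f_j\) still generate the unit ideal in \(\A_{X'}\), so these sets form precisely the rational covering of \(X'\) attached to \(\bar f_1,\ldots,\bar f_n\) and \(r_1,\ldots,r_n\). The Laurent case is identical and in fact easier, as Laurent coverings impose no generation condition: one obtains \(U_\varepsilon\cap X'=X'(r_1^{-\varepsilon_1}\bar f_1^{\varepsilon_1},\ldots,r_n^{-\varepsilon_n}\bar f_n^{\varepsilon_n})\), which is the Laurent covering of \(X'\) attached to the \(\bar f_j\) and the same radii.

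The only genuine content beyond bookkeeping is the explicit compatibility in Proposition \ref{prop:liudomprop}(i) with the defining data: that pulling a rational domain back along \(\iota_{X'}\) produces the rational domain cut out by the \emph{images} of the original functions and the same radii, rather than merely some abstract rational domain. In the affinoid setting this is guaranteed by the proof of \cite[Proposition 7.2.3/6]{BGR}; here the hard part is confirming that the same explicit description survives for (strictly) \(k\)-Liu algebras. This follows because both the universal property of rational domains (Proposition \ref{prop:ratunivprop}) and the functorial pullback (Proposition \ref{prop:liudomprop}(i)) respect the defining inequalities, so once this compatibility is established the remainder of the argument is formal.
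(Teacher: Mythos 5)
Your proposal is correct and follows essentially the same route as the paper, which simply invokes Proposition \ref{prop:liudomprop} and notes that rational and Laurent coverings visibly induce coverings of the same type; you have merely spelled out the bookkeeping (the set-theoretic identity \(\spb(\iota_{X'})^{-1}(X(r_\bullet^{-1}f_\bullet/g))=X'(r_\bullet^{-1}\iota_{X'}(f_\bullet)/\iota_{X'}(g))\) is immediate from the definition of the pullback of seminorms, and generation of the unit ideal is preserved under any ring homomorphism). No gaps.
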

\begin{proof}
	This follows from Proposition \ref{prop:liudomprop}. It is obvious that (strictly) rational (resp. Laurent) coverings induce coverings of the same type.
\end{proof}
\begin{lemma}\label{lem:ratrefine}
	Let \(\A\) be a (strictly) \(k\)-Liu algebra and let \(\mathfrak{U}\) be a covering of \(X=\spb(\A)\) by (strictly) rational domains. Then there exists a (strictly) rational covering of \(X\) which refines \(\mathfrak{U}\).
\end{lemma}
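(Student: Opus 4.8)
The plan is to transfer the classical argument of \cite[§8.2.2]{BGR} to the present setting. Since the preceding results — in particular Proposition \ref{prop:liudomprop} and Proposition \ref{prop:ratunivprop} — show that rational domains of \(\spb(\A)\) behave exactly as affinoid rational domains do classically, the combinatorics should carry over once the two genuinely algebraic inputs are secured. Write the (finite, admissible) covering as \(\mathfrak{U}=\{U_1,\ldots,U_s\}\) with \(U_i=X(r_{i\bullet}^{-1}f_{i\bullet}/g_i)\), where by the definition of a rational domain the functions \(g_i,f_{i,1},\ldots,f_{i,n_i}\) generate the unit ideal. The first observation is that \(U_i\) is precisely the member on which \(g_i\) is dominant of the rational covering generated by \(\{g_i,f_{i,1},\ldots,f_{i,n_i}\}\) with radii \((1,r_{i,1},\ldots,r_{i,n_i})\); this is immediate from the description in Proposition \ref{prop:ratunivprop}, and crucially uses the unit-ideal hypothesis.

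Next I would build the refining rational covering directly, from the finite family
\[F=\Big\{\textstyle\prod_{i=1}^{s}h_i \;\Big|\; h_i\in\{g_i,f_{i,1},\ldots,f_{i,n_i}\}\text{ and }h_i=g_i\text{ for at least one }i\Big\},\]
i.e. all products obtained by choosing one defining function from each \(U_i\), retaining only those in which at least one factor is a denominator \(g_i\). I claim \(F\) generates the unit ideal: fixing \(i_0\), the products \(g_{i_0}\prod_{i\neq i_0}h_i\) all lie in \(F\), and their \(\A\)-span is \(g_{i_0}\cdot\prod_{i\neq i_0}(g_i,f_{i,\bullet})=(g_{i_0})\) since each \((g_i,f_{i,\bullet})=\A\); hence \((F)\supseteq(g_1,\ldots,g_s)\). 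It then remains to see \((g_1,\ldots,g_s)=\A\), which holds because the \(g_i\) have no common zero on \(\spb(\A)\) — at a common zero no \(U_i\) could contain that point, contradicting that \(\mathfrak{U}\) covers \(X\) — and functions without common zeros generate the unit ideal: this is immediate on each affinoid member of a rational atlas by the Nullstellensatz, and descends to \(\A\) by the faithful flatness of \(\A\to\prod_i\A_i\) (Corollary \ref{cor:faithful}). Thus the rational covering \(\mathfrak{W}\) generated by \(F\) is a genuine (strictly) rational covering of \(X\).

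The remaining point, and the main obstacle, is to verify that \(\mathfrak{W}\) refines \(\mathfrak{U}\). Let \(W\in\mathfrak{W}\) be the member on which \(P=\prod_i h_i\in F\) is dominant, and pick \(i_0\) with \(h_{i_0}=g_{i_0}\); I would show \(W\subseteq U_{i_0}\). If \(P\) contains a second denominator \(g_{j_0}\), then for each \(l\) the product obtained from \(P\) by replacing the factor \(g_{i_0}\) with \(f_{i_0,l}\) still lies in \(F\); comparing it with \(P\) on \(W\) and cancelling the common factors (which are nonzero since \(P(x)\neq0\) on \(W\), as \(F\) has no common zero) yields \(|f_{i_0,l}(x)|\leq r_{i_0,l}|g_{i_0}(x)|\), i.e. \(x\in U_{i_0}\). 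The delicate case is when \(g_{i_0}\) is the \emph{only} denominator in \(P\), as the naive replacement then leaves the all-numerator regime and cannot be compared. To handle it, take \(x\in W\) and suppose \(x\notin U_{i_0}\); the covering hypothesis forces \(x\in U_j\) for some \(j\neq i_0\), and comparing \(P\) with its neighbour in \(F\) carrying \(g_j\) in the \(j\)-th slot shows the \(j\)-th factor is exactly tied at \(x\). A second comparison, against the member of \(F\) carrying both \(f_{i_0,l}\) in slot \(i_0\) and \(g_j\) in slot \(j\), then forces \(|f_{i_0,l}(x)|\leq r_{i_0,l}|g_{i_0}(x)|\) once the tie is used, contradicting \(x\notin U_{i_0}\). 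Hence \(W\subseteq U_{i_0}\) in all cases, so \(\mathfrak{W}\) refines \(\mathfrak{U}\).

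In the strict case one takes all radii equal to \(1\); the construction and the two-step comparison go through verbatim, and the only input needing care — that \(g_1,\ldots,g_s\) generate the unit ideal — follows as above after passing to rigid points via Proposition \ref{prop:sprinspb} and Proposition \ref{prop:maxidealprop} and invoking the affinoid Nullstellensatz. The conceptual crux throughout is the elimination of the all-numerator products and the unique-denominator case of the refinement step, which is exactly where the covering hypothesis is used, through the no-common-zero property of the \(g_i\).
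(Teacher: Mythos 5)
Your proof is correct and takes essentially the same approach as the paper, whose proof consists of the instruction to copy \cite[Lemma 8.2.2/2]{BGR}: you carry out exactly that construction (the rational covering generated by all products of defining functions containing at least one denominator $g_i$), and your two-step comparison in the single-denominator case is a sound, if slightly more laborious, variant of the classical observation that on the dominant member the product realises the factorwise maximum.
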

\begin{proof}
	One can copy the proof of \cite[Lemma 8.2.2/2]{BGR}.
\end{proof}
\begin{lemma}\label{lem:bgr8223}
	Let \(\A\) be a (strictly) \(k\)-Liu algebra and let \(\mathfrak{U}\) be a (strictly) rational covering of \(X=\spb(\A)\). Then there exists a (strictly) Laurent covering \(\mathfrak{B}\) of \(X\) such that, for any \(V\in\mathfrak{B}\), the covering \(\mathfrak{U}|_V\) is a (strictly) rational covering of \(V\), which is generated by units in \(\mathcal{O}_X(V)\).
\end{lemma}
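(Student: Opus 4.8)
The plan is to adapt the proof of \cite[Lemma 8.2.2/3]{BGR} to the setting of (strictly) \(k\)-Liu algebras, all of whose geometric inputs have already been secured above. Write \(\mathfrak{U}\) as the (strictly) rational covering generated by \(f_1,\ldots,f_n\in\A\) with radii \(r_1,\ldots,r_n\), so that its \(i\)-th member is the domain \(U_i=\left\{x\in X\,\,\middle|\,\,|f_l(x)|/r_l\leq|f_i(x)|/r_i\text{ for all }l\right\}\), i.e. the locus where \(|f_i|/r_i\) is maximal. Since \(X=\spb(\A)\) is compact and \(f_1,\ldots,f_n\) generate the unit ideal, the very same argument producing the constant \(\varepsilon\) in the proof of Proposition \ref{prop:closedrationalidealsgeneral} yields \(\varepsilon_0\vcentcolon=\inf_{x\in X}\max_i|f_i(x)|/r_i>0\).

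First I would fix a threshold \(0<\varepsilon<\varepsilon_0\) and let \(\mathfrak{B}\) be the Laurent covering obtained by comparing each \(|f_i|/r_i\) with \(\varepsilon\); concretely it is the Laurent covering generated by \(f_1,\ldots,f_n\) with radii \(\varepsilon r_1,\ldots,\varepsilon r_n\), whose members are indexed by subsets \(\sigma\subseteq\{1,\ldots,n\}\) via \(V_\sigma=\left\{x\in X\,\,\middle|\,\,|f_i(x)|/r_i\geq\varepsilon\text{ for }i\in\sigma,\ |f_i(x)|/r_i\leq\varepsilon\text{ for }i\notin\sigma\right\}\). Because \(\varepsilon<\varepsilon_0\leq\max_i|f_i|/r_i\) everywhere on \(X\), the piece \(V_\emptyset\) is empty. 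Hence every nonempty member has \(\sigma\neq\emptyset\), and for each \(i\in\sigma\) the function \(f_i\) is bounded below by \(\varepsilon r_i\) on \(V_\sigma\), so it is a unit in \(\mathcal{O}_X(V_\sigma)\).

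Next I would verify that on each \(V_\sigma\) the restricted covering is generated by these units. By Lemma \ref{lem:bgr8221}, \(\mathfrak{U}|_{V_\sigma}\) is a (strictly) rational covering of \(V_\sigma\). For \(j\in\sigma\), the inequalities defining \(U_j\) that involve indices \(k\notin\sigma\) hold automatically on \(V_\sigma\) (there \(|f_k|/r_k\leq\varepsilon\leq|f_j|/r_j\)), so \(U_j\cap V_\sigma\) coincides with the \(j\)-th member of the rational covering of \(V_\sigma\) generated by \(\{f_i : i\in\sigma\}\). For \(j\notin\sigma\), any point of \(U_j\cap V_\sigma\) forces \(|f_j|/r_j=\max_i|f_i|/r_i=\varepsilon\), whence \(|f_k|/r_k\leq|f_i|/r_i\) for every \(i\in\sigma\) and all \(k\); thus \(U_j\cap V_\sigma\subseteq U_i\cap V_\sigma\) for each \(i\in\sigma\) and is redundant. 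Discarding these redundant members identifies \(\mathfrak{U}|_{V_\sigma}\) with the (strictly) rational covering of \(V_\sigma\) generated by the units \(\{f_i : i\in\sigma\}\), as required.

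The one genuine subtlety, which I expect to be the main point to handle carefully, is the strict case, where a strictly Laurent covering is constrained to threshold at \(1\). There the radii \(r_i\) all equal \(1\), and since \(k\) is non-trivially valued I would choose \(c\in k^\times\) with \(0<|c|<\varepsilon_0\) (possible because \(|k^\times|\) is a non-trivial subgroup of \(\bR_{>0}\) and hence contains elements of arbitrarily small positive absolute value) and take \(\mathfrak{B}\) to be the strictly Laurent covering generated by \(f_1/c,\ldots,f_n/c\). This reproduces the thresholding at \(\varepsilon=|c|\) while keeping every radius equal to \(1\), and \(V_\emptyset=\left\{x\in X\,\,\middle|\,\,|f_i(x)|\leq|c|\text{ for all }i\right\}\) is again empty since \(\max_i|f_i|\geq\varepsilon_0>|c|\). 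Granting this compatibility of the threshold with strictness, together with the emptiness of \(V_\emptyset\), the remaining redundancy bookkeeping is elementary and proceeds exactly as in \cite{BGR}.
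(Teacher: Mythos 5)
Your argument is correct and is exactly what the paper intends: its proof of Lemma \ref{lem:bgr8223} simply says to copy the proof of \cite[Lemma 8.2.2/3]{BGR}, which is what you have done, with the compactness of \(\spb(\A)\), the unit-ideal hypothesis, and the scalar \(c\in k^\times\) with \(|c|<\varepsilon_0\) handling the strict case correctly. One cosmetic point: for \(j\notin\sigma\) your intermediate claim \(|f_j|/r_j=\max_i|f_i|/r_i=\varepsilon\) cannot actually occur, since \(\max_i|f_i|/r_i\geq\varepsilon_0>\varepsilon\) everywhere; the right conclusion is that \(U_j\cap V_\sigma=\varnothing\), which gives the redundancy you need even more directly.
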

\begin{proof}
	One can copy the proof of \cite[Lemma 8.2.2/3]{BGR}.
\end{proof}
\begin{lemma}\label{lem:bgr8224}
	Let \(\A\) be a (strictly) \(k\)-Liu algebra and let \(\mathfrak{U}\) be a (strictly) rational covering of \(X=\spb(\A)\), which is generated by units \(f_1,\ldots,f_n\in\A\). Then there exists a (strictly) Laurent covering \(\mathfrak{B}\) of \(X\), which is a refinement of \(\mathfrak{U}\).
\end{lemma}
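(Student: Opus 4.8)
The plan is to adapt the proof of \cite[Lemma 8.2.2/4]{BGR}, producing an explicit (strictly) Laurent covering that refines \(\mathfrak{U}\). Recall that the \(i\)-th member of \(\mathfrak{U}\) is
\[U_i=X\left(\frac{r_if_1}{r_1f_i},\ldots,\frac{r_if_n}{r_nf_i}\right)=\left\{x\in X\,\,\middle|\,\,\tfrac{1}{r_j}|f_j(x)|\leq\tfrac{1}{r_i}|f_i(x)|\text{ for all }j\right\},\]
so that \(U_i\) is precisely the locus where the index \(i\) maximises the scaled value \(|f_\bullet(x)|/r_\bullet\). Since each \(f_i\) is a unit in \(\A\), the quotients \(h_{ij}\vcentcolon=f_if_j^{-1}\) lie in \(\A\) for all \(1\leq i<j\leq n\), and I set \(s_{ij}\vcentcolon=r_i/r_j\in\bR_{>0}\).

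First I would let \(\mathfrak{B}\) be the (strictly) Laurent covering of \(X\) generated by the family \(\{h_{ij}\}_{i<j}\) with associated radii \(\{s_{ij}\}_{i<j}\); in the strict case all \(r_i=1\), so each \(s_{ij}=1\) and \(\mathfrak{B}\) is strictly Laurent. By construction \(\mathfrak{B}\) is indexed by sign vectors \(\varepsilon=(\varepsilon_{ij})_{i<j}\in\{1,-1\}\), and unwinding \(X(s^{-1}h)=\{|h|\leq s\}\) and \(X(sh^{-1})=\{|h|\geq s\}\) shows that the corresponding member \(V_\varepsilon\) fixes, for every pair \(i<j\), exactly one of the two inequalities \(|f_i(x)|/r_i\leq|f_j(x)|/r_j\) or \(|f_i(x)|/r_i\geq|f_j(x)|/r_j\) on the whole of \(V_\varepsilon\). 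Since a Laurent covering automatically covers \(X\), it remains only to show that \(\mathfrak{B}\) refines \(\mathfrak{U}\), i.e. that each \(V_\varepsilon\) is contained in some \(U_i\).

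The hard part will be producing, for each member \(V_\varepsilon\), a single index \(i_0\) that dominates all the others on \emph{all} of \(V_\varepsilon\) simultaneously; the subtlety is that choosing a maximiser of \(|f_\bullet(x)|/r_\bullet\) at one point need not work uniformly, because equalities can make the fixed pairwise comparisons cyclic. To handle this I would encode the fixed comparisons on \(V_\varepsilon\) as a tournament on \(\{1,\ldots,n\}\), drawing an edge \(i\to j\) whenever \(\varepsilon\) forces \(|f_j(x)|/r_j\leq|f_i(x)|/r_i\) on \(V_\varepsilon\) (for each pair exactly one edge is drawn). Every finite tournament admits a \emph{king}, i.e. a vertex \(i_0\) from which every other vertex is reachable by a directed path of length at most \(2\) — for instance any vertex of maximal out-degree. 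Using the transitivity of the real inequalities holding pointwise on \(V_\varepsilon\), a two-step domination \(i_0\to m\to j\) yields \(|f_j(x)|/r_j\leq|f_m(x)|/r_m\leq|f_{i_0}(x)|/r_{i_0}\), so \(i_0\) in fact dominates every \(j\) as an inequality on all of \(V_\varepsilon\). Hence \(|f_j(x)|/r_j\leq|f_{i_0}(x)|/r_{i_0}\) for all \(j\) throughout \(V_\varepsilon\), that is \(V_\varepsilon\subseteq U_{i_0}\) (empty members being contained in any \(U_i\) vacuously). This completes the refinement, and since all radii equal \(1\) in the strict setting, the same argument applies verbatim to the strictly Laurent and strictly rational case.
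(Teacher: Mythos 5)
Your proposal is correct and takes essentially the same route as the paper, whose proof simply defers to \cite[Lemma 8.2.2/4]{BGR}: one forms the (strictly) Laurent covering generated by the units \(f_if_j^{-1}\) with radii \(r_i/r_j\) and checks that each member \(V_\varepsilon\) lies in some \(U_{i_0}\). Your tournament--king device for locating \(i_0\) is valid, though slightly more than is needed: since for every pair at least one of the two pointwise inequalities holds on all of \(V_\varepsilon\), the relation \lq\lq\(|f_j|/r_j\leq|f_i|/r_i\) on \(V_\varepsilon\)\rq\rq\ is already a total preorder on \(\{1,\ldots,n\}\) and therefore has a maximal element.
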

\begin{proof}
	One can copy the proof of \cite[Lemma 8.2.2/4]{BGR}.
\end{proof}
\begin{lemma}\label{lem:bgr8225}
	Let \(\A\) be a (strictly) \(k\)-Liu algebra and let \(\mathscr{F}\) be a presheaf on \(\spb(\A)\) w.r.t. the weak (strictly) rational \(G\)-topology. Assume that (strictly) Laurent coverings are universally \(\mathscr{F}\)-acyclic on \(X\), i.e. for each (strictly) rational domain \(X'\subset X\), all (strictly) Laurent coverings of \(X'\) are \(\mathscr{F}\)-acyclic. Then all coverings of \(X\) by (strictly) rational domains are \(\mathscr{F}\)-acyclic.
\end{lemma}
\begin{proof}
Using the lemmata above, one can copy the proof of \cite[Proposition 8.2.2/5]{BGR}.
\end{proof}

\begin{theorem}\label{thm:acyclic}
	Let \(\A\) be a (strictly) \(k\)-Liu algebra and \(\mathfrak{U}=\{U_i\}_{i\in I}\) a finite covering of \(X=\spb(\A)\) by (strictly) rational domains \(U_i\subset X\). Then \(\mathfrak{U}\) is \(\mathcal{O}_X\)-acyclic w.r.t. the weak (strictly) rational \(G\)-topology.
\end{theorem}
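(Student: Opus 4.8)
The plan is to mirror the classical proof of Tate's Acyclicity Theorem from \cite[§8.2]{BGR}, using the reduction machinery that has been set up in Lemmata \ref{lem:bgr8221}--\ref{lem:bgr8225}. By Lemma \ref{lem:bgr8225}, it suffices to show that \emph{(strictly) Laurent coverings are universally \(\mathcal{O}_X\)-acyclic}, i.e. that for every (strictly) rational domain \(X'\subset X\), every (strictly) Laurent covering of \(X'\) is \(\mathcal{O}_X\)-acyclic. Since any (strictly) rational localisation of a (strictly) \(k\)-Liu algebra is again a (strictly) \(k\)-Liu algebra (Remark \ref{rem:faithflat}), replacing \(\A\) by \(\mathcal{O}_X(X')\) reduces us to proving that an arbitrary (strictly) Laurent covering \(\mathfrak{U}=\{X(r_1^{-\varepsilon_1}f_1^{\varepsilon_1},\ldots,r_n^{-\varepsilon_n}f_n^{\varepsilon_n})\}_{\varepsilon\in\{\pm1\}^n}\) of \(X=\spb(\A)\) is \(\mathcal{O}_X\)-acyclic.

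\medskip

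First I would handle the elementary inductive reduction: a Laurent covering generated by \(f_1,\ldots,f_n\) is the iterated refinement of the one-variable Laurent coverings \(\{X(r_i^{-1}f_i), X(r_if_i^{-1})\}\). By the standard dévissage argument for Čech cohomology of successive refinements \cite[§8.1.4]{BGR}, it suffices to treat the case \(n=1\), i.e. the two-element covering \(\mathfrak{U}=\{U_+, U_-\}\) with \(U_+=X(r^{-1}f)\) and \(U_-=X(rf^{-1})\). Acyclicity of a two-element covering amounts to the exactness of the augmented Čech complex
\[
0 \to \A \to \A\langle r^{-1}f\rangle \oplus \A\langle rf^{-1}\rangle \to \A\langle r^{-1}f, rf^{-1}\rangle \to 0,
\]
where the first map is the diagonal restriction and the second is the difference of restrictions. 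Exactness at the middle term is separatedness of the presheaf, and surjectivity at the right gives the sheaf gluing; both must be proven as admissible sequences.

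\medskip

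The core computation is to verify this exactness for \(n=1\). Here I would copy the classical calculation of \cite[Lemma 8.2.2/6]{BGR} essentially verbatim: writing \(U_+=\A\langle S\rangle/(fS-r)\) and \(U_-=\A\langle S'\rangle/(f-rS')\) (using Corollary \ref{cor:kerneltate} and Proposition \ref{prop:closedrationalidealsliu} to identify these coordinate rings with the correct closed ideals), one expands an element of the intersection \(\A\langle r^{-1}f, rf^{-1}\rangle\) as a two-sided Laurent series in \(f\) and splits it into its nonnegative and negative parts to produce the required preimage, with explicit norm estimates guaranteeing convergence and admissibility. \textbf{The main obstacle} is that the classical argument repeatedly uses that the ideals \((fS-r)\) and \((f-rS')\) are \emph{closed}, so that the quotient norms behave well; this is precisely what Proposition \ref{prop:closedrationalidealsliu} was proven for, and it is the key input that makes the transfer from the affinoid case legitimate in the Liu setting. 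Once the \(n=1\) case is established as an admissible exact sequence, the inductive refinement step and Lemma \ref{lem:bgr8225} assemble the full statement: all finite coverings by (strictly) rational domains, and in particular the given covering \(\mathfrak{U}\), are \(\mathcal{O}_X\)-acyclic.
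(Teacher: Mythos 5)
Your proposal follows essentially the same route as the paper: reduce via Lemma \ref{lem:bgr8225} to Laurent coverings, then by the d\'evissage of \cite[\S 8.1.4]{BGR} to a single function, and finally establish admissible exactness of the three-term augmented \v{C}ech complex using Proposition \ref{prop:closedrationalidealsliu} and Corollary \ref{cor:kerneltate} as the key inputs. The only substantive difference is that the paper does \emph{not} run the one-variable computation directly in full generality: it first assumes \(k\) non-trivially valued and \(\A\) strictly \(k\)-Liu (citing the proof of \cite[Corollary 2.20]{Mihara} rather than \cite[Lemma 8.2.2/6]{BGR}, though the two computations are the same Laurent-series splitting in substance), and then recovers the general case by a base change to a suitable \(k_r\) together with the descent of admissible exactness provided by \cite[Proposition 2.1.2(ii)]{Berkovich}. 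Your claim that the classical estimates transfer ``essentially verbatim'' quietly skips this step; it matters because the sources you and the paper lean on are written for strictly affinoid algebras over non-trivially valued fields, the radii \(r_i\) of a general Laurent covering need not lie in \(\sqrt{|k^\times|}\), and when \(k\) is trivially valued one cannot fall back on Banach's Open Mapping Theorem to upgrade exactness to admissible exactness. Either re-verify all norm estimates in that generality or, as the paper does, add the base-change reduction at the start and the descent at the end. (A minor slip: your presentations of the two coordinate rings are swapped --- \(\A\langle S\rangle/(fS-r)\) is the ring of \(U_-=X(rf^{-1})\), not of \(U_+\).)
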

\begin{proof}
	First assume that \(k\) is non-trivially valued and \(\A\) is a strictly \(k\)-Liu algebra. By Lemma \ref{lem:bgr8225}, we can assume \(\mathfrak{U}\) to be a strictly Laurent covering. Moreover, using \cite[Corollary 8.1.4/4]{BGR} and an induction argument, we can reduce further to the case where \(\mathfrak{U}\) is generated by a single function \(f\in\A\), i.e. \(\mathfrak{U}=\{X(f),X(f^{-1})\}\). Thus, we have to show that the augmented Čech complex
	\[0\xrightarrow[]{}\A\xrightarrow[]{}\A\langle f\rangle\times\A\langle f^{-1}\rangle\xrightarrow[]{}\A\langle f,f^{-1}\rangle\xrightarrow[]{}0\]
	is an admissible exact sequence. To see this, one can use Proposition \ref{prop:closedrationalidealsliu} and Corollary \ref{cor:kerneltate} to copy the proof of \cite[Corollary 2.20]{Mihara}. The general case now follows from \cite[Proposition 2.1.2(ii)]{Berkovich} after applying a suitable base change.
\end{proof}

\section{Liu algebras - Liu spaces correspondence}\label{secLiuSpaces}
In this section, we prove the correspondence between \(k\)-Liu algebras and \(k\)-Liu spaces, and deduce some additional expected properties of the former. We start by showing that the weak (strictly) rational \(G\)-topology is slightly finer than the \lq classical\rq\ \(k\)-analytic (strict) \(G\)-topology. For convenience, we recall the definition of slightly finer \(G\)-topologies.
\begin{definition}(\cite[Definition 9.1.2/1]{BGR})
	Let \(X\) be a set admitting \(\mathfrak{T}\) and \(\mathfrak{T}'\) as \(G\)-topologies. \(\mathfrak{T}'\) is called \emph{slightly finer} than \(\mathfrak{T}\), if the following conditions are satisfied.
	\begin{enumerate}[label=(\roman*)]
		\item \(\mathfrak{T}'\) is finer than \(\mathfrak{T}\).
		\item The \(\mathfrak{T}\)-open subsets of \(X\) form a basis for \(\mathfrak{T}'\).
		\item For each \(\mathfrak{T}'\)-covering \(\mathfrak{U}\) of a \(\mathfrak{T}\)-open subset \(U\subset X\), there exists a \(\mathfrak{T}\)-covering which refines \(\mathfrak{U}\).
	\end{enumerate}
\end{definition}
\begin{remark}
	Using the notation from the definition above, \cite[Proposition 9.2.3/1]{BGR} implies that (morphisms of) sheaves on \(\mathfrak{T}\) extend uniquely to (morphisms of) sheaves on \(\mathfrak{T}'\). One easily verifies using (ii) that this extension procedure does not alter sheaf cohomology. 
\end{remark}
\begin{lemma}\label{lem:slightlyfiner}
	Let \(\A\) be a (strictly) \(k\)-Liu algebra and denote \(X=\spb(A)\). Let \(\mathfrak{T}_{X,aff}\) be the \(G\)-topology on \(X\) with finite unions of (strictly) affinoid domains as admissible open subsets and finite coverings by finite unions of (strictly) affinoid domains as admissible coverings. Then \(\mathfrak{T}_{X,aff}\) is slightly finer than \(\mathfrak{T}_{X,rat}\).
\end{lemma}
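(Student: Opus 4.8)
The plan is to isolate two geometric facts about the rational atlas and then read off the three conditions in the definition of \emph{slightly finer} almost formally. Fix a (strictly) rational atlas $\A_1,\ldots,\A_n$ of $\A$ as in Definition \ref{def:liualg} and write $Y_i\vcentcolon=\spb(\A_i)$. Since each $\A_i$ is both a (strictly) rational localisation and (strictly) $k$-affinoid, every $Y_i$ is simultaneously a (strictly) rational domain and a (strictly) affinoid domain of $X$, and the $Y_i$ cover $X$. The two facts I would establish are \textbf{(A)} every (strictly) rational domain of $X$ is a finite union of (strictly) affinoid domains of $X$, and \textbf{(B)} every (strictly) affinoid domain of $X$ is a finite union of (strictly) rational domains of $X$.

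For \textbf{(A)}, given a rational domain $U$, I would intersect it with the atlas. By Proposition \ref{prop:liudomprop}(i) applied to the localisation morphisms $\A\to\A_i$, each $U\cap Y_i$ is a (strictly) rational domain of the affinoid space $Y_i$; its coordinate ring $\A_i\widehat{\otimes}_\A\A_U$ is then a rational localisation of the $k$-affinoid algebra $\A_i$, hence itself $k$-affinoid. By Proposition \ref{prop:liudomprop}(iii) each $U\cap Y_i$ is also a rational domain of $X$, hence a Liu domain by Proposition \ref{prop:ratunivprop}, and — having affinoid coordinate ring — is therefore an affinoid domain of $X$ in the sense of Definition \ref{def:liudomain}. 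Thus $U=\bigcup_i(U\cap Y_i)$ exhibits $U$ as a finite union of affinoid domains.

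For \textbf{(B)}, the harder fact, let $V$ be an affinoid domain with affinoid coordinate ring $\A_V$ and structure morphism $\iota_V\colon\A\to\A_V$. Writing $\A_i=\A\langle s_\bullet^{-1}h_\bullet/g_i\rangle$, I would observe that base change of this rational localisation along $\iota_V$ identifies $\A_V\widehat{\otimes}_\A\A_i$ with the rational localisation $\A_V\langle s_\bullet^{-1}\iota_V(h_\bullet)/\iota_V(g_i)\rangle$ of $\A_V$ (the images of $g_i,h_1,\ldots$ still generate the unit ideal), which is $k$-affinoid since $\A_V$ is. As a subset of $X$ this spectrum is exactly $V\cap Y_i$, and the universal property of the affinoid domain $V$ shows that $\A_i\to\A_V\widehat{\otimes}_\A\A_i$ represents $V\cap Y_i$ as a classical affinoid subdomain of the affinoid space $Y_i$. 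I would then apply the Gerritzen--Grauert theorem for affinoid spaces (\cite[\S7.3.5]{BGR} in the strict case) to write $V\cap Y_i$ as a finite union of rational domains of $Y_i$, which by Proposition \ref{prop:liudomprop}(iii) are rational domains of $X$; summing over $i$ gives \textbf{(B)}.

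With \textbf{(A)} and \textbf{(B)} in hand, the three conditions follow. For condition (i) (finer), \textbf{(A)} shows that each rational domain is a $\mathfrak{T}_{X,aff}$-admissible open, so any finite rational covering of a rational domain is in particular a finite covering by finite unions of affinoid domains, i.e. $\mathfrak{T}_{X,aff}$-admissible. For condition (ii) (basis), \textbf{(B)} decomposes any finite union of affinoid domains into finitely many rational domains, and these are $\mathfrak{T}_{X,aff}$-admissible opens by \textbf{(A)}, yielding a $\mathfrak{T}_{X,aff}$-admissible covering by $\mathfrak{T}_{X,rat}$-opens. For condition (iii) (refinement), given a $\mathfrak{T}_{X,aff}$-covering of a rational domain, I would first refine it to a covering by affinoid domains and then replace each by its decomposition from \textbf{(B)}, obtaining a refining $\mathfrak{T}_{X,rat}$-covering. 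The main obstacle is \textbf{(B)}: precisely, verifying that $V\cap Y_i$ is a genuine affinoid subdomain of the affinoid space $Y_i$ — which requires both the identification of $\A_V\widehat{\otimes}_\A\A_i$ as a rational localisation of the affinoid algebra $\A_V$ and the matching of our universal-property notion of affinoid domain with the classical one so that Gerritzen--Grauert applies — together with the transfer of the resulting rational pieces back to $X$ via Proposition \ref{prop:liudomprop}(iii).
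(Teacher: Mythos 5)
Your proposal is correct and follows essentially the same route as the paper: decompose a rational domain into affinoid rational domains via the atlas for the ``finer'' direction, and for the basis condition intersect an affinoid domain with the atlas, apply the Gerritzen--Grauert theorem inside each affinoid piece \(Y_i\), and transfer the resulting rational domains back to \(X\) by transitivity (Proposition \ref{prop:liudomprop}(iii)). You in fact spell out a point the paper leaves implicit (that \(V\cap Y_i\) is a genuine affinoid subdomain of \(Y_i\) with coordinate ring \(\A_V\widehat{\otimes}_\A\A_i\)); just note that for the non-strict case one should cite Temkin's or Ducros's version of Gerritzen--Grauert rather than only \cite[\S 7.3.5]{BGR}.
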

\begin{proof}
	We check the three conditions from the definition above for a general \(k\)-Liu algebra \(\A\). The proof for the strict case is identical. Since a rational domain \(U\) is also a Liu domain, \(U\) can be written as a finite union of affinoid rational domains in \(U\). By transitivity of rational domains (cf. Proposition \ref{prop:liudomprop}(iii)), the latter are also affinoid rational domains in \(X\). It follows that admissible opens and admissible coverings in \(\mathfrak{T}_{X,rat}\) are also admissible for \(\mathfrak{T}_{X,aff}\), i.e. \(\mathfrak{T}_{X,aff}\) is finer than \(\mathfrak{T}_{X,rat}\). Next, let \(U\) be an affinoid domain in \(X\) and let \(\A_1,\ldots,\A_n\) be a rational atlas of \(\A\). By the Gerritzen-Grauert Theorem \cite[Theorem 1.1]{TemkinGG}, the intersection \(U\cap\spb(\A_i)\) can be written as a finite union of rational domains in \(\spb(\A_i)\) for each \(i\in\{1,\ldots,n\}\). Again by transitivity of rational domains, these are also rational domains in \(X\). Hence, the \(\mathfrak{T}_{X,rat}\)-open subsets of \(X\) form a basis for \(\mathfrak{T}_{X,aff}\). The last condition now also follows easily.
\end{proof}
\begin{theorem}\label{thm:isocatliu}
	The category of (strictly) \(k\)-Liu algebras is anti-equivalent to the category of (strictly) \(k\)-Liu spaces. More precisely, taking Berkovich spectra of (strictly) \(k\)-Liu algebras and taking global sections of structure sheaves on (strictly) \(k\)-Liu spaces induce anti-isomorphisms of categories which are mutually inverse to each other.
\end{theorem}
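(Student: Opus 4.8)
The plan is to build two contravariant functors and show that their composites are naturally isomorphic to the respective identities. Send a (strictly) \(k\)-Liu algebra \(\A\) to \(\spb(\A)\) equipped with the structure sheaf of Section \ref{secTate}: by Lemma \ref{lem:slightlyfiner} the weak rational \(G\)-topology extends to the affinoid \(G\)-topology without altering cohomology, and Theorem \ref{thm:acyclic} shows \(\mathcal{O}_{\spb(\A)}\) is an acyclic sheaf, so the affinoid rational domains of a rational atlas glue to a compact \(k\)-analytic space. In the reverse direction, send a (strictly) \(k\)-Liu space \(X\) to \(\mathcal{O}_X(X)\) with the canonical \(k\)-Banach structure of Remark \ref{rem:banfrechstr}. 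The composite \(\A\mapsto\mathcal{O}_{\spb(\A)}(\spb(\A))\) is essentially the identity: \(X=\spb(\A)\) is its own rational domain with coordinate ring \(\A\), so \(\mathcal{O}_{\spb(\A)}(\spb(\A))=\A\) as algebras by Lemma \ref{lem:structsheaf}(ii), and the canonical Banach structure obtained from an atlas \(\{\A_i\}\) is \(\max_i\norm{\cdot}_{\A_i}\), which is equivalent to \(\norm{\cdot}_\A\) since \(\iota:\A\to\prod_i\A_i\) is admissible (Remark \ref{rem:faithflat}).

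Next I would verify that \(\spb(\A)\) is indeed a (strictly) \(k\)-Liu space. It is compact as the spectrum of a Banach algebra, and holomorphically separable because distinct bounded multiplicative seminorms are by definition separated by some \(f\in\A=\mathcal{O}_{\spb(\A)}(\spb(\A))\). Separatedness follows from the fact that finite intersections of rational domains are again rational domains with admissible transition maps (Proposition \ref{prop:liudomprop}(ii)), which makes the diagonal a closed immersion. Universal acyclicity is the crux here: for any analytic extension \(\ell/k\), Lemma \ref{lem:basechangeliu} shows \(\A_\ell\) is an \(\ell\)-Liu algebra with \(\spb(\A_\ell)\cong\spb(\A)_\ell\), so Theorem \ref{thm:acyclic} applied to \(\A_\ell\) gives vanishing of \(H^q\) for \(q>0\) via the comparison between \v{C}ech and derived cohomology on the \(G\)-topology.

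The harder direction is to show, for a (strictly) \(k\)-Liu space \(X\), that \(\mathcal{O}_X(X)\) is a (strictly) \(k\)-Liu algebra and \(X\cong\spb(\mathcal{O}_X(X))\). I would first introduce the comparison map \(\theta:X\to\spb(\mathcal{O}_X(X))\) sending \(x\) to the evaluation seminorm \(f\mapsto|f(x)|\). Holomorphic separability makes \(\theta\) injective, and since \(X\) is compact and the target Hausdorff, \(\theta\) is a homeomorphism onto a closed subset; holomorphic convexity should force this subset to be all of \(\spb(\mathcal{O}_X(X))\). To produce a rational atlas and upgrade \(\theta\) to an isomorphism of \(k\)-analytic spaces, I would take a finite affinoid cover of \(X\) and apply the Gerritzen--Grauert Theorem \cite[Theorem 1.1]{TemkinGG} together with the abundance of global functions (holomorphic separability and convexity) to refine it into affinoid rational domains \(X(r_\bullet^{-1}f_\bullet/g)\) with \(f_\bullet,g\in\mathcal{O}_X(X)\). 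Using Proposition \ref{prop:ratdesc} and the sheaf property, the affinoid algebras \(\mathcal{O}_X(V_j)\) are then identified with the rational localisations \(\mathcal{O}_X(X)\langle r_\bullet^{-1}f_\bullet/g\rangle\); their spectra cover \(\spb(\mathcal{O}_X(X))\) under \(\theta\); and rational injectivity of \(\iota:\mathcal{O}_X(X)\to\prod_j\mathcal{O}_X(V_j)\) follows from the sheaf axiom, preserved under base change by Lemma \ref{lem:basechangeliu} and universal acyclicity. This establishes the three atlas conditions of Definition \ref{def:liualg}.

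I expect the main obstacle to lie in this last direction, and specifically in two interlocking points: the surjectivity of \(\theta\), since nothing a priori guarantees that a bounded multiplicative seminorm on the global sections is realised by a point of \(X\), and the construction of the finite rational atlas by \emph{global} data. Holomorphic convexity should be the decisive input for the former, while the Gerritzen--Grauert Theorem and the Maculan--Poineau structure theory \cite{MaculanPoineau} underpin the latter. Once both directions are in place, functoriality is routine: a bounded morphism \(\A\to\B\) of Liu algebras induces an analytic morphism \(\spb(\B)\to\spb(\A)\) compatible with structure sheaves, a morphism of Liu spaces induces a bounded map on global sections, and the naturality of \(\theta\) together with the identity \(\mathcal{O}_{\spb(\A)}(\spb(\A))=\A\) shows the two functors are mutually inverse anti-isomorphisms of categories.
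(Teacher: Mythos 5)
Your proposal follows essentially the same route as the paper: the same two functors, the same verification that $\spb(\A)$ is a Liu space via Lemma \ref{lem:slightlyfiner}, Theorem \ref{thm:acyclic} and Lemma \ref{lem:basechangeliu}, and the same strategy for the reverse direction. The two ``interlocking obstacles'' you flag in the hard direction --- surjectivity of the comparison map $\theta:X\to\spb(\mathcal{O}_X(X))$ and the existence of a finite cover by affinoid rational domains cut out by \emph{global} functions --- are not proved from scratch in the paper either: they are exactly \cite[Proposition 3.12]{MaculanPoineau} (the structure theorem for Liu spaces, itself obtained from holomorphic separability plus Gerritzen--Grauert) and \cite[Corollary 3.17]{MaculanPoineau} (the homeomorphism $X\cong\spb(\mathcal{O}_X(X))$, with the norm making $\mathcal{O}_X(X)\to\prod_i\A_i$ admissible), so your sketch closes once you invoke those citations. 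One small point to tighten: identifying $\mathcal{O}_X(X_i)$ with the rational localisation $\mathcal{O}_X(X)\langle r_\bullet^{-1}f_\bullet/g\rangle$ needs the universal property of rational domains in analytic spaces rather than Proposition \ref{prop:ratdesc} alone, which only gives a homeomorphism of spectra; and functoriality of global sections (boundedness of the induced algebra map) is not quite ``routine'' --- the paper uses \cite[Corollary 3.7]{Xia} for it.
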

\begin{proof}
	The proof consists of two steps where we show that the respective functors are well-defined. Using \cite[Corollary 3.17]{MaculanPoineau}, one then easily verifies that both functors are mutually inverse to each other. For convenience, we only consider the case of general \(k\)-Liu spaces and algebras. The proof for the strict case is identical.
	
	\emph{Step 1: Taking Berkovich spectra.} Let \(\A\) be a \(k\)-Liu algebra. We show that \(X=(\spb(\A),\mathcal{O}_X)\) is a \(k\)-Liu space, with \(\mathcal{O}_X\) as defined in Lemma \ref{lem:structsheaf}. First note that \(\spb(\A)\) has a canonical \(k\)-analytic structure. Indeed, \(\spb(\A)\) is a non-empty Hausdorff topological space by \cite[Theorem 1.2.1]{Berkovich}, and its \(k\)-affinoid domains constitute a \(k\)-affinoid atlas on \(X\). By Lemma \ref{lem:slightlyfiner}, the \(k\)-analytic \(G\)-topology on \(X\) is slightly finer than the weak rational \(G\)-topology. As the \(k\)-analytic structure sheaf and \(\mathcal{O}_X\) agree on the weak rational \(G\)-topology, we can assume that \(\mathcal{O}_X\) is the \(k\)-analytic structure sheaf. By Theorem \ref{thm:acyclic}, \(\mathcal{O}_X\) is acyclic and thus universally acyclic when \(k\) is non-trivially valued by \cite[Theorem A.5]{MaculanPoineau}. For an analytic extension \(\ell/k\) when \(k\) is trivially valued, we can first consider a non-trivially valued analytic extension \(k_r/k\), in which case \(\mathcal{O}_{X_{k_r}}\) is still acyclic by Lemma \ref{lem:basechangeliu}. As before, \(\mathcal{O}_{X_{\ell_r}}\) is then also acyclic and one can again apply \cite[Theorem A.5]{MaculanPoineau} to conclude that \(\mathcal{O}_{X_\ell}\) is acyclic. Moreover, for distinct points \(x,y\in X\), there exists by definition an \(f\in\A=\mathcal{O}_X(X)\) such that \(|f(x)|\neq|f(y)|\), so \(X\) is holomorphically separable. Lastly, \(X\) is separated and compact by \cite[Theorem 1.2.1 and Proposition 3.1.5]{Berkovich}. Hence, \(X\) is a \(k\)-Liu space. Using Proposition \ref{prop:liudomprop}(i), it is straightforward to check that a morphism of \(k\)-Liu algebras defines a unique morphism of \(k\)-Liu spaces.
	
	\emph{Step 2: Taking global sections.} Let \((X,\mathcal{O}_X)\) be a \(k\)-Liu space. We show that \(\A=\mathcal{O}_X(X)\) is a \(k\)-Liu algebra. By \cite[Proposition 3.12]{MaculanPoineau}, \(X\) is a finite union of affinoid rational domains \(X_1,\ldots,X_n\). This induces a finite collection of \(k\)-affinoid rational localisations \(\A_1=\mathcal{O}_X(X_1),\ldots,\A_n=\mathcal{O}_X(X_n)\) of \(\A\) whose Berkovich spectra cover \(X\cong\spb(\A)\). The latter homeomorphism is \cite[Corollary 3.17]{MaculanPoineau}, where \(\A\) is equipped with a norm making \(\A\to\prod_{i}\A_i\) admissible. Rational injectivity of \(\A\to\prod_{i}\A_i\) follows directly from \(\mathcal{O}_X\) being a sheaf. It follows from \cite[Corollary 3.7]{Xia} that a morphism of \(k\)-Liu spaces yields a morphism of \(k\)-Liu algebras.
\end{proof}
\begin{corollary}\label{cor:noetherian}
	Let \(\A\) be a (strictly) \(k\)-Liu algebra and \(I\subset\A\) an ideal. Then \(\A\) is Noetherian, \(I\) is closed and \(\A/I\) equipped with its residue norm is again a (strictly) \(k\)-Liu algebra.
\end{corollary}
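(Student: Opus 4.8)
The plan is to deduce all three assertions from a single locality property of ideals, extracted from the affinoid pieces of a rational atlas together with Tate's Acyclicity Theorem. Fix a rational atlas $\A_1,\dots,\A_n$ of $\A$, so that the affinoid rational domains $X_i=\spb(\A_i)$ cover $X=\spb(\A)$ and, by Theorem~\ref{thm:acyclic}, the augmented Čech complex
\[0\to\A\xrightarrow{\iota}\prod_i\A_i\to\prod_{i<j}\A_i\widehat{\otimes}_\A\A_j\to\cdots\]
is admissible and exact; in particular $\iota$ is injective and identifies $\A$ with the compatible families in $\prod_i\A_i$. For an ideal $I\subseteq\A$ set $I^{\mathrm{loc}}=\iota^{-1}\bigl(\prod_iI\A_i\bigr)=\{a\in\A:a\in I\A_i\text{ for all }i\}$; since each $\A_i$ is $k$-affinoid and hence Noetherian, every $I\A_i$ is finitely generated and closed in $\A_i$. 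The heart of the matter is the identity $I=I^{\mathrm{loc}}$ for every ideal $I\subseteq\A$, which is exactly the faithful flatness of $\iota$ (cf.\ Remark~\ref{rem:faithflat}). The strict case runs identically with the strict analogues of \lq\lq affinoid\rq\rq\ and \lq\lq rational\rq\rq, and the trivially valued case reduces to the non-trivially valued one via the base change of Lemma~\ref{lem:basechangeliu}.

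Granting the identity $I=I^{\mathrm{loc}}$, the three claims are formal. For \emph{closedness}, $I^{\mathrm{loc}}$ is the preimage under the continuous map $\iota$ of the closed set $\prod_iI\A_i$, so $I=I^{\mathrm{loc}}$ is closed. For \emph{Noetherianity}, an ascending chain $I_1\subseteq I_2\subseteq\cdots$ induces ascending chains $I_m\A_i$ in the Noetherian rings $\A_i$; these stabilise simultaneously from some index $N$ on, whence $I_m=I_m^{\mathrm{loc}}=\iota^{-1}(\prod_iI_N\A_i)=I_N$ for $m\geq N$. For the \emph{quotient}, once $I$ is closed the residue norm makes $\A/I$ a $k$-Banach algebra, and $\{\A_i/I\A_i\}_i$ is a rational atlas of it: each $\A_i/I\A_i$ is a quotient of a $k$-affinoid algebra by a closed ideal, hence $k$-affinoid; by Lemma~\ref{lem:ratcombas} applied to $\A\to\A/I$ it is the rational localisation of $\A/I$ along the images of the defining data; and $\spb(\A_i/I\A_i)=X_i\cap\spb(\A/I)$ cover $\spb(\A/I)$. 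Rational injectivity of $\A/I\to\prod_i\A_i/I\A_i$ is the identity $I=I^{\mathrm{loc}}$ applied to the ideals $I\A_U$ inside the rational localisations $\A_U$ of $\A$ (each itself a Liu algebra), after identifying the relevant localisation of $\A/I$ with $\A_U/I\A_U$ and its atlas with $\{(\A_U)_i/I(\A_U)_i\}_i$.

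It remains to prove the identity $I=I^{\mathrm{loc}}$, which is the main obstacle. The difficulty is non-circularity: the classical proof that a rational localisation $\A\to\A_i$ is flat---which, together with the covering condition, yields faithful flatness of $\iota$ and hence the identity---uses that $\A$ is Noetherian, precisely what we are proving. The plan is to break the loop geometrically through Theorem~\ref{thm:isocatliu}. First, since $X$ is separated and compact (hence countable at infinity) and therefore holomorphically convex, a Liu space satisfies condition (ii) of Definition~\ref{def:stein}; by the equivalence with condition (i) there, $X$ is cohomologically Stein, so every coherent $\mathcal{O}_X$-module is acyclic. Second, by transitivity of rational domains (Proposition~\ref{prop:liudomprop}(iii)) the rational domains contained in $X_i$ are cofinal among those containing a point $x\in X_i$, so the analytic stalk $\mathcal{O}_{X,x}$ coincides with the affinoid stalk $\mathcal{O}_{X_i,x}$, which is Noetherian.

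With these inputs I would run the Berkovich--Kiehl flatness argument locally: flatness of $\A\to\A_i$ is tested on the Noetherian analytic local rings, on which $\mathcal{O}_{X,x}\xrightarrow{\sim}\mathcal{O}_{X_i,x}$ is even an isomorphism; the passage from this pointwise flatness to flatness of the $\A$-module $\A_i$ is exactly where acyclicity is used, upgraded via the admissible Čech resolution above to the Liu analogue of Kiehl's theorem, namely that coherent $\mathcal{O}_X$-modules are finitely generated over $\A$ with vanishing higher Čech cohomology. The technical core is thus this finiteness and global-generation step, carried out by reducing to the affinoid pieces through the resolution above; once it is in place, flatness of each $\A\to\A_i$, faithful flatness of $\iota$, and the locality identity $I=I^{\mathrm{loc}}$ all follow, completing the three assertions.
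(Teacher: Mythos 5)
Your reduction of all three assertions to the single identity $I=I^{\mathrm{loc}}$ is a sensible organising principle, and the formal deductions from it (closedness as a preimage of $\prod_i I\A_i$, stabilisation of chains, the atlas $\{\A_i/I\A_i\}$ for the quotient) are essentially fine. But the identity itself is exactly the hard point, and your proposal does not prove it. What you offer is a plan: test flatness of $\A\to\A_i$ on the stalks $\mathcal{O}_{X,x}\cong\mathcal{O}_{X_i,x}$, then pass from stalkwise flatness to flatness of the $\A$-module $\A_i$ via ``the Liu analogue of Kiehl's theorem, namely that coherent $\mathcal{O}_X$-modules are finitely generated over $\A$,'' which you describe as ``carried out by reducing to the affinoid pieces through the resolution above.'' That last step is not a reduction one can wave at: Tate acyclicity of $\mathcal{O}_X$ does not by itself yield finiteness or global generation of arbitrary coherent sheaves (Liu's counterexamples are precisely about how little acyclicity of $\mathcal{O}_X$ gives for free), and the passage from an isomorphism on stalks to flatness of $\A\to\A_i$ requires exactly the joint faithfulness/exactness of the stalk functors on coherent modules that a Kiehl-type theorem provides. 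So the ``technical core'' you name is a genuine gap, not a routine verification; it is essentially the content of the results of Maculan--Poineau that the paper cites. Note also that the paper deliberately proves flatness of $\A\to\A_U$ (Proposition \ref{prop:flat}) and faithful flatness of $\iota$ (Corollary \ref{cor:faithful}) \emph{after} and \emph{using} Corollary \ref{cor:noetherian}; your proposal inverts this order, which is why you are forced to reprove Kiehl-type statements from scratch.

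For comparison, the paper's own proof is a transport-of-structure argument: by Theorem \ref{thm:isocatliu}, $\A\cong\mathcal{O}_X(X)$ for the $k$-Liu space $X=\spb(\A)$, and Noetherianity is then \cite[Proposition 2.6(3)]{MaculanPoineau}; closedness of ideals follows from Noetherianity by \cite[Proposition 3.7.2/2]{BGR}; and $\A/I$ is Liu because the closed analytic subspace it cuts out is again a Liu space by \cite[Corollary 1.16(1), Lemma 2.4]{MaculanPoineau}. If you want to keep your route, the cleanest repair is to stay on the sheaf-theoretic side rather than going through flatness: you already (correctly) extract from Theorem \ref{thm:isocatliu} and Definition \ref{def:stein} that $X$ is cohomologically Stein, so for a \emph{finitely generated} ideal $J=(g_1,\dots,g_m)$ the coherent image sheaf $\mathcal{J}$ of $\mathcal{O}_X^m\to\mathcal{O}_X$ has $H^1(X,\ker)=0$, whence $H^0(X,\mathcal{J})=J$ while $H^0(X,\mathcal{J})=J^{\mathrm{loc}}$; the case of general ideals and Noetherianity then follow from the finitely generated case by choosing finitely many elements of $I$ generating every $I\A_i$. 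As written, however, the proposal does not contain this (or any complete) argument for $I=I^{\mathrm{loc}}$, so it cannot be accepted as a proof. A further small inaccuracy: Lemma \ref{lem:ratcombas} concerns base change along analytic field extensions, not quotients, so it does not literally give $(\A/I)\langle r_\bullet^{-1}\bar f_\bullet/\bar g\rangle\cong\A_i/I\A_i$; that identification needs its own (easy, but separate) verification using closedness of $I\A_i$ in the affinoid algebra $\A_i$.
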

\begin{proof}
	Noetherianity follows by combining Theorem \ref{thm:isocatliu} and \cite[Proposition 2.6(3)]{MaculanPoineau} (see also \cite[Remark 2.7]{MaculanPoineau}). As a result, \(I\) is closed by \cite[Proposition 3.7.2/2]{BGR}. The quotient \(\A/I\) is a \(k\)-Liu algebra by Theorem \ref{thm:isocatliu} and \cite[Corollary 1.16(1) and Lemma 2.4]{MaculanPoineau}.
\end{proof}
Recall that, for a given \(k\)-Banach algebra \(\A\), a Banach \(\A\)-module \(M\) is called a \emph{finite Banach \(\A\)-module} if it admits an admissible epimorphism \(\A^n\to M\) for some \(n\in\bZ_{>0}\).
\begin{corollary}\label{cor:liubounded}
	Let \(\A\) be a \(k\)-Liu algebra. Then the following hold.
	\begin{enumerate}[label=(\roman*)]
		\item All \(k\)-algebra morphisms from a Noetherian \(k\)-Banach algebra to \(\A\) are bounded. In particular, all \(k\)-algebra morphisms between \(k\)-Liu algebras are bounded.
		\item Every finite \(\A\)-module has a canonical Banach \(\A\)-module structure, and all \(\A\)-linear maps between finite Banach \(\A\)-modules are bounded and admissible.
	\end{enumerate}
\end{corollary}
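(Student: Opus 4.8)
The plan is to prove the two assertions separately. For (i) the engine will be an automatic–continuity argument resting on the facts that \(\A\) is Noetherian with closed ideals (Corollary \ref{cor:noetherian}) and that its rigid residue fields are finite over \(k\) (Lemma \ref{lem:finext}); for (ii) I would transcribe the theory of finite Banach modules from \cite[§3.7.3]{BGR}.

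\emph{Part (i).} Let \(\varphi\colon B\to\A\) be a \(k\)-algebra morphism from a Noetherian \(k\)-Banach algebra \(B\). I would first treat the case where \(k\) is non-trivially valued and \(\A\) is strictly \(k\)-Liu. The key observation is that \emph{any \(k\)-algebra morphism \(\psi\colon B\to R\) into a finite-dimensional \(k\)-algebra \(R\) is bounded}: its kernel \(\ker\psi\) is an ideal of the Noetherian Banach algebra \(B\), hence closed by \cite[Proposition 3.7.2/2]{BGR}, and of finite codimension, so \(B/\ker\psi\cong\operatorname{im}\psi\subseteq R\) is a finite-dimensional \(k\)-Banach algebra, which carries a unique topology, forcing \(\psi\) to be continuous. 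With this in hand I would show that the graph of \(\varphi\) is closed and invoke the Closed Graph Theorem. If \(b_\nu\to b\) in \(B\) with \(\varphi(b_\nu)\to a\) in \(\A\), then for every \(\mathfrak m\in\mspec(\A)\) and every \(j\ge 1\) Lemma \ref{lem:finext} together with Noetherianity shows that \(\A/\mathfrak m^j\) is finite-dimensional over \(k\) and that \(\mathfrak m^j\) is closed (Corollary \ref{cor:noetherian}); hence both \(\A\to\A/\mathfrak m^j\) and \(B\to\A/\mathfrak m^j\) are continuous, giving \(\varphi(b)-a\in\mathfrak m^j\). Since \(\bigcap_{\mathfrak m,j}\mathfrak m^j=0\) by Krull's intersection theorem, \(\varphi(b)=a\), so the graph is closed. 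The non-strict and the trivially valued cases I would reduce to this one by base change along a suitable \(k_r\) (Lemma \ref{lem:basechangeliu}), descending boundedness through the isometric inclusion \(B\hookrightarrow B\widehat{\otimes}_k k_r\) and \cite[Proposition 2.1.2(ii)]{Berkovich}. The \lq in particular\rq\ is then immediate, since Liu algebras are Noetherian (Corollary \ref{cor:noetherian}).

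\emph{Part (ii).} Choose an admissible epimorphism \(\pi\colon\A^n\twoheadrightarrow M\) and give \(M\) the residue norm. Its kernel is a finite \(\A\)-submodule of \(\A^n\), so the first step is to prove that \emph{every \(\A\)-submodule \(N\subseteq\A^n\) is closed}, by induction on \(n\): the case \(n=1\) is the closedness of ideals (Corollary \ref{cor:noetherian}), and the inductive step uses the short exact sequence relating \(N\cap\A^{n-1}\) with the image of \(N\) under a coordinate projection. Thus \(M\) becomes a genuine \(k\)-Banach \(\A\)-module. For the remaining claims I would copy \cite[§3.7.3]{BGR}: any \(\A\)-linear map \(\A^n\to M\) is bounded by the ultrametric inequality applied to the images of the standard basis, so for \(f\colon M\to M'\) between finite Banach \(\A\)-modules, precomposing with an admissible epimorphism \(\A^n\twoheadrightarrow M\) reduces boundedness of \(f\) to that of an \(\A\)-linear map \(\A^n\to M'\); applying this to \(\mathrm{id}_M\) for two presentations shows the residue norm is independent of \(\pi\) up to equivalence. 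Finally, \(f\) is admissible because it factors as \(M\twoheadrightarrow\operatorname{im} f\hookrightarrow M'\), where \(\operatorname{im} f\) is a finite (hence closed) submodule of \(M'\): the Open Mapping Theorem makes the surjection admissible (extending to trivially valued \(k\) via \cite[Proposition 2.1.2(ii)]{Berkovich}), the closed inclusion is admissible since the induced and residue norms on \(\operatorname{im} f\) are equivalent, and compositions of admissible maps are admissible.

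\emph{Main obstacle.} The delicate point is the automatic continuity in (i). The closed–graph argument only directly detects the reduced, rigid structure through the field quotients \(\A/\mathfrak m\); to see the full, possibly non-reduced, algebra one must pass to all the finite-dimensional quotients \(\A/\mathfrak m^j\) and invoke Krull's intersection theorem, which is exactly where Noetherianity and closedness of ideals are indispensable. The second sensitive step is the interaction with base change: reducing the non-strict and trivially valued cases to the strict case requires that the completed base change \(B\widehat{\otimes}_k k_r\) of the source stays Noetherian. This is clear when \(B\) is itself a Liu algebra (Lemma \ref{lem:basechangeliu} and Corollary \ref{cor:noetherian}) — the case needed for the \lq in particular\rq\ — but for a general Noetherian \(B\) it would need a separate argument.
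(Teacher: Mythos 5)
Your part (ii) coincides with the paper's proof: the paper invokes the argument of \cite[Proposition 2.1.9]{Berkovich} (i.e.\ the transcription of \cite[\S 3.7.3]{BGR} that you describe, with Corollary \ref{cor:noetherian} supplying Noetherianity and closedness of ideals), and obtains admissibility by reducing to non-trivially valued \(k\) via \cite[Proposition 2.1.2(ii)]{Berkovich} and the Open Mapping Theorem — exactly your route.

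For part (i) the paper does something much shorter: it combines Theorem \ref{thm:isocatliu} with \cite[Corollary 3.7]{Xia}, which is precisely the automatic-boundedness statement you re-derive (and which itself rests on the closed-graph/Krull argument of \cite[Proposition 3.7.5/2]{BGR}). Your strict, non-trivially valued case is correct. The obstacle you flag at the end, however, is a genuine gap and not a formality: your reduction base-changes the \emph{source} along \(k\to k_r\), and there is no reason for \(B\widehat{\otimes}_kk_r\) to remain Noetherian for an arbitrary Noetherian \(k\)-Banach algebra \(B\); nor can you instead run the closed-graph argument on the composite \(B\to\A\widehat{\otimes}_kk_r\), since the quotients \((\A\widehat{\otimes}_kk_r)/\mathfrak{m}^j\) are finite-dimensional over \(k_r\) but not over \(k\), so your key observation no longer applies to \(B\to(\A\widehat{\otimes}_kk_r)/\mathfrak{m}^j\). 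The repair is to leave \(B\) untouched and shrink the target instead: the canonical map \(\iota:\A\to\prod_i\A_i\) to a rational atlas is injective and admissible (Remark \ref{rem:faithflat}), so \(\varphi\) is bounded if and only if each composite \(B\to\A_i\) is, which reduces (i) to the case of a \(k\)-affinoid target — the statement underlying \cite[Corollary 3.7]{Xia} that the paper cites. As written, your argument proves (i) only for strictly \(k\)-Liu targets over non-trivially valued \(k\), together with the \lq in particular\rq\ clause, where the source is itself a Liu algebra and Noetherianity does survive base change by Lemma \ref{lem:basechangeliu} and Corollary \ref{cor:noetherian}.
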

\begin{proof}
	Claim (i) follows directly by combining Theorem \ref{thm:isocatliu} and \cite[Corollary 3.7]{Xia}.\footnote{Note that in \cite[Corollary 3.7]{Xia}, it was forgotten to add the Noetherianity assumption to the domain of a \(k\)-algebra morphism to \(\A\). However, this is essential in order to be able to use \cite[Proposition 3.7.5/2]{BGR} in their proof.} For claim (ii), one can give essentially the same argument as in \cite[Proposition 2.1.9]{Berkovich} (except for admissibility). To see admissibility, one can use \cite[Proposition 2.1.2(ii)]{Berkovich} to reduce to the non-trivially valued case, and conclude by applying Banach's Open Mapping Theorem.
\end{proof}
The following Corollary is a version of \cite[Lemma 2.4.1]{TemkinPinch} for Liu algebras.
\begin{proposition}\label{prop:basechangeliufull}
	Let \(\ell\) be a topgebraic extension of \(k_r\) for some tuple \(r\in\bR_{>0}^n\). Then a \(k\)-Banach algebra \(\A\) is \(k\)-Liu if and only if \(\A_\ell=\A\widehat{\otimes}_k\ell\) is \(\ell\)-Liu.
\end{proposition}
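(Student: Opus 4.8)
The forward implication is precisely Lemma \ref{lem:basechangeliu}, so the plan is to establish the converse: assuming \(\A_\ell\) is \(\ell\)-Liu, I must produce a rational atlas of \(\A\). By transitivity of the tower \(k\to k_r\to\ell\) together with Lemma \ref{lem:basechangeliu} applied to the intermediate field, it is harmless to treat the two extremal cases \(\ell=k_r\) and \(\ell/k_r\) topgebraic separately, although I will not strictly need this reduction below.

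The heart of the argument is to transport the \(\ell\)-affinoid atlas of \(\A_\ell\) down to \(k\). The key compatibility is Lemma \ref{lem:ratcombas}: for \(g,f_1,\dots,f_n\in\A\) generating the unit ideal, the base change \((\A\langle r_\bullet^{-1}f_\bullet/g\rangle)\widehat{\otimes}_k\ell\) is canonically the rational localisation \((\A_\ell)\langle r_\bullet^{-1}f_\bullet/g\rangle\) of the \(\ell\)-Liu algebra \(\A_\ell\). Writing \(X=\spb(\A)\), \(X_\ell=\spb(\A_\ell)\) and \(\pi:X_\ell\to X\) for the projection, Proposition \ref{prop:ratdesc} identifies this localisation with the coordinate ring of \(\pi^{-1}(X(r_\bullet^{-1}f_\bullet/g))\). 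Invoking Temkin's affinoid base change result \cite[Lemma 2.4.1]{TemkinPinch}, the \(k\)-Banach algebra \(\A\langle r_\bullet^{-1}f_\bullet/g\rangle\) is \(k\)-affinoid if and only if its \(\ell\)-base change is \(\ell\)-affinoid, i.e. if and only if \(\pi^{-1}(X(r_\bullet^{-1}f_\bullet/g))\) is an affinoid domain of the Liu space \(X_\ell\). Thus the problem reduces to a purely geometric statement: \(X\) admits a finite covering by \(k\)-rational domains \(U_1,\dots,U_m\) whose preimages \(\pi^{-1}(U_i)\) are affinoid domains of \(X_\ell\). Granting this, the \(\A_i\vcentcolon=\mathcal{O}_X(U_i)\) are \(k\)-affinoid rational localisations of \(\A\) covering \(X\); rational injectivity of \(\iota:\A\to\prod_i\A_i\) then follows by descent, since the sheaf property of \(\A_\ell\) coming from Tate acyclicity (Theorem \ref{thm:acyclic}) makes \(\iota\widehat{\otimes}_k\ell\) rationally injective with respect to the cover \(\{(\A_i)_\ell\}\), and faithful flatness and exactness of \(-\widehat{\otimes}_k\ell\) together with Lemma \ref{lem:ratcombas} descend this to \(\iota\). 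By Definition \ref{def:liualg}, \(\A\) is then \(k\)-Liu.

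It therefore remains to construct the covering, and this is where the topgebraic-over-\(k_r\) hypothesis enters. The decisive observation is that the fibres of \(\pi\) are small: for \(x\in X\) one has \(\pi^{-1}(x)=\spb(\mathcal{H}(x)\widehat{\otimes}_k\ell)=\spb(\mathcal{H}(x)_r\widehat{\otimes}_{k_r}\ell)\), and since every bounded multiplicative seminorm on a complete valued field coincides with its valuation, \(\pi^{-1}(x)\) reduces to the totally disconnected (at worst profinite) set of points of \(\spb(\mathcal{H}(x)_r\widehat{\otimes}_{k_r}\ell)\), which the topgebraicity of \(\ell/k_r\) keeps small. The plan is to show that each such fibre lies in a single affinoid domain \(V\) of \(X_\ell\), and then to use properness of \(\pi\) (a tube-lemma argument) to find a \(k\)-rational domain \(U\ni x\) with \(\pi^{-1}(U)\subseteq V\); by transitivity of rational domains (Proposition \ref{prop:liudomprop}(iii)) such a \(\pi^{-1}(U)\) is a rational domain of the affinoid \(V\), hence itself affinoid. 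Compactness of \(X\) then yields the desired finite subcovering.

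The main obstacle is exactly this fibre-placement step: showing that each small, conjugate fibre of \(\pi\) lies inside one affinoid domain of \(X_\ell\), and that the surrounding affinoid can be chosen so that a \(k\)-rational neighbourhood of \(x\) has affinoid preimage. This is precisely the sort of subtlety flagged in the introduction as having been overlooked in an earlier attempt, and it is where the hypothesis that \(\ell\) is topgebraic over \(k_r\) — rather than an arbitrary analytic extension — is indispensable. A cleaner but more machinery-heavy alternative would bypass the atlas entirely: equip \(X_\ell\) with its canonical descent datum coming from \(\A_\ell=\A\widehat{\otimes}_k\ell\), invoke effectivity of descent for \(k\)-analytic spaces along such extensions to obtain a \(k\)-analytic space \(X'\) with \(X'_\ell\cong X_\ell\), descend the defining properties of a Liu space (compactness, separatedness, holomorphic separability and universal acyclicity) to \(X'\), and finally recover \(\A=\mathcal{O}_{X'}(X')\) from \(\mathcal{O}_{X'}(X')\widehat{\otimes}_k\ell=\A_\ell\) by faithfully flat descent, concluding by Theorem \ref{thm:isocatliu}.
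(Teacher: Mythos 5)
Your forward direction and your reduction of the converse to the statement ``\(X\) admits a finite cover by \(k\)-rational domains whose \(\pi\)-preimages are affinoid in \(X_\ell\)'' are both sound, and that reduction (via Lemma \ref{lem:ratcombas} and \cite[Lemma 2.4.1]{TemkinPinch}) is consistent with the paper. But the argument then stops exactly where the real work begins: you yourself flag the fibre-placement step as ``the main obstacle'' and do not resolve it. That step is not merely technical. There is no reason for a whole fibre \(\pi^{-1}(x)\) --- a Galois orbit of points of \(X_\ell\) --- to lie in a single affinoid domain of the Liu space \(X_\ell\) (a Liu space is only a finite \emph{union} of affinoid rational domains, and distinct conjugates of a point may sit in different charts); and even granting such a \(V\), the tube-lemma step requires \(V\) to contain a \(\pi\)-saturated neighbourhood of the fibre before \(\pi^{-1}(U)\subseteq V\) can hold for any \(U\ni x\). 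The alternative you sketch (effectivity of descent for \(k\)-analytic spaces along topgebraic extensions, plus descent of the Liu properties) is likewise asserted rather than proved and is itself nontrivial. So as written there is a genuine gap.

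The paper closes this gap by a different mechanism. First, a rigidity/perturbation argument --- compactness of a rational domain \(R=X_\ell(s_\bullet^{-1}f_\bullet/g)\) yields an \(\varepsilon>0\) with \(R\subset X_\ell(\varepsilon g^{-1})\) and \(R\cap X_\ell(\varepsilon^{-1}g)=\varnothing\), so sufficiently small perturbations of \(f_\bullet\) and \(g\) do not change \(R\) --- allows the defining functions of each chart of the atlas of \(\A_\ell\) to be replaced by elements of \(\A_F\) for a \emph{finite} Galois extension \(F/k_r\). Combined with \cite[Lemma 2.4.1]{TemkinPinch}, this reduces to the case where \(\ell/k_r\) is finite Galois with group \(G\). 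One then takes \(G\)-invariants \(\A_i=\A_{\ell,i}^G\) of the charts, which are \(k\)-affinoid by \cite[Proposition 2.1.14(ii)]{Berkovich}; the \(\spb(\A_i)\) form a \(k\)-affinoid atlas on \(X\), and one concludes via \cite[Theorem A.5]{MaculanPoineau} and Theorem \ref{thm:isocatliu}. In short, rather than trying to fit each fibre into one chart upstairs, the paper makes the atlas itself Galois-descendable; your proof would need to be repaired by substituting this perturbation-plus-invariants descent for the unproven fibre-placement step.
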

\begin{proof}
	The direct implication is a special case of Lemma \ref{lem:basechangeliu}. For the converse implication, we can assume that \(\ell\) is non-trivially valued, as otherwise we can replace it by any \(\ell_{r'}\) with \(r'\neq1\). Denote \(X=\spb(\A)\) and \(X_\ell=\spb(\A_\ell)\), and let \(R=X_\ell(s_\bullet^{-1}f_\bullet/g)\) be a rational domain of \(X_\ell\). As \(R\) is compact, one can copy the argument from \cite[Proposition 7.2.4/1]{BGR} to see that there is an \(\varepsilon\in\bR_{>0}\) such that \(R\subset X_\ell(\varepsilon g^{-1})\subset X_\ell\) and \(R\cap X_\ell(\varepsilon^{-1}g)=\varnothing\). Also, one easily verifies rigidity of Weierstrass and Laurent domains in compact \(k\)-analytic spaces by the same argument as in \cite[Proposition 7.2.3/3]{BGR}. Hence, after some small perturbations if necessary, we can assume that \(f_\bullet\) and \(g\) are in \(\A_F\) for a finite Galois extension \(F/k_r\). Combining this with \cite[Lemma 2.4.1]{TemkinPinch}, it follows that every rational atlas of \(\A_\ell\) descends to a rational atlas of \(\A_F\) for a finite Galois analytic extension \(F/k_r\). We can thus assume that \(\ell/k_r\) is a finite Galois analytic extension, for which we denote \(G=\Gal(\ell/k)\).
	
	Now fix a rational atlas \(\A_{\ell,1},\ldots,\A_{\ell,n}\) for \(\A_\ell\) and let \(\A_i=\A_{\ell,i}^G\) for every \(i\in\{1,\ldots,n\}\). By \cite[Proposition 2.1.14(ii)]{Berkovich}, each \(\A_i\) is \(k\)-affinoid. One easily verifies that the \(\spb(\A_i)\)'s define a \(k\)-affinoid atlas on \(X\), endowing \(X\) with a \(k\)-analytic structure compatible with that on \(X_\ell\). It now follows from \cite[Theorem A.5]{MaculanPoineau} and Theorem \ref{thm:isocatliu} that \(\A\) is a \(k\)-Liu algebra.
\end{proof}
\begin{remark}
	We can give the same remarks about Proposition \ref{prop:basechangeliufull} as is done for its affinoid version in \cite[Remark 2.4.2]{TemkinPinch}.
	\begin{enumerate}[label=(\roman*)]
		\item One can remove the assumption that the tuple \(r\) is finite. However, it is important for the proof that \(k_r/k\) has an orthogonal Schauder basis.
		\item The corollary does not hold for an arbitrary analytic extension \(\ell/k\). Indeed, let \(x\in\bA_k^1\) be a type 4 point of radius \(r\in\bR_{>0}\). Then its completed residue field \(\ell=\mathcal{H}(x)\) is not \(k\)-affinoid, whereas \(\ell\widehat{\otimes}_k\ell\cong\ell\langle r^{-1}T\rangle\) is \(\ell\)-affinoid. Since \(\spb(\ell)\) consists of only one point, \(\ell\) also cannot be \(k\)-Liu.
	\end{enumerate}
\end{remark}
\begin{proposition}\label{prop:flat}
	Let \(\A\) be a \(k\)-Liu algebra and \(U\) a Liu domain in \(X=\spb(\A)\). Then the restriction map \(\A\to\A_U\) is flat.
\end{proposition}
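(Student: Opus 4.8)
The plan is to check flatness locally and to compare completed local rings at rigid points, following the template of the affinoid statement \cite[Proposition 2.2.4(ii)]{Berkovich} and its underlying argument \cite[Corollary 7.3.2/6]{BGR}. Both \(\A\) and \(\A_U\) are Noetherian by Corollary \ref{cor:noetherian}, and by Lemma \ref{lem:finext} every maximal ideal \(\mathfrak{m}_x\subset\A_U\) contracts along \(\iota_U\) to a maximal ideal of \(\A\). Hence, by the local nature of flatness, it suffices to prove that the local homomorphism \(\A_{\iota_U^{-1}(\mathfrak{m}_x)}\to(\A_U)_{\mathfrak{m}_x}\) is flat for every \(\mathfrak{m}_x\in\mspec(\A_U)\).

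Before doing so, I would reduce to the case where \(k\) is non-trivially valued and both \(\A\) and \(\A_U\) are strictly \(k\)-Liu, so that rigid points are available (Proposition \ref{prop:sprinspb}). Choosing a finite polyradius \(r\) comprising all radii occurring in a rational atlas of \(\A\) and of \(\A_U\), the base change along the faithfully flat morphism \(\A\to\A_{k_r}=\A\widehat{\otimes}_kk_r\) makes both algebras strict by Lemma \ref{lem:basechangeliu}, and \(-\widehat{\otimes}_kk_r\) is compatible with the formation of the coordinate ring \(\A_U\) by Lemma \ref{lem:ratcombas}. Flatness of \(\A\to\A_U\) then follows from flatness of \(\A_{k_r}\to(\A_U)_{k_r}\) by faithfully flat descent, exactly as in \cite[Proposition 2.2.4]{Berkovich}.

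In the strict case, fix \(\mathfrak{m}_x\in\mspec(\A_U)\) with rigid point \(x\in\spr(\A_U)\), put \(y=\spr(\iota_U)(x)\) and \(\mathfrak{m}_y=\iota_U^{-1}(\mathfrak{m}_x)\). Proposition \ref{prop:maxidealprop} gives \(y\in U\cap\spr(\A)\), an isomorphism \(\A/\mathfrak{m}_y\xrightarrow{\sim}\A_U/\mathfrak{m}_x\), and the equality \(\mathfrak{m}_x=\iota_U(\mathfrak{m}_y)\A_U\). The crucial step is to promote this to isomorphisms \(\A/\mathfrak{m}_y^{\,n}\xrightarrow{\sim}\A_U/\mathfrak{m}_x^{\,n}\) for all \(n\). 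As announced in the remark after Proposition \ref{prop:maxidealprop}, Corollary \ref{cor:noetherian} shows that each \(\A/\mathfrak{m}_y^{\,n}\) is a finite-dimensional, hence strictly \(k\)-affinoid, \(k\)-Liu algebra whose spectrum is the single point \(y\in U\); applying the universal property of the Liu domain \(U\) to the quotient \(\A\to\A/\mathfrak{m}_y^{\,n}\) and running the tensor computation from the proof of Proposition \ref{prop:maxidealprop} with \(\mathfrak{m}_y^{\,n}\) in place of \(\mathfrak{m}_y\) (so that a strictly affinoid domain \(V\subset U\subset X\) around \(y\) reduces the claim to the affinoid powers statement \cite[Proposition 7.2.2/1]{BGR}) yields \(\A_U/\mathfrak{m}_x^{\,n}=\A_U/\mathfrak{m}_y^{\,n}\A_U\cong\A/\mathfrak{m}_y^{\,n}\). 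Passing to the inverse limit over \(n\) produces an isomorphism of \(\mathfrak{m}\)-adic completions \(\widehat{\A_{\mathfrak{m}_y}}\xrightarrow{\sim}\widehat{(\A_U)_{\mathfrak{m}_x}}\).

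The conclusion is then purely formal: the completion maps of the Noetherian local rings \(\A_{\mathfrak{m}_y}\) and \((\A_U)_{\mathfrak{m}_x}\) are faithfully flat, so the composite \(\A_{\mathfrak{m}_y}\to(\A_U)_{\mathfrak{m}_x}\to\widehat{(\A_U)_{\mathfrak{m}_x}}\cong\widehat{\A_{\mathfrak{m}_y}}\) is flat; since \((\A_U)_{\mathfrak{m}_x}\to\widehat{(\A_U)_{\mathfrak{m}_x}}\) is faithfully flat, faithfully flat descent forces \(\A_{\mathfrak{m}_y}\to(\A_U)_{\mathfrak{m}_x}\) to be flat, and the first paragraph completes the proof. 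I expect the genuine obstacle to be precisely the \(n\)-th power upgrade of Proposition \ref{prop:maxidealprop}: it must be derived from Corollary \ref{cor:noetherian} and the affinoid case \emph{without} invoking the flatness we are trying to prove. The base-change bookkeeping in the second paragraph is a comparatively routine, if delicate, point about completed tensor products.
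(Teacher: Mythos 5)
Your proof is correct and follows essentially the same route as the paper: reduce to the strict case by the ground-field-extension/descent argument of \cite[Proposition 2.2.4]{Berkovich}, then in the strict case run the completed-local-ring comparison of \cite[Corollary 7.3.2/6]{BGR} at rigid points and descend flatness along the faithfully flat completion maps. The only cosmetic difference is that the paper obtains the isomorphism \(\widehat{\A}_{\mathfrak{m}_y}\cong\widehat{(\A_U)}_{\mathfrak{m}_x}\) by citing \cite[Proposition 2.4]{LiuTohoku}, whereas you re-derive it from the \(n\)-th power upgrade of Proposition \ref{prop:maxidealprop} --- exactly the upgrade the remark following that proposition says becomes available once Corollary \ref{cor:noetherian} is in place, so your identified ``genuine obstacle'' is indeed the crux and is handled as the paper intends.
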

\begin{proof}
	When \(X\) and \(U\) are strictly \(k\)-Liu, we can use Corollary \ref{cor:noetherian} and \cite[Proposition 2.4]{LiuTohoku} to give the same argument as in \cite[Corollary 7.3.2/6]{BGR}. Using Corollary \ref{cor:liubounded}(ii), one now proves the general case via the same base change argument as in \cite[Proposition 2.2.4]{Berkovich}.
\end{proof}
\begin{corollary}\label{cor:faithful}
	Let \(\A\) be a \(k\)-Banach algebra. Then \(\A\) is a (strictly) \(k\)-Liu algebra if and only if there exist finitely many (strictly) rational localisations \(\A_1,\ldots,\A_n\) of \(\A\) such that
	\begin{enumerate}[label=(\roman*)]
		\item each \(\A_i\) is (strictly) \(k\)-affinoid;
		\item the canonical map \(\iota:\A\to\prod_i\A_i\) is faithfully flat.
	\end{enumerate}
\end{corollary}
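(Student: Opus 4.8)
The statement is precisely the equivalence between Definition~\ref{def:liualg} and the faithful-flatness characterisation, so the plan is to prove both implications while keeping condition~(i) (that the \(\A_i\) be affinoid) fixed, showing that conditions (ii)--(iii) of Definition~\ref{def:liualg} are equivalent to faithful flatness of \(\iota:\A\to\prod_i\A_i\). The flatness half is shared by both directions: each rational localisation presents a rational domain \(\spb(\A_i)\subset\spb(\A)\) (Proposition~\ref{prop:ratunivprop}), so once \(\A\) is known to be Liu, Proposition~\ref{prop:flat} makes each \(\A\to\A_i\) flat, and a finite product of flat modules is flat.

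For the implication \emph{Definition~\ref{def:liualg} \(\Rightarrow\) faithful flatness}, I would invoke the standard criterion that a flat ring map \(\A\to B\) is faithfully flat as soon as \(\mathfrak{m}B\neq B\) for every maximal ideal \(\mathfrak{m}\subset\A\). Since \(\A\) is Noetherian (Corollary~\ref{cor:noetherian}), such an \(\mathfrak{m}\) is closed and \(\A/\mathfrak{m}\) is a nonzero \(k\)-Banach field, so \(\spb(\A/\mathfrak{m})\neq\varnothing\) produces a point \(x\in\spb(\A)\) whose kernel \(\mathfrak{p}_x=\{f:|f(x)|=0\}\) is exactly \(\mathfrak{m}\). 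The covering condition~(ii) places \(x\) in some \(\spb(\A_i)\), i.e.\ there is \(y\in\spb(\A_i)\) over \(x\); the kernel of \(\A_i\to\mathcal{H}(y)\) is then a prime of \(\A_i\) lying over \(\mathfrak{m}\), whence \(\mathfrak{m}\A_i\neq\A_i\). This argument is uniform in the strict and general cases.

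For the converse, faithful flatness gives rational injectivity at once: for any (strictly) rational localisation \(\A'=\A\langle s_\bullet^{-1}f_\bullet/g\rangle\), base change preserves faithful flatness, so \(\A'\to(\prod_i\A_i)\widehat\otimes_\A\A'\cong\prod_i\A_i\langle s_\bullet^{-1}f_\bullet/g\rangle\) (via Lemma~\ref{lem:ratcombas} and Proposition~\ref{prop:liudomprop}) is faithfully flat, hence injective. The substantive point is the covering. I would first treat the strict, non-trivially valued case via rigid points: given \(x\in\spr(\A)\) with maximal ideal \(\mathfrak{m}_x\) (so \(\A/\mathfrak{m}_x\) is finite over \(k\) by Lemma~\ref{lem:finext}), faithful flatness yields \(\mathfrak{m}_x\A_i\neq\A_i\) for some \(i\); the fibre \(\A_i/\mathfrak{m}_x\A_i\) is a nonzero strictly \(k\)-affinoid algebra, which therefore has a rigid point \(y\in\spr(\A_i)\) over \(x\), so \(x\in\spb(\A_i)\) by Proposition~\ref{prop:maxidealprop}(i). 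Thus the \(\spr(\A_i)\) cover \(\spr(\A)\), and Proposition~\ref{prop:defstrictliu} (with density of \(\spr(\A)\) from Proposition~\ref{prop:sprinspb}) upgrades this to a covering of all of \(\spb(\A)\). The general case follows by base change to a strict extension \(\ell=k_r\) with all radii \(s_\bullet\) in \(|\ell^\times|\) (so the atlas rescales to a strict one over \(\ell\)): Lemma~\ref{lem:ratcombas} identifies the base-changed localisations, faithful flatness is preserved, the strict case applies to \(\A_\ell\), and the covering descends along the surjection \(\spb(\A_\ell)\to\spb(\A)\).

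The main obstacle is exactly this covering step of the converse. Faithful flatness is an algebraic condition and directly yields only surjectivity of \(\mathrm{Spec}(\prod_i\A_i)\to\mathrm{Spec}(\A)\), which controls abstract primes and characters to abstract fields; it does not a priori respect the valuation inequalities \(|f_j(x)|\leq s_j|g(x)|\) cutting out the \emph{closed} Berkovich rational domain \(\spb(\A_i)\). Concretely, the non-vanishing one extracts is of an algebraic fibre \(\A_i\otimes_\A\kappa(\mathfrak{p}_x)\), whereas membership \(x\in\spb(\A_i)\) is governed by the completed fibre \(\A_i\widehat\otimes_\A\mathcal{H}(x)\cong\mathcal{H}(x)\langle s_\bullet^{-1}\overline{f}_\bullet/\overline{g}\rangle\), and these may differ precisely when the relevant seminorm degenerates. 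Reducing to rigid points circumvents this, since there \(\mathcal{H}(x)\) is a finite valued extension of \(k\) over which any prime extension automatically carries the unique compatible absolute value; the residual care is the completed-versus-algebraic tensor bookkeeping during base change, where I expect the delicate verifications to concentrate.
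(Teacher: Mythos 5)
Your proposal is correct and follows essentially the same route as the paper: both directions rest on Bourbaki's criterion that a flat map is faithfully flat iff \(\mathfrak{m}B\neq B\) for every maximal ideal \(\mathfrak{m}\), on flatness of rational localisations (Proposition \ref{prop:flat}), on Proposition \ref{prop:defstrictliu} to convert the rigid-point covering into the atlas condition, and on base change to reduce to the strict case. (The only cosmetic slip is invoking Lemma \ref{lem:finext} for \(\A\) before \(\A\) is known to be strictly \(k\)-Liu; finiteness of \(\A/\mathfrak{m}_x\) over \(k\) should instead be deduced from the embedding into \(\A_i/\mathfrak{n}\) for a maximal ideal \(\mathfrak{n}\) over \(\mathfrak{m}_x\), which your own argument already produces.)
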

\begin{proof}
	By combining Lemma \ref{lem:ratcombas}, Proposition \ref{prop:basechangeliufull}, \cite[Lemma 2.4.1]{TemkinPinch}, and the fact that field extensions are faithfully flat, it suffices to prove the strict case. Hence, assume that \(k\) is non-trivially valued and let \(\A_1,\ldots,\A_n\) be strictly \(k\)-affinoid strictly rational localisations of \(\A\). If \(\A_1,\ldots,\A_n\) constitute a strictly rational atlas of \(\A\), then faithful flatness of \(\iota:\A\to\prod_i\A_i\) follows from Proposition \ref{prop:defstrictliu}(ii) and Proposition \ref{prop:flat} via \cite[Proposition I.3.5/9(e)]{BourbakiCom}. Conversely, if \(\iota:\A\to\prod_i\A_i\) is faithfully flat, then the rigid rational domains \(\spr(\A_i)\) cover \(\spr(\A)\) by \cite[Proposition I.3.5/9(e)]{BourbakiCom} and \(\iota:\A\to\prod_i\A_i\) is rationally injective by \cite[Proposition I.3.5/9(c)]{BourbakiCom} and \cite[Proposition 1.1.9/5]{BGR}. Hence, \(\A_1,\ldots,\A_n\) constitutes a strictly rational atlas of \(\A\) by Proposition \ref{prop:defstrictliu}.	
\end{proof}
\begin{corollary}\label{cor:tensorliu}
	Let \(\A\to\B\) and \(\A\to\C\) be bounded morphisms of \(k\)-Banach algebras with \(\B\) and \(\C\) (strictly) \(k\)-Liu. Then the completed tensor product \(\B\widehat{\otimes}_\A\C\) is again a (strictly) \(k\)-Liu algebra. As a result, the intersection of two Liu domains in a \(k\)-Liu space is again a Liu domain.
\end{corollary}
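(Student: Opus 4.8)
The plan is to build an explicit rational atlas for \(\mathcal{D}\vcentcolon=\mathcal{B}\widehat{\otimes}_{\mathcal{A}}\mathcal{C}\) out of the given atlases of \(\mathcal{B}\) and \(\mathcal{C}\), and then conclude via the faithful flatness criterion of Corollary \ref{cor:faithful}. First I would dispose of the general case by reducing it to the strict one. Associativity of the completed tensor product gives \((\mathcal{B}\widehat{\otimes}_{\mathcal{A}}\mathcal{C})\widehat{\otimes}_k\ell\cong\mathcal{B}_\ell\widehat{\otimes}_{\mathcal{A}_\ell}\mathcal{C}_\ell\) for any analytic extension \(\ell/k\); combining this with Lemma \ref{lem:basechangeliu} and Proposition \ref{prop:basechangeliufull} (taking \(\ell=k_r\) for a polyradius \(r\) collecting all radii appearing in the two atlases) lets me assume that \(k\) is non-trivially valued and that \(\mathcal{B},\mathcal{C}\) carry \emph{strict} rational atlases. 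The strict version of the statement is then handled by the same direct argument, so from now on everything is strict.

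Fix strict rational atlases \(\mathcal{B}_1,\dots,\mathcal{B}_m\) of \(\mathcal{B}\) and \(\mathcal{C}_1,\dots,\mathcal{C}_p\) of \(\mathcal{C}\), and set \(\mathcal{D}_{ij}\vcentcolon=\mathcal{B}_i\widehat{\otimes}_{\mathcal{A}}\mathcal{C}_j\). I claim these form a strict rational atlas of \(\mathcal{D}\). Writing \(\mathcal{B}_i=\mathcal{B}\langle f_\bullet/g\rangle\), base change along \(\mathcal{B}\to\mathcal{D}\) identifies \(\mathcal{D}\widehat{\otimes}_{\mathcal{B}}\mathcal{B}_i\cong\mathcal{D}\langle\overline{f_\bullet}/\overline{g}\rangle\), a rational localisation of \(\mathcal{D}\) (the images \(\overline{g},\overline{f_\bullet}\) still generate the unit ideal); iterating with the \(\mathcal{C}_j\) shows each \(\mathcal{D}_{ij}\) is a rational localisation of \(\mathcal{D}\). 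Each \(\mathcal{D}_{ij}\) is strictly \(k\)-affinoid, being the quotient of the strictly \(k\)-affinoid algebra \(\mathcal{B}_i\widehat{\otimes}_k\mathcal{C}_j\) by the (closed) defining ideal of the relative tensor product, and quotients of affinoid algebras by closed ideals are affinoid (cf. Corollary \ref{cor:noetherian}). For the covering condition, any \(x\in\spb(\mathcal{D})\) restricts along \(\mathcal{B}\to\mathcal{D}\) and \(\mathcal{C}\to\mathcal{D}\) to points lying in some \(\spb(\mathcal{B}_i)\) and \(\spb(\mathcal{C}_j)\); the defining inequalities of the corresponding rational domains then hold at \(x\), so \(x\in\spb(\mathcal{D}_{ij})\) by Proposition \ref{prop:ratdesc}.

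By Corollary \ref{cor:faithful} it remains to show that \(\iota:\mathcal{D}\to\prod_{i,j}\mathcal{D}_{ij}\) is faithfully flat; as the atlas is finite, this reduces to flatness of each \(\mathcal{D}\to\mathcal{D}_{ij}\) together with surjectivity of \(\iota\) on spectra, the latter being exactly the covering statement above (and yielding faithful flatness precisely as in the proof of Corollary \ref{cor:faithful}, through \cite[Proposition I.3.5/9(e)]{BourbakiCom} and Proposition \ref{prop:defstrictliu}). For flatness I factor \(\mathcal{D}\to\mathcal{B}_i\widehat{\otimes}_{\mathcal{A}}\mathcal{C}\to\mathcal{B}_i\widehat{\otimes}_{\mathcal{A}}\mathcal{C}_j\), each arrow being the base change of a flat rational localisation — \(\mathcal{B}\to\mathcal{B}_i\), respectively \(\mathcal{C}\to\mathcal{C}_j\), flat by Proposition \ref{prop:flat} — along a completed tensor product. \textbf{This flatness-under-completed-base-change is the main obstacle}: one cannot naively invoke algebraic flat base change, since \(\mathcal{D}\) is not yet known to be Noetherian and the algebraic tensor product need not be complete, nor can one yet apply Proposition \ref{prop:flat} to \(\mathcal{D}\) itself. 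I would resolve it exactly as in the proof of Proposition \ref{prop:flat}, transporting flatness of the rational localisation through the completed tensor product by the base-change argument of \cite[Proposition 2.2.4]{Berkovich}, which applies because the targets are the honest rational localisations \(\mathcal{D}\langle\overline{f_\bullet}/\overline{g}\rangle\) identified above. Granting this, \(\iota\) is faithfully flat and \(\mathcal{D}\) is a (strictly) \(k\)-Liu algebra.

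Finally, for the intersection statement, let \(U,V\) be Liu domains of \(X=\spb(\mathcal{A})\) with coordinate rings \(\mathcal{A}_U,\mathcal{A}_V\). By the first part \(\mathcal{A}_U\widehat{\otimes}_{\mathcal{A}}\mathcal{A}_V\) is a (strictly) \(k\)-Liu algebra, and I would verify it represents \(U\cap V\) as a Liu domain. Its Berkovich spectrum surjects onto the fibre product of \(\spb(\mathcal{A}_U)\to X\leftarrow\spb(\mathcal{A}_V)\), whose image in \(X\) is \(U\cap V\) (using that \(\spb(\iota_U),\spb(\iota_V)\) are injective with images \(U,V\)). Moreover, a (strictly) \(k\)-Liu morphism \(\varphi:\mathcal{A}\to\mathcal{B}\) factors through \(\mathcal{A}_U\widehat{\otimes}_{\mathcal{A}}\mathcal{A}_V\) if and only if it factors through both \(\mathcal{A}_U\) and \(\mathcal{A}_V\), i.e.\ if and only if \(\spb(\varphi)\) factors through \(U\) and through \(V\), by the universal properties of \(U\) and \(V\) together with the universal property of the completed tensor product; in that case the factorisation is unique. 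Hence \(U\cap V\) is a Liu domain with coordinate ring \(\mathcal{A}_U\widehat{\otimes}_{\mathcal{A}}\mathcal{A}_V\).
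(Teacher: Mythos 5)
Your overall architecture coincides with the paper's: the candidate atlas $\{\B_i\widehat{\otimes}_\A\C_j\}$, the identification of each $\B_i\widehat{\otimes}_\A\C_j$ with a (strictly) rational localisation of $\mathcal{D}=\B\widehat{\otimes}_\A\C$, affinoidness via the quotient of $\B_i\widehat{\otimes}_k\C_j$, and the final appeal to Corollary \ref{cor:faithful}. (Your initial reduction to the strict case is harmless but unnecessary; the paper's argument is uniform in both cases.) The divergence, and the genuine gap, lies in how faithful flatness of $\iota:\mathcal{D}\to\prod_{i,j}\B_i\widehat{\otimes}_\A\C_j$ is obtained.

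You correctly identify flatness of $\mathcal{D}\to\mathcal{D}_{ij}$ as the crux and note that neither naive flat base change nor Proposition \ref{prop:flat} applies to $\mathcal{D}$; but your proposed fix --- running the proof of Proposition \ref{prop:flat} via the base-change argument of \cite[Proposition 2.2.4]{Berkovich} --- is circular. That argument, and its strict-case input (Corollary \ref{cor:noetherian} together with \cite[Proposition 2.4]{LiuTohoku}), requires the source algebra to already be Noetherian, respectively a $k$-Liu algebra, so that finite Banach modules over it behave well (Corollary \ref{cor:liubounded}(ii)); this is precisely what is being proved for $\mathcal{D}$. More fundamentally, flatness is not preserved by \emph{completed} base change: $\mathcal{D}\otimes_\B\B_i$ is flat over $\mathcal{D}$ by ordinary base change, but its completion $\mathcal{D}\widehat{\otimes}_\B\B_i$ need not be, and rational localisations of general $k$-Banach algebras are not flat. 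The paper sidesteps this entirely: it establishes faithful flatness of the purely \emph{algebraic} map $\B\otimes_\A\C\to\prod_{i,j}\B_i\otimes_\A\C_j$ by factoring through $\left(\prod_i\B_i\right)\otimes_\A\C$ and applying \cite[Proposition I.3.2/4]{BourbakiCom} twice --- no completions appear, so no Noetherianity of the intermediate rings is needed --- and only then transfers the conclusion to the completed tensor products via \cite[Remark I.3.4/(2)]{BourbakiCom} and \cite[Proposition 1.1.9/5]{BGR}. Some such bridge from the algebraic to the completed statement is indispensable, and as written your step does not supply it. The universal-property argument you give for the intersection statement is fine.
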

\begin{proof}
	Let \(\B\langle r_\bullet^{-1}f_\bullet/g\rangle\) and \(\C\langle r_\bullet'^{-1}f_\bullet'/g'\rangle\) be (strictly) \(k\)-affinoid (strictly) rational localisations of \(\B\) and \(\C\). Then
	\[\left(\B\widehat{\otimes}_\A\C\right)\left\langle(r_\bullet r_\bullet')^{-1}\frac{(f_\bullet,g)\otimes (f_\bullet',g')}{g\otimes g'}\right\rangle\cong \B\left\langle r_\bullet^{-1}f_\bullet/g\right\rangle\widehat{\otimes}_\A\C\left\langle r_\bullet'^{-1}f_\bullet'/g'\right\rangle\]
	is also (strictly) \(k\)-affinoid. Now let \(\B_1,\ldots,\B_n\) and \(\C_1,\ldots,\C_m\) be (strictly) rational atlases for \(\B\) and \(\C\) respectively. We claim that
	\[\left\{\B_i\widehat{\otimes}_\A\C_j\,\,\middle|\,\,1\leq i\leq n\text{ and }1\leq j\leq m\right\}\]
	constitutes a (strictly) rational atlas of \(\B\widehat{\otimes}_\A\C\). Indeed, by the observation above, each \(\B_i\widehat{\otimes}_\A\C_j\) is a (strictly) \(k\)-affinoid (strictly) rational localisation of \(\B\widehat{\otimes}_\A\C\). By Corollary \ref{cor:faithful}, it now suffices to show that the canonical morphism \(\B\widehat{\otimes}_\A\C\to\prod_{i,j}\B_i\widehat{\otimes}_\A\C_j\) is faithfully flat. By \cite[Remark I.3.4/(2)]{BourbakiCom} and \cite[Proposition 1.1.9/5]{BGR}, this follows if we show that both morphisms in the composition
	\[\B\otimes_\A\C\to\left(\prod_i\B_i\right)\otimes_\A\C\to\left(\prod_i\B_i\right)\otimes_\A\left(\prod_j\C_j\right)\cong\prod_{i,j}\B_i\otimes_\A\C_j,\]
	are faithfully flat. To see this, one applies \cite[Proposition I.3.2/4]{BourbakiCom} with \((R,S,E,F)=\left(\B,\B\otimes_\A\C,\prod_i\B_i,\B\otimes_\A\C\right)\) and \((R,S,E,F)=\left(\C,\left(\prod_i\B_i\right)\otimes_\A\C,\prod_j\C_j,\left(\prod_i\B_i\right)\otimes_\A\C\right)\).
\end{proof}

\section{Criterion for affinoidness}\label{secAffcrit}
In this section, we prove a criterion that distinguishes (strictly) \(k\)-affinoid algebras within the category of (strictly) \(k\)-Liu algebras, thus answering a conjecture by Temkin (cf. Remark \ref{rem:conjtemkin}). Recall the definition of (graded) reduction from Definition \ref{def:reduction}. For the definition of stable fields, we refer to \cite[§3.6]{BGR}. The nilradical of a ring \(\A\) will be denoted by \(\rad(\A)\).
\begin{lemma}\label{lem:reduced}
	Let \(\A\) be a (strictly) \(k\)-Liu algebra. If \(\A/\rad(\A)\) is (strictly) \(k\)-affinoid, then \(\A\) is also (strictly) \(k\)-affinoid.
\end{lemma}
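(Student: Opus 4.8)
The plan is to realise $\A$ as an admissible quotient of a Tate algebra. Write $\rad=\rad(\A)$. By Corollary \ref{cor:noetherian}, $\A$ is Noetherian, so $\rad$ is a finitely generated closed ideal with $\rad^N=0$ for some $N\in\bZ_{>0}$, and $\A/\rad$ is a (strictly) $k$-Liu algebra, which by hypothesis is (strictly) $k$-affinoid. The starting observation is that nilpotents are invisible to the Berkovich spectrum: by Lemma \ref{lem:spectralradius} an element of $\A$ is nilpotent if and only if it is quasinilpotent, so $\rad=\{a\in\A\mid\rho_\A(a)=0\}$, and every $x\in\spb(\A)$ satisfies $|a(x)|\leq\rho_\A(a)=0$ for $a\in\rad$. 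Hence the surjection $\A\to\A/\rad$ induces a homeomorphism $\spb(\A/\rad)\xrightarrow{\sim}\spb(\A)$ respecting evaluations, and therefore $\rho_\A(a)=\rho_{\A/\rad}(\bar a)$ for all $a\in\A$.

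Next I would fix a presentation $\A/\rad=\mathcal{T}_m/\mathfrak{a}$ with $\mathcal{T}_m=k\langle r_1^{-1}T_1,\ldots,r_m^{-1}T_m\rangle$ (taking $r_i=1$ in the strict case) and $\mathfrak{a}$ a closed ideal, and let $\bar t_i$ be the image of $T_i$, so that $\rho_{\A/\rad}(\bar t_i)\leq r_i$. Lifting each $\bar t_i$ to an arbitrary $t_i\in\A$, the spectral-radius identity above gives $\rho_\A(t_i)=\rho_{\A/\rad}(\bar t_i)\leq r_i$. Choosing generators $g_1,\ldots,g_p$ of the ideal $\rad$, which are nilpotent and hence satisfy $\rho_\A(g_j)=0\leq1$, Corollary \ref{cor:reltateuniprop} applied to the structure morphism $k\to\A$ (note $k$ is itself $k$-affinoid, hence $k$-Liu) produces a bounded morphism of $k$-Liu algebras
\[\Phi:\mathcal{T}\vcentcolon=k\langle r_1^{-1}T_1,\ldots,r_m^{-1}T_m,S_1,\ldots,S_p\rangle\to\A,\qquad T_i\mapsto t_i,\ S_j\mapsto g_j.\]

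I then claim $\Phi$ is surjective. The composite $\mathcal{T}\xrightarrow{\Phi}\A\to\A/\rad$ sends $T_i\mapsto\bar t_i$ and $S_j\mapsto0$, so it coincides with the canonical surjection $\mathcal{T}\to\mathcal{T}_m\to\A/\rad$; consequently $\A=\Phi(\mathcal{T})+\rad$. Since $\rad=\sum_j\A g_j$ with every $g_j\in\Phi(\mathcal{T})$, multiplying the identity $\A=\Phi(\mathcal{T})+\rad$ by $\rad$ yields $\rad\subseteq\Phi(\mathcal{T})+\rad^2$, and iterating $N$ times together with $\rad^N=0$ gives $\rad\subseteq\Phi(\mathcal{T})$, whence $\Phi(\mathcal{T})=\A$. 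Finally, $\ker\Phi$ is a closed ideal of the Tate algebra $\mathcal{T}$, so $\Phi$ identifies $\A$ with $\mathcal{T}/\ker\Phi$ as $k$-algebras; the given norm on $\A$ is equivalent to the residue norm by Banach's Open Mapping Theorem (reducing to the non-trivially valued case via \cite[Proposition 2.1.2(ii)]{Berkovich} when $k$ is trivially valued, exactly as in Corollary \ref{cor:liubounded}). This exhibits $\A$ as a quotient of a Tate algebra by a closed ideal endowed with the residue norm, i.e. as a (strictly) $k$-affinoid algebra.

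The presentation and the open-mapping step are routine; the points deserving care are controlling the spectral radii of the lifts $t_i$, which is handled by the nilpotent-invariance of $\spb(\A)$, and the Nakayama-style iteration establishing surjectivity, which is exactly where the Noetherian hypothesis and the nilpotence of $\rad$ enter.
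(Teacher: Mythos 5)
Your proof is correct and follows essentially the same route as the paper's: lift an admissible epimorphism from a Tate algebra onto $\A/\rad(\A)$ to a morphism from a Tate algebra with extra variables mapping to generators of the nilradical (well-defined by Corollary \ref{cor:reltateuniprop} and the identity $\rho_\A(a)=\rho_{\A/\rad(\A)}(\bar a)$), prove surjectivity by iterating modulo powers of the nilradical, and obtain admissibility from Corollary \ref{cor:liubounded} and Banach's Open Mapping Theorem. The only cosmetic difference is that the paper first reduces to the strict case via \cite[Lemma 2.4.1]{TemkinPinch}, whereas you work with general polyradii directly; your Nakayama-style phrasing of the surjectivity iteration is a clean rendering of the paper's elementwise induction.
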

\begin{proof}
	By \cite[Lemma 2.4.1]{TemkinPinch}, it suffices to consider the strict case, so assume that \(k\) is non-trivially valued and \(\A/\rad(\A)\) is strictly \(k\)-affinoid. Since \(\A\) is Noetherian (cf. Corollary \ref{cor:noetherian}), we can write the nilradical of \(\A\) as \(\rad(\A)=(a_1,\ldots,a_n)\). By assumption, we have an admissible epimorphism
	\[\varphi:k\langle X_1,\ldots,X_m\rangle\to\A/\rad(\A).\]
	We claim that \(\varphi\) can be lifted to an admissible epimorphism
	\[\varphi':k\langle X_1,\ldots,X_m,Y_1,\ldots,Y_n\rangle\to\A\]
	sending \(Y_j\) to \(a_j\) for every \(j\in\{1,\ldots,n\}\). Indeed, for every \(i\in\{1,\ldots,m\}\), we can pick \(b_i\in\A\) such that \(\varphi(X_i)=b_i+\rad(\A)\), and fix \(\varphi'(X_i)=b_i\). Note that \(\varphi'\) is well-defined by Corollary \ref{cor:reltateuniprop}, since \(\rho_\A(a)=\rho_{\A/\rad(\A)}(a+\rad(\A))\) for every \(a\in\A\). To see that \(\varphi'\) is surjective, take \(a\in \A\) and \(f\in k\langle X_1,\ldots,X_m\rangle\) such that \(\varphi(f)=a+\rad(\A)\). Then there are elements \(c_1,\ldots,c_n\in\A\) such that
	\begin{align*}
		a&=\varphi'(f)+c_1a_1+\ldots+c_na_n \\
		 &=\varphi'(f)+c_1\varphi'(Y_1)+\ldots+c_n\varphi'(Y_n).
	\end{align*}
	One now repeats this procedure inductively to rewrite the \(c_i\)'s until the powers of each \(\varphi(Y_i)\) start to vanish. Admissibility of \(\varphi'\) follows from Corollary \ref{cor:liubounded} and Banach's Open Mapping Theorem.
\end{proof}
\begin{theorem}\label{thm:affcrit}
	Let \(k\) be stable and \(\A\) a \(k\)-Liu algebra.
	\begin{enumerate}[label=(\roman*)]
		\item \(\A\) is \(k\)-affinoid if and only if \(\widetilde{\A}_\gr\) is a finitely generated \(\widetilde{k}_\gr\)-algebra and there exist elements \(f_1,\ldots,f_n\in\A\), with spectra radii \(r_1,\ldots,r_n\), such that \(\A\langle r_1^{-1}f_1^{-1}\rangle,\ldots,\A\langle r_n^{-1}f_n^{-1}\rangle\) forms a rational atlas of \(\A\).
		\item If \(\A\) is strictly \(k\)-Liu, then \(\A\) is strictly \(k\)-affinoid if and only if \(\widetilde{\A}\) is a finitely generated \(\widetilde{k}\)-algebra and there exist elements \(f_1,\ldots,f_n\in\A\), each with spectral radius \(1\), such that \(\A\langle f_1^{-1}\rangle,\ldots,\A\langle f_n^{-1}\rangle\) forms a strictly rational atlas of \(\A\).
	\end{enumerate}
\end{theorem}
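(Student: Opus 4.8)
The plan is to treat both forward directions as routine and to concentrate on the backward implications, reducing the graded statement (i) to the ordinary strict statement (ii). For the forward direction, suppose $\A$ is (strictly) $k$-affinoid. Since $k$ is stable, the graded Noether normalisation for affinoid reductions \cite[§3]{TemkinGraded} shows that $\widetilde{\A}_\gr$ (resp. $\widetilde{\A}$) is a finitely generated $\widetilde{k}_\gr$-algebra (resp. $\widetilde{k}$-algebra). Taking $n=1$ and $f_1=1$, so that $r_1=\rho_\A(1)=1$, the Laurent localisation $\A\langle 1^{-1}\cdot 1^{-1}\rangle=\A$ is itself a (strictly) rational atlas of $\A$, since $\spb(\A)$ covers itself and $\mathrm{id}_\A$ is (strictly) rationally injective.

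For the backward direction I would first reduce (i) to (ii). Base changing along $k\hookrightarrow k_r$ for a polyradius $r$ chosen so that the spectral radii $r_1,\ldots,r_n$ are absorbed into $|k_r^\ast|$ turns the graded reduction over $k$ into an ordinary reduction over the non-trivially valued field $k_r$, and turns the given Laurent atlas into a \emph{strictly} rational atlas of $\A_{k_r}$ with (strictly) $k_r$-affinoid pieces by Lemma \ref{lem:ratcombas}. As $k_r$ is again stable and $\A_{k_r}$ is $k_r$-Liu by Lemma \ref{lem:basechangeliu}, the strict statement (ii) yields that $\A_{k_r}$ is $k_r$-affinoid, and affinoidness descends back to $\A$ by \cite[Lemma 2.4.1]{TemkinPinch}. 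It therefore suffices to prove (ii).

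So assume $k$ non-trivially valued, $\A$ strictly $k$-Liu, $\widetilde{\A}$ finitely generated over $\widetilde{k}$, with a strictly rational atlas $\A_i=\A\langle f_i^{-1}\rangle$, $\rho_\A(f_i)=1$, each $\A_i$ strictly $k$-affinoid. By Lemma \ref{lem:reduced} it is enough to show $\A/\rad(\A)$ is affinoid: nilpotents are quasinilpotent by Lemma \ref{lem:spectralradius}, so both $\widetilde{\A}$ and $\widetilde{\A}_\gr$ are unchanged by passing to $\A/\rad(\A)$, the atlas descends to $\{\A_i/\rad(\A_i)\}$ with reduced affinoid pieces, and we may assume $\A$ reduced. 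The decisive consequence of the atlas is analytic rather than algebraic: by Remark \ref{rem:banfrechstr} the canonical norm satisfies $\norm{\cdot}_\A\simeq\max_i\rho_{\A_i}=\rho_\A$, so $\A$ is a $k$-Banach function algebra and we may take $\norm{\cdot}_\A=\rho_\A$, which is then complete. On each piece $\tilde f_i$ is a homogeneous unit of degree $1$, giving $\widetilde{(\A_i)}_\gr=(\widetilde{\A}_\gr)_{\tilde f_i}$, finitely generated over $\widetilde{k}_\gr$ by the forward direction applied to $\A_i$; since the $X(f_i^{-1})$ cover $\spb(\A)$, the $\tilde f_i$ have no common zero and hence generate the unit ideal of $\widetilde{\A}$. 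By the graded analogue of the standard fact that finitely many principal localisations of finite type whose defining elements generate the unit ideal force finite type, $\widetilde{\A}_\gr$ is finitely generated over $\widetilde{k}_\gr$.

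It remains to upgrade finite generation of the graded reduction to affinoidness of the uniform algebra $\A$. Applying graded Noether normalisation \cite[§3]{TemkinGraded} to $\widetilde{\A}_\gr$, I would choose homogeneous $\bar\tau_1,\ldots,\bar\tau_d$ of degrees $\rho_1,\ldots,\rho_d$ over which $\widetilde{\A}_\gr$ is finite, lift them to $\tau_i\in\A$ with $\rho_\A(\tau_i)=\rho_i$, and form the weighted Tate algebra $\mathcal{T}=k\langle\rho_1^{-1}S_1,\ldots,\rho_d^{-1}S_d\rangle$ with the bounded map $\Psi:\mathcal{T}\to\A$, $S_i\mapsto\tau_i$, well-defined by Corollary \ref{cor:reltateuniprop}. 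Lifting finitely many homogeneous module generators of $\widetilde{\A}_\gr$ over $\widetilde{\mathcal{T}}_\gr$ to $e_1,\ldots,e_r\in\A$, a successive-approximation argument then gives $\A=\sum_l\mathcal{T}e_l$: for $a\in\A$ the graded finiteness lets one subtract a $\mathcal{T}$-combination of the $e_l$ so as to strictly lower $\rho_\A(a-\sum_l p_le_l)$, and completeness of $\rho_\A$ makes the procedure converge. Thus $\A$ is a finite Banach $\mathcal{T}$-module, admissible by Corollary \ref{cor:liubounded}(ii), hence $k$-affinoid. The hard part will be exactly this final convergence: it requires finiteness over $\widetilde{\mathcal{T}}_\gr$ in \emph{every} degree (not merely in degree $1$) together with completeness of the spectral seminorm, both of which fail for Liu's space in Remark \ref{rem:conjtemkin} and are precisely what the graded finite generation and the affinoid Laurent atlas are there to supply.
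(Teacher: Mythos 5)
Your forward direction and your two reductions (to the strict case by base change, and to the reduced case via Lemma \ref{lem:reduced}) match the paper, but the core of your backward argument has a fatal gap. Your final step --- graded Noether normalisation of \(\widetilde{\A}_\gr\), lifting module generators \(e_1,\ldots,e_r\), and a successive approximation using completeness of \(\rho_\A\) --- takes as input \emph{only} the finite generation of \(\widetilde{\A}_\gr\) and the fact that \(\A\) is a \(k\)-Banach function algebra. It never uses the Laurent atlas \(\{\A\langle f_j^{-1}\rangle\}\) (you invoke the atlas only to get \(\norm{\cdot}_\A\simeq\rho_\A\) and, redundantly, to re-derive the finite generation that is already a hypothesis). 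If this step worked, it would prove Temkin's original Conjecture 4.1.4, which is false: Liu's example in Remark \ref{rem:conjtemkin} is a reduced strictly \(k\)-Liu algebra with \(\widetilde{\A}=\widetilde{k}\) (so the reduction is as finitely generated as it gets) and with complete spectral norm, yet it is not affinoid. Concretely, the approximation fails to converge: writing \(a=\sum_lp_l^{(0)}e_l+a_1\) with \(\rho_\A(a_1)<\rho_\A(a)\) and iterating produces a strictly decreasing sequence of spectral radii, but when \(\sqrt{|k^\ast|}\) is dense this sequence need not tend to \(0\), and the partial sums need not converge in \(\mathcal{T}^r\). Your closing sentence misdiagnoses the obstruction: gradewise finiteness over \(\widetilde{\mathcal{T}}_\gr\) and completeness of \(\rho_\A\) both \emph{hold} for Liu's example, so they cannot be what the atlas hypothesis is supplying. (A secondary issue: your claim that the \(\widetilde{f}_i\) generate the unit ideal of \(\widetilde{\A}\) requires surjectivity of the reduction map onto \(\mspec(\widetilde{\A})\), which is not established for Liu algebras; fortunately that whole paragraph is unnecessary since finite generation is assumed.)

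The paper's proof uses the atlas in an essentially different and unavoidable way. It adjoins the atlas functions themselves as Tate variables, forming \(\varphi:\B=k\langle X_1,\ldots,X_m,Y_1,\ldots,Y_n\rangle\to\A\) with \(X_i\mapsto a_i\) (lifts of generators of \(\widetilde{\A}\)) and \(Y_j\mapsto f_j\). On each Laurent piece the induced map \(\B\langle Y_j^{-1}\rangle\to\A\langle f_j^{-1}\rangle\) is a morphism between genuinely \emph{affinoid} algebras --- this is exactly where the hypothesis that the \(\A\langle f_j^{-1}\rangle\) are affinoid enters --- with surjective reduction, so \cite[Proposition 6.4.2/1]{BGR} (whose proof is a statement about affinoid algebras, not a naive approximation on a general Banach function algebra) gives surjectivity of each \(\varphi_j\). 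The global surjectivity of \(\varphi\) is then obtained by a local-to-global argument: \(\spb(\varphi)(\spb(\A))\subset\cup_j\spb(\B\langle Y_j^{-1}\rangle)\) and \(\spb(\B)\) is cohomologically Stein, so surjectivity on the pieces of this covering descends to an admissible epimorphism \(\B\to\A\). If you want to salvage your approach, you must at minimum include the \(f_j\) among the generators of your Tate algebra and replace the global successive approximation by this piecewise-affinoid argument plus cohomological gluing.
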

\begin{proof}
	By (the proof of) Lemma \ref{lem:basechangeliu}, \cite[Lemma 2.4.1]{TemkinPinch} and \cite[Corollary 1.3]{TemkinGraded}, we can assume that \(k\) is non-trivially valued, \(|k^\times|\) is divisible and \(\A\) is a strictly \(k\)-Liu algebra. In particular, it now follows from \cite[Corollary 2.1.6]{Berkovich} that \(\rho_\A(\A)=\rho_\A(k)\), so \(\widetilde{\A}_\gr\cong\widetilde{\A}\otimes_{\widetilde{k}}\widetilde{k}_\gr\) by \cite[Lemma 1.1]{TemkinGraded}. By \cite[Corollary 1.3]{TemkinGraded}, it thus suffices to prove (ii), in which case the direct implication follows by \cite[Corollary 6.3.4/3]{BGR} and taking \(f_1=1\).
	
	For the converse implication of (ii), we can assume by Lemma \ref{lem:reduced} that \(\A\) is reduced. Let \(\widetilde{\varphi}:\widetilde{k}[x_1,\ldots,x_m]\to\widetilde{\A}\) be a \(\widetilde{k}\)-algebra epimorphism and denote by \(a_1,\ldots,a_m\in\A\) liftings of \(\widetilde{\varphi}(x_1),\ldots,\widetilde{\varphi}(x_n)\) respectively. Next, by assumption, there are elements \(f_1,\ldots,f_n\in\A\) such that \(\A\langle f_1^{-1}\rangle,\ldots,\A\langle f_n^{-1}\rangle\) forms a strictly rational atlas of \(\A\) with \(\rho_A(f_i)=1\) for every \(i\in\{1,\ldots,n\}\). Consider the \(k\)-Liu algebra morphism
	\[\varphi:\B=k\left\langle X_1,\ldots,X_m,Y_1,\ldots,Y_n\right\rangle\to\A:\left\{ \begin{aligned} 
		X_i\mapsto a_i&\text{ for }i\in\{1,\ldots,m\} \\
		Y_j\mapsto f_j&\text{ for }j\in\{1,\ldots,n\}
	\end{aligned} \right.,\]
	which is well-defined by Corollary \ref{cor:reltateuniprop}. By an obvious generalisation of \cite[Proposition 7.2.6/3]{BGR} to strictly \(k\)-Liu algebras, \(\varphi\) induces \(\widetilde{k}\)-algebra epimorphisms
	\[\widetilde{\varphi}_j:\left(\B\left\langle Y_j^{-1}\right\rangle\right)^\sim=\widetilde{k}\left[x_1,\ldots,x_m,y_1,\ldots,y_n,y_j^{-1}\right]\to\left(\A\left\langle f_j^{-1}\right\rangle\right)^\sim=\widetilde{\A}\left[\widetilde{f}_j^{-1}\right]\]
	for every \(j\in\{1,\ldots,n\}\). It now follows directly from \cite[Proposition 6.4.2/1]{BGR} that the corresponding \(k\)-Liu algebra morphisms
	\[\varphi_j:\B\left\langle Y_j^{-1}\right\rangle\to\A\left\langle f_j^{-1}\right\rangle\]
	are surjective for every \(j\in\{1,\ldots,n\}\). As \(\spb(\varphi)(\spb(\A))\) is contained in \(\cup_j\spb(\B\langle Y_j^{-1}\rangle)\), it now follows from the fact that \(\spb(\B)\) is cohomologically Stein that \(\varphi:\B\to\A\) is an admissible epimorphism, i.e. \(\A\) is strictly \(k\)-affinoid.
\end{proof}

\section{Berkovich spectra of Fr\'echet algebras}\label{secFrech}
To extend our algebraic description of (strictly) \(k\)-Liu spaces to general (strictly) \(k\)-Stein spaces, we must define Berkovich spectra more broadly, namely for \(k\)-Fr\'echet algebras rather than \(k\)-Banach algebras. While this may be well known to experts, no suitable source for this could be found in the literature, so we recall it here for convenience.
\begin{definition}
	A \emph{\(k\)-Fr\'echet algebra} is a \(k\)-algebra \(\A\) which is also a \(k\)-Fr\'echet space, i.e. it is a complete and Hausdorff topological \(k\)-vector space whose topology is defined by a fixed countable family of \(k\)-seminorms \(\{\norm{\cdot}_i\}_{i\in\bZ_{\geq0}}\), and for which multiplication is jointly continuous. The \emph{category of \(k\)-Fr\'echet algebras} consists of \(k\)-Fr\'echet algebras and bounded \(k\)-algebra morphisms between them (cf. Definition \ref{def:frechetbounded}(i)).
\end{definition}
\begin{remark}
	More concretely, a subset \(U\) of a \(k\)-Fr\'echet space \(\A\), of which the topology is defined by a countable family of seminorms \(\{\norm{\cdot}_i\}_{i\in\bZ_{\geq0}}\), is open if and only if for every \(f\in U\) there is a finite subset \(N\subseteq\bZ_{\geq0}\) and an \(r\in\bR_{>0}\) such that
	\[\left\{g\in\A \,\,|\,\, \norm{f-g}_n< r\text{ for every }n\in N\right\}\subseteq U.\]
	In particular, a sequence \((f_n)_{n\in\bZ_{\geq0}}\) in \(\A\) converges to zero if and only if \(\lim_{n\to+\infty}\norm{f_n}_i=0\) for every \(i\in\bZ_{\geq0}\).
	
	In case the family of seminorms \(\{\norm{\cdot}_i\}_{i\in\bZ_{\geq0}}\) on a \(k\)-Fr\'echet algebra \(\A\) is increasing, i.e. \(\norm{f}_i\leq\norm{f}_j\) for every \(f\in\A\) whenever \(i\leq j\), joint continuity of multiplication is equivalent to there being, for every \(n\in\bZ_{\geq0}\), a constant \(C_n\in\bR_{>0}\) and an integer \(m\geq n\) such that \(\norm{fg}_{n}\leq C_n\norm{f}_m\norm{g}_m\) for all \(f,g\in\A\). When we can take \(n=m\), we can assume the seminorms to be submultiplicative by \cite[Proposition 1.2.1/2]{BGR}. All of this will be the case for the \(k\)-Fr\'echet algebras that we will be interested in, namely \(k\)-Stein algebras (cf. Definition \ref{def:steinalg}).
\end{remark}
\begin{definition}\label{def:frechetbounded}
	Let \((\A,\{\norm{\cdot}_{\A,i}\}_{i\in\bZ_{\geq0}})\) and \((\B,\{\norm{\cdot}_{\B,i}\}_{i\in\bZ_{\geq0}})\) be \(k\)-Fr\'echet algebras.
	\begin{enumerate}[label=(\roman*)]
		\item A \(k\)-algebra morphism \(\varphi:\A\to\B\) is called \emph{bounded} if there exists for each \(n\in\bZ_{\geq0}\) a constant \(C_n\in\bR_{>0}\) and a non-empty finite subset \(N_n\subseteq\bZ_{\geq0}\) such that we have for all \(f\in\A\) that
		\[\norm{\varphi(f)}_{\B,n}\leq C_n\max_{i\in N_n}\norm{f}_{\A,i}.\]
		\item A seminorm \(\norm{\cdot}\) on \(\A\) is called \emph{bounded} if there exists a non-empty finite subset \(N\subseteq\bZ_{\geq0}\) and a constant \(C\in\bR_{>0}\) such that for every \(f\in\A\) we have that
		\[\norm{f}\leq C\max_{i\in N}\norm{f}_i.\]
	\end{enumerate}
\end{definition}
\begin{remark}
	Bounded morphisms and seminorms are also continuous. Indeed, Fr\'echet spaces have an equivalent description as metric spaces, so continuity is equivalent to sequential continuity, which can be easily verified. Conversely, when \(k\) is non-trivially valued, continuity of morphisms or seminorms also implies boundedness by an argument similar to that of the analogous statement for \(k\)-Banach algebras (cf. \cite[Proposition 2.1.8/2]{BGR}). When \(k\) is trivially valued, this is not even necessarily the case for \(k\)-Banach algebras; take for example the identity map \((k[T],\norm{\cdot}_1)\to(k[T],\norm{\cdot}_2)\) where \(\norm{T}_1=1\) and \(\norm{T}_2>1\). 
\end{remark}
\begin{definition}
	Let \(\A\) be a \(k\)-Fr\'echet algebra. The \emph{Berkovich spectrum} of \(\A\) is the set
	\[\spb(\A)\vcentcolon=\left\{\text{bounded multiplicative seminorms }|\cdot(x)|:\A\to\bR_{\geq0}\right\},\]
	equipped with the weakest topology such that \(\spb(\A)\to\bR_{\geq0}:|\cdot(x)|\mapsto|f(x)|\) is continuous for every \(f\in\A\).
\end{definition}
\begin{remark}\label{rem:characters}
	 As is done in \cite[Definition 4.1]{BBBK}, one can easily rephrase the definition above in terms of characters, i.e. \(\spb(\A)\) consists of all equivalence classes of bounded morphisms from \(\A\) to an analytic extension of \(k\). Moreover, a bounded morphism \(\varphi:\A\to\B\) between \(k\)-Fr\'echet algebras functorially induces a continuous map
	\[\spb(\varphi):\spb(\B)\to\spb(\A):|\cdot(x)|\mapsto|\varphi(\cdot)(x)|.\]
	
	Moreover, just as is the case for \(k\)-Banach algebras, the requirement for the family of seminorms to be part of the definition of a given \(k\)-Fr\'echet algebra, is essential in order to get a well-defined Berkovich spectrum when \(k\) is trivially valued. 
\end{remark}
The following proposition is a variation on \cite[Theorem 1.2.1]{Berkovich} for \(k\)-Fr\'echet algebras.
\begin{proposition}\label{prop:frechetspec}
	Let \((\A,\{\norm{\cdot}_i\}_{i\in\bZ_{\geq0}})\) be a \(k\)-Fr\'echet algebra. Then \(\spb(\A)\) is a non-empty Hausdorff space countable at infinity.
\end{proposition}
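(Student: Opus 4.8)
The plan is to realise \(\spb(\A)\) as an increasing countable union of Berkovich spectra of \(k\)-Banach algebras, to which the known result \cite[Theorem 1.2.1]{Berkovich} applies directly. First I would reduce to the case where the defining family \(\{\norm{\cdot}_i\}_{i\in\bZ_{\geq0}}\) is increasing: replacing \(\norm{\cdot}_i\) by \(\max_{j\leq i}\norm{\cdot}_j\) does not change the topology, and, as recorded in the Remark following the definition of \(k\)-Fr\'echet algebras, we may then arrange the seminorms to be submultiplicative. For each \(n\) let \(\A_n\) denote the separated completion of \((\A,\norm{\cdot}_n)\); this is a \(k\)-Banach algebra, and the canonical contraction \(\pi_n:\A\to\A_n\) is bounded, hence induces a continuous map \(\spb(\pi_n):\spb(\A_n)\to\spb(\A)\) by Remark \ref{rem:characters}.

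The key step is the identification \(\spb(\A)=\bigcup_{n\in\bZ_{\geq0}}\spb(\pi_n)(\spb(\A_n))\). The inclusion \(\supseteq\) is immediate, since pulling a bounded multiplicative seminorm on \(\A_n\) back along \(\pi_n\) yields a bounded multiplicative seminorm on \(\A\). For \(\subseteq\), take \(x\in\spb(\A)\); by boundedness there are \(C\in\bR_{>0}\) and \(n\) with \(|f(x)|\leq C\norm{f}_n\) for all \(f\in\A\). Using multiplicativity of \(|\cdot(x)|\) together with submultiplicativity of \(\norm{\cdot}_n\), one absorbs the constant via
\[|f(x)|=|f^k(x)|^{1/k}\leq C^{1/k}\norm{f^k}_n^{1/k}\xrightarrow[k\to\infty]{}\rho_n(f)\leq\norm{f}_n,\]
where \(\rho_n(f)=\inf_k\norm{f^k}_n^{1/k}\). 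Thus \(|\cdot(x)|\) is continuous for \(\norm{\cdot}_n\) and extends to \(\A_n\); the extension stays multiplicative by density of \(\A\) and continuity of multiplication on the Banach algebra \(\A_n\). Hence \(x\) lies in the image of \(\spb(\pi_n)\). Since the seminorms are increasing, \(\spb(\pi_n)(\spb(\A_n))\subseteq\spb(\pi_{n+1})(\spb(\A_{n+1}))\), so this is an increasing union.

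From this the three assertions follow. Each \(\spb(\A_n)\) is compact by \cite[Theorem 1.2.1]{Berkovich}, so each \(\spb(\pi_n)(\spb(\A_n))\) is compact, exhibiting \(\spb(\A)\) as an increasing countable union of compact subsets, i.e. countable at infinity. Non-emptiness follows because \(\A\) is Hausdorff, so \(\norm{1}_n\neq0\) for some \(n\), whence \(\A_n\neq0\) and \(\spb(\A_n)\neq\varnothing\), again by \cite[Theorem 1.2.1]{Berkovich}. Finally \(\spb(\A)\) is Hausdorff: for distinct \(x,y\) there is \(f\in\A\) with \(|f(x)|\neq|f(y)|\), and the evaluation \(z\mapsto|f(z)|\) is continuous by definition of the topology, so disjoint open intervals in \(\bR_{\geq0}\) pull back to disjoint neighbourhoods.

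The step I expect to be most delicate is the reduction to a submultiplicative family, since a general \(k\)-Fr\'echet algebra need only satisfy the shifted estimate \(\norm{fg}_n\leq C_n\norm{f}_m\norm{g}_m\) with \(m\geq n\), so that \(\A_n\) need not a priori be a Banach \emph{algebra} and the constant-absorption estimate above is not immediately available. This is precisely the point where submultiplicativity (automatic for the \(k\)-Stein algebras of interest, cf. Definition \ref{def:steinalg}) is used to make each \(\rho_n\) a genuine spectral seminorm and each completion \(\A_n\) a Banach algebra.
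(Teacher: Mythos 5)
Your proposal follows essentially the same route as the paper's proof: it too replaces the given family by the increasing seminorms \(\max_{0\leq i\leq n}\norm{\cdot}_i\), completes to \(k\)-Banach algebras \(\A_n\), writes \(\spb(\A)\) as the increasing union of the compact images of the \(\spb(\A_n)\), and invokes \cite[Theorem 1.2.1]{Berkovich} for non-emptiness and the Hausdorff property. The submultiplicativity caveat you flag at the end is a genuine subtlety, but it is equally present in the paper's own proof (which simply asserts that each completion is a \(k\)-Banach algebra), and, as you observe, it is harmless for the \(k\)-Stein algebras to which the proposition is actually applied.
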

\begin{proof}
	The Hausdorff property is shown in the same way as for \(k\)-Banach algebras in \cite[Theorem 1.2.1]{Berkovich}. Furthermore, for every \(n\in\bZ_{\geq0}\), we can define a \(k\)-Banach algebra \(\A_n\) as the completion of \(\A\) w.r.t. the seminorm \(\max_{0\leq i\leq n}\norm{\cdot}_i\). We then get continuous mappings \(\A\to\A_n\) with dense image, inducing mutually compatible continuous injections \(\iota_n:\spb(\A_n)\hookrightarrow\spb(\A)\). Since \(\spb(\A_n)\) is compact and \(\spb(\A)\) is Hausdorff, each \(\iota_n\) is a homeomorphism onto its image, so that we can write \(\spb(\A)=\cup_{i\in\bZ_{\geq0}}\spb(\A_i)\) (topologically). It now follows from \cite[Theorem 1.2.1]{Berkovich} that \(\spb(\A)\) is non-empty and countable at infinity.
\end{proof}
\section{Stein algebras - Stein spaces correspondence}\label{secStein}
In this section, we introduce \(k\)-Stein algebras as a specific collection of \(k\)-Fr\'echet algebras, obtained as suitable inverse limits of \(k\)-Liu algebras, and show that their Berkovich spectra coincide exactly with \(k\)-Stein spaces. We also prove some useful properties of \(k\)-Stein algebras.

\begin{definition}\label{def:steinalg}
	A \emph{(strictly) \(k\)-Stein algebra} is a \(k\)-Fr\'echet algebra \(\A\) that can be obtained as the inverse limit in the category of \(k\)-Fr\'echet algebras of a system of (strictly) \(k\)-Liu algebras \((\A_i)_{i\in\bZ_{\geq0}}\) where for every \(i\in\bZ_{\geq0}\) the morphism \(\A_{i+1}\to\A_i\) has dense image and defines a Liu domain such that \(\spb(\A_i)\) is a neighbourhood of \(\spb(\A_{i-1})\) in \(\spb(\A_{i+1})\). For such a system of (strictly) \(k\)-Liu algebras, we call \(\varprojlim_{i}\A_i\) a \emph{presentation} of \(\A\). We define the \emph{category of (strictly) \(k\)-Stein algebras} as the category consisting of (strictly) \(k\)-Stein algebras with bounded \(k\)-algebra morphisms between them.
\end{definition}
\begin{remark}
	To obtain an explicit algebraic description of (strictly) Liu domain embeddings, let \(\A\) be a (strictly) \(k\)-Liu algebra and recall that (strictly) Liu domains can be written as finite unions of (strictly) affinoid (strictly) rational domains \cite[Proposition 3.12]{MaculanPoineau}. Hence, a (strictly) Liu domain embedding in \(\spb(\A)\) corresponds to a morphism of (strictly) \(k\)-Liu algebras \(\varphi:\A\to\B\) for which there exists a (strictly) rational atlas \(\B_1,\ldots,\B_n\) of \(\B\) where each \(\B_i\) is of the form \(\B\langle r_\bullet^{-1}\varphi(f_\bullet)/\varphi(g)\rangle\cong\A\langle r_\bullet^{-1}f_\bullet/g\rangle\). Moreover, it is essential in Definition \ref{def:steinalg} to ask for each \(\spb(\A_i)\) to be a neighbourhood of \(\spb(\A_{i-1})\) in \(\spb(\A_{i+1})\) in order to ensure that \(\spb(\A)\) will be a locally compact topological space. Lastly, when \(k\) is non-trivially valued, \(k\)-Stein algebras are examples of \(k\)-Fr\'echet-Stein algebras in the sense of \cite{SchneiderTeitelbaum} by Corollary \ref{cor:noetherian} and Proposition \ref{prop:flat}.
\end{remark}

To prove the following correspondence between (strictly) \(k\)-Stein algebras and (strictly) \(k\)-Stein spaces, we essentially follow the same arguments as in \cite[§4.2]{BBBK}.
\begin{theorem}\label{thm:isocatstein}
	The category of (strictly) \(k\)-Stein algebras is anti-equivalent to the category of (strictly) \(k\)-Stein spaces. More precisely, taking Berkovich spectra of (strictly) \(k\)-Stein algebras and taking global sections of structure sheaves on (strictly) \(k\)-Stein spaces induce anti-isomorphisms of categories which are mutually inverse to each other.
\end{theorem}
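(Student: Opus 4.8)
The plan is to mirror the two-step structure of the proof of Theorem \ref{thm:isocatliu}, replacing a single Liu algebra and compact Liu space by a presentation and an exhaustion, and then invoking the Liu-level correspondence termwise before passing to the limit. Throughout, the strict case will be identical, so I only treat the general one.

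\emph{Step 1: Taking Berkovich spectra.} Let \(\A\) be a \(k\)-Stein algebra with presentation \(\A=\varprojlim_i\A_i\). Each \(\A_i\) is a \(k\)-Liu algebra, so by Theorem \ref{thm:isocatliu} the space \(X_i\vcentcolon=\spb(\A_i)\) carries a canonical \(k\)-Liu space structure. Using the algebraic description of Liu domain embeddings in the Remark following Definition \ref{def:steinalg}, the dense transition maps \(\A_{i+1}\to\A_i\) induce inclusions \(X_i\hookrightarrow X_{i+1}\) as Liu domains, and the neighbourhood condition ensures \(X_{i-1}\subset\mathrm{int}_{X_{i+1}}(X_i)\). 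Gluing the \(X_i\) along this increasing chain yields a \(k\)-analytic space \(X=\bigcup_iX_i\) which is separated and, by the neighbourhood condition, locally compact and countable at infinity. By Proposition \ref{prop:frechetspec}, \(\spb(\A)=\bigcup_i\spb(\A_i)\) as topological spaces, identifying \(\spb(\A)\) with \(X\). Now \(\{X_i\}\) is a \(W\)-exhaustion of \(X\) by Liu domains: each \(X_i\) is a Liu space, \(X_i\subset X_{i+1}\), and the restriction map \(\mathcal{O}_X(X_{i+1})\to\mathcal{O}_X(X_i)\) is precisely \(\A_{i+1}\to\A_i\), which has dense image by hypothesis. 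Hence \(X\) is a \(k\)-Stein space by Definition \ref{def:stein}(iii).

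\emph{Step 2: Taking global sections.} Let \(X\) be a \(k\)-Stein space. By Definition \ref{def:stein}(iii), \(X\) admits a \(W\)-exhaustion \(\{D_i\}\) by Liu domains. By Theorem \ref{thm:isocatliu}, each \(\A_i\vcentcolon=\mathcal{O}_X(D_i)\) is a \(k\)-Liu algebra, the restriction maps \(\A_{i+1}\to\A_i\) have dense image, each \(D_i\subset D_{i+1}\) is a Liu domain embedding, and \(\varprojlim_i\A_i\) recovers \(\mathcal{O}_X(X)\) with its canonical Fréchet structure from Remark \ref{rem:banfrechstr}. The hard part will be the neighbourhood condition, since a \(W\)-exhaustion does not a priori place \(D_i\) in the interior of \(D_{i+1}\); this is the main obstacle. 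To secure it, I would exploit that \(X\) is locally compact, \(\sigma\)-compact and holomorphically convex to refine \(\{D_i\}\) — passing to a subsequence and inserting intermediate Liu domains — into an exhaustion with \(\spb(\A_i)\subset\mathrm{int}(\spb(\A_{i+1}))\), while preserving density of the restriction maps. This exhibits a presentation, so \(\mathcal{O}_X(X)\) is a \(k\)-Stein algebra.

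\emph{Step 3: Functoriality and mutual inverses.} A bounded morphism \(\A\to\B\) of Stein algebras induces, by the very definition of boundedness, compatible morphisms between the defining Liu algebras for cofinal choices of indices, hence morphisms of Liu spectra by Theorem \ref{thm:isocatliu}, which glue to a morphism \(\spb(\B)\to\spb(\A)\) of Stein spaces; the converse direction follows analogously from \cite[Corollary 3.7]{Xia} applied termwise. Finally, I would check that the two functors are mutually inverse: on spectra, \(\spb(\mathcal{O}_X(X))\cong X\) by gluing the Liu-level homeomorphisms of \cite[Corollary 3.17]{MaculanPoineau}; on algebras, \(\mathcal{O}_{\spb(\A)}(\spb(\A))=\varprojlim_i\mathcal{O}(\spb(\A_i))=\varprojlim_i\A_i=\A\) as Fréchet algebras.
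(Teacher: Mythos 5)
Your proposal is correct and follows essentially the same route as the paper: reduce both directions to the Liu-level correspondence of Theorem \ref{thm:isocatliu}, glue (resp.\ take inverse limits) along a presentation/$W$-exhaustion, and handle morphisms by re-indexing so that each $\spb(\B_j)$ lands in some $\spb(\A_i)$. The one point you flag as the ``main obstacle'' --- arranging $\spb(\A_i)\subset\mathrm{int}(\spb(\A_{i+1}))$ --- is dispatched in the paper simply by deleting terms of the exhaustion (i.e.\ passing to a subsequence, which the quasi-net property of the $G$-cover justifies and which preserves density of the restriction maps); no insertion of intermediate Liu domains is needed.
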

\begin{proof}
	The proof consists of two steps where we show that the respective functors are well-defined. By construction and Theorem \ref{thm:isocatliu}, it will be clear that both functors are mutually inverse to each other. For convenience, we only consider the case of general \(k\)-Stein algebras and spaces. The proof for the strict case is identical.

	\emph{Step 1: Taking Berkovich spectra.} Let \(\A=\varprojlim_i\A_i\) be a \(k\)-Stein algebra. By Proposition \ref{prop:frechetspec} and its proof, \(\spb(\A)\) is a non-empty Hausdorff topological space for which we can write \(\spb(\A)=\cup_{i\in\bZ_{\geq0}}\spb(\A_i)\). By the definition of \(k\)-Stein algebras, each \(\spb(\A_{i+1})\) is a neighbourhood of \(\spb(\A_i)\) with compatible restriction maps for their \(k\)-analytic structure. Hence, one obtains a \(k\)-analytic structure on \(\spb(\A)\) by glueing. It follows from the characterisation of \(k\)-Stein spaces as being \(W\)-exhausted by Liu domains that \(\spb(\A)\) is a \(k\)-Stein space.
	
	Now let \(\varphi:\A\to\B=\varprojlim_j\B_j\) be a morphism of \(k\)-Stein algebras, inducing a continuous map \(\spb(\varphi):\spb(\B)\to\spb(\A)\). As \(\varphi\) is bounded, each morphism \(\A\to\B_j\) factorises through some \(\A_i\) with \(i\in\bZ_{\geq0}\). Hence, possibly after re-indexing the systems in a suitable way, we get compatible morphisms of \(k\)-Liu spaces \(\spb(\varphi_i):\spb(\B_i)\to\spb(\A_i)\) for every \(i\in\bZ_{\geq0}\), which commute with \(\spb(\varphi)\). Thus, \(\spb(\varphi)\) defines a morphism of \(k\)-Stein spaces. One easily verifies this to be functorial. In particular, it follows that the \(k\)-analytic structure on \(\spb(\A)\) does not depend on the chosen presentation of \(\A\).
	
	\emph{Step 2: Taking global sections.} Next, let \(X\) be a \(k\)-Stein space with \(\{D_i\}_{i\in\bZ_{\geq0}}\) a \(W\)-exhaustion of \(X\) by Liu domains. Possibly after deleting some \(D_i\)'s, we can assume that, for every \(i\in\bZ_{>0}\), \(D_i\) is a neighbourhood of \(D_{i-1}\) within \(D_{i+1}\). One easily verifies that \(\mathcal{O}_X(X)\cong\varprojlim_i\mathcal{O}_X(D_i)\), endowing it with a clear \(k\)-Fr\'echet structure and presentation as \(k\)-Stein algebra with \(\spb(\mathcal{O}_X(X))\cong X\).
	
	Lastly, let \(Y=\spb(\B)\to X=\spb(\A)\) be a morphism of \(k\)-Stein spaces. It remains to show that the induced \(k\)-algebra morphism \(\varphi:\A\to\B\) is bounded. Note that for every \(j\in\bZ_{\geq0}\), the image of \(\spb(\B_j)\) must be contained in some \(\spb(\A_i)\) with \(i\in\bZ_{\geq0}\). Possibly after re-indexing the systems in a suitable way, we thus get a system of \(k\)-algebra morphisms \(\varphi_i:\A_i\to\B_i\), which are bounded by Corollary \ref{cor:liubounded}(i), with \(\varprojlim_i\varphi_i=\varphi\). Hence, \(\varphi:\A\to\B\) is bounded. Given two \(W\)-exhaustions of \(X\), one can apply a similar argument to show that (up to equivalence) the \(k\)-Fr\'echet structure on \(\mathcal{O}_X(X)\) does not depend on the chosen \(W\)-exhaustion of \(X\).
\end{proof}

\begin{lemma}\label{lem:maxfingen}
	Assume that \(k\) is non-trivially valued. Let \(\varphi:\A=\varprojlim_i\A_i\to\B\) be a morphism of strictly \(k\)-Stein algebras with \(\B\) strictly \(k\)-Liu. Let \(\mathfrak{n}\) be a maximal ideal in \(\B\) and denote \(\mathfrak{m}=\varphi^{-1}(\mathfrak{n})\). Then we have that
	\begin{enumerate}[label=(\roman*)]
		\item \(\mathfrak{m}\) is a maximal ideal in \(\A\) with \(\A/\mathfrak{m}\) a finite extension of \(k\).
	\end{enumerate}
	Assume now that \(\varphi:\A\to\B\) defines a strictly Liu domain. Then also the following hold.
	\begin{enumerate}[label=(\roman*)]\setcounter{enumi}{1}
		\item For every \(n\in\bZ_{>0}\), \(\varphi:\A\to\B\) induces an isomorphism \(\A/\mathfrak{m}^n\xrightarrow{\sim}\B/\mathfrak{n}^n\).
		\item \(\mathfrak{n}=\mathfrak{m}\B\).
	\end{enumerate}
	In particular, \(\mathfrak{m}\) corresponds to a rigid point \(x\in X=\spb(\A)\) for which one has
	\begin{enumerate}[label=(\roman*)]\setcounter{enumi}{3}
		\item \(\widehat{\A}_{\mathfrak{m}}\cong\widehat{\mathcal{O}}_{X,x}\) for the adic completions.
	\end{enumerate}
\end{lemma}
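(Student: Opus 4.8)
The plan is to reduce every assertion to the level of Liu algebras by factoring $\varphi$ through one of the Banach pieces of $\A$, and then to transport the resulting statements back to the Fréchet algebra $\A$ by means of the Fréchet--Stein formalism of \cite{SchneiderTeitelbaum} (whose applicability is guaranteed by Corollary \ref{cor:noetherian} and Proposition \ref{prop:flat}). Fix a presentation $\A=\varprojlim_i\A_i$. Since $\varphi$ is bounded (Definition \ref{def:frechetbounded}), it factors as $\varphi=\psi\circ\pi_N\colon\A\xrightarrow{\pi_N}\A_N\xrightarrow{\psi}\B$, where $\pi_N$ is a projection with dense image and $\psi$ is a morphism of strictly $k$-Liu algebras (automatically bounded by Corollary \ref{cor:liubounded}(i)). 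As $\spb(\B)$ is compact and the $\spb(\A_i)$ form an increasing exhaustion of $X$ with each contained in the interior of the next (Definition \ref{def:steinalg}), I may enlarge $N$ so that $\spb(\B)\subseteq\spb(\A_{N-1})\subseteq\mathrm{int}_X\spb(\A_N)$; in particular the rigid point carved out below is interior to $\spb(\A_N)$.

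For (i), set $\mathfrak m_N=\psi^{-1}(\mathfrak n)$. Applying Lemma \ref{lem:finext} to $\psi$ and $\mathfrak n$ gives that $\B/\mathfrak n$ is finite over $k$ and $\mathfrak m_N$ is maximal in $\A_N$; applying it again to $\pi_N$ and $\mathfrak m_N$ gives that $\A_N/\mathfrak m_N$ is finite over $k$ and that $\mathfrak m=\pi_N^{-1}(\mathfrak m_N)=\varphi^{-1}(\mathfrak n)$ is maximal in $\A$. Since $\pi_N$ induces an injection $\A/\mathfrak m\hookrightarrow\A_N/\mathfrak m_N$ and a $k$-subalgebra of a finite field extension of $k$ that is a field is itself finite over $k$, this establishes (i).

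For (ii) and (iii) I now assume $\varphi$ defines a Liu domain, so that $\spb(\B)$ is a Liu domain of $X$ contained in the Liu domain $\spb(\A_N)$ and is therefore (by transitivity of Liu domains) a Liu domain of $\spb(\A_N)$; thus $\psi$ is a Liu domain embedding of Liu algebras. By the version of Proposition \ref{prop:maxidealprop} valid for all powers --- available once $\A_N$ is known to be Noetherian, cf. Corollary \ref{cor:noetherian} and the remark following Proposition \ref{prop:maxidealprop} --- the map $\psi$ induces isomorphisms $\A_N/\mathfrak m_N^n\xrightarrow{\sim}\B/\mathfrak n^n$ for all $n$ and satisfies $\mathfrak n=\mathfrak m_N\B$. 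The same result applied along the tower shows that the transition maps $\A_{i+1}/\mathfrak m_{i+1}^n\to\A_i/\mathfrak m_i^n$ are isomorphisms for $i\geq N$, where $\mathfrak m_i$ denotes the maximal ideal of $\A_i$ cut out by $x$. It remains to compare $\A$ with $\A_N$. First I would check $\mathfrak m\A_i=\mathfrak m_i$: the image $\pi_i(\mathfrak m)$ is dense in the finite-codimensional closed ideal $\mathfrak m_i$, while $\mathfrak m\A_i$ is closed by Corollary \ref{cor:noetherian}, so the two coincide. Consequently the cyclic module $\A/\mathfrak m^n$ is a coadmissible $\A$-module whose base change to each $\A_i$ is $\A_i/\mathfrak m_i^n$, so by the Fréchet--Stein formalism \cite{SchneiderTeitelbaum}
\[\A/\mathfrak m^n\cong\varprojlim_i\A_i/\mathfrak m_i^n\cong\A_N/\mathfrak m_N^n\cong\B/\mathfrak n^n,\]
the middle isomorphism coming from the eventually constant system above. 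This gives (ii), and combining $\mathfrak m\A_N=\mathfrak m_N$ with $\mathfrak n=\mathfrak m_N\B$ yields $\mathfrak n=\mathfrak m\B$, which is (iii). For (iv), passing to the inverse limit over $n$ in (ii) gives $\widehat\A_{\mathfrak m}\cong\widehat{(\A_N)}_{\mathfrak m_N}$; on the Liu side the latter is the completion $\widehat{\mathcal O}_{\spb(\A_N),x}$ of the local ring at the rigid point $x$ (the adic quotients of the stalk are computed by any Liu neighbourhood and all equal $\A_N/\mathfrak m_N^n$ by Proposition \ref{prop:maxidealprop}), and since $\spb(\A_N)$ is a neighbourhood of $x$ in $X$ we have $\mathcal O_{\spb(\A_N),x}=\mathcal O_{X,x}$, whence $\widehat\A_{\mathfrak m}\cong\widehat{\mathcal O}_{X,x}$.

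The main obstacle is precisely the passage from the Banach piece $\A_N$ to the Fréchet algebra $\A$ in (ii): because $\A$ is not Noetherian, one cannot argue directly that $\A/\mathfrak m^n\to\A_N/\mathfrak m_N^n$ is injective, and the crux is to recognise $\A/\mathfrak m^n$ as a coadmissible module with the expected base changes, so that it is recovered as the inverse limit of the eventually constant system $\A_i/\mathfrak m_i^n$. Verifying $\mathfrak m\A_i=\mathfrak m_i$ cleanly, and checking that $\A/\mathfrak m^n$ genuinely satisfies the hypotheses of the Fréchet--Stein machinery, are the delicate points on which the argument turns.
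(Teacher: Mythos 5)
Your part (i) is essentially the paper's argument (the detour through $\A_N$ is harmless, since Lemma \ref{lem:finext} applies equally well to $\varphi$ itself), and your deduction of (iv) from (ii) is the paper's. The reduction of (ii)--(iii) to the Liu level --- comparing $\A_N$ with $\B$ via the all-powers version of Proposition \ref{prop:maxidealprop} --- is also fine. The genuine gap is the return trip from $\A_N$ to $\A$, which is where the actual content of (ii) sits. You assert that $\A/\mathfrak{m}^n$ is a coadmissible $\A$-module, but give no reason. The standard sufficient condition --- finite presentation, i.e.\ finite generation of $\mathfrak{m}$ --- is precisely Proposition \ref{prop:maxfingenclosed}, whose proof in the paper \emph{uses} Lemma \ref{lem:maxfingen}(ii)--(iii); invoking it here is circular. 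Nor is it clear a priori that $\mathfrak{m}^n$ is closed in the non-Noetherian Fr\'echet algebra $\A$ for $n\geq 2$, so $\A/\mathfrak{m}^n$ is not even visibly a Hausdorff object to which the Schneider--Teitelbaum formalism could apply. Your computation $\mathfrak{m}\A_i=\mathfrak{m}_i$ is correct and yields surjectivity of $\A/\mathfrak{m}^n\to\A_N/\mathfrak{m}_N^n$ (dense image into a finite-dimensional $k$-vector space), but the whole difficulty of (ii) is injectivity, i.e.\ the identity $\pi_N^{-1}(\mathfrak{m}_N^n)=\mathfrak{m}^n$; your argument supplies only the trivial inclusion $\mathfrak{m}^n\subseteq\pi_N^{-1}(\mathfrak{m}_N^n)$. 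Flagging this as ``the delicate point'' does not resolve it.

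The paper takes a shorter route that bypasses the coadmissibility question entirely: it runs the argument of \cite[Proposition 7.2.2/1]{BGR} directly on $\varphi:\A\to\B$, applying the universal property of the Liu domain to the quotient $\A\to\A/\mathfrak{m}^n$ (whose spectrum is the single point $x\in\spb(\B)$) to produce a map $\B\to\A/\mathfrak{m}^n$ annihilating $\mathfrak{n}^n=\mathfrak{m}^n\B$. It is this map that forces $\varphi^{-1}(\mathfrak{n}^n)\subseteq\mathfrak{m}^n$ and hence the injectivity you are missing, and it simultaneously gives the inverse of $\A/\mathfrak{m}^n\to\B/\mathfrak{n}^n$. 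If you want to retain your two-step structure, you would still need to run this universal-property argument for the Liu domain $\A\to\A_N$, at which point the Fr\'echet--Stein machinery becomes superfluous.
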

\begin{proof}
	We can identify \(\A/\mathfrak{m}\) as a subring of \(\B/\mathfrak{n}\), which is a finite extension of \(k\) by Lemma \ref{lem:finext}. Since integral domains which are finite-dimensional \(k\)-algebras, are fields, it follows that \(\mathfrak{m}\) is a maximal ideal in \(\A\) and \(\A/\mathfrak{m}\) is a finite extension of \(k\). In case \(\varphi:\A\to\B\) defines a Liu domain, one can now follow the same argument as in \cite[Proposition 7.2.2/1]{BGR} to show that \(\mathfrak{n}=\mathfrak{m}\B\) and \(\A/\mathfrak{m}^n\cong\B/\mathfrak{m}^n\B\) for any \(n\in\bZ_{>0}\). Lastly, applying (ii) to \(\A\to\A_i\) for \(i\in\bZ_{\geq0}\) sufficiently large, it follows from \cite[Proposition 2.4]{LiuTohoku} that
	\[\widehat{\A}_{\mathfrak{m}}\cong\widehat{(\A_i)}_{\mathfrak{m}\A_i}\cong\widehat{\mathcal{O}}_{X_i,x}=\widehat{\mathcal{O}}_{X,x},\]
	where \(X_i=\spb(\A_i)\).
\end{proof}
The following proposition is a non-Archimedean analytic version of \cite[Theorem 2]{Forster}.
\begin{proposition}\label{prop:maxfingenclosed}
	Assume that \(k\) is non-trivially valued. Let \(\A\) be a strictly \(k\)-Stein algebra. Then any maximal ideal \(\mathfrak{m}\) in \(\A\) is closed if and only if it is finitely generated.
\end{proposition}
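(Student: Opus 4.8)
The plan is to show that both conditions single out precisely the maximal ideals of the form $\mathfrak{m}_x=\{f\in\A\,|\,|f(x)|=0\}$ attached to a point $x\in X=\spb(\A)$, writing $\A=\varprojlim_i\A_i$ and $X=\cup_iX_i$ with $X_i=\spb(\A_i)$ as in Definition \ref{def:steinalg}. The central tool is the cohomological Stein property (Definition \ref{def:stein}(i), available via Theorem \ref{thm:isocatstein}): if $\mathcal{F}\to\mathcal{G}$ is a surjection of coherent sheaves on $X$ with coherent kernel $\mathcal{K}$, then $H^1(X,\mathcal{K})=0$ forces $\mathcal{F}(X)\to\mathcal{G}(X)$ to be surjective. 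In particular, applying this to $\mathcal{O}_X^N\to\mathcal{O}_X$, any $f_1,\dots,f_N\in\A$ with no common zero on $X$ already generate the unit ideal of $\A$; I will use this repeatedly as a non-Archimedean Nullstellensatz on Stein spaces.

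For the implication ``finitely generated $\Rightarrow$ closed'', suppose $\mathfrak{m}=(m_1,\dots,m_r)$. If the $m_j$ had no common zero on $X$ they would generate the unit ideal by the observation above, contradicting properness; hence there is a common zero $x$, so $\mathfrak{m}\subseteq\mathfrak{m}_x\subsetneq\A$, and maximality gives $\mathfrak{m}=\mathfrak{m}_x$. Since $f\mapsto|f(x)|$ is a bounded seminorm, $\mathfrak{m}_x$ is its zero set and therefore closed. This direction is short and requires no rigidity of $x$.

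For ``closed $\Rightarrow$ finitely generated'' I first locate the point. If $\mathfrak{m}$ had no common zero, then for each $i$ its elements would have no common zero on the compact $X_i$, hence generate the unit ideal of the Noetherian Liu algebra $\A_i$ (whose maximal ideals are rigid points by Lemma \ref{lem:finext} and Corollary \ref{cor:noetherian}); the closure of the image of $\mathfrak{m}$ in $\A_i$ would then be a closed ideal containing $\A_i$, so $\mathfrak{m}$ would be dense in $\A=\varprojlim_i\A_i$, contradicting that it is closed and proper. Thus $\mathfrak{m}$ has a common zero $x$, whence $\mathfrak{m}=\mathfrak{m}_x$, and choosing $i_0$ with $x\in X_{i_0}$ we have $\mathfrak{m}\A_{i_0}\subsetneq\A_{i_0}$, so $\mathfrak{m}=\varphi_{i_0}^{-1}(\mathfrak{n})$ for a maximal ideal $\mathfrak{n}\subset\A_{i_0}$. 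By Lemma \ref{lem:maxfingen} the point $x$ is rigid, $\A/\mathfrak{m}$ is finite over $k$, $\mathfrak{n}=\mathfrak{m}\A_{i_0}$, and $\mathcal{O}_{X,x}$ is Noetherian.

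It remains to produce finitely many generators, and this is where I expect the real work to lie. I would build global generators of the coherent ideal sheaf $\mathcal{I}$ of the reduced point $x$, noting $\mathcal{I}(X)=\mathfrak{m}$ by applying the key surjectivity fact to $0\to\mathcal{I}\to\mathcal{O}_X\to x_*(\A/\mathfrak{m})\to0$. First I pick $f_1,\dots,f_p\in\mathfrak{m}$ whose classes span $\mathfrak{m}_{X,x}/\mathfrak{m}_{X,x}^2$; by Nakayama they generate $\mathcal{I}$ on a neighbourhood $U$ of $x$, so their common zero locus $Z$ meets $U$ only in $x$. The set $Z'=Z\cap(X\setminus U)$ is a closed analytic, hence Stein, subspace of $X$ not containing $x$; elements of $\mathfrak{m}$ have no common zero on $Z'$ (a common zero $z\neq x$ would give $\mathfrak{m}\subseteq\mathfrak{m}_z\subsetneq\A$, against maximality and holomorphic separability), so by the Nullstellensatz observation applied on $Z'$ finitely many $g_1,\dots,g_q\in\mathfrak{m}$ have no common zero on $Z'$. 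Then $f_1,\dots,f_p,g_1,\dots,g_q$ have common zero locus exactly $\{x\}$ and generate $\mathcal{I}$ at every stalk, so by the key fact they generate $\mathcal{I}(X)=\mathfrak{m}$. The main obstacle is precisely this last step: passing from local generation at $x$ to global generation by finitely many functions in $\mathfrak{m}$, that is, eliminating the spurious common zeros on $X\setminus U$, which genuinely uses the global Stein machinery rather than the Noetherianity of a single $\A_i$.
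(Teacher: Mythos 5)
Your first direction and your localisation of the maximal ideal at a point (via the dichotomy $\mathfrak{m}\A_i$ proper for some $i$ versus $\mathfrak{m}\A_i=\A_i$ for all $i$, the density argument, and Lemma \ref{lem:maxfingen}) are correct and essentially agree with the paper's argument. For the hard direction (closed $\Rightarrow$ finitely generated) you then diverge from the paper, which stays algebraic: it shows $\mathfrak{m}/\mathfrak{m}^2\xrightarrow{\sim}\mathfrak{m}\A_i/(\mathfrak{m}\A_i)^2$ using density of $\A\to\A_i$ and Lemma \ref{lem:maxfingen}(ii)--(iii), lifts a basis to $f_1,\dots,f_n\in\A$, applies Nakayama in each Noetherian $\A_i$, and concludes $\mathfrak{m}=(f_1,\dots,f_n)$ by the inverse-limit lemma \cite[Theorem V.5.4]{GrauertRemmert}. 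Your geometric route via the ideal sheaf of the point is a legitimate alternative in principle, but it contains a genuine gap.

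The gap is the step \lq\lq by the Nullstellensatz observation applied on $Z'$, finitely many $g_1,\dots,g_q\in\mathfrak{m}$ have no common zero on $Z'$\rq\rq. Your \lq\lq Nullstellensatz observation\rq\rq\ says that \emph{finitely many} global functions without a common zero generate the unit ideal; it does not let you extract a finite subfamily from an infinite family without common zeros. On a non-compact Stein space this extraction is genuinely false in general: already over $\bC$, the ideal of entire functions vanishing at all but finitely many positive integers has empty common zero locus, yet every finite subfamily has infinitely many common zeros (and the ideal is proper). Since $Z'=Z\setminus\{x\}$ need not be compact (the global zero locus of $f_1,\dots,f_p$ is only controlled near $x$), nothing you have said produces the $g_j$. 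The step can be repaired: $\{x\}\sqcup Z'$ is a disjoint union of closed analytic subsets, so by acyclicity of its ideal sheaf the restriction $\mathcal{O}_X(X)\to\mathcal{O}_X(\{x\})\times\mathcal{O}_X(Z')$ is surjective, and one obtains a single $g\in\A$ with $g(x)=0$ and $g\equiv1$ on $Z'$; then $g\in\mathfrak{m}$ and $f_1,\dots,f_p,g$ generate $\mathcal{I}$ at every stalk, after which your final appeal to $H^1$-vanishing for the coherent kernel of $\mathcal{O}_X^{p+1}\to\mathcal{I}$ does finish the proof. As written, however, the finite-extraction step is unjustified and is exactly where the global Stein input must enter.
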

\begin{proof}
	Fix a presentation \(\A=\varprojlim_{i}\A_i\) and denote the corresponding projection maps by \(\pi_i:\A\to\A_i\). For both implications, we will consider for every \(i\in\bZ_{\geq0}\) the ideal \(\mathfrak{m}\A_i\) generated by \(\pi_i(\mathfrak{m})\) in \(\A_i\). Then two cases can occur: either there is a \(j\in\bZ_{\geq0}\) such that \(\mathfrak{m}\A_j\) is a proper ideal of \(\A_j\), or \(\mathfrak{m}\A_i=\A_i\) for every \(i\in\bZ_{\geq0}\). We will refer to these cases as \emph{Case (i)} and \emph{Case (ii)} respectively.
	
	Assume that \(\mathfrak{m}\) is closed. In Case (i), we can assume that \(j=0\), and it follows from Lemma \ref{lem:maxfingen}(ii)-(iii) that \(\mathfrak{m}\A_i\) is a maximal ideal in \(\A_i\) with \(\A/\mathfrak{m}^n\cong\A_i/(\mathfrak{m}\A_i)^n\) for all \(i,n\in\bZ_{\geq0}\). Moreover, since each morphism \(\A\to\A_i\) has dense image, the inclusion \(\mathfrak{m}/\mathfrak{m}^2\hookrightarrow\mathfrak{m}\A_i/(\mathfrak{m}\A_i)^2\) of finite-dimensional \(\A/\mathfrak{m}\)-vector spaces is an isomorphism for every \(i\in\bZ_{\geq0}\). Hence, lifting a basis for \(\mathfrak{m}/\mathfrak{m}^2\) to \(\A\) yields elements \(f_1,\ldots,f_n\in\A\), whose images in \(\A_i\) generate \(\mathfrak{m}\A_i\) by Nakayama's lemma for every \(i\in\bZ_{\geq0}\). It now follows directly from the obvious non-Archimedean analogue of \cite[Theorem V.5.4; p.161]{GrauertRemmert} that \(\mathfrak{m}=(f_1,\ldots,f_n)\). In Case (ii), when \(\mathfrak{m}\A_i=\A_i\) for every \(i\in\bZ_{\geq0}\), it follows that \(\mathfrak{m}\) is dense in \(\A\) and thus \(\mathfrak{m}=\A\) since \(\mathfrak{m}\) is closed. This contradicts the assumption of \(\mathfrak{m}\) being a maximal ideal.
	
	Now assume that \(\mathfrak{m}\) is finitely generated. To show that \(\mathfrak{m}\) is closed, we can follow the same strategy as in \cite[Lemma 4.21]{BBBK}. In Case (i), we get that \(\pi_j^{-1}(\mathfrak{m}\A_j)\) equals \(\mathfrak{m}\) since it must be a proper ideal of \(\A\) containing \(\mathfrak{m}\). As \(\pi_j\) is bounded (and thus continuous as \(k\) was assumed to be non-trivially valued), it follows from Corollary \ref{cor:noetherian} that \(\mathfrak{m}\) is closed. In Case (ii), let \(f_1,\ldots,f_n\in\A\) be a set of generators of \(\mathfrak{m}\). As \(\mathfrak{m}\A_i=\A_i\) for every \(i\in\bZ_{\geq0}\), the elements \(\pi_i(f_1),\ldots,\pi_i(f_n)\) have no common zeroes in \(\spb(\A_i)\) for every \(i\in\bZ_{\geq0}\), so \(f_1,\ldots,f_n\) have no common zeroes in \(\spb(\A)\). Using the non-Archimedean analytic version of Cartan's Theorem A \cite[Proposition 2.1]{MaculanPoineau}, one now verifies in exactly the same way as in \cite[Theorems V.5.5 and V.5.4; p.161]{GrauertRemmert} that there exist elements \(g_1,\ldots,g_n\in\A\) such that \(f_1g_1+\ldots+f_ng_n=1\) and thus \(\mathfrak{m}=\A\). This contradicts the assumption of \(\mathfrak{m}\) being a maximal ideal.
\end{proof}
\begin{remark}
	More generally, for a \(k\)-Stein algebras \(\A\), any finitely generated ideal of \(\A\) is closed. When \(k\) is non-trivially valued, this follows from \cite[Corollary 3.4(iv) and Lemma 3.6(iii)]{SchneiderTeitelbaum}. The general case follows by descent via the same argument as in \cite[Proposition 2.1.3]{Berkovich}.
\end{remark}
\begin{corollary}
	Assume that \(k\) is non-trivially valued. Let \(\A=\varprojlim_i\A_i\) be a strictly \(k\)-Stein algebra. Then the collection of rigid points \(\spr(\A)\) in the strictly \(k\)-Stein space \(X=\spb(\A)\) corresponds exactly to the collection of closed maximal ideals in \(\A\).
\end{corollary}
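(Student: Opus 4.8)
The plan is to exhibit explicit, mutually inverse maps between $\spr(\A)$ and the set of closed maximal ideals of $\A$, and to verify they are well defined using Lemma \ref{lem:maxfingen} and Proposition \ref{prop:maxfingenclosed}, which already carry the genuine analytic content. Fix a presentation $\A=\varprojlim_i\A_i$ with projections $\pi_i:\A\to\A_i$; recall that each $\pi_i$ is bounded, hence continuous, and defines a strictly Liu domain $\spb(\A_i)\subset X$, and that $X=\cup_i\spb(\A_i)$.

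First I would construct the map from rigid points to ideals. A rigid point $x\in\spr(\A)$ lies in $\spb(\A_i)$ for all sufficiently large $i$, and there it is a rigid point of the strictly $k$-Liu space $\spb(\A_i)$, i.e. corresponds by Proposition \ref{prop:sprinspb} to a maximal ideal $\mathfrak{m}_{x,i}\in\mspec(\A_i)$. I set $\mathfrak{m}_x\vcentcolon=\pi_i^{-1}(\mathfrak{m}_{x,i})$, which is intrinsically the kernel of the character $\A\to\mathcal{H}(x)$ and therefore independent of $i$. By Corollary \ref{cor:noetherian} the ideal $\mathfrak{m}_{x,i}$ is closed in $\A_i$, so $\mathfrak{m}_x$ is closed by continuity of $\pi_i$; and by Lemma \ref{lem:maxfingen}(i) it is maximal with $\A/\mathfrak{m}_x$ finite over $k$. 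Thus $x\mapsto\mathfrak{m}_x$ lands in the set of closed maximal ideals. I would also record here that Lemma \ref{lem:maxfingen}(iii) gives $\mathfrak{m}_{x,i}=\mathfrak{m}_x\A_i$, which is needed for injectivity.

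Conversely, given a closed maximal ideal $\mathfrak{m}\subset\A$, I would follow the case distinction from the proof of Proposition \ref{prop:maxfingenclosed}. Case (ii), in which $\mathfrak{m}\A_i=\A_i$ for every $i$, forces $\mathfrak{m}$ to be dense and hence, being closed, equal to $\A$, contradicting maximality. Therefore we are in Case (i): there is a $j$ with $\mathfrak{m}\A_j$ a proper ideal of $\A_j$, and Lemma \ref{lem:maxfingen}(ii)--(iii), applied to the Liu-domain morphism $\pi_j$, shows that $\mathfrak{m}\A_j$ is maximal in $\A_j$. This maximal ideal corresponds via Proposition \ref{prop:sprinspb} to a rigid point $x\in\spb(\A_j)\subset X$, and since $\pi_j^{-1}(\mathfrak{m}\A_j)$ is a proper ideal of $\A$ containing the maximal ideal $\mathfrak{m}$, it must equal $\mathfrak{m}$, i.e. $\mathfrak{m}_x=\mathfrak{m}$.

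Finally I would check that the two constructions are mutually inverse. The composite (closed ideal) $\to$ (point) $\to$ (ideal) is the identity by the equality $\mathfrak{m}_x=\mathfrak{m}$ just obtained. For the other composite it suffices to see that $x\mapsto\mathfrak{m}_x$ is injective on $\spr(\A)$: if $x,y\in\spr(\A)$ satisfy $\mathfrak{m}_x=\mathfrak{m}_y$, choose $i$ large enough that both lie in $\spb(\A_i)$; then $\mathfrak{m}_{x,i}=\mathfrak{m}_x\A_i=\mathfrak{m}_y\A_i=\mathfrak{m}_{y,i}$, so $x=y$ by injectivity of $\mspec(\A_i)\hookrightarrow\spb(\A_i)$ from Proposition \ref{prop:sprinspb}. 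I expect no essential obstacle: all the difficulty is absorbed by Proposition \ref{prop:maxfingenclosed} and Lemma \ref{lem:maxfingen}, and the only point demanding care is to ensure that passing between $\A$ and a chosen $\A_i$ is harmless, which is guaranteed by the intrinsic description $\mathfrak{m}_x=\ker(\A\to\mathcal{H}(x))$ together with $\mathfrak{m}_{x,i}=\mathfrak{m}_x\A_i$.
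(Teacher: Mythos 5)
Your proposal is correct and follows essentially the same route as the paper: the forward direction uses continuity of \(\pi_i\), Corollary \ref{cor:noetherian} and Lemma \ref{lem:maxfingen}, while the converse invokes the case analysis from the proof of Proposition \ref{prop:maxfingenclosed} to see that \(\mathfrak{m}\A_i\) is maximal for large \(i\). Your write-up is merely more explicit about the two maps being mutually inverse, which the paper leaves implicit.
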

\begin{proof}
	Let \(x\in\spb(\A)\) be a rigid point in \(X\), corresponding to a maximal ideal \(\mathfrak{n}\) in \(\A_i\) for some \(i\in\bZ_{\geq0}\). Since \(\pi_i:\A\to\A_i\) is bounded (and thus continuous as \(k\) is non-trivially valued), it follows from Corollary \ref{cor:noetherian} that \(\pi_i^{-1}(\mathfrak{n})\) is closed. By Lemma \ref{lem:maxfingen}, it is also maximal. Conversely, let \(\mathfrak{m}\) be a closed maximal ideal in \(\A\). Then \(\mathfrak{m}\A_i\) is a maximal ideal in \(\A_i\) for \(i\in\bZ_{\geq0}\) sufficiently large by the proof of Proposition \ref{prop:maxfingenclosed}. Hence, \(\mathfrak{m}\) corresponds to a rigid point \(x\in\spb(\A_i)\).
\end{proof}
The following corollary is a non-Archimedean analytic version of \cite[Theorem 3]{Forster}.
\begin{corollary}\label{cor:algbounded}
	Let \(\A\) and \(\B\) be two strictly \(k\)-Stein algebras. A \(k\)-algebra morphism \(\varphi:\A\to\B\) is continuous if and only if for every finitely generated maximal ideal \(\mathfrak{m}\) in \(\B\) the ideal \(\varphi^{-1}(\mathfrak{m})\) is finitely generated in \(\A\).
\end{corollary}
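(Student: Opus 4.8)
The plan is to route both implications through Proposition~\ref{prop:maxfingenclosed}, which identifies the finitely generated maximal ideals of a strictly $k$-Stein algebra with the closed ones.

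For the forward implication, suppose $\varphi$ is continuous and let $\mathfrak{m}\subset\B$ be a finitely generated maximal ideal. By Proposition~\ref{prop:maxfingenclosed} it is closed, so by the preceding Corollary it corresponds to a rigid point and $\B/\mathfrak{m}$ is a finite extension of $k$. Continuity gives that $\varphi^{-1}(\mathfrak{m})$ is closed, while the injection $\A/\varphi^{-1}(\mathfrak{m})\hookrightarrow\B/\mathfrak{m}$ exhibits $\A/\varphi^{-1}(\mathfrak{m})$ as an integral domain of finite dimension over $k$, hence a field. Thus $\varphi^{-1}(\mathfrak{m})$ is a closed maximal ideal, and so finitely generated by Proposition~\ref{prop:maxfingenclosed} again.

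For the converse I would fix presentations $\A=\varprojlim_i\A_i$ and $\B=\varprojlim_j\B_j$ and argue by the Closed Graph Theorem for $k$-Fr\'echet spaces (continuity being equivalent to boundedness here, as $k$ is non-trivially valued): it suffices to prove that whenever $c_n\to 0$ in $\A$ and $\varphi(c_n)\to d$ in $\B$, one has $d=0$. The first step is to show $d\in\mathfrak{m}^N$ for every closed maximal ideal $\mathfrak{m}\subset\B$ and every $N\geq 1$. Set $\mathfrak{n}=\varphi^{-1}(\mathfrak{m})$, which is finitely generated by hypothesis, hence a closed maximal ideal with $\A/\mathfrak{n}$ finite over $k$. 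Since $\mathfrak{m}^N$ and $\mathfrak{n}^N$ are again finitely generated they are closed, and by Lemma~\ref{lem:maxfingen}(ii) (applied to the Liu-domain projections $\B\to\B_j$ and $\A\to\A_i$) the quotients $\B/\mathfrak{m}^N$ and $\A/\mathfrak{n}^N$ are finite-dimensional over $k$, as they agree with the Artinian local quotients $\B_j/(\mathfrak{m}\B_j)^N$, resp.\ $\A_i/(\mathfrak{n}\A_i)^N$. Consequently $\A\to\A/\mathfrak{n}^N$ is continuous (its kernel is closed), the induced $k$-linear map $\A/\mathfrak{n}^N\to\B/\mathfrak{m}^N$ between finite-dimensional Hausdorff spaces is automatically continuous, and their composite equals $\A\xrightarrow{\varphi}\B\to\B/\mathfrak{m}^N$. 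Feeding $c_n\to 0$ and $\varphi(c_n)\to d$ into this continuous map and using that $\B/\mathfrak{m}^N$ is Hausdorff yields $d\in\mathfrak{m}^N$.

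To conclude $d=0$ I would descend to the Liu level, and this is where the main obstacle lies: because Liu (and Stein) algebras need not be reduced, it is \emph{not} enough to know that $d$ vanishes at every rigid point, since a function vanishing at all points of a Liu space need only be nilpotent. Instead, note that $d=0$ iff its image $d_j$ vanishes in each $\B_j$. Fix $j$ and an arbitrary maximal ideal of $\B_j$; by Lemma~\ref{lem:finext} it has the form $\mathfrak{m}_y$ for a rigid point $y\in\spb(\B_j)$, corresponding (preceding Corollary) to a closed maximal ideal $\mathfrak{m}^{(y)}\subset\B$ with $\mathfrak{m}^{(y)}\B_j=\mathfrak{m}_y$. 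Running the previous step with $\mathfrak{m}=\mathfrak{m}^{(y)}$ gives $d\in\bigcap_N(\mathfrak{m}^{(y)})^N$, hence $d_j\in\bigcap_N\mathfrak{m}_y^N$. As $\B_j$ is Noetherian (Corollary~\ref{cor:noetherian}), Krull's intersection theorem forces $d_j$ to vanish in the localisation $(\B_j)_{\mathfrak{m}_y}$; since this holds at every maximal ideal of $\B_j$, we get $d_j=0$, and therefore $d=0$. This closes the graph of $\varphi$ and proves continuity.
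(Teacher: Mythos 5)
Your proof is correct and takes essentially the same route as the paper: both directions pivot on Proposition~\ref{prop:maxfingenclosed} (finitely generated $=$ closed for maximal ideals), and your converse is precisely the closed-graph continuity criterion of \cite[Proposition 3.7.5/1]{BGR} that the paper invokes by citation (together with \cite[Proposition 3.6]{Xia}), here written out in full --- the reduction to showing $d\in\mathfrak{m}^N$ via Lemma~\ref{lem:maxfingen} and the finite-dimensional Hausdorff quotients, followed by Krull's intersection theorem at the Liu level. No gaps.
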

\begin{proof}
	It suffices to show that \(\varphi_i:\A\to\B_i\) is bounded for every \(i\in\bZ_{\geq0}\). Using the Closed Graph Theorem for Fr\'echet spaces \cite[Corollaire I.3.3/5]{BourbakiVec}, one sees that the proof of the continuity criterion \cite[Proposition 3.7.5/1]{BGR} also holds for \(k\)-Fr\'echet algebras. It now follows from \cite[Proposition 3.6]{Xia} and Proposition \ref{prop:maxfingenclosed} that \(\varphi\) is continuous.
\end{proof}
\begin{remark}
	In the complex analytic setting, one can show that actually any \(\bC\)-algebra morphism between \(\mathbb{C}\)-Stein algebras (associated with finite dimensional \(\bC\)-Stein spaces) satisfies the condition from Corollary \ref{cor:algbounded} \cite[Theorem 5]{Forster}. However, in order to emulate the complex analytic proof for a strictly \(k\)-Stein space \(X\) with \(W\)-exhaustion \(\{X_i\}_{i\in\bZ_{\geq0}}\), one needs to be able to construct a finite collection of global sections \(g_1,\ldots,g_n\in\mathcal{O}_X(X)\) such that
	\[\inf_{x\in X\setminus X_i}\sup\left\{|g_1(x)|,\ldots,|g_n(x)|\right\}\geq\alpha_i\qquad\text{for every}\quad i\in\bZ_{\geq0},\]
	for a certain sequence \((\alpha_i)_{i\in\bZ_{\geq0}}\) in \(\bR_{>0}\). To do this in the complex analytic setting, one makes explicit use of the underlying real analytic structure of \(X\) (cf. the proof of \cite[Theorem 1]{Narasimhan}). The natural question now arises whether an alternative proof is possible in the non-Archimedean setting.
\end{remark}
\section{Runge immersions}\label{secRunge}
In this section, we define Runge immersions into \(k\)-Stein spaces, and show that the analogues of several classical results extend to this setting. The latter will serve as auxiliary results when proving a Gerritzen-Grauert Theorem for \(k\)-Stein spaces in Section \ref{secGG}. We start by discussing natural generalisations of rational domains and affinoid generating systems.
\begin{remark}\label{rem:ratdomstein}
	Let \(\A=\varprojlim_i\A_i\) be a (strictly) \(k\)-Stein algebra. Then (strictly) Weierstrass, Laurent and rational domains of \(X=\spb(\A)\) are defined just as for Berkovich spectra of \(k\)-Banach algebras in Definition \ref{def:rationaldomain}. By \cite[Lemma 3.9]{MaculanPoineau}, (strictly) rational domains of \(X\) are also (strictly) \(k\)-Stein spaces. Furthermore, one easily verifies that these are \(G\)-open subsets and that the coordinate ring of a rational domain
	\[X\left(r_\bullet^{-1}\frac{f_\bullet}{g}\right)=\left\{x\in\spb(\A)\,\,\middle|\,\,|f_j(x)|\leq r_j|g(x)|\text{ for }j=1,\ldots,n\right\}\]
	is given by \(\A\langle r_\bullet^{-1}f_\bullet/g\rangle\vcentcolon=\varprojlim_i\A_i\langle r_\bullet^{-1}f_\bullet/g\rangle\), each element of which can be written as a series \(\sum_Ja_J(f_\bullet/g)^J\) with each \(a_J\in\A\) and \(\lim_{|J|\to+\infty}\norm{a_J}_{i}r_\bullet^J=0\) for every \(i\in\bZ_{\geq0}\). Here, \(\norm{\cdot}_i\) denotes the seminorm on \(\A\) induced by \(\A\to\A_i\) via pullback for \(i\in\bZ_{\geq0}\).
\end{remark}

\begin{remark}\label{rem:affgen}
	We slightly generalise the notion of affinoid generating systems from \cite[§7.2.5]{BGR}. Let \(\varphi:\A=\varprojlim_i\A_i\to\B\) be a morphism of \(k\)-Stein algebras with \(\B\) \(k\)-affinoid. Take \(i_0\in\bZ_{\geq0}\) such that \(\varphi\) factorises through \(\A_i\) for all integers \(i\geq i_0\). As \(\B\) is \(k\)-affinoid, we know that there exists an affinoid generating system of \(\B\) over \(k\), which is thus also an affinoid generating system of \(\B\) over \(\A_i\) for each integer \(i\geq i_0\). That is, there are elements \(b_1,\ldots,b_n\in\B\) and a polyradius \(r_\bullet=(r_1,\ldots,r_n)\in\bR_{>0}^n\) such that we have a commutative diagram
	\[\begin{tikzcd}
		& {k\langle r_\bullet^{-1}\zeta_\bullet\rangle} \arrow[rd, "\beta", two heads] \arrow[ld, "\alpha_i"'] &   \\
		{\A_i\langle r_\bullet^{-1}\zeta_\bullet\rangle} \arrow[rr, "\gamma_i", two heads] &                                                                                                    & \B
	\end{tikzcd}\]
	where \(\beta\) and \(\gamma_i\) are surjective, sending the variable \(\zeta_j\) to \(b_j\) for each \(j\in\{1,\ldots,n\}\). It follows that \(\beta\) factorises through \(\varprojlim_{i\geq i_0}\A_i\langle r_\bullet^{-1}\zeta_\bullet\rangle=\A\langle r_\bullet^{-1}\zeta_\bullet\rangle\), which also maps surjectively to \(\B\).
	
	We now define an \emph{affinoid generating system of \(\B\) over \(\A\)} as a collection of elements \(b_1,\ldots,b_n\in\B\) and a polyradius  \(r_\bullet=(r_1,\ldots,r_n)\in\bR_{>0}^n\) such that there exist a commutative diagram
	\[\begin{tikzcd}
		\A \arrow[rr, "\varphi"] \arrow[rd] &                                                                & \B \\
		& {\A\langle r_\bullet^{-1}\zeta_\bullet\rangle} \arrow[ru, "\gamma"', two heads] &  
	\end{tikzcd}\]
	where \(\gamma\) is surjective with \(\gamma(\zeta_j)=b_j\) for every \(j\in\{1,\ldots,n\}\). When \(k\) is non-trivially valued with \(\A\) strictly \(k\)-Stein and \(\B\) strictly \(k\)-affinoid, \(b_1,\ldots,b_n\in\B\) is called a \emph{strictly affinoid generating system} if above we can take \(r_1=\ldots=r_n=1\). Note that we have just proven that (strictly) affinoid generating systems always exists.
\end{remark}
\begin{definition}
	A morphism \(X\to Y\) of (strictly) \(k\)-Stein spaces with \(X\) \(k\)-affinoid is called a \emph{(strictly) Runge immersion} if it factorises as the composition of a closed immersion and a (strictly) Weierstrass domain embedding.
\end{definition}
\begin{proposition}\label{prop:equivrunge}
	Let \(\varphi:\A\to \B\) be a morphism of (strictly) \(k\)-Stein algebras with \(\B\) (strictly) \(k\)-affinoid. Then the following are equivalent.
	\begin{enumerate}[label=(\roman*)]
		\item \(\spb(\varphi)\) is a (strictly) Runge immersion.
		\item \(\varphi(\A)\) is dense in \(\B\).
		\item \(\varphi(\A)\) contains a (strictly) affinoid generating system of \(\B\) over \(\A\).
	\end{enumerate}
\end{proposition}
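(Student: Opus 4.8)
The plan is to prove the cycle of implications $(i)\Rightarrow(ii)\Rightarrow(iii)\Rightarrow(i)$, treating the general and strict cases simultaneously (the strict case being verbatim with all radii set to $1$ and ``affinoid/Weierstrass'' replaced by their strict analogues). For $(i)\Rightarrow(ii)$, a Runge immersion means by definition that $\varphi$ factorises as a (strictly) Weierstrass localisation $\A\to\A_W$ followed by a surjection $\A_W\twoheadrightarrow\B$. I would first note that $\A\to\A_W$ has dense image: by Remark~\ref{rem:ratdomstein} every element of $\A_W=\A\langle r_\bullet^{-1}f_\bullet\rangle$ is a convergent series $\sum_J a_J f^J$ with $a_J\in\A$ and $f_j\in\A$, whose partial sums already lie in the image of $\A$. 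Since $\A_W\to\B$ is a continuous surjection, it sends this dense image onto a dense subset of $\B$, and that subset is exactly $\varphi(\A)$.

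The step $(ii)\Rightarrow(iii)$ is the main technical point, and I would carry it out entirely at the $k$-affinoid (Banach) level, where the Open Mapping Theorem applies. Since $\B$ is (strictly) $k$-affinoid, fix a (strictly) $k$-affinoid generating system, i.e. an admissible epimorphism $\tau\colon k\langle r_\bullet^{-1}\zeta_\bullet\rangle\twoheadrightarrow\B$ with $\tau(\zeta_l)=e_l$. The key input is a perturbation lemma: such admissible epimorphisms remain admissible epimorphisms under a sufficiently small change of the images $e_l$ of the generators. This is classical in the affinoid theory (cf. \cite[\S7.2.5]{BGR}) and follows from the Open Mapping Theorem \cite[Th\'eor\`eme I.3.3/1]{BourbakiVec} by successive approximation; the point is that the ultrametric estimate $\norm{e^L-b^L}\leq\max_l\norm{e_l-b_l}\cdot\max_l\max(\norm{e_l},\norm{b_l})^{|L|-1}$ bounds the operator norm of $\tau-\tau'$ by a small multiple of $\max_l\norm{e_l-b_l}$. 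As $\varphi(\A)$ is dense in $\B$, I would then choose $b_l\in\varphi(\A)$ with $\norm{b_l-e_l}$ small enough (after possibly enlarging the polyradius slightly so that $\rho_\B(b_l)\leq r_l$, cf.\ Corollary~\ref{cor:reltateuniprop}) that $\tau'\colon\zeta_l\mapsto b_l$ is still an admissible epimorphism onto $\B$. Then $b_1,\dots,b_m\in\varphi(\A)$ is a (strictly) $k$-affinoid generating system of $\B$, and since $\A$ is a $k$-algebra it is a fortiori an affinoid generating system of $\B$ over $\A$ in the sense of Remark~\ref{rem:affgen}; this is exactly~(iii).

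For $(iii)\Rightarrow(i)$, suppose $\varphi(\A)$ contains an affinoid generating system, giving a surjection $\gamma\colon\A\langle r_\bullet^{-1}\zeta_\bullet\rangle\twoheadrightarrow\B$ with $\gamma(\zeta_j)=b_j$ and each $b_j=\varphi(a_j)$ for some $a_j\in\A$. I would compare $\gamma$ with the canonical quotient map $q\colon\A\langle r_\bullet^{-1}\zeta_\bullet\rangle\to\A_W:=\A\langle r_\bullet^{-1}a_\bullet\rangle$ onto the (strictly) Weierstrass localisation attached to the Weierstrass domain $W=\{x\in X\mid |a_j(x)|\leq r_j\text{ for all }j\}$, whose kernel is $\Cl(\zeta_1-a_1,\dots,\zeta_n-a_n)$. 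Since $\gamma$ is continuous and $\gamma(\zeta_j-a_j)=b_j-\varphi(a_j)=0$, it annihilates the whole closed ideal $\ker q$ and hence factors as $\gamma=\bar\gamma\circ q$ with $\bar\gamma\colon\A_W\twoheadrightarrow\B$ surjective. Restricting to the constants $\A$ yields $\varphi=\bar\gamma\circ\iota_W$, where $\iota_W\colon\A\to\A_W$ is the localisation map. Therefore $\spb(\varphi)$ factors as the closed immersion $\spb(\bar\gamma)\colon\spb(\B)\hookrightarrow W$ followed by the (strictly) Weierstrass domain embedding $W\hookrightarrow X$, i.e.\ it is a (strictly) Runge immersion.

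The decisive and least formal step is $(ii)\Rightarrow(iii)$: making the perturbation argument rigorous, in particular verifying that admissibility and surjectivity of $\tau$ survive when the generators are pushed into the dense subalgebra $\varphi(\A)$, and confirming that this can be arranged at the Banach level so that the Open Mapping Theorem is available (which is why I reduce to a generating system over $k$ rather than working directly with the Fr\'echet algebra $\A\langle r_\bullet^{-1}\zeta_\bullet\rangle$). Once the density in $(i)\Rightarrow(ii)$ and the quotient factorisation in $(iii)\Rightarrow(i)$ are unwound, those two implications are essentially formal, so the perturbation lemma carries the real content of the proposition.
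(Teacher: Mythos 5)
Your proof is correct and follows the same overall strategy as the paper, which simply defers to \cite[Proposition 7.3.4/2]{BGR} and isolates the implication (ii)\(\Rightarrow\)(iii) in a perturbation lemma. The one genuine difference is where the perturbation happens. The paper's auxiliary lemma perturbs an affinoid generating system of \(\B\) \emph{over the Fr\'echet algebra \(\A\)}, which forces it first to compare the Banach norm on \(\B\) with the family of residue seminorms induced by a presentation \(\A=\varprojlim_i\A_i\) (showing these give equivalent structures and that a single \(\norm{\cdot}_{\B,i}\) is eventually an equivalent norm) before running the Banach-level argument of \cite[Lemma 7.3.4/3]{BGR}. You instead perturb a generating system of \(\B\) \emph{over \(k\)}, a purely Banach-theoretic statement, and then upgrade the perturbed system to one over \(\A\) via the factorisation of \(\varphi\) through some Liu algebra \(\A_i\) --- which is precisely the content of Remark \ref{rem:affgen}. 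This quarantines the Fr\'echet issues inside material already established and is arguably tidier. Your (i)\(\Rightarrow\)(ii) (partial sums are dense in a Weierstrass localisation, and a continuous surjection carries dense sets to dense sets) and (iii)\(\Rightarrow\)(i) (factoring the surjection \(\A\langle r_\bullet^{-1}\zeta_\bullet\rangle\twoheadrightarrow\B\) through the closed ideal \(\Cl(\zeta_1-a_1,\ldots,\zeta_n-a_n)\)) coincide with the BGR argument the paper invokes.

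Two small repairs to (ii)\(\Rightarrow\)(iii). First, drop the parenthetical about enlarging the polyradius: enlarging \(r_l\) replaces \(k\langle r_\bullet^{-1}\zeta_\bullet\rangle\) by a \emph{smaller} subalgebra, so surjectivity of \(\tau\) is not inherited, and in the strict case you cannot move off \(r_l=1\) at all. It is also unnecessary: since \(\rho_\B(e_l)\leq r_l\) (as \(\tau\) is bounded) and \(\rho_\B(b_l-e_l)\leq\norm{b_l-e_l}_\B\), choosing \(\varepsilon\leq\min_lr_l\) already guarantees \(\rho_\B(b_l)\leq r_l\). Second, the Open Mapping Theorem requires \(k\) to be non-trivially valued; when \(k\) is trivially valued one must first reduce via a ground field extension \(k_r/k\), as the paper does through the Fr\'echet analogue of \cite[Proposition 2.1.2]{Berkovich}. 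Neither point affects the substance of your argument.
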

\begin{proof}
	One can follow the same argument as in \cite[Proposition 7.3.4/2]{BGR}. For implication \((ii)\implies(iii)\), one uses the lemma below, which is itself a generalisation of \cite[Lemma 7.3.4/3]{BGR}.
\end{proof}
\begin{lemma}
	Let \(\sigma:\A\to\B\) be a morphism of (strictly) \(k\)-Stein algebras with \(\B\) (strictly) \(k\)-affinoid and \(b_1,\ldots,b_n\in\B\) a (strictly) affinoid generating system of \(\B\) over \(\A\) with associated polyradius \(r_\bullet=(r_1,\ldots,r_n)\in\bR_{>0}^n\). Then there exists an \(\varepsilon\in\bR_{>0}\) such that the following holds: if \(d_1,\ldots,d_n\) is a system of elements in \(\B\) satisfying \(\norm{b_i-d_i}_\B<\varepsilon\) for every \(i\in\{1,\ldots,n\}\), then \(d_1,\ldots,d_n\) is a (strictly) affinoid generating system of \(\B\) over \(\A\) with associated polyradius \(r_\bullet\).
\end{lemma}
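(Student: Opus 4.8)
The plan is to reduce the statement to the level of $k$-Liu algebras and then run the classical perturbation argument of \cite[Lemma 7.3.4/3]{BGR}. Write $\A=\varprojlim_i\A_i$ and use the description from Remark \ref{rem:affgen}: choose $i$ large enough that $\sigma$ factors through a morphism $\sigma_i:\A_i\to\B$ of $k$-Liu algebras (recall that $\B$, being $k$-affinoid, is $k$-Liu) and that the induced map $\gamma_i:\A_i\langle r_\bullet^{-1}\zeta_\bullet\rangle\to\B$ with $\zeta_j\mapsto b_j$ is surjective. It then suffices to show that the map $\gamma_i':\A_i\langle r_\bullet^{-1}\zeta_\bullet\rangle\to\B$ with $\zeta_j\mapsto d_j$ is again surjective, since precomposing with the projection $\A\langle r_\bullet^{-1}\zeta_\bullet\rangle\to\A_i\langle r_\bullet^{-1}\zeta_\bullet\rangle$ then exhibits $d_1,\dots,d_n$ as an affinoid generating system of $\B$ over $\A$ with the same polyradius $r_\bullet$.

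First I would fix $\varepsilon$ small. We may equip $\B$ with the residue norm induced by $\gamma_i$, as replacing the norm on $\B$ by an equivalent one only rescales $\varepsilon$; then $\gamma_i$ is contractive and $\norm{b_j}_\B\leq r_j$. Taking $\varepsilon\leq\min_j r_j$ forces $\rho_\B(d_j)\leq\max\left(\rho_\B(b_j),\norm{b_j-d_j}_\B\right)\leq r_j$, so that Corollary \ref{cor:reltateuniprop}, applied to $\sigma_i$ and the $d_j$, indeed produces the morphism $\gamma_i'$ with $\gamma_i'(\zeta_j)=d_j$. Next, $\gamma_i$ is a bounded surjection of $k$-Liu algebras (Corollary \ref{cor:liubounded}(i)) whose kernel is closed by Corollary \ref{cor:noetherian}; by Banach's open mapping theorem (extended to the possibly trivially valued case as in \cite[Proposition 2.1.2(ii)]{Berkovich}) it is admissible, so there is a constant $C\in\bR_{>0}$ such that every $b\in\B$ admits a preimage $F$ under $\gamma_i$ with $\norm{F}\leq C\norm{b}_\B$.

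The key estimate is that the perturbation $\gamma_i'-\gamma_i$ has small operator norm. Writing $F=\sum_J a_J\zeta^J$, so that $\norm{F}=\max_J\norm{a_J}_{\A_i}r_\bullet^J$ is the Gauss norm, contractivity of $\gamma_i$ gives $\norm{\sigma_i(a_J)}_\B\leq\norm{a_J}_{\A_i}$, while a telescoping computation in the ultrametric setting, using $\norm{b_j}_\B,\norm{d_j}_\B\leq r_j$ together with $\norm{b_j-d_j}_\B<\varepsilon$, yields $\norm{d^J-b^J}_\B\leq(\varepsilon/\min_j r_j)\,r_\bullet^J$. Combining these,
\[\norm{(\gamma_i'-\gamma_i)(F)}_\B=\norm{\sum_J\sigma_i(a_J)(d^J-b^J)}_\B\leq\frac{\varepsilon}{\min_j r_j}\max_J\norm{a_J}_{\A_i}r_\bullet^J=\frac{\varepsilon}{\min_j r_j}\norm{F}.\]
Setting $\delta=\varepsilon/\min_j r_j$, I shrink $\varepsilon$ further so that $\delta C<1$. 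A successive-approximation scheme then proves surjectivity of $\gamma_i'$: given $b\in\B$, set $b^{(0)}=b$ and inductively pick $F^{(m)}$ with $\gamma_i(F^{(m)})=b^{(m)}$ and $\norm{F^{(m)}}\leq C\norm{b^{(m)}}_\B$, then put $b^{(m+1)}=b^{(m)}-\gamma_i'(F^{(m)})=-(\gamma_i'-\gamma_i)(F^{(m)})$, so that $\norm{b^{(m+1)}}_\B\leq\delta C\norm{b^{(m)}}_\B$. Since $\delta C<1$, the series $G=\sum_m F^{(m)}$ converges in $\A_i\langle r_\bullet^{-1}\zeta_\bullet\rangle$ and telescopes to $\gamma_i'(G)=b$, whence $\gamma_i'$ is surjective. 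The strict case is identical with all $r_j=1$, the condition $\rho_\B(d_j)\leq 1$ ensuring that $d_1,\dots,d_n$ is a \emph{strictly} affinoid generating system.

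The main obstacle I anticipate is not the perturbation argument itself, which is the classical one, but the initial reduction: one must check that the generating system over the Fr\'echet algebra $\A$ genuinely descends to a \emph{surjective} morphism $\gamma_i$ at a finite level, and that this $\gamma_i$ is admissible, so that the constant $C$ exists. Both points rest on Remark \ref{rem:affgen} together with the Noetherianity and open-mapping properties of $k$-Liu algebras established in Corollary \ref{cor:noetherian} and Corollary \ref{cor:liubounded}.
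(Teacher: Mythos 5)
Your perturbation estimates are the standard ones from \cite[Lemma 7.3.4/3]{BGR} and are correct as far as they go, but the reduction to the finite level \(\A_i\) creates a gap at the very last step --- and it is not the step you flag as delicate. The direction you worry about is immediate: once \(\sigma\) factors through \(\A_i\), the given surjection \(\gamma:\A\langle r_\bullet^{-1}\zeta_\bullet\rangle\to\B\) factors through \(\A_i\langle r_\bullet^{-1}\zeta_\bullet\rangle\), and surjectivity of the composite forces surjectivity of \(\gamma_i\). The problem is the return trip. Your argument produces a surjection \(\gamma_i':\A_i\langle r_\bullet^{-1}\zeta_\bullet\rangle\to\B\) with \(\zeta_j\mapsto d_j\), i.e.\ it shows that \(d_1,\ldots,d_n\) is an affinoid generating system of \(\B\) over \(\A_i\). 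But the lemma asks for a generating system over \(\A\), which by Remark \ref{rem:affgen} means surjectivity of \(\gamma':\A\langle r_\bullet^{-1}\zeta_\bullet\rangle\to\B\). Precomposing \(\gamma_i'\) with the transition map \(\A\langle r_\bullet^{-1}\zeta_\bullet\rangle\to\A_i\langle r_\bullet^{-1}\zeta_\bullet\rangle\) does not give this: that map only has dense image, so the composite is only known to have dense image in \(\B\), and upgrading ``dense image'' to ``surjective'' is precisely what cannot be taken for granted here (Proposition \ref{prop:equivrunge}(ii)\(\Rightarrow\)(iii), which would perform that upgrade, is itself proved using the present lemma, so invoking it would be circular).

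The paper's proof avoids this by never leaving the Fr\'echet algebra. It applies the Open Mapping Theorem for Fr\'echet spaces to the given surjection \(\sigma':\A\langle r_\bullet^{-1}\zeta_\bullet\rangle\to\B\) to conclude that the family of residue seminorms \(\{\norm{\cdot}_{\B,i}\}_{i\in\bZ_{\geq0}}\) induced by the Gauss seminorms on \(\A\langle r_\bullet^{-1}\zeta_\bullet\rangle\) defines the same Fr\'echet structure on \(\B\) as \(\norm{\cdot}_\B\); since this family is increasing and \(\B\) is Banach, a single \(\norm{\cdot}_{\B,i}\) is already a norm equivalent to \(\norm{\cdot}_\B\) for \(i\) sufficiently large. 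This furnishes, for every \(b\in\B\), a preimage \(F\) lying in \(\A\langle r_\bullet^{-1}\zeta_\bullet\rangle\) itself with its \(i\)-th seminorm bounded by a constant times \(\norm{b}_\B\), which is exactly what allows the successive-approximation scheme of \cite[Lemma 7.3.4/3]{BGR} to be run for the perturbed map \(\gamma'\) directly on \(\A\langle r_\bullet^{-1}\zeta_\bullet\rangle\). If you wish to keep your finite-level computation, you must supplement it with this quotient-seminorm comparison (or an equivalent descent/ascent statement) to get from \(\A_i\) back to \(\A\); as written, your proof establishes a strictly weaker conclusion than the lemma claims.
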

\begin{proof}
	By the obvious generalisation of \cite[Proposition 2.1.2]{Berkovich} for \(k\)-Fr\'echet algebras, we may assume that \(k\) is non-trivially valued. Fix a presentation \(\A=\varprojlim_i\A_i\) and let \(\sigma':\A\langle r_\bullet^{-1}\zeta_\bullet\rangle\to\B\) be the surjective morphism extending \(\sigma\) by mapping \(\zeta_j\) to \(b_j\) for each \(j\in\{1,\ldots,n\}\). Denote by \(\norm{\cdot}_{\A,i}\) the seminorm on \(\A\) induced by \(\A\to\A_i\) via pullback for each \(i\in\bZ_{\geq0}\). By Banach's Open Mapping Theorem, the norm \(\norm{\cdot}_\B\) and the collection of residue seminorms \(\{\norm{\cdot}_{\B,i}\}_{i\in\bZ_{\geq0}}\) on \(\B\) corresponding to \(\{\norm{\cdot}_{\A,i}\}_{i\in\bZ_{\geq0}}\) via \(\sigma'\), induce equivalent \(k\)-Fr\'echet structures on \(\B\). Since the family of seminorms \(\{\norm{\cdot}_{\B,i}\}_{i\in\bZ_{\geq0}}\) is increasing on \(\B\), we thus get for sufficiently large \(i\in\bZ_{\geq0}\) that \(\norm{\cdot}_{\B,i}\) is a norm on \(\B\) equivalent to \(\norm{\cdot}_\B\). Now one can give the same argument as in \cite[Lemma 7.3.4/3]{BGR}.
\end{proof}
\begin{proposition}\label{prop:ratindense}
	Let \(D\) be a (strictly) Liu domain in a (strictly) \(k\)-Stein space \(X\) such that \(\mathcal{O}_X(X)\) has dense image in \(\mathcal{O}_X(D)\). Any (strictly) rational (resp. Weierstrass) domain of \(D\), contained in the interior of \(D\), can be written as a finite union of connected components of a (strictly) rational (resp. Weierstrass) domain of \(X\).
\end{proposition}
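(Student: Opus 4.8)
The plan is to realise \(R\) as a \emph{clopen} subset of a single global domain \(R'\subset X\), since a clopen subset is automatically a union of connected components. Concretely, I will construct a (strictly) rational (resp.\ Weierstrass) domain \(R'\) of \(X\) satisfying the two properties \(R'\cap D=R\) and \(R'\cap(D\setminus\mathring D)=\varnothing\), where \(\mathring D\) is the interior of \(D\) in \(X\). Granting this, note that \(D\) is compact (it is a Liu domain, so \(\mathcal O_X(D)\) is a Liu algebra with compact spectrum) and hence closed in the Hausdorff space \(X\); thus \(R=R'\cap D\) is closed in \(R'\). On the other hand the second property forces \(R=R'\cap\mathring D\), which is open in \(R'\). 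Hence \(R\) is clopen in \(R'\), i.e.\ a union of connected components of \(R'\); as \(R\) is compact (closed in the compact \(D\)) and \(R'\) is a Stein space (Remark~\ref{rem:ratdomstein}), hence locally connected with open components, only finitely many components occur.

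First I would treat the Weierstrass case \(R=D(r_\bullet^{-1}f_\bullet)=\{x\in D:|f_i(x)|\le r_i\}\) with \(f_i\in\mathcal O_X(D)\). Using that \(\mathcal O_X(X)\to\mathcal O_X(D)\) has dense image, I pick \(\tilde f_i\in\mathcal O_X(X)\) with \(\|f_i-\tilde f_i\|_D<\delta\) for some \(\delta<\min_i r_i\), where \(\|\cdot\|_D\) is the spectral seminorm (it is dominated by the Banach norm of the Liu algebra \(\mathcal O_X(D)\), so density transfers to it). I then set \(R'=X(r_\bullet^{-1}\tilde f_\bullet)\), a Weierstrass domain of \(X\). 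The key is an ultrametric identity: for \(x\in D\) one has \(|(\tilde f_i-f_i)(x)|\le\|f_i-\tilde f_i\|_D<\delta\), so wherever \(|f_i(x)|\ge\delta\) the strong triangle inequality gives \(|\tilde f_i(x)|=|f_i(x)|\), while wherever \(|f_i(x)|<\delta\le r_i\) both \(|f_i(x)|\) and \(|\tilde f_i(x)|\) are \(<r_i\). Hence \(|\tilde f_i(x)|\le r_i\iff|f_i(x)|\le r_i\) on \(D\), giving \(R'\cap D=R\); and \(R'\cap(D\setminus\mathring D)=R\cap(D\setminus\mathring D)=\varnothing\) because \(R\subset\mathring D\). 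In the strict case one takes \(r_i=1\).

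For the rational case \(R=D(r_\bullet^{-1}f_\bullet/g)\), the obstacle is that the approximations \(\tilde f_\bullet,\tilde g\in\mathcal O_X(X)\) need not generate the unit ideal in \(\mathcal O_X(X)\), so \(X(r_\bullet^{-1}\tilde f_\bullet/\tilde g)\) need not be a well-defined rational domain. I resolve this by cutting off the denominator. Since \(g,f_1,\dots,f_n\) have no common zeroes on the compact \(D\), there is \(\beta>0\) with \(\max\{|f_i(x)|,|g(x)|\}\ge\beta\) on \(D\); combined with \(|f_i|\le r_i|g|\) on \(R\) this yields a lower bound \(|g(x)|\ge c>0\) for \(x\in R\). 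Choosing \(0<s<c\) and \(\delta<\min\{s,\beta,\min_i r_is\}\), and approximating both \(f_\bullet\) and \(g\) to within \(\delta\), I set
\[R'=\left\{x\in X\ \middle|\ |\tilde g(x)|\ge s,\ |\tilde f_i(x)|\le r_i|\tilde g(x)|\ \text{ for }1\le i\le n\right\},\]
which is a Weierstrass domain of the Laurent domain \(X(s\,\tilde g^{-1})\) and hence a genuine rational domain of \(X\) by transitivity (Proposition~\ref{prop:liudomprop}); crucially \(\tilde g\) is a unit on \(R'\), so the common-zero problem disappears. The same ultrametric bookkeeping shows \(\{x\in D:|\tilde g(x)|\ge s\}=\{x\in D:|g(x)|\ge s\}\) and then \(R'\cap D=\{x\in D:|g|\ge s,\ |f_i|\le r_i|g|\}\); since \(|g|\ge c>s\) on \(R\) this equals \(R\), and \(R\subset\mathring D\) again gives \(R'\cap(D\setminus\mathring D)=\varnothing\). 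In the strict case I first rescale \(f_\bullet,g\) by some \(a\in k^\times\) with \(|a|<c\) (possible as \(k\) is non-trivially valued): this leaves \(R\) unchanged but arranges \(|g|>1\) on \(R\), so one may take \(s=1\) and obtain a strictly rational \(R'\).

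The main obstacle is precisely this global common-zero issue in the rational case, which I handle by the auxiliary Laurent condition \(|\tilde g|\ge s\) that simultaneously makes \(R'\) a well-defined rational domain and, because \(|g|\ge c>s\) already holds on \(R\), leaves \(R'\cap D\) equal to \(R\). The second delicate point is uniformity of all estimates, which is why I work with the spectral seminorm on the compact sets \(D\) and \(R\) and exploit their compactness to extract the constants \(\beta\) and \(c\). The interior hypothesis is used only to obtain the clean separation \(R\cap(D\setminus\mathring D)=\varnothing\), which is exactly what promotes the identity \(R=R'\cap D\) to openness of \(R\) in \(R'\), hence to the clopenness that yields the decomposition into finitely many connected components.
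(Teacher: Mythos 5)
Your proof is correct and follows essentially the same route as the paper: approximate the defining functions of \(R\) by global sections using the density hypothesis together with the ultrametric rigidity of the defining inequalities on the compact set \(D\), and then use compactness of \(R\) and the interior hypothesis to conclude that \(R\) is (cl)open in the resulting global domain and hence a finite union of its connected components. Your explicit handling of the global common-zero problem via the auxiliary Laurent condition \(|\tilde g(x)|\ge s\) is a welcome extra precision: the paper's proof, which simply invokes the perturbation argument from the beginning of the proof of Proposition~\ref{prop:basechangeliufull}, leaves implicit the verification that the perturbed global functions actually cut out a genuine rational domain of \(X\) in the sense of Definition~\ref{def:ratdomstein}.
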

\begin{proof}
	By the same argument as in the beginning of the proof of Proposition \ref{prop:basechangeliufull}, it follows from the assumption that \(\mathcal{O}_X(X)\) has dense image in \(\mathcal{O}_X(D)\) that we can take \(g'\) and \(f_\bullet'\) in \(\mathcal{O}_X(X)\) such that \(R=D(r_\bullet^{-1}f_\bullet'/g')\). Since \(R\) is compact and contained in the interior of \(D\), it is a finite union of connected components of \(X(r_\bullet^{-1}f_\bullet'/g')\).
\end{proof}
\begin{corollary}\label{cor:transcomp}
	Let \(X\) be a (strictly) \(k\)-Stein space. Let \(U\subset X\) be a (strictly) rational (resp. Weierstrass) domain and \(V\subset U\) a compact (strictly) rational (resp. Weierstrass) domain. Then \(V\) is a finite union of connected components of a (strictly) rational (resp. Weierstrass) domain of \(X\). In case \(U\) is also compact, \(V\) is a (strictly) rational (resp. Weierstrass) domain of \(X\).
\end{corollary}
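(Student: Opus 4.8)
The plan is to deduce both assertions from Proposition~\ref{prop:ratindense} by producing a single compact Liu domain $D\subset X$ that contains $V$ in its interior, on which global functions are dense, and in which $V$ is a rational (resp.\ Weierstrass) domain. Write $U=X(r_\bullet^{-1}a_\bullet/g)$ with $a_\bullet,g\in\mathcal{O}_X(X)$, and $V=U(s_\bullet^{-1}f_\bullet/h)$ with $f_\bullet,h\in\mathcal{O}_U(U)$ (cf.\ Remark~\ref{rem:ratdomstein}). First I would fix a $W$-exhaustion $\{D_i\}_{i\in\bZ_{\geq0}}$ of the Stein space $X$ by Liu domains with each $D_i$ contained in the interior of $D_{i+1}$; since the transition maps $\mathcal{O}_X(D_{i+1})\to\mathcal{O}_X(D_i)$ have dense image, a standard Mittag-Leffler argument shows that $\mathcal{O}_X(X)=\varprojlim_i\mathcal{O}_X(D_i)$ has dense image in each $\mathcal{O}_X(D_i)$. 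As $V$ is compact, I would then pick $N$ with $V\subset\mathrm{int}(D_N)$ and set $D=D_N$.

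Next I would check that $V$ is a rational (resp.\ Weierstrass) domain of $D$. Since $a_\bullet,g$ generate the unit ideal in $\mathcal{O}_X(X)$, their restrictions generate the unit ideal in $\mathcal{O}_X(D)$, so $D\cap U=D(r_\bullet^{-1}a_\bullet/g)$ is a rational (resp.\ Weierstrass) domain of the Liu space $D$. As $V\subset D\cap U$ and the restrictions of $f_\bullet,h$ generate the unit ideal in $\mathcal{O}_X(D\cap U)$, one gets $V=(D\cap U)(s_\bullet^{-1}f_\bullet/h)$, a rational (resp.\ Weierstrass) domain of $D\cap U$; by transitivity (Proposition~\ref{prop:liudomprop}(iii)) inside $D$, $V$ is therefore a rational (resp.\ Weierstrass) domain of $D$. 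Now $V\subset\mathrm{int}(D)$ and $\mathcal{O}_X(X)$ is dense in $\mathcal{O}_X(D)$, so Proposition~\ref{prop:ratindense} applies and realises $V$ as a finite union of connected components of a rational (resp.\ Weierstrass) domain $W$ of $X$; this proves the first assertion. Inspecting its proof, $W$ may moreover be chosen with $W\cap D=V$, and since $V\subset\mathrm{int}(D)$ this yields $V=W\cap\mathrm{int}(D)$.

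For the final statement, assume $U$ is compact. Then in the first step I would additionally arrange $U\subset\mathrm{int}(D)$, which is possible by compactness of $U$. Combining $V=W\cap\mathrm{int}(D)$ with $U\subset\mathrm{int}(D)$ gives $W\cap U\subset V$, while $V\subset W\cap U$ because $V\subset U$ and $V\subset W$; hence $V=W\cap U$. Finally, the intersection of the two rational (resp.\ Weierstrass) domains $W$ and $U$ of the Stein space $X$ is again a rational (resp.\ Weierstrass) domain of $X$, by the explicit \lq products over products\rq\ description of Proposition~\ref{prop:liudomprop}(ii) extended to the Stein setting via Remark~\ref{rem:ratdomstein}. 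Thus $V$ is a rational (resp.\ Weierstrass) domain of $X$.

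The delicate points I expect are the interior bookkeeping — in particular extracting the equality $V=W\cap D$ (and hence $V=W\cap\mathrm{int}(D)$) from the \emph{proof} of Proposition~\ref{prop:ratindense} rather than merely from its statement — together with the verification that an intersection of two rational domains of a Stein space is again rational. Both are precisely where the non-compactness of $X$ and the possibility that $V$ meets the boundary of $U$ must be handled with care; absent the compactness of $U$ one only obtains a union of connected components, and it is the equality $V=W\cap\mathrm{int}(D)$ that upgrades this to a genuine rational domain in the compact case.
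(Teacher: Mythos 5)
Your argument is correct. For the first assertion you follow essentially the paper's route: locate a compact Liu domain $D$ from a $W$-exhaustion with $V\subset\mathrm{int}(D)$, observe that $V$ is a (strictly) rational (resp.\ Weierstrass) domain of $D$ by transitivity inside the Liu algebra $\mathcal{O}_X(D)$, and apply Proposition~\ref{prop:ratindense}. Where you genuinely diverge is the compact case: the paper proves it \emph{first} and directly, by transporting the approximation argument of \cite[Theorem 7.2.4/2]{BGR} --- every element of $\mathcal{O}_X(U)$ is a convergent series $\sum_J c_J(a_\bullet/g)^J$ with global coefficients (cf.\ Remark~\ref{rem:ratdomstein}), so one clears denominators and uses rigidity of compact rational domains --- and then feeds that compact case into the general one. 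You invert this: you deduce the compact case from the general one by extracting from the \emph{proof} of Proposition~\ref{prop:ratindense} the extra equality $V=W\cap D$, upgrading it to $V=W\cap U$ once $U\subset\mathrm{int}(D)$, and invoking stability of rational (resp.\ Weierstrass) domains under intersection. This is sound and there is no circularity, since your general case rests on Proposition~\ref{prop:liudomprop}(iii) (applied in the Liu algebra $\mathcal{O}_X(D)$) rather than on the compact case; the price is that you rely on the proof, not merely the statement, of Proposition~\ref{prop:ratindense}, and must check the (elementary, but not stated in the Stein setting) intersection formula for rational domains. What you gain is that the series/approximation argument of BGR is never repeated for non-compact $X$.
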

\begin{proof}
	 When \(U\) is compact, one can copy the argument from \cite[Theorem 7.2.4/2]{BGR}. For the general case, let \(\{D_i\}_{i\in\bZ_{\geq0}}\) be a \(W\)-exhaustion of \(X\) by (strictly) Liu domains. As \(V\) is compact, it is contained in the interior of \(D_i\) for some \(i\in\bZ_{\geq0}\). It now follows from the compact case that \(V\) is a (strictly) rational (resp. Weierstrass) domain of \(D\). The claim now follows directly from Proposition \ref{prop:ratindense}.
\end{proof}
\begin{proposition}\label{prop:openclosedweierstrass}
	Let \(\varphi:X\to Y\) be a \(G\)-open and closed immersion of (strictly) \(k\)-Stein spaces with \(X\) (strictly) \(k\)-affinoid. Then \(X\) is a finite union of connected components of a (strictly) Weierstrass domain in \(Y\) via \(\varphi\).
\end{proposition}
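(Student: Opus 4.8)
The plan is to translate the statement into commutative algebra via the anti-equivalence of Theorem~\ref{thm:isocatstein} and then to exploit the tension between the two hypotheses: a \(G\)-open immersion is flat, whereas a closed immersion is surjective on global sections, and a flat surjection of Noetherian rings has an idempotent-generated kernel. Write \(\A=\mathcal{O}_Y(Y)\) with a presentation \(\A=\varprojlim_i\A_i\) as in Definition~\ref{def:steinalg}, put \(D_i=\spb(\A_i)\), and let \(\B=\mathcal{O}_X(X)\), which is (strictly) \(k\)-affinoid. Since \(\varphi\) is a closed immersion, the ideal sheaf \(\mathcal{I}=\ker(\mathcal{O}_Y\to\varphi_*\mathcal{O}_X)\) is coherent; as \(Y\) is cohomologically Stein, \(H^1(Y,\mathcal{I})=0\), so passing to global sections shows that the induced map \(\psi:\A\to\B\) is surjective. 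On the other hand, since \(\varphi\) is a \(G\)-open immersion and \(X\) is compact, I would use that the interiors of the \(D_i\) exhaust \(Y\) to fix an index \(i\) with \(\varphi(X)\subset\mathrm{int}_Y(D_i)\), and realise \(\varphi(X)\) as a (strictly) affinoid, hence Liu, domain of the Liu space \(D_i\), with coordinate ring \(\B=(\A_i)_{\varphi(X)}\).

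Next I would analyse the restriction map \(\A_i\to\B\). As \(\varphi(X)\) is a Liu domain of \(D_i\), this map is flat by Proposition~\ref{prop:flat}; and since \(\psi\) factors through it (because \(\varphi(X)\subset D_i\)) and \(\psi\) is surjective, the map \(\A_i\to\B\) is itself surjective. By Corollary~\ref{cor:noetherian} the algebra \(\A_i\) is Noetherian, so the kernel \(\mathfrak{a}_i\) is finitely generated; a finitely generated ideal whose quotient is flat satisfies \(\mathfrak{a}_i=\mathfrak{a}_i^2\) and is therefore generated by a single idempotent \(e\in\A_i\). Consequently \(\B\cong\A_i/(e)\) and \(\varphi(X)=\{x\in D_i\,|\,|e(x)|=0\}\) is topologically clopen in \(D_i\). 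Intersecting with \(\mathrm{int}_Y(D_i)\), a topologically open neighbourhood of \(\varphi(X)\) in \(Y\), then shows that \(\varphi(X)\) is topologically open in \(Y\); this is the crucial upgrade of \lq\(G\)-open\rq\ to \lq topologically open\rq\ that the closedness of \(\varphi\) supplies.

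It remains to engulf \(\varphi(X)\) in a (strictly) Weierstrass domain. Since \(\A\to\A_i\) has dense image, I would choose \(f\in\A\) with \(\rho_{\A_i}(f-e)<1\); as \(e\) vanishes on \(\varphi(X)\), this gives \(|f(x)|=|f(x)-e(x)|\leq\rho_{\A_i}(f-e)<1\) for every \(x\in\varphi(X)\). Hence the set \(W=\{y\in Y\,|\,|f(y)|\leq 1\}\), which is a (strictly) Weierstrass domain of \(Y\), contains \(\varphi(X)\). Because \(\varphi(X)\) is topologically open in \(Y\) and compact, it is open and closed in \(W\), hence a union of connected components of \(W\); and since \(X\) is \(k\)-affinoid it has only finitely many connected components, so \(\varphi(X)\) is a finite union of connected components of \(W\), as desired.

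The main obstacle is the combined middle step: verifying that \(\varphi(X)\) is simultaneously a flat Liu domain (from openness) and a surjective quotient (from closedness) of the \emph{single} Banach algebra \(\A_i\), and extracting from this the idempotent that forces \(\varphi(X)\) to be topologically clopen in \(D_i\). Once this is established, approximating the idempotent by a global function and passing to a Weierstrass domain are routine, the only care being that the radius \(1\) keeps the domain strict in the strict case.
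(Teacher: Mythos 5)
Your proof is correct, and its engine --- extracting an idempotent from the tension between flatness of \(\A_i\to\B\) (coming from \(G\)-openness via Proposition \ref{prop:flat}) and its surjectivity (coming from closedness via vanishing of \(H^1\)) --- is exactly the mechanism behind \cite[Proposition 7.3.3/7]{BGR}, which is what the paper's proof invokes for the compact case after localising to a Liu domain \(D_i\) whose interior contains \(\varphi(X)\). The differences are in the packaging. The paper first reduces to the strict case by Ducros's descent argument and then runs the BGR argument inside \(D_i\) using completed local rings and \cite[Proposition 2.4]{LiuTohoku}, finishing with Proposition \ref{prop:ratindense} to pass from \(D_i\) back to \(Y\); you work directly in the general case --- legitimately, since Corollary \ref{cor:noetherian}, Proposition \ref{prop:flat} and the \(\mathrm{Tor}\)-computation giving \(\mathfrak{a}_i=\mathfrak{a}_i^2\) need no strictness --- and you replace the appeal to Proposition \ref{prop:ratindense} by an explicit approximation of the idempotent by a global section. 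Your route yields the slightly stronger conclusion that \(\varphi(X)\) is clopen in \(Y\) (so it is already a finite union of connected components of \(Y\) itself, and the Weierstrass engulfing at the end is strictly speaking redundant), at the price of being further from the existing affinoid literature. Two points you should make explicit in a full write-up: first, that \(\varphi(X)\), being an affinoid analytic domain of the Liu space \(D_i\), is an affinoid and hence Liu domain in the sense of Definition \ref{def:liudomain}, so that Proposition \ref{prop:flat} and the identity \(U=\spb(\iota_U)(\spb(\A_U))\) (needed to identify \(\varphi(X)\) with \(\{x\in D_i\mid |e(x)|=0\}\)) actually apply; second, that the density of \(\A\to\A_i\) used to approximate \(e\) requires the standard Mittag--Leffler argument for the inverse limit presentation, which the paper's framework supplies. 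Neither is a gap, but both deserve a sentence.
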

\begin{proof}
	In case the proposition holds in the strict case, one can follow Ducros's argument for drawing the Gerritzen-Grauert Theorem for affinoid spaces from the strictly affinoid case \cite[Lemme 2.4]{Ducros}, to derive the general case by base change descent. Hence, assume that \(k\) is non-trivially valued with \(X\) and \(Y\) strictly \(k\)-Stein spaces. When \(Y\) is compact, one can use Corollary \ref{cor:noetherian} and \cite[Proposition 2.4]{LiuTohoku} to give the same argument as in \cite[Proposition 7.3.3/7]{BGR} and see that \(X\) is a strictly Weierstrass domain in \(Y\). When \(Y\) is not compact, let \(\{D_i\}_{i\in\bZ_{\geq0}}\) be a \(W\)-exhaustion of \(Y\) by strictly Liu domains. Since \(X\) is compact, it is contained in the interior of \(D_i\) for \(i\in\bZ_{\geq0}\) sufficiently large, and thus is a strictly Weierstrass domain in \(D_i\). One concludes by applying Proposition \ref{prop:ratindense}.
\end{proof}
\begin{corollary}\label{cor:openrunge}
	Let \(\varphi:X\hookrightarrow Y\) be a (strictly) \(k\)-affinoid domain embedding into a (strictly) \(k\)-Stein space which is also a (strictly) Runge immersion. Then \(X\) is a finite union of connected components of a (strictly) Weierstrass domain in \(Y\) via \(\varphi\).
\end{corollary}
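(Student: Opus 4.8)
The plan is to reduce the statement to Proposition \ref{prop:openclosedweierstrass} by exploiting the factorisation supplied by the Runge immersion. By definition of a (strictly) Runge immersion, \(\varphi\) factorises as
\[X\xrightarrow{\ j\ }W\xrightarrow{\ \iota\ }Y,\]
where \(j\) is a closed immersion and \(\iota\) is a (strictly) Weierstrass domain embedding. Since \(Y\) is (strictly) \(k\)-Stein and a Weierstrass domain is a rational domain, \(W\) is again a (strictly) \(k\)-Stein space by Remark \ref{rem:ratdomstein}. Thus \(j:X\to W\) is a closed immersion of (strictly) \(k\)-Stein spaces with \(X\) (strictly) \(k\)-affinoid, which is precisely the kind of morphism to which Proposition \ref{prop:openclosedweierstrass} applies --- once we know it is also \(G\)-open.

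First I would verify that \(j\) is a \(G\)-open immersion. Because \(\varphi\) is a (strictly) \(k\)-affinoid domain embedding, its image \(\varphi(X)\) is a \(G\)-open (affinoid) subset of \(Y\). As \(\iota\) is a homeomorphism of \(W\) onto the \(G\)-open Weierstrass domain \(\iota(W)\subseteq Y\), and \(\varphi(X)=\iota(j(X))\subseteq\iota(W)\) is \(G\)-open in \(Y\) and hence in \(\iota(W)\), it follows that \(j(X)=\iota^{-1}(\varphi(X))\) is \(G\)-open in \(W\). Combining this with the fact that \(j\) is a closed immersion, \(j:X\to W\) is a \(G\)-open and closed immersion, so Proposition \ref{prop:openclosedweierstrass} shows that \(X\) is a finite union of connected components of a (strictly) Weierstrass domain \(W'\) of \(W\).

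It remains to transport this description along \(\iota\) from \(W\) down to \(Y\). Here \(W'\) is a (strictly) Weierstrass domain of \(W\), which is itself a (strictly) Weierstrass domain of \(Y\); and the finitely many connected components of \(W'\) comprising \(X\) are compact, since \(X\) is (strictly) \(k\)-affinoid and hence compact. Exploiting this compactness, I would realise \(X\) as a finite union of connected components of a \emph{compact} (strictly) Weierstrass domain of \(W\) and then invoke the transitivity of (strictly) Weierstrass domains recorded in Corollary \ref{cor:transcomp} to conclude that \(X\) is a finite union of connected components of a (strictly) Weierstrass domain of \(Y\).

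The two steps that require genuine care are the verification that the closed-immersion part \(j\) is \(G\)-open --- which is exactly where the hypothesis that \(\varphi\) is an affinoid \emph{domain} embedding enters, turning the locally closed image of a general Runge immersion into a \(G\)-clopen one inside \(W\) --- and the final transitivity step, where one must ensure that the compactness of \(X\) lets Corollary \ref{cor:transcomp} apply despite \(W'\) and \(W\) possibly being non-compact. I expect the transitivity to be the main obstacle, since passing a \lq finite union of connected components of a Weierstrass domain\rq\ through the two nested Weierstrass inclusions requires cutting down to a compact Weierstrass domain before Corollary \ref{cor:transcomp} can be used.
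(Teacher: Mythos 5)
Your argument is correct and is precisely the paper's proof, which simply cites Proposition \ref{prop:openclosedweierstrass} and Corollary \ref{cor:transcomp} and defers to \cite[Proposition 7.3.4/6]{BGR}: factor the Runge immersion as a closed immersion \(j\) into a Weierstrass domain \(W\), observe that the affinoid-domain hypothesis makes \(j\) a \(G\)-open and closed immersion, apply Proposition \ref{prop:openclosedweierstrass}, and transport the resulting description back to \(Y\). The transitivity step you flag is resolved exactly as in the proof of Corollary \ref{cor:transcomp} itself: since \(X\) is compact it lies in the interior of a compact Liu domain \(D\) from a \(W\)-exhaustion of \(Y\), the chain \(X\subset W'\subset W\) restricts to a chain of compact Weierstrass domains of \(D\) (with \(X\) a Weierstrass domain of \(W'\) via an idempotent section) where compact transitivity applies, and Proposition \ref{prop:ratindense} then returns \(X\) as a finite union of connected components of a Weierstrass domain of \(Y\).
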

\begin{proof}
	Using Proposition \ref{prop:openclosedweierstrass} and Corollary \ref{cor:transcomp}, this can be verified in the same way as in \cite[Proposition 7.3.4/6]{BGR}.
\end{proof}

\section{Gerritzen-Grauert Theorem for Stein spaces}\label{secGG}
In this section, we prove a generalisation of the Gerritzen-Grauert Theorem to \(k\)-Stein spaces. We start by extending the description of monomorphisms of strictly \(k\)-affinoid spaces as locally closed immersions to the strictly \(k\)-Stein setting.

\begin{proposition}\label{prop:monomorphism}
	Assume that \(k\) is non-trivially valued. A morphism \(\varphi:Y\to X\) of strictly \(k\)-Stein spaces is a locally closed immersion if and only if it is a monomorphism in the category of strictly \(k\)-Stein spaces.
\end{proposition}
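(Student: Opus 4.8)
The plan is to treat the two implications separately; the forward implication is formal, while the converse carries the real content.

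\emph{From locally closed immersion to monomorphism.} First I would record that both constituents of a locally closed immersion are monomorphisms. A closed immersion corresponds, under the anti-equivalence of Theorem~\ref{thm:isocatstein}, to an admissible epimorphism of strictly \(k\)-Stein algebras; since surjections are epimorphisms in any category of algebras and the (contravariant) equivalence turns epimorphisms into monomorphisms, closed immersions are monomorphisms of strictly \(k\)-Stein spaces. A (Liu) domain embedding is a monomorphism directly from the representing universal property of Definition~\ref{def:liudomain} (and its Stein analogue, cf.\ Remark~\ref{rem:ratdomstein}): two morphisms into the domain equalised after composition with the embedding already agree, by the uniqueness clause. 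As a locally closed immersion factors as a closed immersion followed by a domain embedding, it is a composition of monomorphisms and hence a monomorphism.

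\emph{From monomorphism to locally closed immersion.} Here the strategy is to reduce to the strictly \(k\)-affinoid case and then globalise along a \(W\)-exhaustion. Since strictly \(k\)-Stein spaces are separated, the diagonal \(\Delta_\varphi\colon Y\to Y\times_X Y\) is a closed immersion, so \(\varphi\) being a monomorphism is equivalent to \(\Delta_\varphi\) being an isomorphism, i.e.\ \(Y\times_X Y\cong Y\). This condition is local on both \(X\) and \(Y\), and on affinoid pieces the fibre product is computed by a completed tensor product, controlled on the Liu level by Corollary~\ref{cor:tensorliu} and the flatness of domain embeddings (Proposition~\ref{prop:flat}). Concretely, I would fix \(W\)-exhaustions \(X=\bigcup_i D_i\) and \(Y=\bigcup_j E_j\) by strictly Liu domains; after reindexing (using compactness of each \(E_j\)) one may assume \(\varphi(E_j)\subset D_j\), so that \(\varphi\) restricts to \(\varphi_j\colon E_j\to D_j\). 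A short diagram chase shows each \(\varphi_j\) is again a monomorphism (any pair of maps into \(E_j\) equalised by \(\varphi_j\) is equalised by the composite \(E_j\hookrightarrow Y\xrightarrow{\varphi}X\), which is a monomorphism). Covering \(E_j\) and \(D_j\) by affinoid rational domains via Proposition~\ref{prop:liudomprop} then localises the problem to a monomorphism of strictly \(k\)-affinoid spaces, where I would invoke the known affinoid structure theorem for monomorphisms (the affinoid case of the statement, belonging to the circle of ideas around the Gerritzen--Grauert theorem \cite{TemkinGG}) to conclude that, locally, \(\varphi_j\) is a closed immersion into a domain of \(D_j\), hence of \(X\) by transitivity of domains (Proposition~\ref{prop:liudomprop}(iii)).

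\emph{Globalisation and the main obstacle.} The remaining and genuinely Stein-specific task is to assemble these local closed-immersion-into-a-domain factorisations into a single locally closed immersion. The compatibility on overlaps is forced by the monomorphism/uniqueness property, so the local data glue; and the affinoid-local structure already provides, around each \(x=\varphi(y)\), a domain neighbourhood in which the image is closed, with subspace structure pinned down through the comparison of completed local rings in Lemma~\ref{lem:maxfingen} together with Noetherianity (Corollary~\ref{cor:noetherian}). The \textbf{main obstacle} is to promote this from a local statement to the assertion that \(\varphi(Y)\) is \emph{locally closed} in \(X\): since \(Y\) is only \(\sigma\)-compact, \(\varphi(Y)=\bigcup_j\varphi(E_j)\) is a priori merely an \(F_\sigma\)-set, so one must rule out points of \(\overline{\varphi(Y)}\setminus\varphi(Y)\). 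This is exactly where the full force of \(\Delta_\varphi\) being an isomorphism (rather than \(\varphi\) being a mere injective immersion) must be used, in combination with the neighbourhood condition built into the definition of a \(W\)-exhaustion (each \(\spb(\A_i)\) being a neighbourhood of \(\spb(\A_{i-1})\) in \(\spb(\A_{i+1})\)), to organise the local factorisations coherently and show that near any \(x\in\varphi(Y)\) only the \(E_j\) up to a bounded index contribute. Once local closedness is secured, the earlier local factorisations identify \(\varphi\) with a closed embedding into a domain neighbourhood of its image, completing the proof.
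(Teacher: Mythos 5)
Your forward implication and your reduction of the converse to the strictly affinoid case are sound and essentially follow the paper's route: the paper likewise reduces to \(Y\) strictly \(k\)-affinoid and runs Temkin's argument from \cite[Proposition 1.2]{TemkinGG}, with Lemma~\ref{lem:bgr7252} (the Stein-base version of \cite[Lemma 7.2.5/2]{BGR}, proved via Lemma~\ref{lem:maxfingen}) producing, around each rigid point of \(\varphi(Y)\), an affinoid domain of \(X\) on which \(\varphi\) becomes a closed immersion. That lemma is the actual technical content hiding behind your appeal to ``the known affinoid structure theorem for monomorphisms'', and since it is stated directly over a Stein base it renders your double localisation through the \(D_j\) unnecessary (though not incorrect).

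The genuine problem is your third paragraph. The ``main obstacle'' you pose --- assembling the local factorisations into a single closed immersion into a domain of \(X\) by showing that near any point of \(\varphi(Y)\) only boundedly many \(E_j\) contribute --- is left unresolved, and it cannot be resolved, because the global statement you are aiming at is false for non-compact \(Y\). Take \(X=\spb(k\langle T\rangle)\), pick \(\pi\in k\) with \(0<|\pi|<1\), let \(x_0=0\) and \(x_n=(T=\pi^n)\) for \(n\geq1\), and let \(Y=\coprod_{n\geq0}\spb(\mathcal{H}(x_n))\) with the obvious map \(\varphi\) to \(X\). Then \(Y\) is a strictly \(k\)-Stein space, \(W\)-exhausted by the strictly affinoid domains \(\{y_0,\ldots,y_j\}\); the induced map \(k\langle T\rangle\to\prod_n\mathcal{H}(x_n)\) has dense image (Chinese remainder on each finite truncation), hence is an epimorphism of strictly \(k\)-Stein algebras, so \(\varphi\) is a monomorphism by Theorem~\ref{thm:isocatstein}; yet \(Y\) is discrete while \(\varphi(Y)=\{x_0\}\cup\{x_n\}_{n\geq 1}\) is compact and non-discrete, so \(\varphi\) is not a homeomorphism onto its image and cannot factor as a closed immersion into any analytic domain of \(X\) --- even though its image is actually closed, so ``local closedness of \(\varphi(Y)\)'' is not even the right invariant to worry about. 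The conclusion is that ``locally closed immersion'' must be read as a condition local on the source, which is exactly how the paper's proof treats it, and under that reading your affinoid reduction already finishes the converse: each point of \(Y\) lies in an affinoid domain mapping by a closed immersion into a domain of \(X\), and there is nothing left to glue. You should delete the third paragraph and instead spend the effort on justifying the affinoid-local step over a Stein base, which is precisely what Lemma~\ref{lem:bgr7252} is for.
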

\begin{proof}
	One can give the same argument as is done for the strictly affine case in \cite[Proposition 1.2]{TemkinGG}. In particular, for the converse implication, as being a locally closed immersion is a local property, one may assume that \(Y\) is strictly \(k\)-affinoid, so that one can use the lemma below.
\end{proof}
\begin{lemma}\label{lem:bgr7252}
	Assume that \(k\) is non-trivially valued. Let \(\sigma:\A\to\B\) be a morphism of strictly \(k\)-Stein algebras with \(\B\) strictly \(k\)-affinoid and let \(x\in\spr(\A)\). Assume that \(\sigma\) induces
	\begin{enumerate}[label=(\roman*)]
		\item a finite morphism \(\A/\mathfrak{m}_x\to\B/\mathfrak{m}_x\B\), or
		\item an epimorphism \(\A/\mathfrak{m}_x\to\B/\mathfrak{m}_x\B\), or
		\item isomorphisms \(\A/\mathfrak{m}_x^n\to\B/\mathfrak{m}_x^n\B\) for all \(n\in\bZ_{>0}\).
	\end{enumerate}
	Then there exists a strictly affinoid domain \(\spb(\A')\subset\spb(\A)\) containing \(x\) such that the map \(\sigma':\A'\to\A'\widehat{\otimes}_\A\B\) induced by \(\sigma\) is finite, epimorphic, or isomorphic, respectively.
\end{lemma}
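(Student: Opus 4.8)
The plan is to reduce to the strictly \(k\)-affinoid statement \cite[Proposition 7.2.5/2]{BGR}, first by descending along a presentation \(\A=\varprojlim_i\A_i\) to one of the strictly \(k\)-Liu algebras \(\A_i\), and then by localising to a strictly \(k\)-affinoid rational domain around \(x\).

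First I would fix a presentation \(\A=\varprojlim_i\A_i\). Since \(\B\) is strictly \(k\)-affinoid and \(\sigma\) is bounded, \(\sigma\) factorises as \(\sigma=\sigma_i\circ\pi_i\) through some \(\A_i\) (Remark \ref{rem:affgen}); after enlarging \(i\) I may also assume that \(x\) lies in the interior of the Liu domain \(\spb(\A_i)\) inside \(\spb(\A)\). As \(x\) is a rigid point it corresponds to a closed maximal ideal \(\mathfrak{m}_x\), so by Proposition \ref{prop:maxfingenclosed} and its corollary \(\mathfrak{m}\vcentcolon=\mathfrak{m}_x\A_i\) is a maximal ideal of \(\A_i\); moreover Lemma \ref{lem:maxfingen}(ii) applied to the Liu domain \(\spb(\A_i)\subset\spb(\A)\) gives \(\A/\mathfrak{m}_x^n\cong\A_i/\mathfrak{m}^n\) for every \(n\). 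Since \(\sigma\) factors through \(\A_i\) one has \(\mathfrak{m}_x^n\B=\mathfrak{m}^n\B\), and therefore each of the hypotheses (i)--(iii) for \(\sigma\) is equivalent to the corresponding hypothesis for \(\sigma_i:\A_i\to\B\).

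Next I would choose, from a rational atlas of \(\A_i\), a strictly \(k\)-affinoid rational localisation \(\A_{i,j}\) with \(x\in\spb(\A_{i,j})\), and set \(\B'=\B\widehat{\otimes}_{\A_i}\A_{i,j}\). By Proposition \ref{prop:liudomprop}(i), \(\spb(\B')\) is a rational domain of the affinoid space \(\spb(\B)\), so \(\B'\) is strictly \(k\)-affinoid and \(\sigma_{i,j}:\A_{i,j}\to\B'\) is a morphism of strictly \(k\)-affinoid algebras. By Proposition \ref{prop:maxidealprop} (and the remark following it, for the higher powers) one has \(x\in\spr(\A_{i,j})\) with \(\A_{i,j}/\mathfrak{m}_x^n\A_{i,j}\cong\A_i/\mathfrak{m}^n\), and a base-change computation gives \(\B'/\mathfrak{m}_x^n\B'\cong\B/\mathfrak{m}^n\B\); hence the hypotheses (i)--(iii) transfer to \(\sigma_{i,j}\). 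Applying \cite[Proposition 7.2.5/2]{BGR} to \(\sigma_{i,j}\) at \(x\) then produces a strictly affinoid domain \(\spb(\A')\subset\spb(\A_{i,j})\) containing \(x\) over which \(\A'\to\A'\widehat{\otimes}_{\A_{i,j}}\B'\) is finite, epimorphic, or isomorphic, respectively.

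Finally, I would transfer this conclusion back to \(\spb(\A)\). As finiteness, admissible surjectivity and isomorphy are stable under the flat (Proposition \ref{prop:flat}) rational localisations, I may freely shrink \(\spb(\A')\) to a strictly affinoid domain around \(x\) lying in the interior of \(\spb(\A_i)\) without losing the relevant property. Using that \(\spb(\A_i)\) is a Liu domain, so that \(\A_i\widehat{\otimes}_\A\A_i\cong\A_i\) and hence \(\A_i\widehat{\otimes}_\A\B\cong\B\), associativity of the completed tensor product yields \(\A'\widehat{\otimes}_\A\B\cong\A'\widehat{\otimes}_{\A_{i,j}}\B'\); thus \(\sigma':\A'\to\A'\widehat{\otimes}_\A\B\) inherits exactly the desired property. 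The hard part will be this last identification, namely checking that \(\spb(\A')\) is a strictly affinoid domain of the ambient Stein space \(\spb(\A)\) and not merely of the Liu domain \(\spb(\A_i)\): this is precisely where the interior condition enters, and it follows from the transitivity of analytic domains together with Corollary \ref{cor:transcomp} and Proposition \ref{prop:ratindense}, which ensure that a compact affinoid domain contained in the interior of a Liu domain of \(\spb(\A)\) is again an affinoid domain of \(\spb(\A)\).
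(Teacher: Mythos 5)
Your proposal is correct and follows essentially the same route as the paper: the paper's proof is a one-line reduction, via Lemma \ref{lem:maxfingen}(ii), to the case where \(\A\) is strictly \(k\)-affinoid, at which point the statement is exactly \cite[Lemma 7.2.5/2]{BGR}. Your two-step descent (first through some \(\A_i\) in a presentation, then to an affinoid rational localisation around \(x\)) together with the final transfer of the affinoid domain back to \(\spb(\A)\) is just an explicit unwinding of that same reduction.
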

\begin{proof}
	By Lemma \ref{lem:maxfingen}(ii), we may assume that \(\A\) is also strictly \(k\)-affinoid, in which case this is just \cite[Lemma 7.2.5/2]{BGR}.
\end{proof}
\begin{theorem}\label{thm:ggstein}
	Let \(\varphi:Y\to X\) be a monomorphism of (strictly) \(k\)-Stein spaces with \(Y\) (strictly) \(k\)-affinoid. Then there exists an admissible covering \(\{R_i\}_{i\in I}\) of \(X\) by (strictly) rational domains such that \(\varphi\) induces (strictly) Runge immersions \(\varphi_i:\varphi^{-1}(R_i)\to R_i\).
\end{theorem}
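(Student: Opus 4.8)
The plan is to reduce the statement to the classical Gerritzen--Grauert theorem for \(k\)-affinoid spaces \cite[Theorem 1.1]{TemkinGG} by localising \(X\) along an exhaustion by Liu domains. First I would reduce to the strict case by Ducros's base-change descent, exactly as in the proof of Proposition \ref{prop:openclosedweierstrass} (cf. \cite[Lemme 2.4]{Ducros}), so that \(k\) is non-trivially valued, \(X,Y\) are strictly \(k\)-Stein and \(Y\) is strictly \(k\)-affinoid. By Proposition \ref{prop:monomorphism} the morphism \(\varphi\) is then a locally closed immersion. Fixing a \(W\)-exhaustion \(\{D_i\}_{i\in\bZ_{\geq0}}\) of \(X\) by strictly Liu domains with \(D_i\subset\mathrm{int}(D_{i+1})\), the compactness of \(Y\) forces \(\varphi(Y)\subset\mathrm{int}(D_N)\) for some \(N\); in particular \(\varphi^{-1}(D_i)=Y\) for all \(i\geq N\).

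Next I would establish Gerritzen--Grauert on each compact Liu space \(D_{i+1}\). Writing \(D_{i+1}\) as a finite union of affinoid rational domains \(A_j\) by \cite[Proposition 3.12]{MaculanPoineau}, the preimage \(\varphi^{-1}(A_j)\) is an affinoid rational domain of \(Y\) (Proposition \ref{prop:liudomprop}(i)) and \(\varphi^{-1}(A_j)\to A_j\) is again a monomorphism. The affinoid Gerritzen--Grauert theorem then covers each \(A_j\) by affinoid rational domains \(R\) with \(\varphi^{-1}(R)\to R\) a Runge immersion, and by transitivity of rational domains (Proposition \ref{prop:liudomprop}(iii)) these \(R\) are rational domains of \(D_{i+1}\). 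Thus every \(D_{i+1}\) carries a finite covering by affinoid rational domains along which \(\varphi\) induces Runge immersions, with empty (hence trivial) preimage on the pieces disjoint from \(\varphi(Y)\).

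The globalisation step is where I would spend the most care. Restricting the above covering of \(D_{i+1}\) to \(D_i\subset\mathrm{int}(D_{i+1})\) and using that \(\mathcal{O}_X(X)\) has dense image in \(\mathcal{O}_X(D_{i+1})\), Proposition \ref{prop:ratindense} (resp. Corollary \ref{cor:transcomp}) re-presents each compact piece \(R\cap D_i\) with global functions, i.e. as \(\widehat R\cap D_{i+1}\) for a rational domain \(\widehat R\) of \(X\) of which \(R\cap D_i\) is a finite union of connected components. Since \(\varphi(Y)\subset D_{i+1}\), the extra components of \(\widehat R\) lie outside \(D_{i+1}\) and hence miss \(\varphi(Y)\), so that \(\varphi^{-1}(\widehat R)=\varphi^{-1}(R\cap D_i)=\varphi^{-1}(R)\). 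Invoking the characterisation of Runge immersions by density (Proposition \ref{prop:equivrunge}), the perturbation lemma following it, and the stability of Runge immersions under base change to rational domains and under composition with the open--closed (Weierstrass) embedding \(R\cap D_i\hookrightarrow\widehat R\) (Proposition \ref{prop:openclosedweierstrass}, Corollary \ref{cor:openrunge}), I would conclude that \(\varphi^{-1}(\widehat R)\to\widehat R\) is still a Runge immersion. Taking the union over \(i\geq N\) of these finite families then produces an admissible covering of \(X=\cup_{i\geq N}D_i\) by rational domains with the required Runge immersions.

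The hard part will be precisely this globalisation: the affinoid theorem only yields rational domains of the compact pieces \(D_i\), and the genuine obstacle is to convert them into rational domains of the non-compact space \(X\) without destroying the Runge immersion property. This hinges on re-presenting the domains with global functions via the density in the \(W\)-exhaustion, on controlling the spurious connected components introduced by Proposition \ref{prop:ratindense} (verifying they sit outside \(D_{i+1}\) and so do not meet \(\varphi(Y)\)), and on the perturbation-stability of Runge immersions; it is in assembling these ingredients coherently across the exhaustion that the Stein case departs from the classical affinoid one.
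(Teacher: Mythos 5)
Your proposal is correct and follows essentially the same route as the paper's proof: apply the affinoid Gerritzen--Grauert theorem on the compact Liu pieces of a \(W\)-exhaustion, then promote the resulting rational domains to rational domains of \(X\) via Proposition \ref{prop:ratindense}, controlling the spurious connected components (which indeed avoid \(\varphi(Y)\)) and preserving the Runge property through the density characterisation of Proposition \ref{prop:equivrunge}. The only substantive deviation is your preliminary reduction to the strict case via Ducros-style descent (together with the appeal to Proposition \ref{prop:monomorphism}), which the paper does not perform and which is both unnecessary --- every ingredient you use is available in the non-strict setting --- and not literally covered by Ducros's Lemme 2.4, since here one would have to descend an infinite admissible covering of a non-compact space rather than a finite covering of an affinoid one; simply deleting that step leaves a sound argument.
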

\begin{proof}	
	For convenience, we only consider the general case. The proof for the strict case is identical. First suppose that \(X\) is \(k\)-Liu and let \(X_1,\ldots,X_n\) be \(k\)-affinoid rational domains covering \(X\). By \cite[Theorem 3.1]{TemkinGG}, each \(X_i\) can be covered by rational domains \(R_{i1},\ldots,R_{in_i}\) such that each \(\varphi_{ij}:\varphi^{-1}(R_{ij})\to R_{ij}\) is a Runge immersion. By Corollary \ref{cor:transcomp}, each \(R_{ij}\) is also a rational domain of \(X\).
	
	Now suppose that \(X\) is a general \(k\)-Stein space and let \(\{D_i\}_{i\in\bZ_{\geq0}}\) be a \(W\)-exhaustion of \(X\) by Liu domains. Take \(j\in\bZ_{\geq0}\) sufficiently large such that the compact set \(\varphi(Y)\) is contained in the interior of \(D_j\). By the case above, there exists a covering \(\{R_1',\ldots,R_n'\}\) of \(D_j\) by rational domains such that \(\varphi^{-1}(R_i')\to R_i'\) is a Runge immersion for every \(i\in\{1,\ldots,n\}\). By Proposition \ref{prop:ratindense}, there exists a collection of rational domains \(\{R_1,\ldots,R_n\}\) of \(X\) such that \(R_i'\) is a finite union of connected components of \(R_i\) for every \(i\in\{1,\ldots,n\}\). In particular, each \(R_i'\) is a Weierstrass domain of \(R_i\), since we can take a section \(h\in\mathcal{O}_X(R_i)\) such that \(h=0\) on \(R_i\) and \(h=1\) on \(R_i'\). Hence, \(\varphi_i:\varphi^{-1}(R_i)\to R_i\) is a Runge immersion by Proposition \ref{prop:equivrunge}(ii). Lastly, since \(\varphi(Y)\) is compact, we can enlarge \(\{R_1,\ldots,R_n\}\) to a covering of \(X\) by repeating the process above outside of \(\varphi(Y)\), i.e. for every \(x\in X\setminus\varphi(Y)\) we can consider an affinoid neighbourhood \(U\) contained in the interior of \(D_\ell\setminus\varphi(Y)\) for some integer \(\ell\geq j\) and write \(U\) as the intersection of rational domains of \(X\) with \(D_\ell\). As the inverse image of such a latter rational domain \(R\) under \(\varphi\) is empty, the induced morphism \(\varphi^{-1}(R)=\varnothing\to R\) is obviously a Runge immersion.
\end{proof}

\begin{corollary}\label{cor:affinoidrational}
	Let \(X\) be a (strictly) \(k\)-Stein space. Then every (strictly) Liu domain in \(X\) is a finite union of (strictly) \(k\)-affinoid connected components of (strictly) rational domains of \(X\).
\end{corollary}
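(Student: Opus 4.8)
The plan is to bootstrap from Theorem \ref{thm:ggstein} by covering the given Liu domain with affinoid pieces. Let $D$ be a (strictly) Liu domain in $X$. Being a (strictly) $k$-Liu space, $D$ is a finite union $D = \bigcup_{j=1}^m D_j$ of (strictly) $k$-affinoid rational domains of $D$ by \cite[Proposition 3.12]{MaculanPoineau}. Each composite $\iota_j \colon D_j \hookrightarrow D \hookrightarrow X$ is a composition of domain embeddings, hence a monomorphism of (strictly) $k$-Stein spaces with (strictly) $k$-affinoid source; by transitivity of analytic domains it is in fact a (strictly) $k$-affinoid domain embedding of $X$. This is exactly the input required by Theorem \ref{thm:ggstein}.

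Applying Theorem \ref{thm:ggstein} to each $\iota_j$, I obtain an admissible covering $\{R_i\}_i$ of $X$ by (strictly) rational domains such that $\varphi_{j,i} \colon \iota_j^{-1}(R_i) = D_j \cap R_i \to R_i$ is a (strictly) Runge immersion. Since $D_j$ is compact, only finitely many $R_i$ meet it, and for those $D_j \cap R_i$ is the preimage of a rational domain under $\iota_j$, hence a rational domain of the affinoid $D_j$ and in particular (strictly) $k$-affinoid. As $\varphi_{j,i}$ is the base change of the affinoid domain embedding $\iota_j$, it is itself a (strictly) $k$-affinoid domain embedding, so Corollary \ref{cor:openrunge} applies and writes $D_j \cap R_i$ as a finite union of connected components of a (strictly) Weierstrass domain $W_{j,i}$ of $R_i$.

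It then remains to transfer these components from the (generally non-compact) Stein space $R_i$ down to $X$. Here I use that $D_j \cap R_i$ is open and closed in $W_{j,i}$, so the indicator $e = \mathbbm{1}_{D_j \cap R_i}$ is a global idempotent of the coordinate ring, realising $D_j \cap R_i = \{x : |(e-1)(x)| \le |e(x)|\}$ as a (strictly) rational domain at the local level. Combined with the $W$-exhaustion of $X$: the compact set $D_j \cap R_i$ lies in the interior of some Liu domain $D_\iota$, where it becomes a (strictly) rational domain of the compact space $W_{j,i} \cap D_\iota$, hence (by the compact transitivity of rational domains, Proposition \ref{prop:liudomprop}(iii)) a (strictly) rational domain of $D_\iota$; Proposition \ref{prop:ratindense} then exhibits it as a finite union of connected components of a (strictly) rational domain of $X$. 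These components, being connected components of the affinoid $D_j \cap R_i$, are (strictly) $k$-affinoid. Taking the finite union over $j$ and over the finitely many relevant $i$ writes $D$ as a finite union of (strictly) $k$-affinoid connected components of (strictly) rational domains of $X$.

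The routine parts are the decomposition of $D$ and the two invocations of known results (Theorem \ref{thm:ggstein} and Corollary \ref{cor:openrunge}). The main obstacle is the last step: full transitivity of rational domains fails for non-compact Stein spaces — this is precisely why the conclusion is phrased in terms of connected components — so the descent of $D_j \cap R_i$ from $R_i$ to $X$ must be routed through a compact Liu domain $D_\iota$ of the exhaustion and through Proposition \ref{prop:ratindense}, using the idempotent description of the union of connected components to present it as a genuine rational domain at the compact level. A subsidiary point requiring care is the claim that $\iota_j$ is an affinoid domain embedding of $X$ (and not merely a monomorphism), which is what makes Corollary \ref{cor:openrunge} applicable to the base change $\varphi_{j,i}$.
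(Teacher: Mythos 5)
Your argument is correct and takes essentially the same route as the paper's proof: cover the Liu domain by finitely many affinoid rational domains via \cite[Proposition 3.12]{MaculanPoineau}, apply Theorem \ref{thm:ggstein} and Corollary \ref{cor:openrunge} to each piece, and descend the resulting unions of components to \(X\). The only difference is that your final descent step (the idempotent realising the union of components as a rational domain, compact transitivity, and Proposition \ref{prop:ratindense}) is an inline re-derivation of what the paper simply cites as Corollary \ref{cor:transcomp}.
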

\begin{proof}
	Let \(Y\) be a (strictly) Liu domain in \(X\). By considering a finite covering of \(Y\) by (strictly) affinoid domains, the claim follows directly from Theorem \ref{thm:ggstein}, Corollary \ref{cor:openrunge} and Corollary \ref{cor:transcomp}. Alternatively, one can also repeat the argument of the proof of Theorem \ref{thm:ggstein}, using \cite[Proposition 3.12]{MaculanPoineau} instead of \cite[Theorem 3.1]{TemkinGG}.
\end{proof}

\end{document}